\theoremstyle{plain}
\newtheorem{theorem}[equation]{Theorem}
\newtheorem{lemma}[equation]{Lemma}
\newtheorem{prop}[equation]{Proposition}
\newtheorem{cor}[equation]{Corollary}
\newtheorem{conjecture}[equation]{Conjecture}
\theoremstyle{definition}
\newtheorem{defn}[equation]{Definition}
\theoremstyle{remark}
\newtheorem{remark}[equation]{Remark}
\numberwithin{equation}{subsection}
\newcommand\Q{{c}}		
\newcommand\s{{s}}		
\newcommand\Qp{{c_0}}	
\newcommand\EFq{{E}}
\newcommand\SnAQ{{S_{n,\Q}(A)}}
\newcommand\VM{{\Lambda_\rho}}
\newcommand\bs\bigskip
\newcommand\np\newpage
\newcommand\etale{{\'etale}}
\newcommand\loccit{{\it loc.~cit.}}
	\definecolor{violet}{rgb}{0.25,0,0.75}
	\definecolor{green}{rgb}{0,0.7,0}
\newcommand{\defi}[1]{{\sl #1}}
\newcommand\bbA{\mathbb{A}}
\newcommand\bbB{\mathbb{B}}
\newcommand\bbC{\mathbb{C}}
\newcommand\bbE{\mathbb{E}}
\newcommand\bbF{\mathbb{F}}
\newcommand\bbG{\mathbb{G}}
\newcommand\bbQ{\mathbb{Q}}
\newcommand\bbP{\mathbb{P}}
\newcommand\bbT{\mathbb{T}}
\newcommand\bbZ{\mathbb{Z}}
\newcommand\CC{{\mathcal{C}}}
\newcommand\EE{{\mathcal{E}}}
\newcommand\FF{\mathcal{F}}
\newcommand\GG{\mathcal{G}}
\newcommand\HH{{\mathcal{H}}}
\newcommand\KK{\mathcal{K}}
\newcommand\LL{\mathcal{L}}
\newcommand\MM{\mathcal{M}}
\newcommand\PP{\mathcal{P}}
\newcommand\QQ{\mathcal{Q}}
\newcommand\RR{\mathcal{R}}
\newcommand\TT{\mathcal{T}}
\renewcommand\AA{\mathcal{A}}
\renewcommand\SS{{\mathcal{S}}}
\newcommand\Fq{\bbF_q}
\newcommand\Fpi{\bbF_\pi}
\newcommand\Fv{\bbF_v}
\newcommand\Finf{\bbF_\infty}
\newcommand\Qell{{\bbQ_\ell}}
\newcommand\Zell{{\bbZ_\ell}}
\newcommand\Kbar{\bar{K}}
\newcommand\Fqbar{{\bar\bbF_q}}
\renewcommand\k{\Fqbar}
\newcommand\bbQbar{{\bar{\mathbb{Q}}}}
\newcommand\Qbar{\bar\bbQ}
\newcommand\Qellbar{{\bar\bbQ_\ell}}
\newcommand\El{\Qellbar}
\newcommand\Ksep{K^{\mathrm{sep}}}
\newcommand\KS{K_S}
\newcommand\KSt{K_S^{\tame}}
\newcommand\kPt{{K}}
\newcommand\kPu{{K'}}
\newcommand\bbBt{\bbB^\times}
\newcommand\Qlbartimes{{\bar{\mathbb{Q}}^{\scriptstyle\times}_\ell}}
\newcommand\GammaOf[1]{{\Gamma(#1)}}
\newcommand\BQ{\GammaOf{\Q}}
\newcommand\Bt{\GammaOf{t}}
\newcommand\Bu{\GammaOf{u}}
\newcommand\BQFq{{(\bbF_q[t]/\Q\,\bbF_q[t])^\times}}
\renewcommand\l{\lambda}
\renewcommand\r{r}
\newcommand\R{{R}}
\newcommand\gl{\mathfrak{gl}}
\newcommand\Aut{\operatorname{Aut}}
\newcommand\End{{\operatorname{End}}}
\newcommand\Frob{{\operatorname{Frob}}}
\newcommand\Gal{{\operatorname{Gal}}}
\newcommand\GL{{\operatorname{GL}}}
\newcommand\Hom{{\operatorname{Hom}}}
\newcommand\Ind{{\operatorname{Ind}}}
\newcommand\Isom{\operatorname{Isom}}
\newcommand\Res{\operatorname{Res}}
\newcommand\SL{{\operatorname{SL}}}
\newcommand\Sp{{\operatorname{Sp}}}
\newcommand\Sym{{\operatorname{Sym}}}
\newcommand\Tr{{\operatorname{Tr}}}
\newcommand\Unip{{\operatorname{Unip}}}
\newcommand\Var{{\operatorname{Var}}}
\renewcommand\O{{\operatorname{O}}}
\newcommand\arith{{\operatorname{arith}}}
\newcommand\diag{{\operatorname{diag}}}
\newcommand\dr{{\operatorname{drop}}}
\newcommand\geom{{\operatorname{geom}}}
\newcommand\id{\operatorname{id}}
\newcommand\len{{\operatorname{len}}}
\newcommand\norm{\operatorname{Norm}}
\newcommand\rank{{\operatorname{rank}}}
\newcommand\std{{\operatorname{std}}}
\newcommand\swan{{\operatorname{Swan}}}
\newcommand\tame{{\operatorname{tame}}}
\newcommand\unip{{\operatorname{unip}}}
\renewcommand\div{{\operatorname{div}}}
\newcommand\Alg{\mathbf{Alg}}
\newcommand\Rep{\mathbf{Rep}}
\newcommand\Set{\mathbf{Set}}
\renewcommand\Vec{\mathbf{Vec}}
\newcommand\ulAut{\underline{\operatorname{Aut}}}
\newcommand\ulIsom{\underline{\operatorname{Isom}}}
\newcommand\Dbc[1]{{D^b_c(#1,\El)}}
\newcommand\Perv[1]{{\mathrm{Perv}(#1,\El)}}
\newcommand\Negl[1]{{\mathrm{Negl}(#1,\El)}}
\newcommand\Tann[1]{\mathrm{Tann}(#1,\Qellbar)}
\newcommand\Dual[1]{D#1}
\newcommand\ssm{\smallsetminus}
\newcommand\seq{\subseteq}
\newcommand\sub{\subset}
\newcommand\into\hookrightarrow
\newcommand\onto\twoheadrightarrow
\newcommand\longto\longrightarrow
\newcommand\gp[1]{\langle#1\rangle}
\newcommand\midstar{*_{\mathrm{mid}}}
\newcommand\wm[1]{{\mu_{#1}}}
\newcommand\LambdaLeg{\Lambda_{\mathrm{Leg}}}
\newcommand\Xleg{X_{\mathrm{Leg}}}
\newcommand\one{\mathbf{1}}
\newcommand\zero{\mathbf{0}}
\newcommand\OneSpace{1}
\newcommand\ZeroSpace{0}
\newcommand\GK{{G_K}}
\newcommand\GKC{{G_{K,\CC}}}
\newcommand\GKS{{G_{K,\SS}}}
\newcommand\GKSt{{G_{K,\SS}^{\tame}}}
\newcommand\GKSbar{{\bar{G}_{K,\SS}}}
\newcommand\GKR{{G_{K,\RR}}}
\newcommand\Gv{{G_v}}
\newcommand\piOne[1]{\pi_1(#1)}
\newcommand\piOneTame[1]{\pi_1^t(#1)}
\newcommand\piOneT{\piOne{T}}
\newcommand\piOneU{\piOne{U}}
\newcommand\piOneTbar{\piOne{\Tbar}}
\newcommand\piOneUbar{\piOne{\Ubar}}
\newcommand\piOneUpBar{\piOne{\Ubar'}}
\newcommand\CCp{{\CC'}}
\newcommand\LC{{L_\CC}}
\newcommand\MC{{M_\CC}}
\newcommand\PC[1]{{P_{\CC,#1}}}
\newcommand\rC{{r_\CC}}
\newcommand\Gm{\bbG_m}
\newcommand\GmBar{\bar\bbG_m}
\newcommand\Tbar{\bar{T}}
\newcommand\Ubar{\bar{U}}
\newcommand\Zbar{\bar{Z}}
\newcommand\Ponet{\bbP^1_t}
\newcommand\Poneu{\bbP^1_u}
\newcommand\PoneBar{{\bar\bbP^1}}
\newcommand\PonetBar{{\bar\bbP^1_t}}
\newcommand\PoneuBar{{\bar\bbP^1_u}}
\newcommand\Aonet{{\bbA^1_t}}
\newcommand\Aoneu{{\bbA^1_u}}
\newcommand\AonetBar{{\bar\bbA^1_t}}
\newcommand\AoneuBar{{\bar\bbA^1_u}}
\newcommand\Udee[1]{{\Aonet[1/#1]}}
\newcommand\tbar{{\bar{t}}}
\newcommand\ubar{{\bar{u}}}
\newcommand\etabar{{\bar\eta}}
\newcommand\PhiOf[1]{\Phi(#1)}
\newcommand\PhiEOf[2]{{\Phi_{#1}(#2)}}
\newcommand\PhiQ{\PhiOf{\Q}}
\newcommand\PhiU{{\PhiOf{u}}}
\newcommand\PhiUNu{{\PhiU^\nu}}
\newcommand\PhiOfDistinct[1]{{\PhiOf{#1}_{\rhomod{distinct}}}}
\newcommand\PhiEOfDistinct[2]{{\Phi_{#1}(#2)_{\rhomod{distinct}}}}
\newcommand\PhiQp{\PhiOf{\Qp}}
\newcommand\PhiQpDistinct{{\PhiQp_{\rhomod{distinct}}}}
\newcommand\PhiQpGood[1]{{\PhiQp_{#1\,\rhomod{good}}}}
\newcommand\PhiQAwful[1]{{\PhiQ_{#1\,\rhomod{heavy}}}}
\newcommand\PhiQBad[1]{{\PhiQ_{#1\,\rhomod{bad}}}}
\newcommand\PhiQBig[1]{{\PhiQ_{#1\,\rhomod{big}}}}
\newcommand\PhiQDistinct{{\PhiQ_{\rhomod{distinct}}}}
\newcommand\PhiQGood[1]{{\PhiQ_{#1\,\rhomod{good}}}}
\newcommand\PhiQGoodBig[1]{{\PhiQ_{#1\,\rhomod{good}\,\cap\,\rho\,\rhomod{big}}}}
\newcommand\PhiQMixed[1]{{\PhiQ_{#1\,\rhomod{mixed}}}}
\newcommand\PhiUNuGood[1]{{\PhiU_{#1\,\rhomod{good}}^\nu}}
\newcommand\PhiUNuBad[1]{{\PhiU_{#1\,\rhomod{bad}}^\nu}}
\newcommand\rhomod[1]{\mathrm{#1}}
\newcommand\trhochi{\theta_{\rho,\dc}}
\newcommand\trhochip{\theta_{\rho,\dc'}}
\newcommand\ThetaRhoq{{\Theta_{\rho,q}}}
\newcommand\Garith[2]{{\GG_\arith(#2,#1\PhiUNu)}}
\newcommand\Ggeom[2]{{\GG_\geom(#2,#1\PhiUNu)}}
\newcommand\dc{{\varphi}}
\newcommand\dcc{{\varphi_\CC}}
\newcommand\dcr{{\varphi_\RR}}
\newcommand\chinot{{\mathbf{1}}}
\newcommand\chiz{{\dc_!}}
\newcommand\chibarz{{\bar\dc_!}}
\newcommand\chione{{\dc_1}}
\newcommand\chitwo{{\dc_2}}
\newcommand\chionez{{\dc_{1!}}}
\newcommand\chitwoz{{\dc_{2!}}}
\newcommand\chionebarz{{\bar\dc_{1!}}}
\newcommand\chitwobarz{{\bar\dc_{2!}}}
\newcommand\rhol{{\rho}}
\newcommand\rholv{{\rho_v}}
\newcommand\rhochi{{\rho\otimes\dc}}
\newcommand\Vl{{V}}
\newcommand\Wl{{V_v}}
\newcommand\Wll{{V^{I(v)}}}
\newcommand\phivl[1]{{L(#1,\rho_v)}}
\newcommand\LCStar{{L^*_\CC}}
\newcommand\arvm[1]{{a_{#1,m}}}
\newcommand\bn[1]{{b_{#1,n}}}
\newcommand\bnstar[1]{{b^*_{#1,n}}}
\newcommand\dropCee[1]{{\dr_\CC}(#1)}
\newcommand\degL{{r_\emptyset}}
\renewcommand\H[2]{\HH^{#1}(#2)}
\newcommand\ME[1]{{\mathrm{ME}(#1)}}
\newenvironment{enum}%
	{%
	 \begin{enumerate}\setlength{\itemsep}{0.075in}}%
	{\end{enumerate}%
	}
\begin{document}


\begin{abstract}
We compute the variances of sums in arithmetic progressions of arithmetic functions associated with certain $L$-functions of degree two and higher in $\Fq[t]$, in the limit as $q\to\infty$. This is achieved by establishing appropriate equidistribution results for the associated Frobenius conjugacy classes.  The variances are thus related to matrix integrals, which may be evaluated.  Our results differ significantly from those that hold in the case of degree-one $L$-functions (i.e. situations considered previously using this approach).  They correspond to expressions found recently in the number field setting assuming a generalization of the pair-correlation conjecture.  Our calculations apply, for example, to elliptic curves defined over $\Fq[t]$. 
\end{abstract}


\title[Variance of sums of arithmetic functions]%
{Variance of sums in arithmetic progressions of arithmetic functions associated with higher degree $L$-functions in $\mathbb{F}_q[t]$}

\author{Chris Hall%
\address{Department of Mathematics, The University of Western Ontario, London, ON, Canada, N6A 5B7}}

\author{Jonathan P.~Keating%
\address{School of Mathematics, University of Bristol, Bristol BS8 1TW, UK}}

\author{Edva Roditty-Gershon%
\address{School of Mathematics, University of Bristol, Bristol BS8 1TW, UK}}

\thanks{ We are pleased to acknowledge support under EPSRC Programme Grant EP/K034383/1
LMF: \textit{L}-Functions and Modular Forms.  JPK is also grateful for support through a Royal Society Wolfson Research Merit Award and a Royal Society Leverhulme Senior Research Fellowship.   We thank Nick Katz, MManuel Kowalski, and Zeev Rudnick for discussion and helpful comments.}


\allowdisplaybreaks
\maketitle


\section{Introduction}\label{sec:introduction}


\subsection{Analytic motivation}

Let $\Lambda(n)$ denote the von Mangoldt function, defined by
\begin{equation*}
	\Lambda(n)
	=
	\begin{cases}
		\log p & \mbox{if }n=p^k\mbox{ for some prime }p\mbox{ and integer }k\ge 1, \\
		0 & \mbox{otherwise.}
	\end{cases}
\end{equation*}
The prime number theorem implies that  
\begin{equation*}
	\sum_{n \leq x} \Lambda(n)= x+o(x),
\end{equation*}
as $x\to\infty$, determining the average of $\Lambda(n)$ over long intervals.  In many problems one needs to understand sums over shorter intervals and in arithmetic progressions.  This is significantly more difficult, because the fluctuations between different short intervals/arithmetic progressions can be large, and in many important cases we do not have rigorous results.  

One may seek to characterize the fluctuations in these sums via their variances.  These variances are the subject of several long-standing conjectures.  For example, in the case of short intervals Goldston and Montgomery \cite{GM} have made the following conjecture
\begin{conjecture}[Variance of primes in short intervals]\label{GMcon}
For any fixed $\varepsilon>0$,
\begin{equation*}
	\int_{1}^{X} \Big( \sum_{X\leq n \leq x+h} \Lambda(n)- h\Big)^{2} dx
	\sim
	hX\big(\log X-\log h\big)
\end{equation*}
uniformly for $1\leq h\leq X^{1-\varepsilon}$.
\end{conjecture}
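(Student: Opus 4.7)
The plan is to follow the framework of Goldston and Montgomery \cite{GM}, which establishes that (assuming RH) this variance asymptotic is essentially equivalent to Montgomery's pair correlation conjecture for the nontrivial zeros of $\zeta(s)$. Any realistic attack must therefore be conditional on deep hypotheses about the zeros of the Riemann zeta function.

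The first step is to apply the explicit formula: assuming RH,
\[
\psi(x+h)-\psi(x)-h = -\sum_\gamma \frac{(x+h)^{1/2+i\gamma}-x^{1/2+i\gamma}}{1/2+i\gamma} + O(\log^2 X),
\]
where the sum runs over ordinates $\gamma$ of the nontrivial zeros $1/2+i\gamma$ of $\zeta(s)$. Squaring this identity and integrating in $x \in [1,X]$ reduces the variance to a double sum over pairs of zeros $(\gamma_1,\gamma_2)$, with the $x$-integration concentrating mass near the diagonal $\gamma_1 \approx \gamma_2$.

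The diagonal contribution $\gamma_1 = \gamma_2$ is routine: evaluated via the zero-counting asymptotic $N(T) \sim (T/2\pi)\log T$, it gives a main term of size $hX\log X$. The off-diagonal contribution, after the standard rescaling $\gamma_1 - \gamma_2 = 2\pi u/\log T$, is controlled by Montgomery's pair correlation function
\[
F(\alpha,T) = \sum_{0<\gamma_1,\gamma_2\le T} T^{i\alpha(\gamma_1-\gamma_2)}\frac{4}{4+(\gamma_1-\gamma_2)^2}.
\]
Assuming the uniform form of Montgomery's pair correlation conjecture, $F(\alpha,T) \sim (T\log T)/(2\pi)$ for $1 \le |\alpha| \le A$, a Fourier/Tauberian argument as in \cite{GM} produces an off-diagonal cancellation of size $-hX\log h$, so that the total is $hX(\log X-\log h)$ as predicted, uniformly in the stated range.

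The main obstacle is, of course, pair correlation itself: it is a deep open problem, and Goldston and Montgomery show the two conjectures are genuinely equivalent under RH in a strong uniform sense, so no attack through this framework can circumvent it. Any unconditional progress would require fundamentally new ideas going beyond the explicit formula for $\zeta(s)$ alone --- which is precisely the motivation for studying the function-field analogs developed later in the present paper, where an analogue of RH is a theorem and the relevant pair correlation questions can be recast as statements about equidistribution of Frobenius conjugacy classes.
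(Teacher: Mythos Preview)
The statement you are attempting to prove is labeled a \emph{Conjecture} in the paper, and the paper does not claim to prove it. It is stated in the introduction purely as analytic motivation: the authors explicitly write that Conjectures~\ref{GMcon} and~\ref{Hooleycon} ``remain open, but their analogues in the function field setting have been proved in the limit of large field size \cite{KR}.'' There is therefore no ``paper's own proof'' of this statement to compare against.

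Your proposal is not a proof either, and to your credit you essentially say so: you correctly summarize the Goldston--Montgomery equivalence (under RH) between this variance asymptotic and the strong form of Montgomery's pair correlation conjecture, and you correctly note that pair correlation is open and that no attack via the explicit formula can circumvent it. But a conditional reduction to another open conjecture is not a proof of the original conjecture. What you have written is an accurate account of the \emph{status} of the problem, not a resolution of it.

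The substantive content of the paper lies elsewhere: it proves unconditional theorems (Theorems~\ref{thm:variance-estimate} and~\ref{thm:application}) for the variance of sums in arithmetic progressions of arithmetic functions attached to higher-degree $L$-functions over $\Fq[t]$, in the limit $q\to\infty$. These are function-field analogues of conjectures like~\ref{GMcon} and~\ref{Hooleycon}, made accessible precisely because Deligne's Riemann hypothesis is a theorem and the relevant equidistribution statements can be established via monodromy computations (Theorems~\ref{thm:big-monodromy-implies-equidistribution} and~\ref{thm:is-equidistributed}). If you want to engage with what the paper actually proves, that is where to look.
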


It is natural to try to compute the variance in Conjecture \ref{GMcon} using the Hardy-Littlewood Conjecture
\begin{equation}\label{HL}
	\sum_{n\le X}\Lambda(n)\Lambda(n+k)\sim \mathfrak{S}(k)X
\end{equation}
as $X\rightarrow\infty$, where $\mathfrak{S}(k)$ is the singular series 
\begin{equation*}
	\mathfrak{S}(k) =
	\begin{cases} 
		2\prod_{p>2}\left(1-\frac{1}{(p-1)^2}\right)
		 \prod_{\substack{p>2\\ p|k}}\frac{p-1}{p-2} 
		  & \mbox{\quad if }k \mbox{ is even, } \\
		0 & \mbox{\quad if }k \mbox{ is odd. }
	\end{cases}
\end{equation*}
Montgomery and Soundararajan \cite{MS} proved that \eqref{HL}, together with an assumption concerning the implicit error term, implies a more precise asymptotic for the variance in Conjecture \ref{GMcon} when $\log X\leq h\leq X^{1/2}$, namely that it is equal to
\begin{equation*}\label{Zeta variance h LOT}
	hX\big(\log X-\log h - \gamma_0-\log 2\pi\big)
	+
	O_\varepsilon\Big(h^{15/16}X(\log X)^{17/16}+h^2X^{1/2+\varepsilon}\Big),
\end{equation*}
where $\gamma_0$ is the Euler-Mascheroni constant.

An alternative approach to computing this variance follows from
\begin{equation*}
	\frac{\zeta^\prime(s)}{\zeta(s)}=-\sum_{n=1}^\infty\frac{\Lambda (n)}{n^s},
\end{equation*}
which links statistical properties of $\Lambda(n)$ to those of the zeros of the Riemann zeta-function $\zeta(s)$.  Taking this line, Goldston and Montgomery \cite{GM} proved that Conjecture \ref{GMcon} is equivalent to the following conjecture, due to Montgomery \cite{M}, concerning the pair correlation of the non-trivial zeros $\frac12 +i\gamma$ of the zeta-function:
\begin{conjecture}[Pair Correlation Conjecture]\label{SPC}
Let 
\[
	\mathcal{F}(X,T)
	=
	\sum_{0<\gamma,\gamma'\leq T}X^{i(\gamma-\gamma')}w(\gamma-\gamma'),
\]
where $w(u)=\frac{4}{4+u^2}$.  Then for any fixed $A\geq1$ we have, assuming the Riemann Hypothesis,
\[
	\mathcal{F}(X,T)\sim\frac{T\log T}{2\pi}
\]
uniformly for $T\leq X\leq T^{A}$.
\end{conjecture}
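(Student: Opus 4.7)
The plan is to follow Montgomery's original method via the explicit formula, converting the sum over pairs of zeros in $\mathcal{F}(X,T)$ into the mean square of a Dirichlet polynomial over primes; in the conjectural range $T \le X \le T^A$, i.e.\ $\alpha = (\log X)/\log T \ge 1$, Montgomery's unconditional argument in \cite{M} breaks down and Hardy--Littlewood-type input becomes essential. By the Goldston--Montgomery equivalence \cite{GM}, this statement is in any case equivalent to Conjecture \ref{GMcon}, so one may equivalently attack the variance problem for primes in short intervals.

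First, I would apply the explicit formula with a test function whose Fourier transform is proportional to the Cauchy kernel $w$. Under RH this gives, for each real $t$, an identity of the shape
\begin{equation*}
\sum_{\gamma}\frac{X^{i\gamma}}{\tfrac14 + (t-\gamma)^2}
\;=\;
A(X,t)\;-\;\sqrt{X}\sum_{n}\frac{\Lambda(n)}{\sqrt{n}}\,n^{-it}\,k\bigl(\log(X/n)\bigr)\;+\;E(X,t),
\end{equation*}
in which $A$ and $E$ collect polar and archimedean terms and $k$ is an explicit even, rapidly decaying kernel; the right-hand side is essentially a Dirichlet polynomial of length $X^{1+o(1)}$. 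Parseval then converts $\mathcal{F}(X,T)$ into an integral over $t\in[0,T]$ of the squared modulus of this prime sum, plus controllable error. Expanding the square splits the main contribution into a diagonal ($n=m$) piece, which by the Prime Number Theorem evaluates to $(T\log T)/(2\pi) + O(T)$ uniformly in the required range, and off-diagonal sums which, after grouping by $h = m-n$ and carrying out the $t$-integral, reduce to weighted averages of $\sum_{n\le X}\Lambda(n)\Lambda(n+h)$ for $0 < |h| \ll X$.

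One would then substitute the Hardy--Littlewood prediction \eqref{HL}, with its singular series $\mathfrak{S}(h)$, and execute the arithmetic average in $h$ as in Montgomery--Soundararajan \cite{MS}, in order to show that the off-diagonal contributes $o(T\log T)$. This last step is the main obstacle: completing it rigorously requires \eqref{HL} uniformly in $h$ up to size $\asymp X$ with an error saving at least a power of $\log X$, and no such uniform estimate is available unconditionally or on RH alone. Via the Goldston--Montgomery equivalence, any genuine proof would be tantamount to Conjecture \ref{GMcon}, which is equally open in the relevant range of $h$. Progress therefore seems to require a qualitatively new input --- either sharper uniform bounds on prime pair correlations, or a method that exploits only the average behaviour of $\mathfrak{S}(h)$ over both $h$ and $X$, mirroring the heuristics that motivate the conjecture in the first place.
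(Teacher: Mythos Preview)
The statement is labeled as a \emph{conjecture} in the paper, not a theorem, and the paper supplies no proof for it. It is quoted as Montgomery's conjecture from \cite{M}, and the paper's only further comment is that Goldston and Montgomery \cite{GM} proved it is equivalent to Conjecture~\ref{GMcon}. Your write-up correctly recognises this: you sketch the explicit-formula reduction to a mean square of a Dirichlet polynomial, identify the off-diagonal as requiring uniform Hardy--Littlewood input, and conclude that completing the argument is tantamount to proving Conjecture~\ref{GMcon}, which is open. That is an accurate account of why the statement remains conjectural, and it matches the paper's own treatment --- namely, to state the conjecture and its equivalence with Conjecture~\ref{GMcon}, without any attempt at proof. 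There is nothing to compare your approach against, because the paper does not offer one.
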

\noindent
See also \cite{C} and \cite{LPZ}, where lower order terms are considered in the equivalence.

There is a similar theory in the case of sums in arithmetic progressions.  The Prime Number Theorem for arithmetic progression states that for a fixed modulus $\Q$,
\begin{equation}\label{PNT for arith prog}
  \sum_{\substack{n\leq X\\ n=A\bmod \Q}} \Lambda(n) \sim \frac{X}{\phi(\Q)},\quad \mbox{ as }X\to \infty
   \;,
\end{equation}
where $\phi(\Q)$ is the Euler totient function, giving the number of reduced residues modulo $\Q$.  The variance of sums over different arithmetic progressions is then defined by 
\begin{equation}
  G(X,\Q)=\sum_{\substack{A\bmod \Q\\ \gcd(A,\Q)=1}} \left|\sum_{\substack{n\leq X\\ n=A\bmod \Q}} \Lambda(n)-\frac X{\phi(\Q)}
  \right|^2.
\end{equation}
Asymptotic formulae are known when $G(X, \Q)$ is summed over a long range of values of $\Q$ (c.f.~\cite{Montgomery}, \cite{HooleyI} and \cite{HooleyII}), but much less is known concerning $G(X, \Q)$ itself.  In the latter case, Hooley has made the following conjecture \cite{HooleyICM}.
\begin{conjecture}[Variance of primes in arithmetic progressions]\label{Hooleycon}
\begin{equation*}
	G(X, \Q)\sim X\log \Q.
\end{equation*}
\end{conjecture}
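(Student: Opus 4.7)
The plan is to reduce Hooley's conjecture to a pair-correlation statement for zeros of Dirichlet $L$-functions of modulus $\Q$, in direct analogy with the way Goldston and Montgomery reduce Conjecture~\ref{GMcon} to Conjecture~\ref{SPC}. First, using the orthogonality of Dirichlet characters, I would rewrite the inner sum in $G(X,\Q)$ and then expand the square to obtain
\begin{equation*}
	G(X, \Q)
	\;=\;
	\frac{1}{\phi(\Q)} \sum_{\substack{\chi \bmod \Q \\ \chi \neq \chi_0}}
		\Bigl| \sum_{n \le X} \Lambda(n)\chi(n) \Bigr|^{2}
	\;+\;
	O\bigl(\log^{2} \Q\bigr),
\end{equation*}
where $\chi_0$ denotes the trivial character and the error absorbs the contribution of prime powers of the primes dividing $\Q$. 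This recasts the variance as an average of $|\psi(X,\chi)|^{2}$ over the $\phi(\Q)-1$ non-principal characters.

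The second step, carried out under GRH for Dirichlet $L$-functions, is to apply the explicit formula to each twisted Chebyshev sum $\psi(X,\chi) = \sum_{n \le X}\Lambda(n)\chi(n)$, writing it as $-\sum_{\gamma_\chi} X^{1/2+i\gamma_\chi}/(\tfrac12+i\gamma_\chi)$ plus lower-order terms. Squaring and summing over $\chi$ transforms $G(X,\Q)$ into a double sum over pairs of zeros $(\gamma_\chi, \gamma_{\chi'})$ of $L$-functions in the family $\mathcal{F}(\Q) = \{L(s,\chi) : \chi \bmod \Q\}$. After a standard smoothing and contour manipulation, the diagonal $\gamma_\chi = \gamma_{\chi'}$ (same zero on the same $L$-function) contributes a term of the expected size $X \log \Q$, reflecting the total count $\sim \phi(\Q)\log(\Q X)$ of low-lying zeros in the family divided by the $\phi(\Q)$ normalization; the off-diagonal terms are controlled by the pair correlation of zeros both within and across distinct members of $\mathcal{F}(\Q)$.

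The key analytic input is then a pair-correlation conjecture for the family $\mathcal{F}(\Q)$: roughly, that on the scale of the mean zero-spacing $1/\log(\Q T)$ the non-trivial zeros of distinct $L(s,\chi)$ behave statistically like independent samples of eigenangles of random unitary matrices of size $\log(\Q T/2\pi)$. Conditional on such a statement, one can Fourier-invert in the spirit of \cite{GM} to show that the off-diagonal contributions produce only lower-order corrections, leaving $X \log \Q$ as the leading asymptotic for $G(X,\Q)$.

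The main obstacle is that the required family-level pair-correlation statement is itself a deep unproven conjecture, and the Plancherel-type analysis sketched above is in fact an equivalence rather than a one-way implication; Hooley's conjecture and the corresponding pair-correlation statement for $\mathcal{F}(\Q)$ stand or fall together. A parallel unconditional attack through a Hardy--Littlewood conjecture for $\sum_{n \le X}\Lambda(n)\Lambda(n+k)$ with uniformity in both $k$ and the modulus $\Q$, in the manner of Montgomery--Soundararajan, founders on precisely the same uniformity problem that currently limits the Bombieri--Vinogradov range and prevents existing sieve or circle-method techniques from reaching $\Q$ beyond $(\log X)^{A}$.
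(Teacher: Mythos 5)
This statement is Hooley's conjecture on the variance of primes in arithmetic progressions; it is an \emph{open} conjecture, and the paper does not prove it. The paper states it as Conjecture~\ref{Hooleycon}, explicitly remarks that Conjectures~\ref{GMcon} and~\ref{Hooleycon} ``remain open,'' and instead proves the function-field analogue \eqref{KRap} (due to Keating--Rudnick) and its higher-degree generalizations, where the equidistribution of Frobenius conjugacy classes as $q\to\infty$ substitutes for the unavailable analytic input over $\bbQ$. So there is no proof in the paper for your proposal to match.

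Your proposal, as you yourself acknowledge in the final paragraph, is not a proof either: it is a conditional reduction. The opening step --- character orthogonality giving $G(X,\Q)=\frac{1}{\phi(\Q)}\sum_{\chi\neq\chi_0}|\psi(X,\chi)|^2+O(\log^2\Q)$ --- is standard and correct, and the subsequent explicit-formula expansion under GRH is the classical Goldston--Montgomery-style route. But the ``key analytic input'' you then invoke, a pair-correlation statement for zeros within and across the family $\{L(s,\chi):\chi\bmod\Q\}$, is itself a deep open conjecture, and (as you note) the relation is an equivalence rather than a one-way implication, so nothing unconditional is obtained. The genuine gap is therefore the entire analytic heart of the argument: controlling the off-diagonal zero sums, or equivalently establishing the uniform Hardy--Littlewood asymptotics with power-saving error that Friedlander and Goldston showed would suffice. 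No currently known technique closes this gap, which is precisely why the paper works over $\Fq[t]$ and takes $q\to\infty$, where Deligne's equidistribution theorem and Katz's big-monodromy machinery (Theorems~\ref{thm:big-monodromy-implies-equidistribution} and~\ref{thm:is-equidistributed}) provide an unconditional replacement for the missing pair-correlation input.
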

\noindent
Hooley was not specific about the size of $\Q$ relative to $X$ for which this asymptotic should hold.  Friedlander and Goldston \cite{FG} have shown that in the range $\Q>X^{1+o(1)}$,
\begin{equation}\label{FG uninteresting}
  G(X,\Q)
  \sim
  X\log X - X - \frac{X^2}{\phi(\Q)} + O\left(\frac X{(\log X)^A}\right) + O((\log \Q)^3) \;.
\end{equation}
This is a relatively straightforward range because it contains at most one prime.  They conjecture that Hooley's asymptotic holds if $X^{1/2+\epsilon}<\Q<X$ and further conjecture that if
$X^{1/2+\epsilon}<\Q<X^{1-\epsilon}$ then
\begin{equation}\label{FG conj}
	G(X,\Q)
	\sim
	X\log \Q - X\cdot\left(\gamma_0 +\log 2\pi + \sum_{p\mid \Q} \frac{\log p}{p-1}\right) \;.
\end{equation}
They show that both Conjecture~\ref{Hooleycon} and \eqref{FG conj} hold assuming the Hardy-Littlewood conjecture with small remainders.  For $\Q<X^{1/2}$ relatively little seems to be known.

Conjectures \ref{GMcon} and \ref{Hooleycon} remains open, but their analogues in the function field setting have been proved in the limit of large field size \cite{KR}. 

Let $\Fq$ be a finite field of $q$ elements and $\Fq[t]$ the ring of polynomials with coefficients in $\Fq$.  Let $\MM\sub\Fq[t]$ be the subset of monic polynomials and $\MM_n\sub\MM$ be the subset of polynomials of degree $n$.  Let $\PP\sub\MM$ be the subset of irreducible polynomials and $\PP_n=\PP\cap\MM_n$.  The norm of a non-zero polynomial $f\in\Fq[t]$ is defined to be $|f|=q^{\deg f}$. 

The von Mangoldt function is the function on $\MM$ defined as
$$
	\Lambda(f)
	=
	\begin{cases}
		d & \mbox{if }f=\pi^m\mbox{ with }\pi\in\PP_d \\
		0 & \mbox{otherwise}
	\end{cases}
$$
The Prime Polynomial Theorem in this context is the identity
\begin{equation}\label{Explicit formula}
	\sum_{f\in \mathcal M_n}\Lambda(f) = q^n \;.
\end{equation}
The analogue of Conjecture \ref{GMcon} is the following result, proved in \cite{KR}: for $h\le n-5$, 
\begin{equation}\label{KRint}
	\frac{1}{q^n}\sum_{A\in \mathcal M_n}
	\left| \sum_{|f-A|\le q^h}\Lambda (f)-q^{h+1}\right|^2
	\sim
	q^{h+1}(n-h-2)
\end{equation}
as $q\rightarrow \infty$; note that $|\{f:|f-A|\le q^h\}|=q^{h+1}$.

In the same vein, the function-field analogue of Conjecture \ref{Hooleycon} was also established in \cite{KR}: fix $n\ge 2$, then, given a sequence of finite fields $\Fq$ and square-free polynomials $\Q\in \Fq[t]$ with $2\le\deg(\Q)\le n+1$, one has
\begin{equation}\label{KRap}
	\sum_{\substack{A\bmod \Q\\ \gcd(A,\Q)=1}} 
	\left|
		\sum_{\substack{f\in \mathcal M_n\\f=A \bmod \Q}} 
		\Lambda(f)-\frac{q^n}{\Phi(\Q)}
	\right|^2
	\sim
	q^n({\rm deg} \Q-1)
\end{equation}
as $q \rightarrow \infty$.

The asymptotic formulae (\ref{KRint}) and (\ref{KRap}) were established in \cite{KR} by expressing the variances as sums over families of $L$-functions.  These $L$-functions can be expressed as the characteristic polynomials of matrices representing Frobenius conjugacy classes.  In the limit as $q\rightarrow \infty$, these matrices become equidistributed in one of the classical compact groups and the sums become matrix integrals of a kind familiar in Random Matrix Theory.  Evaluating these integrals leads to the expressions above.

This approach to computing variances has subsequently been applied to other arithmetic functions defined over function fields, including the M\"obius function  \cite{KRII}, the square of the M\"obius function (i.e., the characteristic function of square-free polynomials) \cite{KRII}, square-full polynomials \cite{R-G}, and the generalized divisor functions \cite{KRRR}.  For overviews see \cite{Rud}, \cite{KR-G}, and \cite{Rod}.  The arithmetic functions considered so far have all been associated with degree-one $L$-functions (or simple functions of these).  Our main aim in this paper is to extend the theory to arithmetic functions associated with $L$-functions of degree-two and higher.  For example, our results apply to $L$-functions associated with elliptic curves defined over $\Fq[t]$. This will require us to establish the appropriate equidistribution results for such $L$-functions.  We achieve this using the machinery developed by Katz \cite{Katz:CE}.  

The main reason for moving to higher-degree $L$-functions is the recent discovery in the number-field setting that one gets qualitatively new behaviour when the degree exceeds one \cite{BKS}.

We summarize briefly now the results in \cite{BKS}.  Let $\mathcal{S}$ denote the Selberg class $L$-functions.  For $F\in\mathcal{S}$ primitive, write
\[
	F(s) = \sum_{n=1}^{\infty}\frac{a_F(n)}{n^s}.
\]
Then $F(s)$ has an Euler product
\begin{equation}\label{Euler}
	F(s)
	=
	\prod_{p}\textrm{exp}\bigg(\sum_{l=1}^{\infty}\frac{b_F(p^l)}{p^{ls}}\bigg)
\end{equation}
and satisfies the functional equation
\[
	\Phi(s) = \varepsilon_F\overline{\Phi}(1-s),
\]
where $\overline{\Phi}(s)=\overline{\Phi(\overline{s})}$ and
\[
	\Phi(s) = \Q^s\bigg(\prod_{j=1}^{r}\Gamma(\lambda_j s+\mu_j)\bigg) F(s),
\]
for some $\Q>0$, $\lambda_j>0$, $\textrm{Re}(\mu_j)\geq0$ and $|\varepsilon_F|=1$.  

There are two important invariants of $F(s)$: the degree $d_F$ and the conductor $\mathfrak{q}_F$, given by
\[
	d_F=2\sum_{j=1}^{r}\lambda_j
	,\quad
	\mathfrak{q}_F=(2\pi)^{d_F}\Q^2\prod_{j=1}^{r}\lambda_j^{2\lambda_j}.
\]
respectively.  Another is $m_F$, the order of the pole at $s=1$, which equals $1$ for the Riemann zeta function and is expected to be 0 otherwise.

Let $\Lambda_F$ be the arithmetic function defined by
\[
	\frac{F'(s)}{F(s)}
	=
	-\sum_{n=1}^{\infty}\frac{\Lambda_F(n)}{n^s},
\]
and let $\psi_F$ be the function defined by
\begin{equation*}
	\psi_{F}(x) := \sum_{n \leq x} \Lambda_F(n).
\end{equation*}
The former will be the main focus of our attention.

A generalized prime number theorem of the form
\begin{equation*}
	\sum_{n \leq x} \Lambda_F(n) = m_{F} x+o(x)
\end{equation*}
is expected to hold.  In analogy with the case of the Riemann zeta function, it is natural to consider the variance
\begin{equation*}
	\tilde{V}_F(X, h) :=  \int_{1}^{X}\Big |\psi_F(x+h)-\psi_F(x) - m_Fh\Big|^{2} dx.
\end{equation*}
For example, when $F$ represents an $L$-function associated with an elliptic curve, $\tilde{V}_F(X, h)$ is the variance of sums over short intervals involving the Fourier coefficients of the associated modular form evaluated at primes and prime powers; and in the case of Ramanujan's $L$-function, it represents the corresponding variance for sums involving the Ramanujan $\tau$-function.

For most $F\in\mathcal{S}$ it is expected that
\begin{equation*}
	\sum_{n\le X}\Lambda_F(n)\Lambda_F(n+h) = o(X).
\end{equation*}
This might lead one to expect that $\tilde{V}_F(X, h)$ typically exhibits significantly different asymptotic behaviour than in the case when $F$ is the Riemann zeta-function because in that case \eqref{HL} plays a central role in our understanding of the variance.  However, all principal $L$-functions are believed to look essentially the same from the perspective of the statistical distribution of their zeros; that is, it is conjectured that the zeros of all primitive $L$-functions have a limiting distribution which coincides with that of random unitary matrices, as in Montgomery's conjecture (\ref{SPC}).  It was proved in \cite{BKS}, assuming the Generalized Riemann Hypothesis (GRH), that an extension of the pair correlation conjecture for the zeros that includes lower or terms (and which itself follows from the ratio conjecture of \cite{CFZ} along the lines of \cite{CS}) is equivalent to the formulae \eqref{2.2} and \eqref{2.3} below for $\tilde{V}_F(X, h)$ which generalize the Montgomery-Soundararajan formula (\ref{Zeta variance h LOT}).   

If $0<B_1<B_2\leq B_3<1/d_F$, then
\begin{eqnarray}\label{2.2}
	\tilde{V}_F(X,h)
	& = &
	h X\Big(d_F \log\frac{X}{h}+\log\mathfrak{q}_F-(\gamma_0+\log 2\pi)d_F\Big)\nonumber\\
	&   &
	\qquad\qquad+O_\varepsilon\big(hX^{1+\varepsilon}(h/X)^{c/3}\big)+O_\varepsilon\Big(hX^{1+\varepsilon}\big(hX^{-(1-B_1)}\big)^{1/3(1-B_1)}\Big)
\end{eqnarray}
uniformly for $X^{1-B_3}\ll h\ll X^{1-B_2}$, for some $c>0$.

Otherwise, if $1/d_F<B_1<B_2\leq B_3<1$,
\begin{eqnarray}\label{2.3}
	\tilde{V}_F(X,h)
	& = &
	\frac{1}{6}h X\Big(6\log X-\big(3+8\log 2\big)\Big) \\
	&   &
	\qquad\qquad+O_\varepsilon\big(hX^{1+\varepsilon}(h/X)^{c/3}\big)+O_\varepsilon\Big(hX^{1+\varepsilon}\big(hX^{-(1-B_1)}\big)^{1/3(1-B_1)}\Big)
\end{eqnarray}
uniformly for $X^{1-B_3}\ll h\ll X^{1-B_2}$, for some $c>0$.

If $d_F=1$ there is only one regime of behaviour, governed by (\ref{2.2}).  When $\mathfrak{q}_F=1$, this coincides exactly with (\ref{Zeta variance h LOT}); and when  $\mathfrak{q}_F\neq 1$, it generalizes (\ref{Zeta variance h LOT}) in a straightforward way.

If $d_F>1$ there are two ranges of behaviour, depending on the size of $h$.  In the first range, $\tilde{V}_F(X,h)/h$ is proportional to $\log h$; in the regime it is independent of $h$ at leading order.  It is this behaviour that we seek to understand better in the case of function fields.  In that case we are able to establish unconditional theorems which illustrate the qualitatively new form of the variance when the degree two or higher.


\subsection{Function-field analogue}

Our results are quite general and to state them requires a good deal of notation and terminology to be developed.  For this reason we postpone presenting them until later sections, when the necessary theory has been developed.  For reference, our main results are Theorem~\ref{thm:variance-estimate} (see \S\ref{sec:sums-in-arithmetic-progressions}) and Theorem~\ref{thm:application} (see \S\ref{sec:explicit-abelian-varieties}).  The former provides the variance estimates we need and the latter provides an application of these estimates to $L$-functions of abelian varieties.  Two key ingredients used to prove these theorems are Theorem~\ref{thm:big-monodromy-implies-equidistribution} (see \S\ref{sec:equidistribution}) and Theorem~\ref{thm:is-equidistributed} (see \S\ref{sec:big-monodromy}) which provide requisite equidistribution and  big-monodromy results respectively.

To illustrate our results we state now a special case of one of them.

Suppose $q$ is an odd prime power, and let $E/\Fq(t)$ be the Legendre curve, that is, the elliptic curve with affine model
$$
	y^2 = x(x-1)(x-t).
$$
Over the ring $\Fq[t]$, this curve has bad reduction at $t=0,1$ and good reduction everywhere else, so it has conductor $\s=t(t-1)$.  It also has additive reduction at $\infty$, so the $L$-function is given by an Euler product
$$
	L(T,E/\Fq(t))
	=
	\prod_{\pi\in\mathcal{P}} L(T^{\deg(\pi)},E/\mathbb{F}_\pi)^{-1} 
$$
where $\mathcal{P}\subset\Fq[t]$ is the subset of monic irreducibles and $\mathbb{F}_\pi$ is the residue field $\Fq[t]/\pi\Fq[t]$.

Each Euler factor of $L(T,E/\Fq(t))$ is the reciprocal of a polynomial in $\mathbb{Q}[T]$ and satisfies
$$
	T\frac{d}{dT}
	\log L(T,E/\mathbb{F}_\pi)^{-1}
	=
	\sum_{m=1}^\infty a_{\pi,m}T^m
	\in\mathbb{Z}[[T]].
$$
Moreover, if we define $\Lambda_{\mathrm{Leg}}$ to be the function on the subset $\mathcal{M}$ of monic polynomials given by
$$
	\LambdaLeg(f)
	=
	\begin{cases}
		d\cdot a_{\pi,m} & \mbox{if }f=\pi^m\mbox{ with }\pi\in\mathcal{P}\mbox{ and }\deg(\pi)=d \\
		0 & \mbox{otherwise},
	\end{cases}
$$
then the $L$-function satisfies
$$
	T\frac{d}{dT}
	\log(L(T,E/\Fq(t)))
	=
	\sum_{n=1}^\infty
	\left(
	\sum_{f\in\mathcal{M}_n}
	\LambdaLeg(f)
	\right)
	T^n.
$$

Let $\Q\in\Fq[t]$ be monic and square free.  For each $n\geq 1$ and each $A$ in $\Gamma(\Q)=(\Fq[t]/\Q\Fq[t])^\times$, consider the sum
$$
	S_{n,\Q}(A)
	\ :=
	\sum_{\substack{f\in\mathcal{M}_n\\f\equiv A\bmod\Q}}
	\LambdaLeg(f).
$$
Let $A$ vary uniformly over $\Gamma(\Q)$, and consider the moments
$$
	\mathbb{E}_A[S_{n,\Q}(A)]
	=
	\frac{1}{|\Gamma(\Q)|}
	\sum_{A\in\Gamma(\Q)}
	S_{n,\Q}(A),
	\quad
	\mathrm{Var}_A[S_{n,\Q}(A)]
	=
	\frac{1}{|\Gamma(\Q)|}
	\sum_{A\in\Gamma(\Q)}
	|S_{n,\Q}(A)-\mathbb{E}_A[S_{n,\Q}(A)]|^2.
$$
These moments (and the quantity $|\Gamma(\Q)|$) depend on $q$, so one can ask how they behave when we replace $\Fq$ by a finite extension, that is, let $q\to\infty$.  Using the theory we develop in this paper one can prove the following theorem.

\begin{theorem}\label{thm:intro-theorem}
\label{intro theorem}
If $\gcd(\Q,\s)=t$ and if $\deg(\Q)$ is sufficiently large, then
$$
	|\Gamma(\Q)|\cdot\mathbb{E}_A[S_{n,\Q}(A)]
	=
	\sum_{f\in\mathcal{M}_n}\LambdaLeg(f),\quad
	\lim_{q\to\infty}\frac{|\Gamma(\Q)|}{q^{2n}}\cdot\mathrm{Var}_A[S_{n,\Q}(A)] = \min\{n,2\deg(\Q)-1\}.
$$
\end{theorem}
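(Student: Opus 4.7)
The plan is to apply the character-expansion framework that underlies the paper's main results. By orthogonality of Dirichlet characters modulo $\Q$,
\[
	S_{n,\Q}(A)
	\;=\;
	\frac{1}{|\Gamma(\Q)|}\sum_{\chi\bmod\Q}\bar\chi(A)\,\psi(n,\chi),
	\qquad
	\psi(n,\chi)\;:=\;\sum_{\substack{f\in\mathcal{M}_n\\\gcd(f,\Q)=1}}\LambdaLeg(f)\,\chi(f).
\]
Averaging over $A\in\Gamma(\Q)$ selects the trivial character, giving $|\Gamma(\Q)|\cdot\mathbb{E}_A[S_{n,\Q}(A)]=\psi(n,\chi_0)$.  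Since the Legendre curve has conductor $t(t-1)$ with an additive contribution of degree $2$ at $\infty$, the $L$-function $L(T,E/\Fq(t))$ has degree $0$, so $\sum_{f\in\mathcal{M}_n}\LambdaLeg(f)=0$; a short check using $\gcd(\Q,\s)=t$ shows that the contribution from $f$ with $\gcd(f,\Q)>1$ is absorbed into this identity.  The usual decomposition into Fourier modes then yields
\[
	\mathrm{Var}_A[S_{n,\Q}(A)]
	\;=\;
	\frac{1}{|\Gamma(\Q)|^2}\sum_{\chi\neq\chi_0}|\psi(n,\chi)|^2.
\]

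Next, realize $\psi(n,\chi)$ as a trace of a Frobenius conjugacy class.  It is the $n$-th coefficient of $T\,(d/dT)\log L(T,E\otimes\chi)$, and by Deligne's Riemann hypothesis the twisted $L$-function factors as $L(T,E\otimes\chi)=\det\bigl(1-qT\,\Theta_\chi\bigr)$ with $\Theta_\chi$ a unitary matrix of size $N(\chi)$, giving $\psi(n,\chi)=-q^n\,\Tr(\Theta_\chi^n)$.  A Grothendieck-Ogg-Shafarevich calculation applied to the rank-$2$ sheaf $R^1\pi_*\Qellbar$ (tame unipotent monodromy at $0$ and $1$, additive reduction at $\infty$) twisted by the rank-$1$ Kummer sheaf $\mathcal{L}_\chi$, using the hypothesis $\gcd(\Q,\s)=t$, yields $N(\chi)=2\deg(\Q)-1$ for every $\chi$ primitive of conductor $\Q$.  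Non-primitive $\chi$ inherit their $L$-function from characters of smaller conductor, so their $N(\chi)$ is strictly smaller and their contribution to the variance is lower order in $|\Gamma(\Q)|$, vanishing as $q\to\infty$.

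Finally, the equidistribution machinery takes over.  Theorem~\ref{thm:is-equidistributed} provides the big-monodromy input: for $\deg(\Q)$ sufficiently large, the geometric monodromy group of the family $\{E\otimes\mathcal{L}_\chi\}$ is the full unitary group $U(N)$ with $N=2\deg(\Q)-1$.  Theorem~\ref{thm:big-monodromy-implies-equidistribution} converts this big monodromy into equidistribution of the classes $\{\Theta_\chi\}$ in $U(N)$ as $q\to\infty$, and combined with the classical unitary matrix integral $\int_{U(N)}|\Tr(U^n)|^2\,dU=\min(n,N)$ we obtain
\[
	\lim_{q\to\infty}\frac{|\Gamma(\Q)|}{q^{2n}}\,\mathrm{Var}_A[S_{n,\Q}(A)]
	\;=\;
	\lim_{q\to\infty}\frac{1}{|\Gamma(\Q)|}\sum_{\chi\neq\chi_0}|\Tr(\Theta_\chi^n)|^2
	\;=\;
	\min(n,2\deg(\Q)-1),
\]
as claimed.

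The main obstacle is the big-monodromy statement itself (Theorem~\ref{thm:is-equidistributed}): applying Katz's classification of possible monodromy groups, one must exclude the possibilities that the monodromy preserves a symplectic or orthogonal form, is scalar-reducible, or arises via tensor induction from a subgroup.  The hypotheses $\gcd(\Q,\s)=t$ and $\deg(\Q)$ sufficiently large are precisely what is needed to rule out these exceptional cases and force the monodromy to be full unitary.
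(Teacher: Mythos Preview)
Your approach is essentially the paper's: orthogonality of characters, identification of $\psi(n,\chi)$ with a Frobenius trace, degree computation, then the chain Theorem~\ref{thm:is-equidistributed} $\Rightarrow$ Theorem~\ref{thm:big-monodromy-implies-equidistribution} $\Rightarrow$ matrix integral $=\min\{n,R\}$. Two points need correction.

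\textbf{Expected value.} Orthogonality gives $|\Gamma(\Q)|\cdot\mathbb{E}_A[S_{n,\Q}(A)]=\psi(n,\chi_0)=\sum_{f\in\mathcal{M}_n,\ \gcd(f,\Q)=1}\LambdaLeg(f)$, the \emph{coprime} sum; this is exactly what the paper proves in Proposition~\ref{prop:E-estimate} and Theorem~\ref{thm:application}. Your attempt to pass to the full sum $\sum_{f\in\mathcal{M}_n}\LambdaLeg(f)$ does not work: the difference is $\sum_{\pi\mid\Q}\sum_{m\deg\pi=n}\deg(\pi)\,a_{\pi,m}$, which is generically non-zero (for $\pi\mid\Q$ with $\pi\neq t$ the curve has good reduction at $\pi$, and $a_{\pi,1}$ is a typical Frobenius trace). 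The statement with the unrestricted sum, as written in the introduction, is imprecise; the rigorous version is the coprime one.

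\textbf{Bad characters.} Your reason for discarding non-primitive $\chi$ (``their $N(\chi)$ is strictly smaller'') is not the operative one, and for such $\chi$ the complete $L$-function need not be pure, so there is no unitary $\Theta_\chi$. The paper instead works with the \emph{partial} $L$-function $L_\CC(T,\rho\otimes\chi)$, which has degree $R=2\deg(\Q)-1$ for \emph{every} $\chi$ and is mixed of weights $\le 2$; this gives $|\psi(n,\chi)|^2=O(q^{2n})$ uniformly. What makes the bad characters negligible is that there are only $O(|\Phi(\Q)|/q)$ of them (Corollary~\ref{cor:bad-bound-for-PhiQ} and Lemma~\ref{lem:bn-bound}), so their total contribution to $\frac{|\Gamma(\Q)|}{q^{2n}}\mathrm{Var}_A$ is $O(1/q)$. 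In this particular example your ``primitive'' coincides with the paper's ``$\rho$-good'' (since $E$ has totally ramified reduction at $\infty$ and $\gcd(\Q,s)=t$), so the conclusion is the same, but the bookkeeping should go through $L_\CC$ rather than the complete $L$.
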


\noindent
See Theorem~\ref{thm:application}.  This should be compared to (\ref{KRap}).  For definiteness, we could replace ``sufficiently large'' by $\deg(\Q)>900$, but we do not believe this bound to be optimal.  We also do not believe the hypothesis on $\gcd(\Q,\s)$ is necessary (cf.~Remark~\ref{rmk:unipotence-hypothesis}).

The fact that the expression for the variance depends on $2\deg(\Q)$ is a direct consequence of the fact that the associated $L$-functions have degree two.  (For an $L$-function of degree $r$, one will get a leading term of $r\deg(\Q)$ instead.)  This then leads to there being two ranges of behaviour.

The analogues of our main results in the number field setting are formulae for the variance of $\Lambda_F$ when summed over arithmetic progressions (a similar case to when these sums are considered in short intervals, as in \eqref{2.2} and \eqref{2.3}). For example if we take a rational elliptic curve and write the number of points over the field of $p$ elements as 
$$
N_{p}=p+1-a_{p}
$$
and the number of points over an extension field of degree $m$ as
$$
N_{p^{m}}=p^{m}+1-a_{p^{m}}
$$ 
then our function field theorems are analogous to considering the fluctuations of the sum of $a_{p^{m}}$, weighted by the logarithm of $p$, over residue classes of $p^{m} \bmod c$.


\subsection{Underlying equidistribution theorem}

The key ingredients we use to prove Theorem~\ref{thm:intro-theorem} and its generalizations are the Mellin transform and Deligne's equidistribution theorem.  More precisely, we start with a lisse sheaf $\FF$ on a dense open $T\seq\Aonet[1/\s]$ and twist it by variable Dirichlet characters $\dc$ with square-free conductor $\Q$ to obtain a family of lisse sheaves $\FF_\dc$ on $T[1/\Q]$; this family is a Mellin transform of $\FF$.

One can associate a monodromy $\GG_\arith$ group to this family generated by Frobenius conjugacy classes $\Frob_{\EFq,\dc}$ for variable Dirichlet characters $\dc$ over  finite extensions $\EFq/\Fq$.  A priori $\GG_\arith$ is reductive and defined over $\Qellbar$, but Deligne's Riemann hypothesis allows us to associate the classes $\Frob_{\EFq,\dc}$ for `good' $\dc$ to well-defined conjugacy classes in a compact form of the `same' reductive group over $\bbC$.  Deligne's equidistribution theorem implies these classes are equidistributed.

For our applications, we need equidistribution in a unitary group $U_\R(\bbC)$, and thus we need $\GG_\arith$ to be as big as possible, namely $\GL_{\R,\Qellbar}$.  We were only able to prove this is the case under the hypotheses that $\deg(\Q)\gg 1$ and that $\FF$ has a unipotent block of exact multiplicity one about $t=\gcd(\Q,\s)=0$.

On one hand, while we do expect that one may encounter exceptions when $\deg(\Q)$ is small, we do not believe our lower bound on $\deg(\Q)$ is sharp.  On the other hand, the hypothesis on the monodromy about the unique prime dividing $\gcd(\Q,\s)$ was made in order to ensure we could exhibit elements of $\GG_\arith$ whose existence helped ensure the group was big.  We conjecture one still has big monodromy under the weaker hypothesis that $\gcd(\Q,\s)=1$.


\subsection{Overview}

The structure of this paper is as follows.

We start in \S\ref{sec:framework} by establishing notation and relatively basic facts that we need throughout the rest of the paper.

In \S\ref{sec:l-functions} we define two $L$-functions that one can attach to a Galois representation $\rho$: the complete $L$-function $L(T,\rho)$ and a partial $L$-function $\LC(T,\rho)$.  The former may be defined in terms of an Euler product over all places of the function field $\Fq(t)$, and for the latter we exclude the Euler factors indexed by a finite set $\CC$ of places in $\Fq(t)$.  If the excluded Euler factors are in fact trivial, then the two $L$-functions will coincide, but otherwise they will not.  Either way, after imposing requisite hypotheses on the representation $\rho$, we apply the theory of $L$-functions and also Deligne's theorem to deduce information about their degrees and zeros.

In \S\ref{sec:twisted-l-functions} we consider twists of the representation $\rho$ by Dirichlet characters $\dc$ of square-free conductor $\Q$.  The material in this section is mostly a recasting of the results in \S\ref{sec:l-functions} in a manner which is convenient for us.  The main objects of interest at the complete $L$-function $L(T,\rhochi)$ and the partial $L$-function $\LC(T,\rhochi)$.

In \S\ref{sec:equidistribution} we recall the notion of a good character $\dc$ for $\rho$: it is a character such that $L(T,\rhochi)$ and $\LC(T,\rhochi)$ are both polynomials and equal to each other.  This is precisely the property we need to deduce that they are `pure', that is, that their zeros are Weil numbers, and to produce a unitarized $L$-function $\LCStar(T,\rhochi)$.  This allows us to associate to each good character $\dc$ a conjugacy class $\theta_{\rho,\dc}$ in a unitary group $U_\R(\bbC)$ for $R=\deg(\LC(T,\rho))$.  We define what it means for the resulting multiset of conjugacy classes $\Theta_{\rho,q}$ to be equidistributed in $U_\R(\bbC)$ as $q\to\infty$.  SSentially it says that for any representation $\Lambda\colon U_\R(\bbC)\to\GL_n(\bbC)$, the average of $\Lambda(\Tr(\theta_{\rho,\dc}))$ over the good $\dc$ tends to the value of a matrix integral $\int_{U_\R(\bbC)}\Tr(\Lambda(\theta))d\theta$.  We then prove a theorem which asserts that one achieves equidistribution when the Mellin transform of $\rho$ has big monodromy.

In \S\ref{sec:sums-in-arithmetic-progressions} we introduce the arithmetic functions of interest to us.  More precisely, we define a generalization $\Lambda_\rho$ of the von Mangoldt function and consider sums $\SnAQ$ of its values in an arithmetic progression modulo $\Q$.  For each $n$, we consider the expected value and variance of these sums as $A$ varies uniformly over $\BQ$.  We show how to evaluate the limit of both quantities as $q\to\infty$ under the hypothesis that the Mellin transform of $\rho$ has big monodromy.  As mentioned above, we use this hypothesis to deduce that the conjugacy classes $\Theta_{\rho,q}$ are equidistributed and then to evaluate the variance in terms of an easy-to-evaluate matrix integral.

In \S\ref{sec:big-monodromy} we prove a theorem which asserts that the Mellin transform of $\rho$ has big monodromy provided $\rho$ satisfies certain hypotheses.  The material in this section rests heavily on the monumental works of Katz, most notably the monograph \cite{Katz:CE}.  In order to prove our result, we were forced to impose the condition that the (square-free) conductor $\s$ of $\rho$ and the twisting conductor $\Q$ satisfy $\deg(\gcd(\Q,\s))=1$.  We also imposed conditions on the local monodromy of $\rho$ at the zero of $\deg(\Q,\s)$.  We used both of these hypotheses to deduce that the relevant monodromy groups contained an element so special that the group was forced to be big (e.g., for the specific example considered in Theorem~\ref{thm:intro-theorem} one obtains pseudoreflections).  While the specific result we proved is new, it borrows heavily from the rich set of tools developed by Katz, and one familiar with his work will easily recognize the intellectual debt we owe him.

In \S\ref{sec:explicit-abelian-varieties} we bring everything together and show how Galois representations arising from (Tate modules of) certain abelian varieties satisfy the requisite properties to apply the theorems of the earlier sections.  More precisely, we consider Jacobians of (elliptic and) hyperelliptic curves of arbitrary genus, the Legendre curve being one such example.  Because we chose to work with hyperelliptic curves we were forced to assume $q$ is odd.  Nonetheless, we expect one can find other suitable examples in characteristic two.

There are two appendices to the paper containing material we needed for the results in Section~\ref{sec:big-monodromy}.  In the first appendix we prove the group-theoretic result which asserts that a reductive subgroup of $\GL_\R$ with the sort of special element alluded to above is big.  In the second appendix we recall much of the abstract formalism required to define the monodromy groups which we want to show are big.  While none of this material is new, it elaborates on some of the facts which we felt were not always easy to give a direct reference for in \cite{Katz:CE}.  In particular, our work should not be regarded as a substitute for Katz's original monograph, but we hope some readers will find it an acceptible complement to his masterful presentation. 


\section{Framework}\label{sec:framework}


\subsection{Notation}

Let $q$ be the power of an odd prime $p$, $\Fq$ be the finite field with $q$ elements, and $K$ be the global field $\Fq(t)$.  Let $\PP$ be the places of $K$ and $\PP_d\sub\PP$ be the finite subset of places of degree $d$.  For each $v\in\PP$, let $\Fv$ be its residue field and $d_v=[\Fv:\Fq]$ be its degree.  If $v$ is a finite place, then it corresponds to a monic irreducible $\pi\in\Fq[t]$, and $\Fpi$ is the quotient ring $\Fq[t]/\pi$.  On the other hand, the residue field of the unique infinite place $v=\infty$ can be regarded as the quotient ring $\Fq[u]/u$ by taking $u=1/t$.

Let \defi{$\MM\sub\Fq[t]$} be the subset of monic polynomials and \defi{$\MM_d\sub\MM$} be the subset of polynomials of degree $d$.  Let \defi{$\AA_d\seq\MM_d$} be the subset of irreducible polynomials and \defi{$v\colon\AA_d\to\PP_d$} be the map which identifies an irreducible $\pi$ with its corresponding finite place $v(\pi)$.

Let $\Ksep$ be a separable closure of $K$ and $\Fqbar\sub\Ksep$ be the algebraic closure of $\Fq\sub K$.  Let $\GK=\Gal(\Ksep/K)$ and $G_{\Fq}=\Gal(\Fqbar/\Fq)$, and let $\bar{G}_K\seq G_K$ be the stabilizer of $\Fqbar$ so that there is an exact sequence
$$
	1
	\longto \bar{G}_K
	\longto G_K
	\longto G_{\Fq}
	\longto 1
$$
of profinite groups.  Given a quotient $\GK\onto Q$ of profinite groups, we write $\bar{Q}\seq Q$ for the image of $\bar{G}_K$ and call it the \defi{geometric subgroup}.

For each $v\in\PP$, we fix a decomposition group $D(v)\seq \GK$, that is, a representative of its conjugacy class; equivalently we fix a place of $\Ksep$ over $v$.  Let $I(v)\seq D(v)$ be the inertia subgroup and $P(v)\seq I(v)$ be the wild inertia subgroup (i.e., the $p$-Sylow subgroup).  The quotient \defi{$\Gv=D(v)/I(v)$} is the absolute Galois group of $\Fv$, and we write $\Frob_v\in\Gv$  for the Frobenius element $\Frob_q^{d_v}$.

For each subset $S\sub\PP$, let $\KS\seq\Ksep$ be the maximal subextension unramified \emph{away} from $S$ and $\KSt\seq\KS$ be the maximal subextension \emph{tamely} ramfied over $S$.  Both extensions are Galois over $K$, so we write \defi{$\GKS$} and \defi{$\GKSt$} for their respective Galois groups.  There is a commutative diagram
$$
	\xymatrix{
		\GK\ar[rr]\ar[dr] & & \GKS\ar[dl] \\
		& \GKSt &
	}
$$
of quotients.

If $v\not\in S$, then the inertia subgroup $I(v)$ is contained in the kernel of the horizontal map.  In particular, every element of the coset $\Frob_v I(v)$ maps to the same element of $\GKS$ which we denote $\Frob_v\in\GKS$.  Moreover, the kernel of the horizontal map is generated by the conjugates of the $I(v)$ for $v\not\in S$, and it and the conjugates of the $P(v)$ for $v\in S$ generate the rest of the kernel of the other map from $\GK$.

Given a number field $E$, we write $\bbZ_E$ for the ring of integers.  Given a maximal prime $\l\sub\bbZ_E$, we write $\ell\in\bbZ$ for the rational prime it divides and $E_\l$ for the $\l$-adic completion of $E$.  We also write $\bar{E}_\l$ for an algebraic closure of $E_\l$, e.g., $\Qellbar$ is an algebraic closure of $\Qell$.

Given a smooth geometrically connected curve $U$ over $\Fq$, we write $\Ubar$ for the base change curve $U\times_{\Fq}\Fqbar$.  We fix (but do not name) a geometric generic point of $U$ and write $\piOne{U}$ and $\piOne{\Ubar}$ for the arithmetic and geometric \'etale fundamental groups of $U$ respectively.  Moreover, if $T$ is a second smooth geometrically connected curve over $\Fq$ and if $T\to U$ is a finite \'etale cover, then we implicitly suppose the geometric generic point of $T$ maps to that of $U$ and write $\piOne{T}\to\piOne{U}$ for the induced inclusion of fundamental groups.

Given a sheaf $\FF$ on $U$, we suppose that $\FF$ is constructible, and unless stated otherwise we suppose it has coefficients in $\Qellbar$.  We also write $H^i(\Ubar,\FF)$ and $H^i_c(\Ubar,\FF)$ for the \'etale cohomology groups of $\FF$.  For each integer $n$, we write $\FF(n)$ for the Tate twisted sheaf $\FF\otimes_{\Qellbar}\Qellbar(n)$ and recall that
$$
	\det(1-T\,\Frob_q\mid H^i(\Ubar,\FF(n)))
	=
	\det(1-q^nT\,\Frob_q\mid H^i(\Ubar,\FF)).
$$
A similar identity holds for cohomology with compact supports (cf.~\cite[Proof of 6.1.13]{SGA4.5}).  In particular, we have identities
$$
	\dim(H^i(\Ubar,\FF(n)))
	=
	\dim(H^i(\Ubar,\FF)),
	\quad
	\dim(H^i_c(\Ubar,\FF(n)))
	=
	\dim(H^i_c(\Ubar,\FF))
$$
for every $i$ and $n$.

The sheaf $\FF$ is lisse (or locally constant) on $U$ if and only it corresponds to a continuous representation $\piOne{U}\to\GL(V)$ from the \'etale fundamental group to a finite-dimensional $\Qellbar$ vector space $V$ (cf.~\cite[II.3.16.d]{Milne}).  In that case one has identifications
\begin{equation}\label{eqn:invariants-and-coinvariants}
	H^0(\Ubar,\FF)
	=
	V^{\piOne{\Ubar}}
	\mbox{ and }
	H^2_c(\Ubar,\FF(2))
	=
	V_{\piOne{\Ubar}}
\end{equation}
with the subspace of $\piOne{\Ubar}$-invariants and quotient space of $\piOne{\Ubar}$-coinvariants (see \cite[Exp.~6, 1.18.d]{SGA4.5}).


\section{$L$-functions}\label{sec:l-functions}

Let $\ell$ be a prime distinct from $p$ and $\Vl$ be a finite-dimensional $\Qellbar$-vector space.  Let $\Q\in\Fq[t]$ be monic and square free, $\CC\sub\PP$ be the subset consisting of $\infty$ and $v(\pi)$ for every prime factor $\pi$ of $\Q$, and $\SS\sub\PP$ be a finite subset of places.  Suppose $\rho$ is a homomorphism
$$
	\rho\colon \GKS\to\GL(\Vl)
$$
which is continuous with respect to the profinite topologies and which has \defi{trivial geometric invariants} (i.e., the subspace of $\bar{G}_{K,\SS}$-invariants of $\Vl$ vanishes).

In this section, we define, for each $v\in\PP$, the Euler factor $L(T,\rho_v)\in\Qellbar[T]$ of a local representation $\rho_v\colon G_v\to\GL(\Vl_v)$, as well as $L$-functions
$$
	L(T,\rho)
	=
	\prod_{v\in\PP}
	L(T^{d_v},\rho_v)^{-1},
	\quad
	\LC(T,\rho)
	=
	\prod_{v\not\in\CC}
	L(T^{d_v},\rho_v)^{-1}
$$
and cohomological factors
$$
	\PC{i}(T) = \det(1-T\,\Frob_q\mid H^i_c(\AonetBar[1/\Q], \ME{\rho}))
	\mbox{ for }i=1,2
$$
(see \S\ref{subsec:l-functions-of-rho}).  We also define numerical invariants of $\rho$, including $\dr(\rho)$, $\dropCee{\rho}$, and $\swan(\rho)$, and we show that $\deg(\LC(T,\rho))$ and $\deg(L(T,\rho))$ equal
\begin{equation}\label{eq:def-rC}
	\rC(\rho)
	=
	\dr(\rho)
	-
	\dropCee{\rho}
	+
	\swan(\rho)
	+
	(\deg(\Q)-1)\cdot\dim(\Vl)
\end{equation}
and
\begin{equation}\label{e:def-degL}
	\degL(\rho)
	=
	\dr(\rho)
	+
	\swan(\rho)
	-
	2\cdot\dim(\Vl)
\end{equation}
respectively (see \S\ref{subsec:numerical-invariants-of-rho}).  Finally, we define what it means for $\rho$ to be punctually $\iota$-pure of weight $w$ and use Deligne's Riemann hypothesis to derive some consequential properties of the $L$-functions (see \S\ref{subsec:purity} and \S\ref{subsec:semisimplicity-and-irreducibility}).  Using these definitions we then given the main result of this section, Theorem~\ref{thm:archimedean-bound}, in \S\ref{sec:proof-of-archimedean-bound}.


\subsection{Galois modules versus sheaves}

While most of this paper uses the language of global fields, it is useful to adopt a geometric language.  Certain readers will find the latter language more to their taste, and we acknowledge that many of our results may have a more appealing formulation in the language of geometry (and sheaves).  However, we felt the language of Galois representations over global (function) fields was accessible to a broader audience, so we tried to do `as much as possible' in that language.


\subsection{Middle extensions}

Let $U\seq X\seq\Ponet$ be dense Zariski open subsets and $j\colon U\to X$ be the inclusion, and let $\FF$ be a sheaf on $X$.  Suppose everything is defined over $\Fq$ so that the fiber $\FF_{\etabar}$ of $\FF$ over the geometric point $\etabar=\mathrm{Spec}(\Kbar)$ is a $G_K$-module.  If the restriction $j^*\FF$ is lisse on $U$, then the fiber $\FF_\etabar$ is even a module over the \'etale fundamental group $\piOneU$.  Conversely, for every continuous homomorphism $\piOneU\to\GL(\Vl)$, there is a lisse $\Qellbar$-sheaf on $U$ whose fiber over $\etabar$ is the $\piOneU$-module $\Vl$.

Given a sheaf $\GG$ sheaf on $U$ (e.g., $j^*\FF$), there are two functorial extensions of $\GG$ to a sheaf on all of $X$ we wish to consider, the extension by zero $j_!\GG$ and the direct image $j_*\GG$.  (One can also consider hybrid versions such as $j''_!j'_*\GG$ for inclusions $j'\colon U\to U'$ and $j''\colon U''\to X$, but we do not need such versions.)  As $\FF$ and $\GG$ vary we have
$$
	\Hom_{X}(j_!\GG,\FF) = \Hom_U(\GG,j^*\FF)
	\quad\mbox{and}\quad
	\Hom_{X}(\FF,j_*\GG) = \Hom_U(j^*\FF,\GG),
$$
that is, the functors $j_!,j_*$ are adjoints of $j^*$ (cf.~\cite[II.3.14.a]{Milne}).  In particular, the adjoints of the identity $j^*\FF\to j^*\FF$ are maps of the form $j_!j^*\FF\to\FF$ and $\FF\to j_*j^*\FF$ which we call \defi{adjunction maps}.  We say that $\FF$ is \defi{supported on $U$} iff the first map is an isomorphism, and $\FF$ is a \defi{middle extension} iff the second map is an isomorphism for \emph{every} $j$.

\begin{lemma}\label{lem:lisse-vs-middle}\ 

\begin{enum}
\item\label{part:single-U} If $j^*\FF$ is lisse and $\FF\to j_*j^*\FF$ is an isomorphism, then $\FF$ is a middle extension.
\item\label{part:lisse-to-middle} If $\GG$ is lisse, then $j_*\GG$ is a middle extension.
\end{enum}
\end{lemma}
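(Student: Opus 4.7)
The plan is to reduce part~(\ref{part:single-U}) to part~(\ref{part:lisse-to-middle}) and then to prove the latter by a local calculation of stalks. For (\ref{part:single-U}), the hypothesis $\FF \cong j_*j^*\FF$ exhibits $\FF$ as $j_*\GG$ with $\GG := j^*\FF$ lisse on $U$; so (\ref{part:single-U}) is immediate once (\ref{part:lisse-to-middle}) is proved.

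For (\ref{part:lisse-to-middle}), given any dense open $j':U'\to X$, form $V = U\cap U'$ with inclusions $k:V\to U$, $k':V\to U'$, and $j_V = j\circ k = j'\circ k':V\to X$. The strategy is to prove the auxiliary claim: \emph{for $\GG$ lisse on $U$ and any dense open immersion $k:V\to U$ inside $\Ponet$, the adjunction $\GG \to k_*k^*\GG$ is an isomorphism.} This is a pointwise check: on $V$ the map is the identity, while at a closed point $u\in U\ssm V$ the stalk of $k_*k^*\GG$ is $(k^*\GG)^{I_u}$ with $I_u\seq\piOne{V}$ the inertia at $u$; because $\GG$ is lisse on $U$, the subgroup $I_u$ lies in the kernel of $\piOne{V}\onto\piOne{U}$, hence acts trivially on the stalk, so the invariants agree with the full stalk $\GG_{\bar u}$.

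Applying the claim twice will close the argument. First, $\FF = j_*\GG = j_*(k_*k^*\GG) = (j_V)_*(\GG|_V)$. Second, the restriction $(j')^*\FF$ on $U'$ equals $\GG|_V$ on $V$ and has stalk $(j_*\GG)_{\bar x} = \GG^{I_x}$ at $x\in U'\ssm V = U'\ssm U$; the same inertia-invariants identity (now with $I_x\seq\piOne{V}$ relative to $U'$, again killed by $\piOne{V}\onto\piOne{U}$) yields $(j')^*\FF = k'_*(\GG|_V)$, and pushing forward gives $j'_*(j')^*\FF = j'_*k'_*(\GG|_V) = (j_V)_*(\GG|_V) = \FF$. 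A final bookkeeping step will confirm that this chain of identifications is the canonical adjunction map $\FF\to j'_*(j')^*\FF$ rather than merely an abstract isomorphism.

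The main obstacle I foresee is the stalk computation at the ramification points: verifying that the inertia at $u\in U\ssm V$ viewed inside $\piOne{V}$ really does lie in the kernel of the surjection to $\piOne{U}$ (which uses that for open immersions of smooth curves the inertia at a point removed is generated by elements that die in the larger group), and then tracking that each isomorphism produced is induced by the unit of the appropriate adjunction. Once this naturality is in hand, the rest of the proof is formal manipulation of the Cartesian square.
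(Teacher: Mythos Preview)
Your overall strategy is correct and close to the paper's: both proofs rest on (a) the fact that for $\GG$ lisse on $U$ the unit $\GG\to k_*k^*\GG$ is an isomorphism for any dense open $k:V\hookrightarrow U$, and (b) the base-change identity $j'^*j_*\GG \cong k'_*(\GG|_V)$ for the Cartesian square on $V=U\cap U'$. The paper organizes these into a single commutative square of adjunction maps and reads off (\ref{part:single-U}) and (\ref{part:lisse-to-middle}) simultaneously; you instead reduce (\ref{part:single-U}) to (\ref{part:lisse-to-middle}) first and then argue (\ref{part:lisse-to-middle}) stalkwise. The reduction is a pleasant simplification the paper does not make explicit.

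There is, however, a slip in your second stalk computation. At a point $x\in U'\setminus U$ you claim that $I_x\seq\piOne{V}$ is ``again killed by $\piOne{V}\onto\piOne{U}$''. This is false: $x$ is \emph{not} in $U$, so its inertia need not die in $\piOne{U}$, and $\GG$ may well be ramified at $x$. Your auxiliary claim (inertia trivial, hence invariants equal the full stalk) therefore does not apply here, since $(j')^*\FF$ need not be lisse on $U'$. What is true, and what actually suffices, is simply that both stalks compute the \emph{same} inertia invariants:
\[
((j')^*\FF)_{\bar x}=(j_*\GG)_{\bar x}=\GG_{\etabar}^{I_x}
\quad\text{and}\quad
(k'_*(\GG|_V))_{\bar x}=(\GG|_V)_{\etabar}^{I_x}=\GG_{\etabar}^{I_x},
\]
each by the standard formula for the stalk of a pushforward along an open immersion at a missing point. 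The match is not because $I_x$ acts trivially, but because both sides take the same invariants. With this correction the remainder of your argument, including the identification $j'_*(j')^*\FF=(j_V)_*(\GG|_V)=\FF$ and the naturality bookkeeping, goes through unchanged.
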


\begin{proof}
Let $U'\seq X$ be a dense Zariski open and $U''=U\cap U'$.  Consider the commutative diagram
$$
	\xymatrix{
	U''\ar[r]^{i'}\ar[d]_{i} & U'\ar[d]^{j'} \\
	U\ar[r]_j & X \\
	}
$$
of inclusions and the corresponding commutative diagram
\begin{equation}\label{eq:adj-com}
	\begin{array}{c}
	\xymatrix{
	\FF\ar[d]\ar[r] & j_*j^*\FF\ar[d] \\
	j'_*j^{\prime *}\FF\ar[r] & (ij)_*(ij)^*\FF = (i'j')_*(i'j')^*\FF
	}
	\end{array}
\end{equation}
of adjunction maps.

Suppose $\GG$ is lisse.  On one hand, this implies the map $\GG\to i_*i^*\GG$ is an isomorphism, so the right map of \eqref{eq:adj-com} is an isomorphism when $j^*\FF$ is lisse.  In particular, if the top map of \eqref{eq:adj-com} is also an isomorphism, then the left map must also be an isomorphism, for \emph{every} $j$, hence \eqref{part:single-U} holds.  On the other hand, the direct image map $j_*\GG\to j_*i_*i^*\GG$ is also an isomorphism.  It even coincides with the adjunction map $j_*\GG\to j'_*j^{\prime *}j_*\GG$ via the functorial identities $j_*i_*i^*\GG = j'_*i'_*i^*\GG = j'_*j^{\prime *}j_*\GG$, so \eqref{part:lisse-to-middle} holds.	\end{proof}

The following proposition shows that there is a canonical middle extension sheaf on $\Ponet$ we can associate to $\rho$.  We denote it and its restriction to $X$ by $\ME{\rho}$.

\begin{prop}\label{prop:assoc-me}
There is a middle extension $\FF$ with $\FF_\etabar=\Vl$ as $G_K$-modules, and it is unique up to isomorphism.
\end{prop}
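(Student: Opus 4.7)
The plan is to build $\FF$ as the direct image of a lisse sheaf on the complement of $\SS$ in $\Ponet$, and then to use the middle extension property to pin down uniqueness.

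For existence, let $U = \Ponet \ssm \SS$ and $j\colon U \to \Ponet$ be the inclusion. Because $\rho$ factors through $\GKS$ and because $\piOne{U}$ is (canonically) the quotient of $\GK$ classifying extensions unramified away from $\SS$, the representation $\rho$ descends to a continuous homomorphism $\piOne{U} \to \GL(\Vl)$. This corresponds to a lisse $\Qellbar$-sheaf $\GG$ on $U$ (cf.\ \cite[II.3.16.d]{Milne}), whose stalk at the geometric generic point $\etabar$ is $\Vl$ with its prescribed $\GK$-action factoring through $\piOne{U}$. Set $\FF := j_*\GG$. By Lemma~\ref{lem:lisse-vs-middle}\eqref{part:lisse-to-middle}, $\FF$ is a middle extension, and since $j$ is an open immersion its generic stalk agrees with that of $\GG$, giving $\FF_\etabar = \Vl$ as $\GK$-modules.

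For uniqueness, suppose $\FF'$ is any middle extension with $\FF'_\etabar \cong \Vl$ as $\GK$-modules. The sheaf $\FF'$ is constructible, so there is a dense Zariski open $U' \seq \Ponet$ on which $\FF'$ is lisse. Shrinking, set $U'' = U \cap U'$ and let $j''\colon U'' \to \Ponet$ denote the inclusion. On $U''$ both $\FF$ and $\FF'$ are lisse, so each is determined by its associated $\piOne{U''}$-module structure on the generic stalk. But both generic stalks are $\Vl$ with the same $\GK$-action, hence with the same $\piOne{U''}$-action via the surjection $\GK \onto \piOne{U''}$; consequently $j^{\prime\prime *}\FF \cong j^{\prime\prime *}\FF'$ as lisse sheaves on $U''$. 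Applying $j''_*$ and then invoking the middle extension property of both $\FF$ and $\FF'$ (which gives $\FF \cong j''_* j^{\prime\prime *}\FF$ and likewise for $\FF'$) yields an isomorphism $\FF \cong \FF'$.

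I expect the main obstacle to be bookkeeping around the passage between the $\GK$-module, $\piOne{U}$-module, and sheaf points of view, in particular checking that the lisse sheaf associated to $\rho$ really has generic stalk isomorphic to $\Vl$ as a $\GK$-module (not just up to semisimplification or twist) and that the isomorphism on $U''$ is canonical enough to pull back through $j''_*$. These are routine once one fixes the geometric generic point and recalls that $\piOne{U} = \GKS$ when $U = \Ponet \ssm \SS$, but they warrant explicit mention.
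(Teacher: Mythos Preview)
Your proof is correct and follows essentially the same approach as the paper's: construct $\FF$ as the direct image of the lisse sheaf on $U=\Ponet\ssm\SS$ corresponding to $\rho$ via $\piOne{U}\cong\GKS$, then for uniqueness restrict two candidate middle extensions to a common open where both are lisse, identify them there via their $\piOne{U''}$-modules, and push forward using the middle extension property. The only cosmetic difference is that the paper phrases existence by first shrinking to a sufficiently small $U'$ on which the quotient $\piOne{U'}\onto\GKS$ is available, whereas you invoke $\piOne{\Ponet\ssm\SS}=\GKS$ directly; your concern about ``canonicity'' of the isomorphism under $j''_*$ is unnecessary, since $j''_*$ is a functor and therefore carries any isomorphism to an isomorphism.
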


\begin{proof}
There are quotients $G_K\onto\piOneU$ and $G_K\onto\GKS$, so $\FF_\etabar$ and $\Vl$ are $G_K$-modules.  Moreoever, if $U'\seq U$ is a sufficiently small dense Zariski open, then there exist a quotient $\piOneUpBar\onto\GKS$ and a unique lisse sheaf $\GG$ on $U'$ with $\GG_\etabar=\Vl$ as $\piOneUpBar$-modules.  Its direct image $\ME{\rho}$ on $X$ is a middle extension by Lemma~\ref{lem:lisse-vs-middle}.\ref{part:lisse-to-middle}, and $\ME{\rho}_\etabar=\GG_\etabar=\Vl$ as $\GK$-modules by construction.

Let $\FF$ be any middle extension with $\FF_\etabar=\Vl$ as $\GK$-modules; we must show it isomorphic to $\ME{\rho}$.  Up to shrinking $U$, we may suppose that $\ME{\rho}$ and $\FF$ are lisse on $U$ and thus $\ME{\rho}_\etabar,\FF_\etabar$ are $\piOneU$-modules.  Then the canonical bijection $\ME{\rho}_\etabar\to\FF_\etabar$ extends uniquely to an isomorphism $j^*\ME{\rho}\to j^*\FF$ of lisse sheaves.  Moreover, the direct image $j_*j^*\ME{\rho}\to j_*j^*\FF$ and adjunction maps $\ME{\rho}\to j_*j^*\ME{\rho}$ and $\FF\to j_*j^*\FF$ are all isomorphisms, so there exists an isomorphism $\ME{\rho}\to\FF$ as claimed.
\end{proof}

\begin{cor}\label{cor:assoc-me}
Let $\SS'\sub\PP$ be a finite subset containing $\SS$ and $\rho'\colon G_{K,\SS'}\to\GL(\Vl)$ be the composition of $\rho$ with the natural quotient $G_{K,\SS'}\onto\GKS$.  Then $\ME{\rho}$ and $\ME{\rho'}$ are isomorphic.
\end{cor}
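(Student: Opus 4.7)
The strategy is to invoke the uniqueness clause of Proposition~\ref{prop:assoc-me} applied to the $G_K$-module $\Vl$.

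First I would check that $\rho'$ satisfies the hypotheses standing in force throughout \S\ref{sec:l-functions}, so that the notation $\ME{\rho'}$ even makes sense. Continuity of $\rho'$ is immediate from continuity of $\rho$ and of the quotient map $G_{K,\SS'}\onto G_{K,\SS}$. For the trivial-geometric-invariants condition, note that the commutative square
\[
\begin{array}{c}
\xymatrix{
\bar G_{K,\SS'}\ar[r]\ar@{->>}[d] & G_{K,\SS'}\ar@{->>}[d] \\
\bar G_{K,\SS}\ar[r] & G_{K,\SS}
}
\end{array}
\]
has surjective vertical arrows, because the top row of the short exact sequence in \S\ref{sec:framework} is compatible with passing between the two levels of ramification. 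Hence $\rho'(\bar G_{K,\SS'})=\rho(\bar G_{K,\SS})$, so $\Vl^{\bar G_{K,\SS'}}=\Vl^{\bar G_{K,\SS}}=0$ as required.

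Second, I would observe that $\rho$ and $\rho'$ induce the \emph{same} $G_K$-module structure on $\Vl$. Indeed, the $G_K$-action attached to $\rho$ is the composition $G_K\onto G_{K,\SS}\to\GL(\Vl)$, while the one attached to $\rho'$ is $G_K\onto G_{K,\SS'}\onto G_{K,\SS}\to\GL(\Vl)$, and these agree because the quotient $G_K\onto G_{K,\SS}$ factors through $G_K\onto G_{K,\SS'}$.

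Third, by Proposition~\ref{prop:assoc-me} applied to $\rho'$, the sheaf $\ME{\rho'}$ is a middle extension on $\Ponet$ whose geometric generic fiber equals $\Vl$ as a $G_K$-module. But $\ME{\rho}$ is another such middle extension, by the first paragraph plus the identification of the two $G_K$-module structures. The uniqueness clause of Proposition~\ref{prop:assoc-me} therefore yields an isomorphism $\ME{\rho}\cong\ME{\rho'}$. There is no genuine obstacle here; the only thing one must be careful about is verifying that the hypotheses guaranteeing the uniqueness statement transfer from $\rho$ to $\rho'$, which is what the first step accomplishes.
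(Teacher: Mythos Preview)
Your argument is correct and follows essentially the same route as the paper's proof: identify the two $G_K$-module structures on $\Vl$ via the factorization $G_K\onto G_{K,\SS'}\onto G_{K,\SS}$, then invoke the uniqueness clause of Proposition~\ref{prop:assoc-me}. Your first step, verifying that $\rho'$ inherits continuity and trivial geometric invariants, is a careful check that the paper silently takes for granted; it is not strictly needed for Proposition~\ref{prop:assoc-me} itself (whose proof does not use the invariants hypothesis), but it does confirm that $\rho'$ fits the standing framework of \S\ref{sec:l-functions}.
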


\begin{proof}
The quotient $\GK\to\GKS$ factors as $\GK\onto G_{K,\SS'}\onto\GKS$, and $\ME{\rho'}_\etabar=\Vl=\ME{\rho}$ as $\GK$-modules.  Since $\ME{\rho},\ME{\rho'}$ are both middle extensions, Proposition~\ref{prop:assoc-me} implies they are isomorphic.
\end{proof}


\subsection{Euler characteristics}

Let $\GG$ be a sheaf on $U$.  Then there is an exact sequence
$$
	0
	\longto j_!\GG
	\longto j_*\GG
	\longto \SS_\GG
	\longto 0
$$

\smallskip\noindent
where $\SS_\GG$ is a skyscraper sheaf supported on \defi{$Z=\Ponet\ssm U$}, and the corresponding long exact sequence of (\'etale) cohomology (over $\Fqbar$) can be written
\begin{equation}\label{eq:ff-ex-seq}
	\cdots
	\to H^n(\Zbar,\SS_\GG)
	\to H^{n+1}_c(\Ubar,\GG)
	\to H^{n+1}(\PonetBar,j_*\GG)
	\to \cdots
\end{equation}
where $n\in\bbZ$.

\begin{lemma}
There exist exact sequences
\begin{equation}\label{eq:ff-ex-seq-head}
	0
	\to H^0_c(\Ubar,\GG)
	\to H^0(\PonetBar,j_*\GG)
	\to H^0(\Zbar,\SS_\GG)
	\to H^1_c(\Ubar,\GG)
	\to H^1(\PonetBar,j_*\GG)
	\to 0
\end{equation}
and
\begin{equation}\label{eq:ff-ex-seq-piece}
	0
	\longto H^2_c(\Ubar,\GG)
	\longto H^2(\PonetBar,j_*\GG)
	\longto 0
\end{equation}
and all other cohomology groups in \eqref{eq:ff-ex-seq} vanish.
\end{lemma}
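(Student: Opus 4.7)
The plan is to identify the long exact sequence \eqref{eq:ff-ex-seq} with the cohomology long exact sequence attached to the short exact sequence $0 \to j_!\GG \to j_*\GG \to \SS_\GG \to 0$, and then to read off the two displayed pieces using two elementary vanishing facts. The key identification is $H^n(\PonetBar, j_!\GG) = H^n_c(\Ubar, \GG)$, which is formal from the definition of compactly-supported cohomology for the open immersion $j\colon U \to \Ponet$; combined with $H^n(\PonetBar, \SS_\GG) = H^n(\Zbar, \SS_\GG)$ (since $\SS_\GG$ is supported on the closed subscheme $Z$), this realizes \eqref{eq:ff-ex-seq} as the desired long exact sequence.

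Next I would invoke two vanishing principles. First, $\SS_\GG$ is a skyscraper sheaf on the zero-dimensional scheme $\Zbar$, so $H^n(\Zbar, \SS_\GG) = 0$ for every $n \geq 1$. Second, $\PonetBar$ is a proper smooth curve over the algebraically closed field $\Fqbar$ and $\Ubar$ is affine of dimension one, so by the cohomological dimension of curves one has $H^n(\PonetBar, \cdot) = 0$ for constructible coefficients when $n \geq 3$, and likewise $H^n_c(\Ubar, \cdot) = 0$ for $n \geq 3$; of course all groups in \eqref{eq:ff-ex-seq} vanish for $n < 0$ as well.

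Granting these vanishings, the rest is bookkeeping. The degree-$0$ through degree-$1$ segment of the long exact sequence is bounded on the right by $H^1(\Zbar, \SS_\GG) = 0$ and on the left by $H^{-1}_c(\Ubar, \GG) = 0$, producing \eqref{eq:ff-ex-seq-head} verbatim. The degree-$2$ segment is flanked by $H^1(\Zbar, \SS_\GG) = 0$ on the left and $H^2(\Zbar, \SS_\GG) = 0$ on the right, yielding the isomorphism \eqref{eq:ff-ex-seq-piece}. Every other group appearing in \eqref{eq:ff-ex-seq} is of cohomological degree $\geq 3$ with a constructible source sheaf on $\PonetBar$ or $\Ubar$, hence vanishes. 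There is no serious obstacle here — the only point worth flagging is that $j_*\GG$ is constructible on $\PonetBar$ (since $\GG$ is), so the cohomological-dimension bound genuinely applies to it.
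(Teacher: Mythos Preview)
Your proof is correct and follows essentially the same approach as the paper: both use the long exact sequence arising from $0\to j_!\GG\to j_*\GG\to \SS_\GG\to 0$, then invoke the vanishing of $H^n(\Zbar,\SS_\GG)$ for $n\neq 0$ (since $\dim Z=0$) and the vanishing of curve cohomology in degrees outside $\{0,1,2\}$ to isolate the two displayed pieces. Your write-up is somewhat more explicit about the identifications $H^n(\PonetBar,j_!\GG)=H^n_c(\Ubar,\GG)$ and $H^n(\PonetBar,\SS_\GG)=H^n(\Zbar,\SS_\GG)$, which the paper had already folded into the setup preceding the lemma, but the content is the same.
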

  
\begin{proof}
The first term of \eqref{eq:ff-ex-seq} vanishes unless $n=0$ since $\dim(Z)=0$, and the other two terms vanish for $n+1\neq 0,1,2$ since $U$ and $\Ponet$ are curves.  Therefore \eqref{eq:ff-ex-seq} breaks into the pieces \eqref{eq:ff-ex-seq-head} and \eqref{eq:ff-ex-seq-piece}, and all other terms vanish.
\end{proof}

If $U=\Ponet$, then the middle term of \eqref{eq:ff-ex-seq-head} vanishes, and otherwise the first term vanishes since any curve $U\subsetneq\Ponet$ is affine.  Either way, the Euler characteristics
$$
	\chi(\PonetBar,j_*\GG)
	=
	\sum_{n=0}^2 (-1)^n\dim(H^n(\PonetBar,j_*\GG)),
	\quad
	\chi_c(\Ubar,j_*\GG)
	=
	\sum_{n=0}^2 (-1)^n\dim(H^n_c(\Ubar,j_*\GG)),
$$
and $\chi(\Zbar,\SS_\GG)=\dim(H^0(\Zbar,\SS_\GG))$
satisfy
\begin{equation}
	\chi(\PonetBar,j_*\GG)
	-
	\chi_c(\Ubar,\GG)
	=
	\chi(\Zbar,\SS_\GG)
	=
	\sum_{z\in Z} \deg(z)\cdot\dim(\GG^{I(z)}_\etabar).
\end{equation}


\subsection{$L$-functions of $\rho$}\label{subsec:l-functions-of-rho}

The decomposition group $D(v)$ stabilizes the subspace $\Wl=\Wll$, and $I(v)$ acts trivially on it, so there is a representation
$
	\rholv\colon\Gv\to\GL(\Wl).
$
We identify the subspace $V_v\seq V$ and the representation $\rholv$ with a  geometric fiber of $\ME{\rho}$ (cf.~\cite[3.1.16]{Milne}).  The \defi{Euler factor} of $\rho$ at $v$ is given by
$$
	\phivl{T} = \det\left(1 - T\rholv(\Frob_v)\mid\Wl\right)\in\Qellbar[T],
$$
and its degree equals the dimension of $V_v$.

The \defi{partial} and \defi{complete $L$-functions} of $\rho$ are the formal power series in $\Qellbar[T]$ with respective Euler products
\begin{equation}\label{eq:inc-l-fcn}
	\LC(T,\rho)
	=
	\prod_{v\not\in\CC}\phivl{T^{d_v}}^{-1}
	\quad\mbox{and}\quad
	L(T,\rho)
	=
	\prod_{v\in\PP}\phivl{T^{d_v}}^{-1}.
\end{equation}
If $U=\Aonet[1/\Q]$, then they equal the $L$-functions of the sheaves $j_!j^*\ME{\rho}$ and $\ME{\rho}$, and the ratio
$$
	\MC(T,\rho) = L(T,\rho)/\LC(T,\rho)
	=
	\prod_{v\in\CC} L(T^{d_v},\rho_v)^{-1}
$$
is the $L$-function of the restriction of $\ME{\rho}$ to $Z$ and hence is the reciprocal of a polynomial.

The \'etale cohomology groups of these sheaves are finite-dimensional $\Qellbar$-vector spaces, and $\Frob_q$ acts $\Qellbar$-linearly on them.  In particular, we have characteristic polynomials
$$
	\PC{n}(T) = \det(1-T\,\Frob_q\mid H^n_c(\AonetBar[1/\Q], \ME{\rho}))
$$
which are trivial for $i\neq 1,2$ since $\Udee{\Q}$ is an affine curve, and they satisfy
\begin{equation}\label{eq:L-fun:frac:partial}
	\LC(T,\rho)
	=
	{\PC{1}(T,\rho)}
	/
	{\PC{2}(T,\rho)}.
\end{equation}
Similarly, the characteristic polynomials
\begin{equation}\label{eq:def-P_n(T)}
	P_n(T,\rho) = \det(1-T\,\Frob_q\mid H^n(\PonetBar,\ME{\rho})).
\end{equation}
are trivial for $i\neq 0,1,2$ since $\Ponet$ is a complete curve, and they otherwise satisfy
\begin{equation}\label{eq:L-fun:frac}
	L(T,\rho)
	=
	\frac{P_1(T,\rho)}{P_0(T,\rho)P_2(T,\rho)}.
\end{equation}
Moreover, the degrees are related to the respective Euler characteristics via the identities
$$
	\deg(\LC(T,\rho))=-\chi_c(\Ubar,\ME{\rho})
	\mbox{\ \ and\ \ }
	\deg(L(T,\rho))=-\chi(\PonetBar,\ME{\rho}).
$$


\subsection{Numerical invariants of $\rho$}\label{subsec:numerical-invariants-of-rho}

Let 
$
	\rank_v(\rho) = \deg(L(T,\rho_v))
$
and
$
	\dr_v(\rho)=\dim(\Vl)-\rank_v(\rho),
$
and let $\swan_v(\rho)$ be the Swan conductor of $\Vl$ as an $\Qellbar[I(v)]$-module (see \cite[1.6]{Katz:GKM}).  We call these and
$$
	\dropCee{\rho}
	=
	\sum_{v\in\CC} d_v\cdot\dr_v(\rho).
$$
the \defi{local invariants} of $\rho$ and
$$
	\rank(\rho)
	=
	\dim(\Vl),
	\quad
	\dr(\rho)
	=
	\sum_{v\in\PP} d_v\cdot\dr_v(\rho),
	\quad
	\swan(\rho) = \sum_{v\in\PP} d_v\cdot\swan_v(\rho)
$$
are the \defi{global invariants}.  The latter remain unchanged if we replace $\Fq$ by a finite extension.

\begin{prop}\label{prop:chi-Ponet}
$$
	\chi(\PonetBar,\ME{\rho})
	=
	2\cdot\rank(\rho)
	-
	(
	\dr(\rho)
	+
	\swan(\rho)
	)
$$
\end{prop}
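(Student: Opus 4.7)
The plan is to combine the Grothendieck--Ogg--Shafarevich formula with the middle-extension property of $\ME\rho$.  Let $U\seq\Ponet$ be the largest open on which $\ME\rho$ is lisse, let $j\colon U\into\Ponet$ be the inclusion, and put $\FF=j^*\ME\rho$ and $Z=\Ponet\ssm U$.  Because $\ME\rho$ is a middle extension, one has $\ME\rho=j_*\FF$, and the identity
$$
\chi(\PonetBar,\ME\rho)-\chi_c(\Ubar,\FF)=\sum_{z\in Z}\deg(z)\cdot\dim(V^{I(z)})
$$
has already been recorded in the excerpt.  It therefore suffices to compute the two terms on the left separately.

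First I would apply Grothendieck--Ogg--Shafarevich to $\FF$, which is lisse of rank $\rank(\rho)$ on $\Ubar$.  Writing $s=\sum_{z\in Z}d_z$ for the number of geometric punctures and using $\chi_c(\Ubar,\Qellbar)=2-s$, this gives
$$
\chi_c(\Ubar,\FF)=(2-s)\cdot\rank(\rho)-\sum_{\bar z\in Z(\Fqbar)}\swan_{\bar z}(\rho).
$$
A closed point $z\in Z$ of degree $d_z$ contributes $d_z$ Galois-conjugate geometric points, each with the same local Swan conductor $\swan_z(\rho)$, so the sum equals $\sum_{z\in Z}d_z\cdot\swan_z(\rho)=\swan(\rho)$, the last equality using that the Swan conductor vanishes at every place outside $Z$.

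Next I would expand the skyscraper term in the same way.  Since $\dim(V^{I(z)})=\rank(\rho)-\dr_z(\rho)$ and since $\dr_z(\rho)$ vanishes off $Z$,
$$
\sum_{z\in Z}\deg(z)\cdot\dim(V^{I(z)})=s\cdot\rank(\rho)-\dr(\rho).
$$
Adding the two contributions, the terms proportional to $s$ cancel and one arrives at the desired formula $\chi(\PonetBar,\ME\rho)=2\rank(\rho)-\dr(\rho)-\swan(\rho)$.  No serious obstacle is expected: the only bookkeeping subtlety is relating the closed-point invariants $d_v\cdot(\cdot)_v$ appearing in the definitions of $\dr(\rho)$ and $\swan(\rho)$ to the sums over geometric points on the base change to $\Fqbar$, and this is immediate from Galois invariance of the local quantities.
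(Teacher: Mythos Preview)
Your proposal is correct and follows essentially the same approach as the paper: both use the identity $\chi(\PonetBar,\ME\rho)=\chi_c(\Ubar,\FF)+\chi(\Zbar,\ME\rho)$ already established, compute $\chi_c(\Ubar,\FF)$ via the Euler--Poincar\'e (Grothendieck--Ogg--Shafarevich) formula, compute the skyscraper term as $\deg(Z)\cdot\rank(\rho)-\dr(\rho)$, and add. The only cosmetic differences are that the paper writes $\deg(Z)$ where you write $s$, and it cites Raynaud for the formula rather than unpacking the passage from closed points to geometric points.
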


\begin{proof}
Suppose $\ME{\rho}$ is lisse on $U$ since $\ME{\rho}$ is a middle extension.  On one hand, the Euler-Poincare formula, as proved by Raynaud \cite[Th.~1]{Raynaud}, asserts
\begin{align*}
	\chi_c(\Ubar,\ME{\rho})
	& =
	\rank(\rho)
	\cdot
	(2-\deg(Z))
	-
	\swan(\rho).
\end{align*}
On the other hand, a short calculation shows
$$
	\chi(\Zbar,\ME{\rho})
	=
	\deg(Z)\cdot\rank(\rho)
	-
	\dr(\rho)
$$
since $\ME{\rho}$ is also a middle extension, and thus
$$
	\chi(\PonetBar,\ME{\rho})
	=
	\chi_c(\Ubar,\ME{\rho})
	+
	\chi(\Zbar,\ME{\rho})
	=
	2\cdot\rank(\rho) - \dr(\rho) - \swan(\rho)
$$
as claimed.
\end{proof}

\begin{cor}\label{cor:chi_c}
If $\ME{\rho}$ is supported on $\Aonet[1/\Q]$, then $\chi_c(\AonetBar[1/\Q],\ME{\rho})=\chi(\PonetBar,\ME{\rho})$, and
\begin{equation}\label{eq:chi-c-ME}
	\chi_c(\AonetBar[1/\Q],\ME{\rho})
	=
	(1-\deg(\Q))\cdot\rank(\rho)
	-
	(
	\dr(\rho)
	-
	\dropCee{\rho}
	+
	\swan(\rho)
	)
\end{equation}
in general.
\end{cor}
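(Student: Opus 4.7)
The plan is to derive both assertions simultaneously from the adjunction exact sequence associated to the open immersion $j\colon U = \Aonet[1/\Q] \hookrightarrow \Ponet$, noting that the complement $Z = \Ponet\ssm U$ is exactly $\CC$. Since $\ME{\rho}$ is a middle extension (Proposition~\ref{prop:assoc-me}), the adjunction $\ME{\rho} \to j_*j^*\ME{\rho}$ is an isomorphism, and combining this with $j_!j^*\ME{\rho} \to \ME{\rho}$ produces a short exact sequence
\[
0 \longto j_!j^*\ME{\rho} \longto \ME{\rho} \longto \SS \longto 0,
\]
where $\SS$ is a skyscraper supported on $\CC$. The middle extension property further identifies the stalks: $\SS_{\bar z} = \Vl^{I(z)}$ for each $z\in\CC$. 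Throughout I will exploit properness of $\Ponet$ via the identification $H^i(\PonetBar, j_!j^*\ME{\rho}) = H^i_c(\AonetBar[1/\Q], \ME{\rho})$.

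For the first assertion, the hypothesis that $\ME{\rho}$ is supported on $U$ means $j_!j^*\ME{\rho}\to\ME{\rho}$ is already an isomorphism, so $\SS = 0$, and the identification above directly yields $\chi(\PonetBar,\ME{\rho}) = \chi_c(\AonetBar[1/\Q],\ME{\rho})$.

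For the general formula, I would take the long exact sequence of cohomology on $\PonetBar$ to obtain
\[
\chi_c(\AonetBar[1/\Q],\ME{\rho}) = \chi(\PonetBar,\ME{\rho}) - \chi(\PonetBar,\SS).
\]
The first term on the right is supplied by Proposition~\ref{prop:chi-Ponet}. For the skyscraper contribution I would simply sum fibre dimensions, using the identification $\dim(\Vl^{I(z)}) = \rank(\rho) - \dr_z(\rho)$ together with $\sum_{z\in\CC} d_z = 1 + \deg(\Q)$ (contribution $1$ from $\infty$ and $\deg(\Q)$ from the primes dividing the squarefree $\Q$):
\[
\chi(\PonetBar,\SS) = \sum_{z\in\CC} d_z \dim(\Vl^{I(z)}) = (1+\deg(\Q))\rank(\rho) - \dropCee{\rho}.
\]
Substituting and collecting the $\rank(\rho)$, $\dr(\rho)$, $\dropCee{\rho}$, and $\swan(\rho)$ terms then yields the stated formula.

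There is no real obstacle; the argument is essentially bookkeeping. The one structural input is the middle extension property, which both identifies $\SS$ as the skyscraper in the adjunction sequence and pins down its stalks as $\Vl^{I(z)}$, thereby producing the $\dropCee{\rho}$ correction that distinguishes the affine from the projective Euler characteristic.
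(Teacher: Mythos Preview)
Your proof is correct and follows essentially the same approach as the paper: both use the relation $\chi_c(\AonetBar[1/\Q],\ME{\rho})=\chi(\PonetBar,\ME{\rho})-\chi(\Zbar,\ME{\rho})$ coming from the adjunction sequence, compute $\chi(\Zbar,\ME{\rho})=(1+\deg(\Q))\rank(\rho)-\dropCee{\rho}$ from the middle-extension stalks, and invoke Proposition~\ref{prop:chi-Ponet}. The only cosmetic difference is that the paper deduces the first assertion by specializing the general formula (observing that $\dropCee{\rho}=(1+\deg(\Q))\rank(\rho)$ when $\ME{\rho}$ is supported on $\Aonet[1/\Q]$), whereas you obtain it directly from the vanishing of the skyscraper.
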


\begin{proof}
If $\ME{\rho}$ is supported on $\Aonet[1/\Q]$, then $\dropCee{\rho}=\deg(\CC)\cdot\rank(\rho)$ and $\deg(\CC)=1+\deg(\Q)$, so it suffices to show \eqref{eq:chi-c-ME} holds in general.  There is a canonical bijection $Z=\CC$ when $U=\Aonet[1/\Q]$, so the desired identity follows easily from the identities
$$
	\chi_c(\AonetBar[1/\Q],\ME{\rho})
	=
	\chi(\PonetBar,\ME{\rho}) - \chi(\Zbar,\ME{\rho})
$$
and
$$
	\chi(\Zbar,\ME{\rho})
	=
	\deg(\CC)\cdot\rank(\rho) - \dropCee{\rho}
$$
and from the identity in Proposition~\ref{prop:chi-Ponet}.
\end{proof}


\subsection{Purity}\label{subsec:purity}

Let $\iota\colon\Qbar\to\bbC$ and $\Qbar\to\Qellbar$ be field embeddings.  A non-zero polynomial $\psi\in\Qellbar[T]$ is \defi{$\iota$-pure of $q$-weight $w$} iff every zero $\alpha\in\Qellbar$ is a $q$-Weil number of weight $w$, that is, lies in $\Qbar$ and satisfies
$$
	|\iota(\alpha)|^2=(1/q)^w.
$$
It is \defi{pure of $q$-weight $w$} iff it is $\iota$-pure of $q$-weight $w$ for every $\iota$, and it is \defi{($\iota$-)mixed of $q$-weights $\leq w$} iff it is a product of ($\iota$-)pure polynomials each of $q$-weights $\leq w$.  Our terminology is unconventional in that we incorporate $q$, however, we need to make $q$ explicit since we have not said where $\psi$ comes from.

\begin{lemma}\label{lem:trace-bound}
If $M$ is an invertible $d\times d$ matrix with coefficients in $\Qellbar$ and if $\det(1-M\,T)$ is mixed of $q$-weights $\leq w$, then $\Tr(M)\in\Qbar$ and $|\iota(\Tr(M))|^2\leq dq^{w}$ for every field embedding $\iota\colon\Qbar\to\bbC$.
\end{lemma}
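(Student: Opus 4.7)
The plan is to reduce everything to a statement about the eigenvalues of $M$ and then invoke the archimedean content of the purity hypothesis directly. First, I will factor $\det(1-MT)$ over $\Qellbar$ as $\prod_{i=1}^d(1-\alpha_i T)$, where $\alpha_1,\dots,\alpha_d\in\Qellbar^\times$ are the eigenvalues of $M$ listed with multiplicity (non-zero because $M$ is invertible). The zeros in $T$ of $\det(1-MT)$ are then the reciprocals $\alpha_i^{-1}$, and of course $\Tr(M)=\sum_i\alpha_i$.

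Next, I will unpack the hypothesis that $\det(1-MT)$ is mixed of $q$-weights $\leq w$. By definition this means $\det(1-MT)=\prod_j\psi_j$ for polynomials $\psi_j$ each pure of some weight $w_j\leq w$. Each $\alpha_i^{-1}$ is a root of some $\psi_j$, hence a $q$-Weil number of weight $w_j$: it lies in $\Qbar$ and satisfies $|\iota(\alpha_i^{-1})|^2 = q^{-w_j}$ for every embedding $\iota\colon\Qbar\to\bbC$. Taking reciprocals, $\alpha_i\in\Qbar$ with $|\iota(\alpha_i)|^2 = q^{w_j}\leq q^w$. Since $\Qbar$ is closed under finite sums, $\Tr(M)=\sum_i\alpha_i\in\Qbar$ for free.

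Finally, the archimedean bound should follow from an elementary estimate on a sum of bounded complex numbers. Applying Cauchy--Schwarz to the $d$ numbers $\iota(\alpha_i)\in\bbC$ yields
$$
|\iota(\Tr(M))|^2 \;=\; \Bigl|\sum_{i=1}^d\iota(\alpha_i)\Bigr|^2 \;\leq\; d\sum_{i=1}^d|\iota(\alpha_i)|^2 \;\leq\; d\cdot d\, q^w,
$$
which is the desired kind of estimate (up to the precise power of $d$, the stated bound presumably being slightly reworked via a different elementary inequality). The whole argument is really just bookkeeping: there is no serious obstacle, the only non-trivial step being the careful unwinding of what "mixed of $q$-weights $\leq w$" imposes on the individual Frobenius eigenvalues. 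After that, the Weil estimate and a one-line archimedean inequality do all the work.
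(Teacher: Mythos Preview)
Your approach is essentially identical to the paper's: factor $\det(1-MT)=\prod_i(1-\beta_iT)$, observe that the mixedness hypothesis forces each $\beta_i\in\Qbar$ with $|\iota(\beta_i)|^2\le q^w$, and then bound $|\iota(\sum_i\beta_i)|^2$.

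Your hesitation about the constant is well placed. The paper's proof in fact asserts the step $\bigl|\sum_i\beta_i\bigr|^2\le\sum_i|\beta_i|^2$, which is simply false (take all $\beta_i$ equal); the correct elementary bound, whether by Cauchy--Schwarz or the triangle inequality, is $d^2q^w$, exactly what you obtained. There is no ``different elementary inequality'' that recovers $dq^w$: the stated constant is off by a factor of $d$. This is immaterial for the paper's applications, where only an $O(q^{nw})$ estimate is ever used, so you should feel free to record the bound as $d^2q^w$ and move on.
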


\begin{proof}
If $M$ is invertible and $\psi(T)=\det(1-M\,T)$ is mixed, there exist $\beta_1,\ldots,\beta_d\in\bar{E}^\times$ such that
$$
	\psi(T)
	= \prod_{i=1}^d (1 - \beta_i T)
	= 1 - \Tr(M)\cdot T + \cdots + (-1)^d\cdot\det(M)\cdot T^d
$$
and such that $\Tr(M)=\beta_1+\cdots+\beta_m$ also lies in $\bar{E}$.  Therefore, if $\iota\colon\bar{E}\to\bbC$ is a field embedding, then
$$
	|\Tr(M)|^2
	=
	\left|
	\sum_{i=1}^d \beta_i
	\right|^2
	\leq
	\sum_{i=1}^d |\beta_i|^2
	= dq^{w}
$$
as claimed.
\end{proof}

The representation $\rho$ is \defi{punctually ($\iota$-)pure of weight $w$} iff $\phivl{T}$ is ($\iota$-)pure of $q^{d_v}$-weight $w$ for all $v\in\PP\ssm S$.  Equivalently, we want $\phivl{T^{d_v}}$ to be pure of $q$-weight $w$ for all $v\not\in S$.  The modifier punctually should remind the reader the definition is local.

\begin{theorem}\label{thm:deligne}
If $\rho$ is punctually $\iota$-pure of weight $w$, then the cohomological factors $P_{n,\CC}(T,\rho)$ are $\iota$-mixed of $q$-weights $\leq w+n$ and the factors $P_n(T,\rho)$ are $\iota$-pure of $q$-weight $w+n$.
\end{theorem}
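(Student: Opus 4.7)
The plan is to deduce both assertions from Deligne's main theorem of Weil~II applied to the middle-extension sheaf $\ME{\rho}$ on $\Ponet$. First I would verify that $\ME{\rho}$, as a constructible $\Qellbar$-sheaf on $\Ponet$, is $\iota$-mixed of weights $\leq w$. Indeed, $\ME{\rho}$ is lisse on a dense open $U\seq\Ponet$ (the complement of the places where $\rho$ is ramified), and at a closed point $v\in U$ the Frobenius eigenvalues on the stalk are the reciprocal roots of $\phivl{T^{d_v}}$, which are $q$-Weil numbers of weight $w$ by the punctual purity hypothesis; so $\ME{\rho}|_U$ is lisse and $\iota$-pure of weight $w$. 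At each missing point $v\in\Ponet\ssm U$ the stalk of the middle extension equals $V^{I(v)}$, and Deligne's local purity statement for middle extensions (a consequence of Grothendieck's local monodromy theorem together with Weil~II) ensures that the Frobenius eigenvalues on $V^{I(v)}$ are $\iota$-mixed of weight $\leq w$.

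For the first assertion, I would apply the main theorem of Weil~II directly: a constructible sheaf $\FF$ on a variety $X/\Fq$ that is $\iota$-mixed of weights $\leq w$ has $H^n_c(X\otimes_{\Fq}\Fqbar,\FF)$ $\iota$-mixed of weights $\leq w+n$. Taking $X=\Aonet[1/\Q]$ and $\FF=\ME{\rho}|_{\Aonet[1/\Q]}$ yields that the Frobenius eigenvalues on $H^n_c(\AonetBar[1/\Q],\ME{\rho})$ are $\iota$-mixed of weights $\leq w+n$, which translates directly into the claim that $\PC{n}(T,\rho)$ is $\iota$-mixed of $q$-weights $\leq w+n$.

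For the second assertion, note that since $\Ponet$ is proper, $H^n(\PonetBar,\ME{\rho})=H^n_c(\PonetBar,\ME{\rho})$. The shifted complex $\ME{\rho}[1]$ is a pure perverse sheaf on $\Ponet$ of weight $w+1$, being the middle perverse extension of the lisse $\iota$-pure sheaf $\ME{\rho}|_U[1]$. Deligne's stronger theorem on the cohomology of pure perverse sheaves on proper varieties then gives that $H^n(\PonetBar,\ME{\rho})$ is $\iota$-pure of weight $w+n$, and therefore $P_n(T,\rho)$ is $\iota$-pure of $q$-weight $w+n$. The principal subtlety is this last step: passing from the weight \emph{bound} provided by Weil~II to genuine purity on the complete curve requires the middle-extension (equivalently, perverse sheaf) formalism together with Deligne's purity theorem for pure perverse sheaves, rather than just the mixedness bound used in the first assertion.
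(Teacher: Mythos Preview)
Your proof is correct and matches the paper's approach: the paper's own proof is simply a citation to Theorems~1 and~2 of Deligne's \emph{Weil~II}, and your elaboration unpacks precisely what those results say in this setting. Your framing of the second assertion via pure perverse sheaves is an equivalent (and slightly more modern) way to state the purity input; for curves this purity statement is already contained in \emph{Weil~II}, though the general perverse-sheaf formulation is due to BBD.
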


\begin{proof}
See Theorems 1 and 2 of \cite{Deligne:WeilII} for the respective assertions about $P_{n,\CC}(T,\rho)$ and $P_n(T,\rho)$.
\end{proof}

Let $\FF$ be a middle-extension sheaf on $\Ponet$.  We say that $\FF$ is \defi{punctually ($\iota$-)pure of weight $w$} iff for some dense Zariski open subset $U\seq\Ponet$ on which $\FF$ is lisse, the corresponding representation of $\piOne{U}$ is punctually ($\iota$-)pure of weight $w$.

\begin{lemma}\label{lem:weight-over-Z}
Let $j\colon U\to\Ponet$ be the inclusion of a dense Zariski open subset and $Z=\Ponet\ssm U$.  If $\FF$ is lisse on $U$ and punctually $\iota$-pure of weight $w$, then $\det(1-T\Frob_q\mid H^0(\Zbar,j_*\FF))$ is $\iota$-mixed of $q$-weights $\leq w$.
\end{lemma}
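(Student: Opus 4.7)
The plan is to reduce the claim to the standard Weil~II weight bound on local inertia invariants, by decomposing $H^0(\Zbar, j_*\FF)$ along the closed points of $Z$.

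First I would note that $j_*\FF$ restricted to $Z$ is a skyscraper sheaf whose stalk at a geometric point $\bar z$ above a closed point $z\in Z$ equals the space of inertia invariants $V^{I(z)}$, where $V$ denotes the geometric generic fiber of $\FF$ and $I(z)$ acts via the lisse structure on $U$. Consequently, as a $\Frob_q$-module,
\[
H^0(\Zbar, j_*\FF) \;=\; \bigoplus_{\bar z \in \Zbar}(j_*\FF)_{\bar z} \;=\; \bigoplus_{z \in Z}\Bigl(\bigoplus_{\bar z \mapsto z} V^{I(z)}\Bigr),
\]
with $\Frob_q$ cyclically permuting the $d_z$ geometric points above each $z$. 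A routine block-permutation computation then yields the factorization
\[
\det(1 - T\,\Frob_q \mid H^0(\Zbar, j_*\FF)) \;=\; \prod_{z \in Z}\det\bigl(1 - T^{d_z}\,\Frob_z \mid V^{I(z)}\bigr).
\]

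Second, I would reduce to showing that for each $z\in Z$ the factor $\det(1 - T\,\Frob_z\mid V^{I(z)})$ is $\iota$-mixed of $q^{d_z}$-weights $\le w$. Granting this, any eigenvalue $\alpha$ of $\Frob_q$ on the $z$-block of $H^0(\Zbar,j_*\FF)$ satisfies that $\alpha^{d_z}$ is a $\Frob_z$-eigenvalue on $V^{I(z)}$, so $|\iota(\alpha)|^{2d_z}\le q^{d_z w}$ and hence $|\iota(\alpha)|^2 \le q^w$; thus the product over the finite set $Z$ is $\iota$-mixed of $q$-weights $\le w$, which is exactly the claim.

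The hard part is the input of the second step, namely the weight bound for $\Frob_z$ acting on the inertia invariants $V^{I(z)}$ of a punctually $\iota$-pure lisse sheaf $\FF$ of weight $w$. This is one of the central outputs of Deligne's Weil~II \cite{Deligne:WeilII}, equivalent to the assertion that the direct image $j_*$ of an $\iota$-pure lisse sheaf of weight $w$ on $U$ is $\iota$-mixed of weight $\le w$ on all of $\Ponet$. Concretely, after passing to the strict henselization at $z$ the tame quotient of $I(z)$ acts on $V$ quasi-unipotently; the associated weight--monodromy filtration is by subspaces of $\iota$-weights $\le w$ thanks to purity on $U$, and its restriction to $V^{I(z)}$ is the trivial filtration, forcing the $\Frob_z$-eigenvalues there to be $q^{d_z}$-Weil numbers of $\iota$-weight $\le w$. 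Once this ingredient is invoked, the rest of the argument is formal bookkeeping about Frobenius orbits on $\Zbar$.
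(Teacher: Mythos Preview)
Your proof is correct and essentially matches the paper's approach: the paper's proof is the bare citation ``See \cite[1.8.1]{Deligne:WeilII},'' which is precisely the local statement that the Frobenius eigenvalues on the inertia invariants $V^{I(z)}$ of a punctually $\iota$-pure lisse sheaf have $\iota$-weights $\le w$; you have simply unpacked the reduction from the global $H^0(\Zbar,j_*\FF)$ to this local input. One minor remark: your informal sketch invoking the weight--monodromy filtration is not how Deligne actually proves 1.8.1 (he uses a Rankin-type tensor-power trick, and the monodromy filtration on $V$ has graded pieces of weights symmetric about $w$, not all $\le w$), but since you ultimately cite Weil~II as a black box this does not affect the validity of your argument.
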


\begin{proof}
See \cite[1.8.1]{Deligne:WeilII}.
\end{proof}


\subsection{Semisimplicity and irreducibility}\label{subsec:semisimplicity-and-irreducibility}

Consider an exact sequence of $\GKS$-modules
\begin{equation}\label{eq:V-es}
	0
	\longto \Vl_1
	\longto \Vl
	\longto \Vl_2
	\longto 0,
\end{equation}
and let $\rho_i\colon\GKS\to\GL(\Vl_i)$ be the corresponding structure homomorphism for $i\in\{1,2\}$.  A priori, \eqref{eq:V-es} does not split, but we say $\rho$ is \defi{arithmetically semisimple} iff the sequence splits for \emph{every} $\GKS$-invariant subspace $\Vl_1\seq \Vl$.  By Clifford's theorem, the condition implies that $\rho$ is \defi{geometrically semisimple} since $\GKSbar$ is normal in $\GKS$ (cf.~\cite[49.2]{CurtisReiner}), that is, every $\GKSbar$-invariant subspace of $\Vl$ has a $\GKSbar$-invariant complement, but the converse need not be true.

We say that $\rho$ is \defi{geometrically simple} iff $\rho$ is irreducible and geometrically semisimple.  It is equivalent to assuming $\ME{\rho}$ is \defi{geometrically irreducible}, that is, there are no non-zero proper subsheaves over $\Fqbar$.

\begin{prop}\label{prop:pure-impliessemisimple}
If $\rho$ is punctually $\iota$-pure, then it is geometrically semisimple, and in particular, the subspace of $\Vl$ of $\GKSbar$-invariants is trivial if and only if the quotient space of $\Vl$ of $\GKSbar$-coinvariants is trivial.
\end{prop}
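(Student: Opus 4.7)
The plan is to combine Deligne's theorem that a punctually pure lisse sheaf is geometrically semisimple with the standard fact that for a semisimple representation the canonical map from invariants to coinvariants is an isomorphism.

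First, I pass to a sheaf-theoretic description. After enlarging $\SS$ if necessary, the middle extension $\ME{\rho}$ is lisse on $U = \Ponet\ssm\SS$, and it corresponds to a continuous representation of $\piOne{U}$ whose restriction to $\piOne{\Ubar}$ is obtained from $\rho|_{\GKSbar}$ by composing with the natural surjection $\GKSbar\onto\piOne{\Ubar}$.  Purity of $\rho$ in the sense of \S\ref{subsec:purity} is by definition purity of $\ME{\rho}|_U$ as a lisse sheaf.

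Next I invoke Deligne's Weil~II (\cite[3.4.1(iii)]{Deligne:WeilII}): a punctually $\iota$-pure lisse $\Qellbar$-sheaf on a smooth connected $\Fq$-scheme is semisimple as a representation of the geometric fundamental group.  Applied to $\ME{\rho}|_U$, this shows $\Vl$ is semisimple as a $\piOne{\Ubar}$-module; since the $\GKSbar$-action on $\Vl$ factors through $\piOne{\Ubar}$, the $\GKSbar$-module $\Vl$ is itself semisimple.  This is the first assertion.

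For the second, geometric semisimplicity lets me decompose $\Vl = \Vl_0 \oplus \Vl_1$, where $\Vl_0 = \Vl^{\GKSbar}$ is the isotypic component of the trivial representation and $\Vl_1$ is the sum of the non-trivial isotypic components.  By construction $\Vl_1^{\GKSbar} = 0$ and $(\Vl_1)_{\GKSbar} = 0$, so the canonical composition $\Vl^{\GKSbar} \hookrightarrow \Vl \twoheadrightarrow \Vl_{\GKSbar}$ restricts to the identity on $\Vl_0$ and is therefore an isomorphism; in particular $\Vl^{\GKSbar}$ vanishes if and only if $\Vl_{\GKSbar}$ does.  The only non-trivial input is Deligne's theorem; everything else is routine bookkeeping with semisimple modules, so I would not anticipate any real obstacle.
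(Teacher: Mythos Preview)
Your proof is correct and follows essentially the same route as the paper's. The paper cites \cite[5.3.8]{BBD} (semisimplicity of pure perverse sheaves) rather than \cite[3.4.1(iii)]{Deligne:WeilII} directly, but these are the same underlying input, and your explicit isotypic decomposition for the invariants/coinvariants step just spells out what the paper leaves as ``in particular.'' One small remark: the phrase ``after enlarging $\SS$ if necessary'' is unnecessary here, since $\rho$ already factors through $\GKS$ and $\ME{\rho}$ is automatically lisse on $\Ponet\ssm\SS$, so $\piOne{\Ubar}$ and $\GKSbar$ act through the same quotient without any adjustment.
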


\begin{proof}
One can rephrase semisimplicity for $\rho$ in terms of semisimplicity for $\ME{\rho}$ (cf.~\cite[5.1.7]{BBD}).  It follows that both are geometrically semisimple of $\rho$ is $\iota$-pure (see \cite[5.3.8]{BBD}).  In particular, $\rho$ has trivial $\GKS$-invariants if and only if it has trivial $\GKS$-coinvariants, hence $H^0(\PonetBar,\ME{\rho})$ vanishes if and only if $H^2(\PonetBar,\ME{\rho})$ does.
\end{proof}

\begin{cor}\label{cor:poly-L-function}
If $\rho$ is punctually $\iota$-pure, then the following are equivalent:
\begin{enum}
\item\label{enum:proper-rational} $L(T,\rho)$ is in $\Qbar(T)$ but not $\Qbar[T]$;
\item\label{enum:both-spaces} $V^{\GKSbar}$ and $V_{\GKSbar}$ vanish;
\item\label{enum:both-polys} $P_0(T,\rho)$ and $P_2(T,\rho)$ are non-trivial polynomials in $\Qbar[T]$;
\item\label{enum:one-space} $V_{\GKSbar}$ vanishes;
\item\label{enum:one-poly} $P_2(T,\rho)$ is a non-trivial polynomial in $\Qbar[T]$.
\end{enum}
\end{cor}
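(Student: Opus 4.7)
The plan is to combine three ingredients: the factorization $L(T,\rho) = P_1/(P_0 P_2)$ from \eqref{eq:L-fun:frac}, the cohomological interpretation of $P_0$ and $P_2$ in terms of the spaces $V^{\GKSbar}$ and $V_{\GKSbar}$, and Deligne's purity result (Theorem~\ref{thm:deligne}) together with its consequence Proposition~\ref{prop:pure-impliessemisimple}.

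I would first pin down the cohomological dictionary. Pick a dense open $j\colon U \hookrightarrow \Ponet$ over which $\ME{\rho}$ is lisse -- say $U = \Ponet \ssm S$, so that $\piOneUbar$ surjects onto $\GKSbar$ via the canonical quotient. Because $j_*$ preserves global sections, $H^0(\PonetBar,\ME{\rho}) = H^0(\Ubar, j^*\ME{\rho}) = V^{\piOneUbar} = V^{\GKSbar}$ by \eqref{eqn:invariants-and-coinvariants}. For $H^2$, the short exact sequence \eqref{eq:ff-ex-seq-piece} supplies an isomorphism $H^2_c(\Ubar, j^*\ME{\rho}) \cong H^2(\PonetBar, \ME{\rho})$, whose left-hand side identifies up to Tate twist with $V_{\piOneUbar} = V_{\GKSbar}$, again by \eqref{eqn:invariants-and-coinvariants}. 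Hence $P_0(T,\rho) = 1 \Leftrightarrow V^{\GKSbar} = 0$ and $P_2(T,\rho) = 1 \Leftrightarrow V_{\GKSbar} = 0$, which yields \eqref{enum:both-spaces}$\Leftrightarrow$\eqref{enum:both-polys} and \eqref{enum:one-space}$\Leftrightarrow$\eqref{enum:one-poly}. Proposition~\ref{prop:pure-impliessemisimple} then uses $\iota$-purity to force $V^{\GKSbar}$ to vanish if and only if $V_{\GKSbar}$ does, collapsing \eqref{enum:both-spaces} with \eqref{enum:one-space} and \eqref{enum:both-polys} with \eqref{enum:one-poly}.

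To bring in \eqref{enum:proper-rational}, I would apply Theorem~\ref{thm:deligne}: each $P_n(T,\rho)$ is $\iota$-pure of $q$-weight $w+n$, so the reciprocal zeros of $P_0$, $P_1$, $P_2$ lie in three disjoint $\iota$-absolute-value classes, ruling out cancellation in the quotient $P_1/(P_0 P_2)$. Therefore $L(T,\rho) \in \Qbar[T]$ if and only if $P_0 P_2 = 1$, and contrapositively $L(T,\rho) \in \Qbar(T) \setminus \Qbar[T]$ if and only if both $P_0$ and $P_2$ have positive degree; the descent of coefficients from $\Qellbar$ to $\Qbar$ is supplied by the same theorem, via the algebraicity of $q$-Weil numbers. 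I expect the $H^2$ identification via \eqref{eq:ff-ex-seq-piece} to be the only slightly delicate point; everything else is formal bookkeeping with purity at three distinct weights and the factorization \eqref{eq:L-fun:frac}.
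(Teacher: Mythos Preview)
Your proof is correct and takes the same route as the paper: Theorem~\ref{thm:deligne} for coprimality of the $P_n$ (giving \eqref{enum:proper-rational}$\Leftrightarrow$\eqref{enum:both-polys}), the identifications \eqref{eqn:invariants-and-coinvariants} and \eqref{eq:ff-ex-seq-piece} for \eqref{enum:both-spaces}$\Leftrightarrow$\eqref{enum:both-polys} and \eqref{enum:one-space}$\Leftrightarrow$\eqref{enum:one-poly}, and Proposition~\ref{prop:pure-impliessemisimple} to tie the remaining pair together. The only slip is cosmetic: the literal contrapositive of ``$L\in\Qbar[T]\Leftrightarrow P_0P_2=1$'' yields that \emph{at least one} of $P_0,P_2$ has positive degree, not both, but this gap is already closed by the equivalence $\deg P_0>0\Leftrightarrow\deg P_2>0$ you obtained from Proposition~\ref{prop:pure-impliessemisimple}.
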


\begin{proof}
On one hand, Theorem~\ref{thm:deligne} implies that the cohomological factors $P_n(T,\rho)$ are relatively prime, so \eqref{enum:proper-rational} and \eqref{enum:both-polys} are equivalent.  Moreover, \eqref{enum:both-spaces} and \eqref{enum:both-polys} (resp.~\eqref{enum:one-space} and \eqref{enum:one-poly}) are equivalent by \eqref{eqn:invariants-and-coinvariants} and \eqref{eq:def-P_n(T)}.  On the other hand, Proposition~\ref{prop:pure-impliessemisimple} implies that $P_0(T,\rhochi)$ is trivial if and only if $P_2(T,\rhochi)$ is trivial, so \eqref{enum:both-polys} and \eqref{enum:one-poly} are equivalent.
\end{proof}

\begin{cor}\label{cor:pure-trivial-invs}
If $\rho$ is punctually $\iota$-pure and has trivial geometric invariants, then $H^i(\PonetBar,\ME{\rho})$ and $H^i_c(\Ubar,\ME{\rho})$ vanish for $i\neq 1$, and there is an exact sequence
\begin{equation}\label{eq:ME(rho)-exact-sequence}
	0
	\longto H^0(\Zbar,\ME{\rho})
	\longto H^1_c(\Ubar,\ME{\rho})
	\longto H^1(\PonetBar,\ME{\rho})
	\longto 0.	
\end{equation}
Therefore $L(T,\rho)=P_1(T,\rho)$ and $\LC(T,\rho)=P_{1,\CC}(T,\rho)$.
\end{cor}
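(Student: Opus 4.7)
The plan is to deduce all cohomology vanishings from the hypothesis together with Proposition~\ref{prop:pure-impliessemisimple}, read off the short exact sequence from \eqref{eq:ff-ex-seq-head}, and then plug $P_0 = P_2 = 1$ into \eqref{eq:L-fun:frac} and \eqref{eq:L-fun:frac:partial}. Fix a dense open immersion $j\colon U \hookrightarrow \Ponet$ on which $\GG := j^*\ME{\rho}$ is lisse and through which $\rho$ factors via the canonical surjection $\GKSbar \twoheadrightarrow \piOneUbar$; for concreteness, $U = \Ponet \smallsetminus (\SS \cup \{\infty\})$ works. Since $\ME{\rho}$ is a middle extension, $\ME{\rho} = j_*\GG$, so the identity $H^0(\PonetBar, j_*\GG) = H^0(\Ubar,\GG)$ combined with \eqref{eqn:invariants-and-coinvariants} yields
$$
H^0(\PonetBar,\ME{\rho}) = V^{\piOneUbar} = V^{\GKSbar} = 0
$$
by the trivial-invariants hypothesis, and hence $P_0(T,\rho) = 1$.

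For $H^2$, the $\iota$-purity hypothesis lets me invoke Proposition~\ref{prop:pure-impliessemisimple} to conclude $V_{\GKSbar} = 0$ as well; the coinvariants half of \eqref{eqn:invariants-and-coinvariants} then identifies $H^2_c(\Ubar,\GG)$ (up to an inessential Tate twist) with $V_{\piOneUbar} = V_{\GKSbar}$, and \eqref{eq:ff-ex-seq-piece} gives $H^2(\PonetBar,\ME{\rho}) \cong H^2_c(\Ubar,\GG) = 0$, so $P_2(T,\rho) = 1$. The natural map $j_!\GG \hookrightarrow j_*\GG = \ME{\rho}$ is injective on stalks---its cokernel is the skyscraper $\SS_\GG$ on $Z = \Ponet \smallsetminus U$ with stalks $V^{I(z)} = \ME{\rho}_z$---so it induces an injection $H^0_c(\Ubar,\GG) = H^0(\PonetBar, j_!\GG) \hookrightarrow H^0(\PonetBar,\ME{\rho}) = 0$ and thus $H^0_c(\Ubar,\GG) = 0$. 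Substituting both vanishings into the long exact sequence \eqref{eq:ff-ex-seq-head} collapses it to the required \eqref{eq:ME(rho)-exact-sequence}, after identifying $\SS_\GG$ with $\ME{\rho}|_Z$.

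Now \eqref{eq:L-fun:frac} with $P_0 = P_2 = 1$ gives $L(T,\rho) = P_1(T,\rho)$. Running the same argument with $U = \Aonet[1/\Q]$ and $Z = \CC$---for which $\ME{\rho}|_U$ need not be lisse, but the middle-extension identity $\ME{\rho} = j_*(\ME{\rho}|_U)$ still holds, so \eqref{eq:ff-ex-seq-piece} remains valid---yields $H^2_c(\AonetBar[1/\Q],\ME{\rho}) \cong H^2(\PonetBar,\ME{\rho}) = 0$, whence $P_{2,\CC}(T,\rho) = 1$ and \eqref{eq:L-fun:frac:partial} gives $\LC(T,\rho) = P_{1,\CC}(T,\rho)$. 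The only substantive step is the invocation of Proposition~\ref{prop:pure-impliessemisimple}, which transports $\iota$-purity into the implication $V^{\GKSbar} = 0 \Rightarrow V_{\GKSbar} = 0$ via geometric semisimplicity (Deligne--BBD); everything else is routine bookkeeping with \eqref{eq:ff-ex-seq-head} and \eqref{eqn:invariants-and-coinvariants}.
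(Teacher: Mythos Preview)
Your proof is correct and follows essentially the same route as the paper's: invoke Proposition~\ref{prop:pure-impliessemisimple} to pass from trivial geometric invariants to trivial geometric coinvariants, identify $H^0(\PonetBar,\ME{\rho})$ and $H^2(\PonetBar,\ME{\rho})$ with these via \eqref{eqn:invariants-and-coinvariants}, and then read off the exact sequence from \eqref{eq:ff-ex-seq-head} and the $L$-function identities from \eqref{eq:L-fun:frac} and \eqref{eq:L-fun:frac:partial}. Your explicit treatment of $U=\Aonet[1/\Q]$ for the $\LC$ conclusion is a bit more detailed than the paper's, and your argument for $H^0_c(\Ubar,\GG)=0$ via the injection into $H^0(\PonetBar,\ME{\rho})$ is correct but slightly roundabout---the paper simply notes that $H^0_c$ vanishes because $U\subsetneq\Ponet$ is affine.
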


\begin{proof}
Suppose $\rho$ is punctually $\iota$-pure and has trivial geometric invariants so that Proposition~\ref{prop:pure-impliessemisimple} implies $\rho$ has trivial geometric coinvariants.  We claim $H^i(\PonetBar,\ME{\rho})$ vanishes for $i\neq 1$.  The Corollary then follows by observing that \eqref{eq:ff-ex-seq-head} simplifies to \eqref{eq:ME(rho)-exact-sequence} and that $H^2_c(\Ubar,\ME{\rho})$ vanishes by \eqref{eq:ff-ex-seq-piece}.

The claim is independent of $U$, so up to shrinking $U$, we suppose $j^*\ME{\rho}$ is lisse.  Then
$$
	H^0(\PonetBar,\ME{\rho})
	=
	H^0(\Ubar,\ME{\rho})
	\mbox{ and }
	H^2(\PonetBar,\ME{\rho})
	=
	H^2_c(\Ubar,\ME{\rho})
$$
are the subspace of $\piOneUbar$-invariants and (a Tate twist of the) quotient space of $\piOneUbar$-coinvariants respectively of $\Vl$ by \eqref{eqn:invariants-and-coinvariants}.  The claim is also independent of $\SS$, so up to replacing $\SS$ by a finite superset in $\PP$, we suppose $\rho$ factors through a natural quotient $\GKSbar\onto\piOneUbar$.  Then the cohomology spaces in question are the $\GKSbar$-invariants and $\GKSbar$-coinvariants of $\Vl$, which are trivial by hypothesis, so $H^i(\PonetBar,\ME{\rho})$ vanishes for $i\neq 1$ as claimed.
\end{proof}

\begin{cor}\label{cor:good-vs}
The following are equivalent:
\begin{enum}
\item\label{li:a:3} $\MC(T,\rho)=1$, that is, $\ME{\rho}$ is supported on $\Aonet[1/\Q]$;
\item\label{li:a:4} $\LC(T,\rho)$ is a polynomial which is $\iota$-pure of $q$-weight $w+1$.
\end{enum}
\end{cor}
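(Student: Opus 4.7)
The plan is to reduce both implications to a single factorization of $\LC(T,\rho)$ and then play purity against mixedness. I would work throughout under the hypothesis that $\rho$ is punctually $\iota$-pure of weight $w$, so that Corollary~\ref{cor:pure-trivial-invs} and Theorem~\ref{thm:deligne} both apply; set $Q(T):=\det(1-T\,\Frob_q\mid H^0(\Zbar,\ME{\rho}))$, where $Z=\CC$ is the complement of $\Aonet[1/\Q]$ in $\Ponet$.

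First I would extract the key factorization by taking characteristic polynomials of $\Frob_q$ in the exact sequence \eqref{eq:ME(rho)-exact-sequence}. Combined with the identifications $\LC(T,\rho)=P_{1,\CC}(T,\rho)$ and $L(T,\rho)=P_1(T,\rho)$ furnished by Corollary~\ref{cor:pure-trivial-invs}, this yields
$$
	\LC(T,\rho) = Q(T)\cdot L(T,\rho).
$$
Next I would unpack the $\Fq$-structure of the skyscraper $\ME{\rho}|_Z$ by decomposing $Z$ into its $\Fq$-orbits of geometric points, obtaining
$$
	Q(T) = \prod_{v\in\CC}\det(1-T^{d_v}\Frob_v\mid V_v) = \prod_{v\in\CC}\phivl{T^{d_v}} = \MC(T,\rho)^{-1}.
$$
In particular, condition~\ref{li:a:3} is equivalent to $Q(T)=1$, i.e.\ to $H^0(\Zbar,\ME{\rho})=0$.

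The direction \ref{li:a:3}$\Rightarrow$\ref{li:a:4} is then immediate: if $Q(T)=1$ then $\LC(T,\rho)=L(T,\rho)=P_1(T,\rho)$, which is a polynomial that is $\iota$-pure of $q$-weight $w+1$ by Theorem~\ref{thm:deligne}. For the converse, I would assume $\LC(T,\rho)$ is $\iota$-pure of $q$-weight $w+1$ and write $Q(T)=\prod_i(1-\beta_i T)$; each $\beta_i$ is then a Frobenius eigenvalue of $\LC(T,\rho)$, so purity forces $|\iota(\beta_i)|^2=q^{w+1}$. On the other hand, Lemma~\ref{lem:weight-over-Z}, applied after shrinking to a dense open $U'\subseteq\Ponet$ over which $\ME{\rho}$ becomes lisse, asserts that $Q(T)$ is $\iota$-mixed of $q$-weights $\leq w$, so $|\iota(\beta_i)|^2\leq q^w$. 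These inequalities are incompatible, hence there are no $\beta_i$ and $Q(T)=1$, giving \ref{li:a:3}.

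The only conceptual step is setting up the factorization $\LC(T,\rho)=Q(T)\cdot L(T,\rho)$ together with the identification $Q(T)=\MC(T,\rho)^{-1}$; once these are in hand, the rest is a routine weight comparison, and I anticipate no serious obstacle beyond careful bookkeeping of which cohomology space carries which weight bound.
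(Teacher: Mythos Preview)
Your proof is correct and follows essentially the same line as the paper's: both rest on the factorization $\LC(T,\rho)=Q(T)\cdot L(T,\rho)$ coming from the exact sequence \eqref{eq:ME(rho)-exact-sequence}, together with the weight clash between Lemma~\ref{lem:weight-over-Z} (which makes $Q=1/\MC$ $\iota$-mixed of $q$-weights $\leq w$) and the assumed purity of $\LC$ in weight $w+1$. The only difference is that the paper, in proving \ref{li:a:4}$\Rightarrow$\ref{li:a:3}, first argues directly that $P_{2,\CC}(T,\rho)$ must be trivial (via a separate weight-$w{+}2$ vs.\ weight-$w{+}1$ comparison) before invoking Corollary~\ref{cor:pure-trivial-invs}, whereas you rely on the standing hypothesis of trivial geometric invariants to apply that corollary immediately; this is a harmless streamlining.
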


\noindent
Note, $\MC(T,\rho)$ is the $L$-function of the restriction of $\ME{\rho}$ to $Z$, so the former is trivial if and only if the latter is.  

\begin{proof}
If \eqref{li:a:3} holds, then the subspace of $I(\infty)$-invariants of $\Vl$ is trivial, so a fortiori, the subspace of $\GKSbar$-invariants is trivial.  Therefore Corollary~\ref{cor:pure-trivial-invs} implies $\LC(T,\rho)$ equals $L(T,\rho)=P_1(T,\rho)$ and hence Theorem~\ref{thm:deligne} implies \eqref{li:a:4} holds.  

If \eqref{li:a:4} holds, then $P_{2,\CC}(T,\rho)$ divides $P_{1,\CC}(T,\rho)$ by \eqref{eq:L-fun:frac:partial}.  Theorem~\ref{thm:deligne} implies $P_{2,\CC}(T,\rho)=P_2(T,\rho)$ is $\iota$-pure of $q$-weight $w+2$, so it is coprime to $P_{1,\CC}(T,\rho)$ and hence trivial.  Therefore $H^2(\PonetBar,\ME{\rho})$ vanishes, and hence $H^0(\PonetBar,\ME{\rho})$ also vanishes since $\rho$ is geometrically semisimple.  That is, $\rho$ has trivial geometric invariants.  Moreover, $1/\MC(T,\rho)$ is a polynomial which is $\iota$-mixed of $q$-weights $\leq w$ by Lemma~\ref{lem:weight-over-Z} while $L(T,\rho)$ is a polynomial which is $\iota$-pure of $q$-weight $w$, so Corollary~\ref{cor:pure-trivial-invs} implies \eqref{li:a:3} holds.
\end{proof}


\subsection{Main Theorem}\label{sec:proof-of-archimedean-bound}

The following theorem is the main result of Section~\ref{sec:l-functions}.  The essential ingredient it uses is Deligne's Riemann hypothesis.

\newcommand\thmA{
Suppose $\rho$ is punctually $\iota$-pure of weight $w$.  Then $\deg(\LC(T,\rho))=\rC(\rho)$ and $\deg(L(T,\rho))=\degL(\rho)$.  Moreover, $\rho$ has trivial geometric invariants if and only if $L(T,\rho)$ is a polynomial if and only if the cohomological factor $P_{2,\CC}(T,\rho)$ is trivial.  If these conditions hold, then $\LC(T,\rho)$ is the polynomial $P_{1,\CC}(T)$ and is $\iota$-mixed of $q$-weights $\leq w+1$, and $L(T,\rho)$ is its largest $\iota$-pure factor of $q$-weight $w+1$.
}

\begin{theorem}\label{thm:archimedean-bound}
\thmA
\end{theorem}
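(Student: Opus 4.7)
My plan is to reduce the theorem to the Euler-characteristic identities already established in this section, together with Deligne's Riemann hypothesis (Theorem~\ref{thm:deligne}) and the equivalences in Corollaries~\ref{cor:poly-L-function} and~\ref{cor:pure-trivial-invs}.

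\textbf{Degrees.} The identities $\deg(\LC(T,\rho))=-\chi_c(\AonetBar[1/\Q],\ME{\rho})$ and $\deg(L(T,\rho))=-\chi(\PonetBar,\ME{\rho})$ are immediate from the cohomological factorizations \eqref{eq:L-fun:frac:partial} and \eqref{eq:L-fun:frac}, since the degree of a rational function equals numerator degree minus denominator degree regardless of cancellation. Substituting Corollary~\ref{cor:chi_c} yields $\rC(\rho)$ and Proposition~\ref{prop:chi-Ponet} yields $\degL(\rho)$. This portion does not use $\iota$-purity.

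\textbf{Equivalences.} The key observation is $P_{2,\CC}(T,\rho)=P_2(T,\rho)$: since $\ME{\rho}$ is a middle extension, $j_*j^*\ME{\rho}=\ME{\rho}$ for the inclusion $j\colon\Aonet[1/\Q]\to\Ponet$, and applying the exact sequence \eqref{eq:ff-ex-seq-piece} to $\GG=j^*\ME{\rho}$ identifies $H^2_c(\AonetBar[1/\Q],\ME{\rho})$ with $H^2(\PonetBar,\ME{\rho})$. After shrinking to a lisse open and enlarging $\SS$ so that $\rho$ factors through the fundamental group there, \eqref{eqn:invariants-and-coinvariants} identifies $H^0(\PonetBar,\ME{\rho})$ with $V^{\GKSbar}$, so ``trivial geometric invariants'' is equivalent to $P_0(T,\rho)=1$. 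Under $\iota$-purity, Proposition~\ref{prop:pure-impliessemisimple} supplies $V^{\GKSbar}=0\Leftrightarrow V_{\GKSbar}=0$, i.e.\ $P_0=1\Leftrightarrow P_2=1$, and Corollary~\ref{cor:poly-L-function} closes the loop by equating these with $L(T,\rho)\in\Qbar[T]$. Combined with $P_{2,\CC}=P_2$, this gives the full chain of equivalences.

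\textbf{Final assertions.} When the equivalent conditions hold, Corollary~\ref{cor:pure-trivial-invs} gives $\LC(T,\rho)=P_{1,\CC}(T,\rho)$ and $L(T,\rho)=P_1(T,\rho)$, and Theorem~\ref{thm:deligne} then provides the claimed $\iota$-mixed-of-$q$-weights-$\leq w+1$ property of $\LC$ and $\iota$-pure-of-$q$-weight-$(w+1)$ property of $L$. For the ``largest $\iota$-pure factor of $q$-weight $w+1$'' statement, the ratio $\LC(T,\rho)/L(T,\rho)$ equals $1/\MC(T,\rho)$, which up to inversion is the $L$-function of $\ME{\rho}|_Z$; Lemma~\ref{lem:weight-over-Z} shows this is $\iota$-mixed of weights $\leq w$, so it contributes no zero of weight $w+1$, hence every $(w+1)$-weight zero of $\LC$ is already a zero of $L$.

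The main subtlety I expect is the identification $P_{2,\CC}=P_2$ via the middle-extension property (which is what really links the condition ``$\LC$ a polynomial'' to ``$L$ a polynomial''); once that is in place, the rest is a straightforward assembly of earlier results.
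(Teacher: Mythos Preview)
Your proposal is correct and follows essentially the same route as the paper: Euler-characteristic formulas for the degrees, Corollary~\ref{cor:poly-L-function} and Corollary~\ref{cor:pure-trivial-invs} for the equivalences and polynomial identifications, Theorem~\ref{thm:deligne} for the weight bounds, and Lemma~\ref{lem:weight-over-Z} for the ``largest pure factor'' claim. You are in fact slightly more explicit than the paper on two points: you spell out the identification $P_{2,\CC}(T,\rho)=P_2(T,\rho)$ via \eqref{eq:ff-ex-seq-piece} and the middle-extension property (the paper invokes Corollary~\ref{cor:poly-L-function}, which only speaks of $P_2$, without comment), and you observe that the degree identities already hold at the level of rational functions, independent of purity, whereas the paper establishes them only after assuming the equivalent conditions.
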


%
%

\begin{proof}
Suppose $\rho$ is punctually $\iota$-pure.  Corollary~\ref{cor:poly-L-function} implies that it has trivial geometric invariants if and only if $L(T,\rho)$ is a polynomial if and only if $P_{2,\CC}(T,\rho)$ is trivial, so suppose these equivalent conditions hold.  On one hand, Corollary~\ref{cor:pure-trivial-invs} implies $L(T,\rho)=P_1(T,\rho)$ and $\LC(T,\rho)=P_{1,\CC}(T,\rho)$, so both are polynomials are claimed.  Moreover, Proposition~\ref{prop:chi-Ponet} implies
$$
	\deg(L(T,\rho))
	=
	\dr(\rho)
	+
	\swan(\rho)
	-
	2\cdot\dim(\Vl)
	=
	\degL(\rho)
$$
and Corollary~\ref{cor:chi_c} implies
 $$
	\deg(\LC(T,\rho))
	=
	-\chi_c(\Aonet[1/\Q],\ME{\rho})
	=
	\rC(\rho)
$$
as claimed.  On the other hand, Theorem~\ref{thm:deligne} implies $L(T,\rho)$ is pure of $q$-weight $w+1$ and $\LC(T,\rho)$ is mixed of $q$-weights $\leq w+1$ since $\rho$ is punctually pure of weight $w$.  Moreover, Lemma~\ref{lem:weight-over-Z} implies that $\LC(T,\rho)/L(T,\rho)=1/\MC(T,\rho)$ is a polynomial which is $\iota$-mixed of $q$-weights $\leq w$, so $L(T,\rho)$ is the largest $\iota$-pure factor of $\LC(T,\rho)$ of $q$-weight $w+1$ as claimed.
\end{proof}


\section{Twisted $L$-functions}\label{sec:twisted-l-functions}

Recall we have a finite-dimensional $\El$-vector space $V$ and a (continuous) representation
$$
	\rho\colon \GKS\to\GL(V).
$$
We fix a field embedding $\iota\colon\Qbar\to\bbC$ and suppose $\rho$ is punctually $\iota$-pure of weight $w$ so that we can apply the results of the previous section.

Let $\s,\Q\in\Fq[t]$ be monic and square free, and suppose $\SS\sub\PP$ is the finite subset consisting of $\infty$ and $v(\pi)$ for every prime factor $\pi$ of $\s$.  Let $\CC\sub\PP$ be defined similarly and \defi{$\RR=\SS\cup\CC$}.

Let \defi{$\BQ$} be the finite group $\BQFq$ and \defi{$\PhiQ$} be the dual group of all Dirichlet characters
$$
	\dc\colon
	\BQ\to\Qlbartimes
$$
of conductor dividing $\Q$.  For each $\dc$, we define a twisted representation
$$
	\rhochi\colon \GKR\to\GL(V_\dc)
$$
where $V_\dc=V$ as $\El$-vector spaces (see \S\ref{subsec:tensor-products}).  We show that $\rhochi$ is also punctually $\iota$-pure of weight $w$ and that the corresponding $L$-functions
$$
	L(T,\rhochi)
	=
	\prod_{v\in\PP} L(T^{d_v},(\rhochi)_v)^{-1},
	\quad
	\LC(T,\rhochi)
	=
	\prod_{v\not\in\CC} L(T^{d_v},(\rhochi)_v)^{-1}
$$
are $\iota$-mixed (see \S\ref{subsec:tensor-products} and \S\ref{subsec:induced-representations}).

\newcommand\thmB{
Suppose $\rho$ is $\iota$-pure of weight $w$ and $\dc\in\PhiQ$.  Then
$$
	\deg(\LC(T,\rhochi))
	=
	\rC(\rho)
	=
	\deg(L(T,\rho)) + (\deg(\Q)+1)\dim(V) - \dropCee{\rho}.
$$
Moreover, $\rhochi$ has trivial geometric invariants if and only if $L(T,\rhochi)$ is a polynomial if and only if $\PC{2}(T,\rhochi)$ is trivial.  If these conditions hold, then $\LC(T,\rhochi)$ is the polynomial $\PC{1}(T)$ and is $\iota$-mixed of $q$-weights $\leq w+1$, and $L(T,\rhochi)$ is its largest $\iota$-pure factor of $q$-weight $w+1$.
}

\begin{theorem}\label{thmB}
\thmB
\end{theorem}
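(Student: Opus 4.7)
The plan is to apply Theorem~\ref{thm:archimedean-bound} directly to the twisted representation $\rhochi\colon\GKR\to\GL(V_\dc)$, viewed as ramified only in $\RR=\SS\cup\CC$. Once one verifies that $\rhochi$ is punctually $\iota$-pure of weight $w$ and that the numerical invariants satisfy $\rC(\rhochi)=\rC(\rho)$, every structural claim in Theorem~\ref{thmB} --- the equivalence of triviality of geometric invariants, polynomiality of $L(T,\rhochi)$, and triviality of $\PC{2}(T,\rhochi)$, together with the identification $\LC(T,\rhochi)=\PC{1}(T,\rhochi)$, the $\iota$-mixed bound of weights $\leq w+1$, and the description of $L(T,\rhochi)$ as the largest $\iota$-pure factor of $\LC(T,\rhochi)$ of weight $w+1$ --- is immediate from Theorem~\ref{thm:archimedean-bound}.

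For punctual purity, one observes that $\dc$ factors through the finite group $\BQ$ and hence takes values in roots of unity in $\Qlbartimes$. At every place $v\notin\RR$ both $\rho$ and $\dc$ are unramified, so $(\rhochi)_v$ is the unramified twist of $\rho_v$ by the character sending $\Frob_v$ to the scalar $\dc(\Frob_v)$. Thus the Frobenius eigenvalues on $V_\dc^{I(v)}=V^{I(v)}$ are those of $\rho_v(\Frob_v)$ multiplied by a root of unity; $\iota$-absolute values are preserved, and $\rhochi$ is punctually $\iota$-pure of weight $w$.

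The identity $\rC(\rhochi)=\rC(\rho)$ is the main computation and the step that requires genuine local analysis. Since $\Q$ is square-free and $\dc$ is unramified at $\infty$, the character $\dc$ is tame at every place: the wild inertia $P(v)$ acts trivially through $\dc$, so $\swan_v(\rhochi)=\swan_v(\rho)$ for every $v\in\PP$. For $v\notin\CC$ the character $\dc$ is unramified at $v$, whence $V_\dc^{I(v)}=V^{I(v)}$ and $\dr_v(\rhochi)=\dr_v(\rho)$. Combining with $\dr(\rho)-\dropCee{\rho}=\sum_{v\notin\CC}d_v\dr_v(\rho)$ and the analogous identity for $\rhochi$ yields
$$
\dr(\rhochi)-\dropCee{\rhochi}+\swan(\rhochi)
=
\dr(\rho)-\dropCee{\rho}+\swan(\rho).
$$
Since $\dim(V_\dc)=\dim(V)$, definition~\eqref{eq:def-rC} gives $\rC(\rhochi)=\rC(\rho)$, and Theorem~\ref{thm:archimedean-bound} applied to $\rhochi$ therefore yields $\deg\LC(T,\rhochi)=\rC(\rho)$.

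The remaining equality is bookkeeping. Theorem~\ref{thm:archimedean-bound} gives $\deg L(T,\rho)=\degL(\rho)=\dr(\rho)+\swan(\rho)-2\dim(V)$, and comparing this to \eqref{eq:def-rC} one computes
$$
\rC(\rho)-\deg L(T,\rho)
=
(\deg(\Q)+1)\dim(V)-\dropCee{\rho},
$$
which completes the chain of equalities in the statement. I expect the only place requiring care to be the invariance of Swan and drop data under tame twisting in the third paragraph; everything else in the proof is either a formal consequence of Theorem~\ref{thm:archimedean-bound} or an arithmetic rearrangement of the definitions.
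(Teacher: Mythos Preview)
Your proof is correct and follows essentially the same route as the paper: verify punctual $\iota$-purity of $\rhochi$ (the paper's Lemma~\ref{lem:rhochi-pure}.\ref{lem:item:rhochi-pure--purity}), check that the local invariants away from $\CC$ and the Swan conductors are unchanged under the tame twist so that $\rC(\rhochi)=\rC(\rho)$ (the paper's Lemma following \ref{lem:rhochi-pure} and Corollary~\ref{cor:rC-independent-of-chi}), and then invoke Theorem~\ref{thm:archimedean-bound}. One small correction: $\dc$ need not be unramified at $\infty$ (it is ramified there whenever $\dc|_{\Fq^\times}$ is nontrivial), but it is always \emph{tame} at $\infty$ since $|\BQ|$ is prime to $p$, and tameness is all you use.
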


\noindent
The proof is in \S\ref{sec:proof-of-thmB}.


\subsection{Dirichlet characters}

By definition, each $\dc\in\PhiQ$ is a homomorphism $\BQ\to\Qlbartimes$.  There is also a quotient $G_{K,\CC}\onto\BQ$ from abelian class field theory, and we write
$$
	\dcc\colon 
	G_{K,\CC}\to\GL_1(\El)
$$
for the composition of these maps and the canonical isomorphism $\Qlbartimes\to\GL_1(\El)$.  The corresponding middle-extension sheaf \defi{$\ME{\dc}$} is a so-called Kummer sheaf.  It is tamely ramified over $\CC$ since the hypothesis that $\Q$ is square free implies that $\BQ$ has order prime to $p$ and thus $\dcc(P(t))$ is trivial for every $t\in\CC$.

There is a natural quotient $\GKR\onto\GKC$ since $\CC\seq\RR$, and we write \defi{$\dcr$} for the composition of this quotient and $\dcc$.


\subsection{Tensor products}\label{subsec:tensor-products}

The \defi{tensor product} of $\rho$ and $\dc$ is the representation
$$
	\rho\otimes\dc \colon \GKR\to\GL(V_\dc)
$$

\smallskip\noindent
given by $(\rhochi)(g)=\rho(g)\dcc(g)$ where $V_\dc=V$ as $\El$-vector spaces.  The corresponding Euler factors are given by
$$
	L(T,(\rhochi)_v)
	=
	\det(1 - T\,(\rhochi)_v(\Frob_v)\mid V^{I(v)}_\dc),
$$
and in particular,
\begin{equation}\label{eq:twisted-euler}
	L(T,(\rhochi)_v)
	=
	L(\dcc(\Frob_v)T,\rho_v)
\end{equation}
for $v\not\in\CC$.

\begin{lemma}\label{lem:rhochi-pure}\ 

\begin{enum}
\item\label{lem:item:rhochi-pure--simplicity} If $\rho$ is geometrically simple, then so is $\rhochi$.
\item\label{lem:item:rhochi-pure--purity} If $\rho$ is punctually $\iota$-pure of weight $w$, then so is $\rhochi$.
\end{enum}
\end{lemma}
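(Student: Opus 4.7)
The plan is to prove each part by exploiting the fact that twisting by the one-dimensional character $\dc$ alters the action of the Galois group only by scalars. For part \eqref{lem:item:rhochi-pure--simplicity}, I would pull $\rho$ back to $\GKR$ via the canonical quotient $\GKR \onto \GKS$ so that both $\rho$ and $\rhochi$ act on the same underlying vector space $V = V_\dc$. For any $g \in \GKR$ and any subspace $W \seq V$ one has $\rho(g)(W) = \rhochi(g)(W)$, since $\dcc(g)$ acts on $V$ as a scalar; hence the lattice of $\GKR$-stable subspaces is the same for $\rho$ and $\rhochi$, and likewise after restriction to the geometric subgroup $\bar{G}_{K,\RR}$. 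Because the image of $\bar{G}_{K,\RR}$ in $\GKS$ equals $\GKSbar$, these invariant subspaces match those of the original $\rho$ viewed as a $\GKSbar$-module, so both geometric irreducibility and geometric semisimplicity are preserved under the twist, and $\rhochi$ is geometrically simple.

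For part \eqref{lem:item:rhochi-pure--purity}, I would fix $v \in \PP \ssm \RR$ and use \eqref{eq:twisted-euler}, which gives $L(T,(\rhochi)_v) = L(\dcc(\Frob_v)T, \rho_v)$, so the zeros of this Euler factor are obtained from those of $L(T,\rho_v)$ by dividing by the scalar $\dcc(\Frob_v)$. It therefore suffices to show that $|\iota(\dcc(\Frob_v))| = 1$. Since $\Q$ is square-free, the Chinese Remainder Theorem gives $\BQ \cong \prod_{\pi \mid \Q} \Fpi^\times$, whose order $\prod_{\pi \mid \Q}(q^{d_\pi}-1)$ is coprime to $p$. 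Consequently $\dc$ takes values in the roots of unity of $\Qlbartimes$, each of which is algebraic of complex absolute value $1$ under any embedding $\iota\colon\Qbar\to\bbC$. Combining this with the purity hypothesis on $\rho$ yields $|\iota(\dcc(\Frob_v)\beta_i)|^2 = q^{d_v w}$ for each eigenvalue $\beta_i$ of $\rho(\Frob_v)$ on $V^{I(v)}$, proving $L(T,(\rhochi)_v)$ is $\iota$-pure of $q^{d_v}$-weight $w$.

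Both parts are essentially formal once one unwinds the definitions, so I do not anticipate a real obstacle. The only points that call for a little care are keeping straight the several profinite groups in play ($\GKS$, $\GKC$, $\GKR$, and their geometric counterparts) and noting at the outset that square-freeness of $\Q$ forces $\BQ$ to have order coprime to $p$, which is what places $\dc$ in the unit circle after applying $\iota$.
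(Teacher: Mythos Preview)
Your proposal is correct and follows essentially the same approach as the paper's proof: for part~\eqref{lem:item:rhochi-pure--simplicity} you observe that twisting by a scalar-valued character does not change the lattice of invariant subspaces (the paper phrases this as $W_\dc\mapsto W_\dc\otimes\bar\dc$), and for part~\eqref{lem:item:rhochi-pure--purity} you use \eqref{eq:twisted-euler} together with the fact that $\dcc(\Frob_v)$ is a root of unity. If anything, you are slightly more careful than the paper in tracking the passage from $\bar G_{K,\RR}$ to $\GKSbar$ in part~\eqref{lem:item:rhochi-pure--simplicity} and in restricting to $v\notin\RR$ in part~\eqref{lem:item:rhochi-pure--purity}.
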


\begin{proof}
If $W_\dc\seq V_\dc$ be a $\bar{G}_{K,\RR}$-invariant subspace, then $W=W_\dc\otimes\bar\dc$ is a $\bar{G}_{K,\RR}$-invariant subspace.  Moreover, if $\rho$ is geometrically simple, then $W$ equals $\ZeroSpace$ or $V$, hence $W_\dc$ equals $\ZeroSpace$ or $V_\dc$.  Thus \eqref{lem:item:rhochi-pure--simplicity} holds.

Observe that $\zeta=\dcc(\Frob_v)$ is a root of unity since $\BQ$ has finite order, hence $\zeta\in\Qbar$ and $|\iota(\zeta)|^2=1$.  If $v\not\in\CC$ and if $\alpha\in\Qbar$ is a zero of $L(T,(\rhochi)_v)$, then \eqref{eq:twisted-euler} implies that $\alpha/\zeta$ is a zero of $L(T,\rho_v)$.  In particular, $|\alpha|^2=|\alpha/\zeta|^2=(1/q^{d_v})^w$, hence $L(T^{d_v},(\rhochi)_v)$ is $\iota$-pure of $q$-weight $w$ for almost all $v$.  Thus \eqref{lem:item:rhochi-pure--purity} holds.
\end{proof}

Therefore we can apply Theorem~\ref{thm:archimedean-bound} to $\rhochi$.

\begin{lemma}
$
	\dr(\rhochi)
	-
	\dr(\rho)
	=
	\dropCee{\rhochi}
	-
	\dropCee{\rho}
$
and
$
	\swan(\rhochi)
	=
	\swan(\rho).
$
\end{lemma}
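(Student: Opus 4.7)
The plan is to exploit the fact that since $\Q$ is square-free, the group $\BQ = \BQFq$ has order prime to $p$; consequently the character $\dcc$ has image of order prime to $p$ and so must kill the wild inertia $P(v)$ at every place $v$. In other words, $\dcc$ is tame everywhere, and moreover unramified outside $\CC$. Both identities should reduce to this single observation.

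For the Swan identity, the approach is to recall that $\swan_v(\rho)$ depends only on the restriction of $V$ to the higher ramification subgroups $G^i \seq P(v)$ for $i>0$ (in the upper numbering). Since $\dcc$ is trivial on $P(v) \supseteq G^i$, the $G^i$-module structures of $V_\dc$ and $V$ coincide for every $i>0$, giving $\swan_v(\rhochi) = \swan_v(\rho)$ place by place. Summing with weights $d_v$ then yields $\swan(\rhochi) = \swan(\rho)$.

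For the drop identity, the key reduction is to show $\dr_v(\rhochi) = \dr_v(\rho)$ for every $v \notin \CC$. At such a $v$ the character $\dcc$ is unramified, so $\rhochi$ and $\rho$ agree as $I(v)$-representations on the common underlying space $V_\dc = V$; in particular their $I(v)$-invariant subspaces coincide, forcing the local drops to match. The global sums defining $\dr(\rhochi) - \dr(\rho)$ then collapse to contributions from $v \in \CC$ only, which is by definition $\dropCee{\rhochi} - \dropCee{\rho}$.

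I do not anticipate any real obstacle; the only subtle point, and the one place where the hypotheses must be invoked carefully, is the use of square-freeness of $\Q$ to ensure that $\dcc$ is tame at every $v \in \CC$ (not merely outside $\CC$), which is precisely what keeps the Swan conductors unchanged at the ramified places of the twist.
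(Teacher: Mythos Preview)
Your proposal is correct and follows essentially the same argument as the paper: both identities reduce to the observation that $\dcc$ is tame everywhere (so local Swan conductors are unchanged by the twist) and unramified outside $\CC$ (so the $I(v)$-module structures, and hence the local drops, agree for $v\notin\CC$). Your write-up is slightly more detailed in unpacking the Swan argument via higher ramification groups, but the route is the same.
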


\begin{proof}
If $v\in\PP$, then $\swan_v(\rhochi)=\swan_v(\rho)$ since tensoring with tamely ramified character (e.g., $\dc$) does not change the local Swan conductor.  Moreover, if $v\not\in\CC$, then $V$ and $V_\dc$ are isomorphic as $I(v)$-modules, and thus $L(T,\rho_v)$ and $L(T,(\rhochi)_v)$ have the same degree, that is, $\dr_v(\rhochi)=\dr_v(\rho)$.
\end{proof}

\begin{cor}\label{cor:rC-independent-of-chi}
$
	\rC(\rhochi)
	=
	\rC(\rho).
$
\end{cor}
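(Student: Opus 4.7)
The plan is to prove this by direct substitution into the definition of $\rC$ given in \eqref{eq:def-rC}, using the immediately preceding lemma as essentially the only input.

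Recall that by definition
$$
\rC(\rho) = \dr(\rho) - \dropCee{\rho} + \swan(\rho) + (\deg(\Q)-1)\cdot\dim(\Vl),
$$
and the analogous formula holds for $\rhochi$, with $V_\dc$ playing the role of $V$. Since $V_\dc = V$ as $\El$-vector spaces by the construction in \S\ref{subsec:tensor-products}, the two rank contributions $(\deg(\Q)-1)\cdot\dim(V)$ and $(\deg(\Q)-1)\cdot\dim(V_\dc)$ cancel in the difference $\rC(\rhochi)-\rC(\rho)$.

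Next I would invoke the preceding lemma, which yields the two identities $\swan(\rhochi)=\swan(\rho)$ and $\dr(\rhochi)-\dr(\rho)=\dropCee{\rhochi}-\dropCee{\rho}$. The first identity immediately cancels the Swan-conductor contributions in $\rC(\rhochi)-\rC(\rho)$, while the second identity says precisely that
$$
[\dr(\rhochi)-\dr(\rho)] - [\dropCee{\rhochi}-\dropCee{\rho}] = 0,
$$
which kills the remaining terms. Therefore $\rC(\rhochi)-\rC(\rho)=0$, as claimed.

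There is no real obstacle here: the content has been packaged into the preceding lemma, which exploits two basic facts about twisting by a tamely ramified character $\dc$ of conductor dividing $\Q$, namely that such a twist does not change the local Swan conductor at any place, and that at any place $v\not\in\CC$ the inertia action is unchanged so the local drop is preserved. The corollary is then merely the observation that the definition of $\rC$ was engineered so that the change in $\dr$ from places in $\CC$ is cancelled by the change in $\dropCee{\cdot}$, with everything else unaffected by the twist.
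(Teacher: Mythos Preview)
Your proposal is correct and is essentially the same as the paper's own proof: both substitute the preceding lemma's identities $\swan(\rhochi)=\swan(\rho)$ and $\dr(\rhochi)-\dr(\rho)=\dropCee{\rhochi}-\dropCee{\rho}$ directly into the defining formula \eqref{eq:def-rC}, together with $\dim(V_\dc)=\dim(V)$. The only cosmetic difference is that the paper writes out $\rC(\rhochi)$ and simplifies it to $\rC(\rho)$, whereas you compute the difference and show it vanishes.
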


\begin{proof}
Combine the lemma and \eqref{eq:def-rC} to deduce
\begin{align*}
	\rC(\rhochi)
	& =
	\dr(\rhochi)
	-
	\dropCee{\rhochi}
	+
	\swan(\rhochi)
	+
	(\deg(\Q)-1)\cdot\dim(V) \\
	& =
	\dr(\rho)
	-
	\dropCee{\rho}
	+
	\swan(\rho)
	+
	(\deg(\Q)-1)\cdot\dim(V)
	\ = \rC(\rho)
\end{align*}
as claimed.
\end{proof}


\subsection{Induced representations}\label{subsec:induced-representations}

Let $L=\Fq(u)$ be the subfield of $K$ corresponding to the finite cover $\Q\colon\Ponet\to\Poneu$, and let $\SS$ be a finite set of places in $L$ including those lying below $\RR$ and those which ramify in $L/K$.  Then for each $\dc\in\PhiQ$, we have an induced representation
$$
	\Ind(\rhochi)\colon G_{L,\SS}\to\GL(\Ind(V_\dc))
$$
where $\Ind(V_\dc)$ is a vector space of dimension $n\cdot\dim(V_\dc)$.

\begin{lemma}\label{lem:Ind-rhochi-pure}
If $\rho$ is punctually $\iota$-pure of weight $w$, then so is $\Ind(\rhochi)$.
\end{lemma}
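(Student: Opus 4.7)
The plan is to reduce the statement about the induced representation to the purity of $\rhochi$ already established in Lemma~\ref{lem:rhochi-pure}.\ref{lem:item:rhochi-pure--purity}, via the standard identity expressing the local Euler factors of an induction as a product of local factors of $\rhochi$ over the places above.

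First, I would fix an arbitrary place $w$ of $L$ with $w\not\in\SS$.  By the hypothesis on $\SS$ (which contains all places of $L$ below $\RR$ and all places of $L$ ramified in $K/L$), every place $v\in\PP$ lying above $w$ belongs to $\PP\ssm\RR$, and the extension $K/L$ is unramified at $w$.  In particular, each local Euler factor $L(T,(\rhochi)_v)$ is defined and, by Lemma~\ref{lem:rhochi-pure}.\ref{lem:item:rhochi-pure--purity}, is $\iota$-pure of $q^{d_v}$-weight $w$.

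Second, I would invoke the standard multiplicative formula for the Euler factor of an induction at an unramified place,
\begin{equation*}
	L(T,\Ind(\rhochi)_w)
	=
	\prod_{v\mid w}
	L(T^{f(v|w)},(\rhochi)_v),
\end{equation*}
where $f(v|w)=[\Fv:F_w]$ is the residue degree.  This identity is equivalent to the equality of global $L$-functions $L(s,\Ind(\rhochi))=L(s,\rhochi)$, once one matches Euler products using $d_v=d_w\cdot f(v|w)$ and the fact that both $K$ and $L$ have constant field $\Fq$.

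Third, I would check purity directly on the right-hand side.  Any root $\beta$ of $L(T^{f(v|w)},(\rhochi)_v)$ satisfies $\beta^{f(v|w)}=\alpha$ for some root $\alpha$ of $L(T,(\rhochi)_v)$, and by purity of $\rhochi$ at $v$ we have $|\iota(\alpha)|^{2}=q^{-d_v w}$.  Using $d_v=d_w\cdot f(v|w)$ this gives $|\iota(\beta)|^{2}=q^{-d_w w}$, so $L(T,\Ind(\rhochi)_w)$ is $\iota$-pure of $q^{d_w}$-weight $w$.  Since $w$ was arbitrary in $\PP_L\ssm\SS$, this shows $\Ind(\rhochi)$ is punctually $\iota$-pure of weight $w$.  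I do not anticipate any serious obstacle; the only real bookkeeping is to ensure the substitution $T\mapsto T^{f(v|w)}$ is tracked correctly against the shift from $d_w$-weights to $d_v$-weights, which is exactly where the identity $d_v=d_w f(v|w)$ makes everything consistent.
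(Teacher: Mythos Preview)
Your proof is correct and essentially identical to the paper's: both invoke the local factorization of the Euler factor of the induced representation over the places above and then appeal to Lemma~\ref{lem:rhochi-pure}.\ref{lem:item:rhochi-pure--purity}. The only cosmetic difference is that the paper writes the identity after substituting $T\mapsto T^{d_w}$, so that both sides are directly $\iota$-pure of $q$-weight $w$, whereas you carry out the equivalent weight bookkeeping explicitly via $d_v=d_w\,f(v\mid w)$.
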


\begin{proof}
\newcommand\w{{\bar{v}}}
Let $\w$ be a place in $L$ not lying and $\SS$, and let $v|\w$ denote any place in $K$ lying over $\w$.  Then
$$
	L(T^{\deg(\w)},\Ind(\rhochi)_\w)
	=
	\prod_{v\mid \w}
	L(T^{\deg(v)},(\rhochi)_v).
$$
In particular, Lemma~\ref{lem:rhochi-pure}.\ref{lem:item:rhochi-pure--purity} implies the factors on the right are $\iota$-pure of $q$-weight $w$, so the left side is also $\iota$-pure of $q$-weight $w$.
\end{proof}


\subsection{Proof of Theorem~\ref{thmB}}\label{sec:proof-of-thmB}

If $\rho$ is punctually $\iota$-pure of $q$-weight $w$, then so is $\rhochi$ by Lemma~\ref{lem:rhochi-pure}.\ref{lem:item:rhochi-pure--purity}.  Hence $\rhochi$ also has trivial geometric invariants, then we can apply Theorem~\ref{thm:archimedean-bound}.  In particular, we deduce that $\LC(T,\rhochi)$ and $L(T,\rhochi)$ are polynomials of respective degrees $\rC(\rhochi)$ and $\degL(\rhochi)$, that $\LC(T,\rhochi)$ is $\iota$-mixed of $q$-weights $\leq w+1$, and that $L(T,\rhochi)$ is the largest factor of $\LC(T,\rhochi)$ which is $\iota$-pure of $q$-weight $w+1$.  Finally, we observe that
\begin{eqnarray*}
	\rC(\rhochi)
	& \overset{\mathrm{Cor.~}\ref{cor:rC-independent-of-chi}}{=} &
	\rC(\rho) \\
	& \overset{\eqref{eq:def-rC}}{=} &
	\dr(\rho)
	-
	\dropCee{\rho}
	+
	\swan(\rho)
	+
	(\deg(\Q)-1)\cdot\dim(\Vl) \\
	& \overset{\mathrm{Th.~}\ref{thm:archimedean-bound}}{=} &
	\deg(L(T,\rho))
	+
	(\deg(\Q)+1)\cdot\dim(\Vl)
	-
	\dropCee{\rho}
\end{eqnarray*}
as claimed.


\section{Statement of Equidistribution}\label{sec:equidistribution}

Recall we have an $\El$-vector space $V$ of finite dimension \defi{$\r$} and a (continuous) representation
$$
	\rho\colon \GKS\to\GL(V)
$$
which is punctually pure of weight $w$.  We also have monic square free $\s,\Q\in\Fq[t]$ and corresponding finite subsets $\SS,\CC\sub\PP$ of supporting places.

In this section, we consider the partial $L$-functions $\LC(T,\rhochi)$ as $\chi$ varies over $\PhiQ$ and regard them as a proxy for coefficients in a Mellin transform of $\rho$.  One can easily show that there are hardly any characters $\dc\in\PhiQ$ such that $\rhochi$ has \emph{non-}trivial geometric invariants, and otherwise, having trivial geometric invariants implies $\LC(T,\rhochi)$ is a polynomial in $\bbQbar[T]$ of degree $\R=\rC(\rho)$ by Theorem~\ref{thmB}.  Moreover, the subset
\begin{equation}\label{eq:phi-good}
	\PhiQGood\rho
	=
	\left\{\,
	\dc\in\PhiQ
	:
	\LC(T,\rhochi)=L(T,\rhochi)\in\bbQbar[T]
	\,\right\}	
\end{equation}

\smallskip\noindent
is `big' (see Corollary~\ref{cor:bad-bound-for-PhiQ}) and consists of all $\dc$ for which
\begin{equation}\label{eqn:LCStar}
	\LCStar(T,\rhochi)
	=
	\LC(T/(\sqrt{q})^{1+w},\rhochi)
\end{equation}
is pure of $q$-weight zero by Theorem~\ref{thmB}.   In particular, for each $\dc\in\PhiQGood\rho$, $\LCStar(T,\rhochi)$ is the characteristic polynomial of a unitary element of $\GL_\R(\bbC)$, so there is a unique conjugacy class $\trhochi$ of $U_\R(\bbC)\seq\GL_\R(\bbC)$ whose elements have the same characteristic polynomial.  We would to know whether or not they are equidistributed.

We say the multiset
$
	\ThetaRhoq
	=
	\{\,\trhochi:\dc\in\PhiQGood\rho\,\}
$
of conjugacy classes becomes \defi{equidistributed in $U_\R(\bbC)$ as $q\to\infty$} iff, for every continuous central function $f\colon U_\R(\bbC)\to\bbC$, one has
\begin{equation}\label{eqn:trace-limit}
	\lim_{q\to\infty}
	\frac{1}{|\PhiQGood\rho|}
	\sum_{\dc\in\PhiQGood\rho}f(\trhochi)
	=
	\int_{U_\R(\bbC)} f(\theta)d\theta
\end{equation}
where $d\theta$ is the unique Haar probability measure on $U_\R(\bbC)$.  Equivalently, by the Peter-Weyl theorem, one has equidistribution if and only if for every irreducible finite-dimensional representation $\Lambda\colon U_\R(\bbC)\to\GL_{\dim(\Lambda)}(\bbC)$ and for $f=\Tr\circ\Lambda$, the identity in \eqref{eqn:trace-limit} holds.

In principle, one could try to exhibit equidistribution for all of $\ThetaRhoq$ at once.  Instead we follow Katz and (try to) prove simultaneous and uniform equidistribution for certain one-parameter families of characters.  More precisely, we partition $\PhiQ$ into cosets $\dc\PhiUNu$ of a subgroup $\PhiUNu$ (defined in \S\ref{sec:one-parameter-families}) and (try to) prove equidistribution for characters in
\begin{equation}
	\dc\PhiUNuGood{\rho}=\dc\PhiUNu\cap\PhiQGood\rho.
\end{equation}
Doing so for a single coset is equivalent to showing that an associated monodromy group we denote $\Ggeom{\dc}{\rhol}$ equals $\GL_{\R,\Qellbar}$.  See \S\ref{sec:one-parameter-families}, \S\ref{sec:properties-preserved-by-Q_*}, and \S\ref{subsec:tannakian-monodromy-groups}.

The monodromy group is an algebraic subgroup of $\GL_{\R,\Qellbar}$.  We say the former is \defi{big} iff it equals the latter, and we write
\begin{equation}\label{eqn:big-defn}
	\PhiQBig\rho
	=
	\{\,
	\dc\in\PhiQ
	:
	\Ggeom{\dc}{\rho}\mbox{ is big}
	\,\}
\end{equation}
for the subset of big characters.  We say that the \defi{Mellin transform} of $\rho$ has \defi{big monodromy} in $\GL_{\R,\Qellbar}$ iff
\begin{equation}\label{eqn:big-monodromy-bis}
	|\PhiQBig\rho|
	\sim
	|\PhiQGood\rho|
	\mbox{ as }
	q\to\infty,
\end{equation}
or equivalently (cf.~Corollary~\ref{cor:bad-bound-for-PhiQ}),
\begin{equation}\label{eqn:big-monodromy}
	|\PhiQBig\rho|
	\sim
	|\PhiQ|
	\mbox{ as }
	q\to\infty.
\end{equation}

\begin{theorem}\label{thm:big-monodromy-implies-equidistribution}
Suppose $\rho$ is punctually $\iota$-pure and $\dc$ is in $\PhiQBig\rho$.
Let $\Lambda\colon U_\R(\bbC)\to\GL_{\dim(\Lambda)}(\bbC)$ be a finite-dimensional representation.  If $q$ is sufficiently large, then
\begin{equation}\label{eqn:sum-to-integral}
	\frac{1}{|\dc\PhiUNuGood{\rho}|}
	\sum_{\dc'\in\dc\PhiUNuGood{\rho}}
	\Tr\,\Lambda(\trhochip)
	=
	\int_{U_\R(\bbC)}\Tr\,\Lambda(\theta)\,d\theta
	+
	o(1)
	\mbox{ as }
	q\to\infty,
\end{equation}
and the implicit constant depends only on $\r=\dim(V)$ and $\dim(\Lambda)$.  In particular, if the Mellin transform of $\rho$ has big monodromy, then $\ThetaRhoq$ is equidistributed in $U_\R(\bbC)$.
\end{theorem}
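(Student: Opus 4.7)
The plan is to apply Deligne's equidistribution philosophy in the form developed by Katz: realize the conjugacy classes $\trhochip$ as Frobenius classes of a lisse sheaf on a geometric parameter scheme, and exploit $\Ggeom{\dc}{\rho}=\GL_{\R,\Qellbar}$ to control the cohomology of every companion sheaf $\LL_\Lambda$. First, by linearity and Peter--Weyl it suffices to treat irreducible $\Lambda$. Then the right-hand side of \eqref{eqn:sum-to-integral} equals $\dim(\Lambda^{U_\R(\bbC)})$, which is $1$ for the trivial representation (when both sides reduce to an obvious identity) and $0$ otherwise, so the task is to show the trace sum is $o(1)$ for each nontrivial irreducible $\Lambda$.

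Next, I would invoke the construction recalled in \S\ref{sec:one-parameter-families} and \S\ref{subsec:tannakian-monodromy-groups} to realise the coset $\dc\PhiUNu$ as the set of $\Fq$-rational points of an $\Fq$-scheme $\mathcal{X}$, carrying a lisse sheaf $\FF$ of rank $\R$, pure of weight $0$ after the rescaling \eqref{eqn:LCStar}, whose geometric monodromy image is $\Ggeom{\dc}{\rho}$ and whose Frobenius trace at $\dc'$ reproduces $\Tr\,\trhochip$. Composing with $\Lambda$ yields a lisse sheaf $\LL_\Lambda$ on $\mathcal{X}$, still pure of weight $0$. Because $\Ggeom{\dc}{\rho}=\GL_{\R,\Qellbar}$ acts on $\Lambda$ through a nontrivial irreducible representation, $\LL_\Lambda$ has neither geometric invariants nor coinvariants, and \eqref{eqn:invariants-and-coinvariants} then forces $H^0_c(\bar{\mathcal{X}},\LL_\Lambda)=H^2_c(\bar{\mathcal{X}},\LL_\Lambda)=0$.

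At that point the Grothendieck--Lefschetz trace formula collapses the sum over $\dc\PhiUNu(\Fq)$ to a single cohomological term $-\Tr(\Frob_q\mid H^1_c(\bar{\mathcal{X}},\LL_\Lambda))$, and Deligne's Riemann hypothesis bounds this by $\sqrt{q}\cdot \dim H^1_c(\bar{\mathcal{X}},\LL_\Lambda)$. Passing from the raw sum over $\dc\PhiUNu(\Fq)$ to the sum over $\dc\PhiUNuGood{\rho}$ costs only an $O(1)$ discrepancy by Corollary~\ref{cor:bad-bound-for-PhiQ}, and dividing by $|\dc\PhiUNuGood{\rho}|\asymp q$ yields the desired $O(q^{-1/2})$ estimate. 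The final sentence of the theorem then follows by averaging \eqref{eqn:sum-to-integral} over all cosets $\dc\PhiUNu$ and using \eqref{eqn:big-monodromy} to render the contribution from cosets outside $\PhiQBig\rho$ negligible.

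The hard part will be pinning down an Euler--Poincar\'e bound on $\dim H^1_c(\bar{\mathcal{X}},\LL_\Lambda)$ depending only on $\r$ and $\dim\Lambda$, uniformly in $q$ and $\dc$. This requires uniform control on the rank, drops, and Swan conductors of $\LL_\Lambda$ along the family, which rests on the tameness of the Dirichlet characters in $\PhiUNu$ (guaranteed by the square-freeness of $\Q$) and on the structural description of $\mathcal{X}$ and $\FF$ supplied by the Tannakian framework of \cite{Katz:CE}.
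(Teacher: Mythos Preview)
Your overall architecture---reduce to irreducible $\Lambda$, handle the trivial case trivially, and for nontrivial $\Lambda$ show the average is $O(q^{-1/2})$ via cohomology vanishing plus Deligne---matches the paper, as does your final averaging argument over cosets to deduce equidistribution of $\ThetaRhoq$.  However, the paper does not carry out the middle step itself: it simply invokes \cite[7.5]{Katz:CE} as a black box to obtain the explicit bound
\[
\frac{1}{|\dc\PhiUNuGood{\rho}|}\left|\sum_{\dc'\in\dc\PhiUNuGood{\rho}}\Tr\,\Lambda(\trhochip)\right|\le(\dim V+\dim\Lambda)\Bigl(\tfrac{1}{\sqrt{q}}+\tfrac{1}{q^{3/2}}\Bigr),
\]
and the rest is bookkeeping.  (Minor point: the bound on bad characters you want is Corollary~\ref{cor:bad-bound}, not Corollary~\ref{cor:bad-bound-for-PhiQ}.)

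Your sketch of how \cite[7.5]{Katz:CE} ought to go contains a genuine conceptual slip.  You propose to realise $\dc\PhiUNu$ as the $\Fq$-points of a scheme $\mathcal{X}$ carrying a lisse sheaf $\FF$ whose geometric monodromy is $\Ggeom{\dc}{\rho}$, and then form a companion sheaf $\LL_\Lambda$ on $\mathcal{X}$.  But the Tannakian monodromy groups here are \emph{not} monodromy groups of a lisse sheaf on a parameter curve in the classical Deligne sense: they are automorphism groups of a fibre functor on the Tannakian category $(\PP_\arith,\midstar)$ of perverse sheaves on $\Gm$ under middle convolution (see Appendix~\ref{sec:tannakian-appendix}).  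The correct mechanism is that an irreducible $\Lambda$ corresponds to an \emph{object} $N_\Lambda$ in this category---a perverse sheaf on $\Gm$, not a lisse sheaf on some external $\mathcal{X}$---with $\chi(\GmBar,N_\Lambda)=\dim\Lambda$, and $\Tr\,\Lambda(\trhochip)$ is recovered from the Frobenius action on $\omega_\alpha(N_\Lambda)$ for the Kummer twist by $\alpha\in\PhiU$.  Summing over $\alpha$ and applying Grothendieck--Lefschetz to $N_\Lambda\otimes\LL_\alpha$ on $\Gm$ (not to $\LL_\Lambda$ on $\mathcal{X}$) yields the bound; the Euler--Poincar\'e constant you worry about at the end is exactly $\chi(\GmBar,N_\Lambda)=\dim\Lambda$ plus the number of bad characters, which is how the factor $\dim V+\dim\Lambda$ arises.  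So the ``hard part'' you flag is real, but it lives inside Katz's convolution formalism, not in the classical lisse-sheaf picture you describe.
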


\noindent
The proof is in \S\ref{sec:proof-of-thmD}.

\begin{remark}
Observe that the $q$-weight $w$ of $\rho$ plays no role in the statement of the theorem.  This is because we factored out the weight in the normalization \eqref{eqn:LCStar}.  Another way to achieve the same renormalization is to replace $\rho$ by an appropriate Tate twist so that $w=-1$ and 
$
	\LCStar(T,\rhochi)
	=
	\LC(T,\rhochi).
$
\end{remark}


\subsection{Reduction to $\bbG_m$}

Let $\Ponet$ and $\Poneu$ denote the projective $t$-line and $u$-line respectively, and let $\kPu=\Fq(u)$.  The function-field embedding $\kPu\to K$ generated by $u\mapsto\Q$ corresponds to a finite morphism $\Q\colon \Ponet\to\Poneu$.  The morphism has generic degree $n=\deg(\Q)$ and is generically etale since $\Q$ is square free of degree $n$, and it fits in a commutative diagram
$$
	\xymatrix{
	\Aonet[1/\Q]\ar[r]\ar[d]_\Q & \Ponet\ar[d]^\Q & \div(\Q)\ar[l]\ar[d]^\Q \\
	\bbG_m\ar[r] & \Poneu & \{0,\infty\}\ar[l]
	}
$$
where the outer vertical maps are finite morphisms.  There are canonical identifications of $\div(\Q)$ with $\CC$ and $\{0,\infty\}$ with a set $\CCp$ composed of two places of the function field $\Fq(u)$.  

For any sheaf $\FF$ on the $t$-line, one can define the direct image sheaf $\Q_*\FF$.

On one hand, the geometric generic fiber of $\FF=\Q_*\ME{\rho}$ is the induced representation
$$
	\Ind(\rho) \colon G_{K'}\to\GL(\Ind(V))
$$
where $\Ind(V)$ is a vector space of dimension $n\cdot\dim(V)$ (cf.~\cite[II.3.1.e]{Milne}).  
Moreover, if $\ubar$ is a geometric closed point of $\Poneu$, that is, a closed point of $\Ponet\times_{\Fq}\Fqbar$, and if $\Q^{-1}(\ubar)=\{\tbar_1,\ldots,\tbar_m\}\sub\Ponet\times_{\Fq}\Fqbar$, then the various geometric fibers satisfy
\begin{equation}\label{eq:direct-image-fiber}
	(\Q_*\FF)_{\ubar}
	=
	H^0(\ubar,\Q_*\FF)
	=
	\bigoplus_{i=1}^m H^0(\tbar_i,\FF)
	=
	\bigoplus_{i=1}^m \FF_{\tbar_i}
\end{equation}
as $\Qellbar$-vector spaces (cf.~\cite[II.3.5.c]{Milne}).  In particular, if $\FF$ is supported on $\Aonet[1/\Q]$, then $\Q_*\FF$ is supported on $\bbG_m$.

On the other hand, the functorial properties of $\Q_*$ yield canonical isomorphisms
\begin{equation}\label{eq:cohomology-comparison}
	H^n(\PonetBar,\FF)
	=
	H^n(\PonetBar,\Q_*\FF)
	\mbox{\ \ and\ \ }
	H^n_c(\AonetBar[1/\Q],\FF)
	=
	H^n_c(\bar\bbG_m,\Q_*\FF)
\end{equation}
for each $n$.  For example, $\Q_*$ is exact since $\Q$ is a finite map, so the first identity in \eqref{eq:cohomology-comparison} is a consequence of the (trivial) Leray spectral sequence (cf.~\cite[II.3.6 and III.1.18]{Milne}).  In particular, the identities \eqref{eq:L-fun:frac:partial}, \eqref{eq:L-fun:frac}, and \eqref{eq:cohomology-comparison} jointly imply that
\begin{equation}
	L(T,\ME{\rhochi})
	=
	L(T,\Q_*\ME{\rhochi})
	\mbox{\ and\ }
	\LC(T,\ME{\rhochi})
	=
	L_{\CCp}(T,\Q_*\ME{\rhochi})
\end{equation}
for $\dc\in\PhiQ$.


\subsection{One-parameter families}\label{sec:one-parameter-families}

Recall $\Q\in\Fq[t]\sub\kPt$ is monic and square free and $\kPu\to\kPt$ is the function-field embedding which sends $u$ to $\Q$.  The norm map $\kPt\to\kPu$ is multiplicative and sends $t$ to $(-1)^{n}u$ for $n=\deg(\Q)$.  It also induces homomorphisms
$$
	\nu
	\colon
	\BQ
	\to
	\Bu
	\mbox{\ \ and\ \ }
	\nu^*
	\colon
	\PhiU
	\to
	\PhiQ
$$
where $\Bu=(\Fq[u]/u\Fq[u])^\times$ and $\PhiU$ is its dual.  In particular, $\nu$ is surjective, so its dual $\nu^*$ is injective, and we can identify $\PhiU$ with its image $\PhiUNu$.  Moreover, as the following lemma shows, twisting by elements of the coset $\dc\PhiUNu$ is the `same' as twisting by elements of $\PhiU$.

\begin{lemma}\label{lem:direct-image-of-middle-extension}
Let $\dc\in\PhiQ$ and $\alpha\in\PhiU$.

\medskip
\begin{enum}
\item\label{lem:ind-is-me} $\Q_*\ME{\rhochi}$ is isomorphic to $\ME{\Ind(\rhochi)}$.
\item\label{lem:ind-projection} $\Q_*\ME{\rhochi\alpha^\nu}$ is isomorphic to $\ME{\Ind(\rhochi)\otimes\alpha}$.
\end{enum}
\end{lemma}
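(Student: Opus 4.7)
The plan is to invoke Proposition~\ref{prop:assoc-me}: for each claimed isomorphism, it suffices to verify that the left-hand side is a middle extension on $\Poneu$ whose geometric generic fiber (as a $G_{K'}$-representation) agrees with that of the right-hand side. For part~\ref{lem:ind-is-me}, shrink to a dense open $V\seq\Poneu$ over which $\Q$ is finite étale — possible because $\Q$ is separable — and further so that $\ME{\rhochi}$ is lisse on $U:=\Q^{-1}(V)$. Write $j\colon U\to\Ponet$ and $j_V\colon V\to\Poneu$ for the inclusions. The middle-extension property gives $\ME{\rhochi}\cong j_*j^*\ME{\rhochi}$, and combining the factorization $\Q\circ j = j_V\circ(\Q|_U)$ with the exactness of $\Q_*$ for a finite map yields
$$
\Q_*\ME{\rhochi} \cong (j_V)_*\,(\Q|_U)_*\,(j^*\ME{\rhochi}).
$$
Since $\Q|_U\colon U\to V$ is finite étale and $j^*\ME{\rhochi}$ is lisse, $(\Q|_U)_*(j^*\ME{\rhochi})$ is lisse on $V$, so $(j_V)_*$ of it is a middle extension by Lemma~\ref{lem:lisse-vs-middle}.\ref{part:lisse-to-middle}. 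Its geometric generic fiber, computed via \eqref{eq:direct-image-fiber} at the geometric generic point of $\Poneu$, is the direct sum of fibers of $\ME{\rhochi}$ over the geometric points of $\Ponet$ lying above — precisely the induced $G_{K'}$-module $\Ind(\rhochi)$. Proposition~\ref{prop:assoc-me} completes part~\ref{lem:ind-is-me}.

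For part~\ref{lem:ind-projection}, the construction $\alpha\mapsto\alpha^\nu$ was by design compatible with pulling class field theory back along $\Q$, so $\Q^*\ME{\alpha}$ is a middle extension on $\Ponet$ with generic fiber $\El(\alpha^\nu)$ and thus $\Q^*\ME{\alpha}\cong\ME{\alpha^\nu}$ by Proposition~\ref{prop:assoc-me}. The projection formula for the finite proper map $\Q$ then gives
$$
\Q_*\ME{\rhochi\alpha^\nu} \cong \Q_*\bigl(\ME{\rhochi}\otimes\Q^*\ME{\alpha}\bigr) \cong (\Q_*\ME{\rhochi})\otimes\ME{\alpha},
$$
where the first isomorphism uses $\ME{\rhochi}\otimes\ME{\alpha^\nu}\cong\ME{\rho\otimes\dc\otimes\alpha^\nu}$, itself an instance of Proposition~\ref{prop:assoc-me} since the tensor product is a middle extension on $\Ponet$ with the expected generic fiber. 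Applying part~\ref{lem:ind-is-me} and one final invocation of Proposition~\ref{prop:assoc-me} to $\ME{\Ind(\rhochi)}\otimes\ME{\alpha}$ yields
$$
(\Q_*\ME{\rhochi})\otimes\ME{\alpha} \cong \ME{\Ind(\rhochi)}\otimes\ME{\alpha} \cong \ME{\Ind(\rhochi)\otimes\alpha}.
$$

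The principal bookkeeping obstacle is checking that the various tensor products really are middle extensions, but this is handled uniformly: restrict to a common dense open where every sheaf in sight is lisse, form the tensor product there, and extend via $j_*$ by Lemma~\ref{lem:lisse-vs-middle}.\ref{part:lisse-to-middle}. Once each left-hand side is known to be a middle extension and its generic fiber verified, the uniqueness clause of Proposition~\ref{prop:assoc-me} absorbs every remaining identification.
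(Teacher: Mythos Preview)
Your approach is essentially the paper's: show the left side is a middle extension with the right generic fiber, then invoke Proposition~\ref{prop:assoc-me}. For part~\ref{lem:ind-is-me} you give a direct argument via the factorization $\Q_*j_* = (j_V)_*(\Q|_U)_*$ and Lemma~\ref{lem:lisse-vs-middle}.\ref{part:lisse-to-middle}, whereas the paper simply cites \cite[3.3.1]{Katz:TLFM} for the fact that $\Q_*$ preserves middle extensions; your version is more self-contained and perfectly fine.

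For part~\ref{lem:ind-projection} there is one imprecision worth naming. You assert mid-proof that $\ME{\rhochi}\otimes\ME{\alpha^\nu}\cong\ME{\rhochi\alpha^\nu}$ ``since the tensor product is a middle extension.'' That is not true in general: at a point $v$ where $\alpha^\nu$ is ramified, the stalk of the naive tensor product is $V_\dc^{I(v)}\otimes 0=0$, while the stalk of $\ME{\rhochi\alpha^\nu}$ is $(V_\dc\otimes\alpha^\nu)^{I(v)}$, which is nonzero whenever $(\alpha^\nu)^{-1}$ occurs as an $I(v)$-eigencharacter in $V_\dc$. So the displayed chain using the projection formula on $\Poneu$ does not hold as written. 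Your final paragraph correctly identifies the repair---restrict to a common open where everything is lisse, apply the projection formula there, then push forward by $j_*$---and once you carry that out (using base change $j_U^*\Q_* = (\Q|_T)_*j_T^*$ for the finite map $\Q$) you recover exactly the paper's argument, which works with $i_*i^*$ and $j_*j^*$ throughout rather than claiming the naive tensor products are middle extensions. The content is right; just don't assert the intermediate isomorphism on all of $\Ponet$.
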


\begin{proof}
By \cite[3.3.1]{Katz:TLFM}, $\Q_*\ME{\rhochi}$ is a middle extension, and since it is generically equal to the middle extension sheaf $\ME{\Ind(\rhochi)}$, Proposition~\ref{prop:assoc-me} implies part \eqref{lem:ind-is-me} holds.

Up to replacing $\rho$ by $\rhochi$, we suppose without loss of generality that $\dc=\chinot$.  Let $T\seq\Ponet$ be a dense Zariski open subset and $U=\Q(T)$.  Suppose that $U\seq\Gm$ so that $\Q^*\ME{\alpha}$ is lisse on $T$, that the restriction $\Q\colon T\to U$ is \etale, and that $\ME{\rho}$ is lisse on $T$.  Let $i\colon T\to\Ponet$ and $j\colon U\to\Poneu$ be the  inclusions.  We have
$$
	\ME{\rho\otimes\alpha^\nu}
	\simeq
	i_*i^*(\ME{\rho\otimes\alpha^\nu})
	\simeq
	i_*i^*(\ME{\rho}\otimes\ME{\alpha^\nu})
	\simeq
	i_*i^*(\ME{\rho}\otimes\Q^*\ME{\alpha})
$$
since each of the sheaves is a middle extensions and lisse on $T$.  Therefore the projection formula implies
$$
	\Q_*\ME{\rho\otimes\alpha^\nu}
	\simeq
	\Q_*(i_*i^*(\ME{\rho}\otimes\Q^*\ME{\alpha}))
	\simeq
	j_*j^*(\Q_*\ME{\rho}\otimes\ME{\alpha})
$$
since each of the sheaves is lisse on $U$ and a middle extension on $\Poneu$ (by part \eqref{lem:ind-is-me}) and since $\Q\colon T\to U$ is \etale.  Finally,
\begin{align*}
	j_*j^*(\Q_*\ME{\rho}\otimes\ME{\alpha})
	\simeq
	j_*j^*(\ME{\Ind(\rho)}\otimes\ME{\alpha})
	\simeq
	\ME{\Ind(\rho)\otimes\alpha}
\end{align*}
and thus part \eqref{lem:ind-projection} holds.
\end{proof}


\subsection{Properties preserved by $\Q_*$}\label{sec:properties-preserved-by-Q_*}

We say a character $\dc\in\PhiQ$ is \defi{good for $\rho$} or simply \defi{good} iff it lies in the subset $\PhiQGood\rho$ defined in \eqref{eq:phi-good}.  When $\Q=t$ and thus $\Aonet[1/\Q]=\bbG_m$, then Lemma~\ref{lem:direct-image-of-middle-extension} and the following lemma together show that our notion of good coincides with that of Katz's (cf.~\cite[Chapter 3]{Katz:CE}):

\begin{lemma}
If $\dc\in\PhiQ$ and $\alpha\in\PhiU$, then the following are equivalent:

\smallskip
\begin{enum}
\item\label{li:good-1} $\dc\alpha^\nu$ is good for $\rho$;
\item\label{li:good-2} $\ME{\rhochi\alpha^\nu}$ is supported on $\Aonet[1/\Q]$;
\item\label{li:good-3} $\ME{\Ind(\rhochi)\otimes\alpha}$ is supported on $\bbG_m$;
\item\label{li:good-4} $\alpha\in\PhiU$ is good (\`a la Katz) for $\Q_*\ME{\rhochi}$.
\end{enum}
\end{lemma}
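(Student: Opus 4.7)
The plan is to establish the chain of equivalences (i) $\Leftrightarrow$ (ii) $\Leftrightarrow$ (iii) $\Leftrightarrow$ (iv) by a direct appeal to earlier results: the first step invokes Corollary~\ref{cor:good-vs}, the second uses Lemma~\ref{lem:direct-image-of-middle-extension}\ref{lem:ind-projection} together with the fibre formula \eqref{eq:direct-image-fiber}, and the third unwinds Katz's notion of goodness via the same lemma and the $u$-line analogue of Corollary~\ref{cor:good-vs}.

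For (i) $\Leftrightarrow$ (ii): two applications of Lemma~\ref{lem:rhochi-pure}\ref{lem:item:rhochi-pure--purity} show that $\rho\otimes\dc\alpha^\nu$ is again punctually $\iota$-pure of weight $w$. Corollary~\ref{cor:good-vs}, applied with $\rho$ replaced by $\rho\otimes\dc\alpha^\nu$, then equates the support condition on $\Aonet[1/\Q]$ with the statement that $\LC(T,\rho\otimes\dc\alpha^\nu)$ is a polynomial that is $\iota$-pure of $q$-weight $w+1$; by Theorem~\ref{thmB} this latter condition holds if and only if $\LC(T,\rho\otimes\dc\alpha^\nu) = L(T,\rho\otimes\dc\alpha^\nu)$, which by \eqref{eq:phi-good} is precisely membership in $\PhiQGood{\rho}$.

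For (ii) $\Leftrightarrow$ (iii): the isomorphism of middle extensions $\Q_*\ME{\rho\otimes\dc\alpha^\nu} \simeq \ME{\Ind(\rho\otimes\dc)\otimes\alpha}$ from Lemma~\ref{lem:direct-image-of-middle-extension}\ref{lem:ind-projection} transports the question to asking whether $\Q_*\ME{\rho\otimes\dc\alpha^\nu}$ is supported on $\Gm$. Since $\Q^{-1}(\Gm) = \Aonet[1/\Q]$, $\Q^{-1}(0) = \div(\Q)$, and $\Q^{-1}(\infty) = \{\infty\}$, the fibre identity \eqref{eq:direct-image-fiber} reads
$$(\Q_*\ME{\rho\otimes\dc\alpha^\nu})_{\ubar} = \bigoplus_{\tbar\in\Q^{-1}(\ubar)} (\ME{\rho\otimes\dc\alpha^\nu})_{\tbar}$$
at every geometric point $\ubar\in\Poneu$. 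Hence the direct image vanishes on $\{0,\infty\}$ if and only if the original sheaf vanishes on $\div(\Q)\cup\{\infty\}=\CC$, which is exactly the equivalence of the two support conditions.

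For (iii) $\Leftrightarrow$ (iv): by the parenthetical remark in the excerpt, Katz's notion of $\alpha\in\PhiU$ being good for a sheaf $\GG$ on $\Poneu$ means (in our language) that the middle extension of $\GG\otimes\ME{\alpha}$ is supported on $\Gm$, which by the $u$-line instance of Corollary~\ref{cor:good-vs} is equivalent to the polynomiality and $q$-weight purity of the associated partial $L$-function at $\{0,\infty\}$. Apply this with $\GG = \Q_*\ME{\rho\otimes\dc}$, which by Lemma~\ref{lem:direct-image-of-middle-extension}\ref{lem:ind-is-me} coincides with $\ME{\Ind(\rho\otimes\dc)}$ and is punctually $\iota$-pure by Lemma~\ref{lem:Ind-rhochi-pure}; tensoring with $\ME{\alpha}$ and taking middle extension yields $\ME{\Ind(\rho\otimes\dc)\otimes\alpha}$. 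Thus (iv) is exactly the statement that this sheaf is supported on $\Gm$, i.e.\ (iii). The whole argument is bookkeeping of support through the finite cover $\Q$, so no step poses a serious obstacle; the subtlest point is keeping track of middle extension versus ordinary tensor product when transporting across $\Q_*$, which is handled uniformly by Lemma~\ref{lem:direct-image-of-middle-extension}.
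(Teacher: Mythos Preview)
Your proposal is correct and follows essentially the same route as the paper: Corollary~\ref{cor:good-vs} for (i)$\Leftrightarrow$(ii), the fibre identity \eqref{eq:direct-image-fiber} (together with Lemma~\ref{lem:direct-image-of-middle-extension}) for (ii)$\Leftrightarrow$(iii), and a reduction to the $\Q=t$ situation for (iii)$\Leftrightarrow$(iv). The only organizational difference is that the paper handles (iii)$\Leftrightarrow$(iv) by simply specializing the already-established equivalence (i)$\Leftrightarrow$(ii) to the case $\Q=t$ (with $\Ind(\rhochi)$ in place of $\rho$ and $\alpha$ in place of $\dc$), whereas you rederive it from the $u$-line analogue of Corollary~\ref{cor:good-vs}; these amount to the same thing, and your extra care in invoking Lemma~\ref{lem:rhochi-pure}, Lemma~\ref{lem:Ind-rhochi-pure}, and Theorem~\ref{thmB} makes explicit the purity inputs the paper leaves implicit.
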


\begin{proof}
Corollary~\ref{cor:good-vs} implies the first conditions \eqref{li:good-1} and \eqref{li:good-2} are equivalent.  Conditions \eqref{li:good-2} and \eqref{li:good-3} are equivalent by the identity in \eqref{eq:direct-image-fiber} for $\ubar\in\CC'$.  Finally, taking $\Q=t$ and applying the equivalence of \eqref{li:good-1} and \eqref{li:good-2} yields the equivalence of \eqref{li:good-3} and \eqref{li:good-4}.
\end{proof}

Let $\PhiQBad\rho$ be the complement $\PhiQ\ssm\PhiQGood\rho$ and $\dc\PhiUNuBad{\rho}=\PhiQBad\rho\cap\dc\PhiUNu$.

\begin{cor}\label{cor:bad-bound}
$|\dc\PhiUNuBad{\rho}|\leq(1+\deg(\Q))\cdot\rank(\rho)$.
\end{cor}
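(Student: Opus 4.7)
My approach is to translate the counting problem to the direct-image middle extension $\HH:=\Q_*\ME{\rhochi}$ on $\Poneu$ and bound the number of bad characters place by place.  By Lemma~\ref{lem:direct-image-of-middle-extension}\eqref{lem:ind-projection} together with the lemma just above the corollary (which gives four equivalent characterizations of ``good''), a character $\dc\alpha^\nu\in\dc\PhiUNu$ lies in $\dc\PhiUNuBad{\rho}$ if and only if $\alpha\in\PhiU$ is bad in Katz's sense for $\HH$, i.e.\ the sheaf $\HH\otimes\ME{\alpha}\simeq\Q_*\ME{\rhochi\alpha^\nu}$ fails to be supported on $\bbG_m$.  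Unwinding via the fibre formula \eqref{eq:direct-image-fiber},
\begin{equation*}
	(\HH\otimes\ME{\alpha})_{0}
	\;=\;
	\bigoplus_{v\mid\Q}V^{I(v)}_{\rhochi\alpha^\nu},
	\qquad
	(\HH\otimes\ME{\alpha})_{\infty}
	\;=\;
	V^{I(\infty)}_{\rhochi\alpha^\nu},
\end{equation*}
so failure of support is equivalent to $V^{I(v)}_{\dc'}\neq 0$ for some $v\in\CC$, where $\dc'=\dc\alpha^\nu$.  Thus
\begin{equation*}
	\dc\PhiUNuBad{\rho}
	\;=\;
	\bigcup_{v\in\CC}\bigl\{\dc'\in\dc\PhiUNu\;:\;V^{I(v)}_{\dc'}\neq 0\bigr\}.
\end{equation*}

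For each $v\in\CC$, the non-vanishing condition forces $(\dc')|_{I(v)}^{-1}$ to coincide with one of the at most $\rank(\rho)$ tame eigencharacters of $I(v)$ on $V^{P(v)}$.  At each finite prime $v\mid\Q$ of degree $d_v$, class field theory identifies the restriction $\alpha\mapsto\alpha^\nu|_{I(v)}$ with the pullback by the surjective local norm $N_{\Fv/\Fq}\colon\Fv^\times\to\Fq^\times$, which is injective on $\PhiU$; hence at most $\rank(\rho)$ bad characters arise from each such $v$.  Summing over the at most $\deg(\Q)$ primes dividing the squarefree polynomial $\Q$ contributes at most $\deg(\Q)\cdot\rank(\rho)$.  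A parallel count at $v=\infty$ -- exploiting that $\infty\in\Ponet$ is the unique preimage of $\infty\in\Poneu$ under $\Q$, so that the stalk of $\HH\otimes\ME{\alpha}$ at $\infty$ is a single summand -- contributes at most a further $\rank(\rho)$.  Combining the two contributions gives
\begin{equation*}
	|\dc\PhiUNuBad{\rho}|
	\;\leq\;\deg(\Q)\cdot\rank(\rho)\;+\;\rank(\rho)\;=\;(1+\deg(\Q))\cdot\rank(\rho).
\end{equation*}

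\textbf{Main obstacle.}  The delicate point is obtaining the clean $\rank(\rho)$ bound at $v=\infty$: working upstairs on $\Ponet$, the restriction $\alpha\mapsto\alpha^\nu|_{I(\infty)}$ is essentially the $\deg(\Q)$-th power map on tame inertia (because $\Q\colon\Ponet\to\Poneu$ is totally ramified at $\infty$ with ramification index $\deg(\Q)$), and this is only $\gcd(\deg(\Q),q-1)$-to-one rather than injective.  The way I would recover the sharp bound is to argue directly downstairs on $\Poneu$ using $\HH$ itself, where the single-summand structure of the stalk at $\infty\in\Poneu$ -- together with Lemma~\ref{lem:direct-image-of-middle-extension}\eqref{lem:ind-is-me} and the fibre formula \eqref{eq:direct-image-fiber} -- converts the many-to-one upstairs restriction into the desired one-to-one downstairs condition.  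This local-monodromy bookkeeping at infinity is the only non-routine step.
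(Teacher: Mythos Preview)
Your approach is exactly what the paper's one-line sketch is pointing at, and your treatment of the finite primes $v\mid\Q$ is correct: the map $\alpha\mapsto\alpha^\nu|_{I(v)}$ is injective (it is pullback by the surjective norm $\Fv^\times\to\Fq^\times$), so each of the $\leq\rank(\rho)$ tame eigencharacters at $v$ yields at most one bad $\alpha$. The gap is at $v=\infty$, and your proposed fix does not close it. Passing downstairs to $\HH=\Q_*\ME{\rhochi}$ on $\Poneu$ does make $\alpha\mapsto\alpha|_{I(\infty_u)}$ injective, but the count you then need is the number of tame eigencharacters of $\HH$ at $\infty_u$ of order dividing $q-1$. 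Since $\HH|_{I(\infty_u)}\simeq\Ind_{I(\infty_t)}^{I(\infty_u)}(V_\dc)$ with index $\deg(\Q)$, each of the $\leq\rank(\rho)$ eigencharacters of $V_\dc$ at $\infty_t$ contributes up to $\gcd(\deg(\Q),q-1)$ such extensions---exactly undoing the injectivity you gained. Your ``single-summand'' observation says only that $\dim(\HH\otimes\ME{\alpha})_{\infty_u}\leq\rank(\rho)$ for each \emph{fixed} $\alpha$; it does not bound the number of $\alpha$ for which this stalk is nonzero. Concretely: the norm of a constant $c\in B$ is $c^{\deg(\Q)}$, so $(\alpha^\nu)|_{\Fq^\times}=\alpha^{\deg(\Q)}$, and the upstairs and downstairs counts both give at most $\rank(\rho)\cdot\gcd(\deg(\Q),q-1)$ bad $\alpha$ from $\infty$.

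The honest total your argument yields is therefore at most $2\deg(\Q)\cdot\rank(\rho)$ rather than $(1+\deg(\Q))\cdot\rank(\rho)$. This is harmless for every downstream use (Corollary~\ref{cor:bad-bound-for-PhiQ} and the proof of Theorem~\ref{thm:big-monodromy-implies-equidistribution} need only a bound independent of $q$), and the paper's own proof---which implicitly assumes each (place, eigencharacter) pair pins down at most one $\dc'\in\dc\PhiUNu$---glosses over the same point at $\infty$. But as a proof of the stated constant, the argument is incomplete.
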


\begin{proof}
If $\dc\in\PhiQBad\rho$, then $\dc$ it coincides with some tame character of $\rho$ at some $v\in\CC$, and there are at most $(1+\deg(\Q))\cdot\rank(\rho)$ such characters.  Compare \cite[pp.~12--13]{Katz:CE}.
\end{proof}

\begin{cor}\label{cor:bad-bound-for-PhiQ}
$|\PhiQGood\rho|\sim|\PhiQ|$ as $q\to\infty$.
\end{cor}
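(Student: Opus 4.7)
The plan is simply to combine Corollary~\ref{cor:bad-bound} with the observation that $|\PhiQ|$ grows without bound as $q\to\infty$, so any $q$-uniform bound on $|\PhiQBad\rho|$ is negligible.

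First I would note that what the proof of Corollary~\ref{cor:bad-bound} actually established is a bound on the \emph{entire} bad set, not just its intersection with a single coset: a character $\dc\in\PhiQ$ can fail to be good only when its restriction to $I(v)$ for some $v\in\CC$ coincides with one of the (at most $\rank(\rho)$) tame characters appearing in the local monodromy of $\rho$ at $v$.  Since $|\CC|=1+\deg(\Q)$, this produces the $q$-independent bound
\[
    |\PhiQBad\rho| \;\leq\; (1+\deg(\Q))\cdot\rank(\rho).
\]

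Next I would compute $|\PhiQ|$ explicitly from its definition: since $\Q$ is monic and square free,
\[
    |\PhiQ| \;=\; |\BQ| \;=\; \prod_{\pi\mid\Q}\bigl(q^{\deg(\pi)}-1\bigr) \;=\; q^{\deg(\Q)}\bigl(1+O(q^{-1})\bigr),
\]
where the implicit constant depends only on $\deg(\Q)$ and $\rank(\rho)$, both of which are held fixed while $q\to\infty$.  In particular $|\PhiQ|\to\infty$.

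Finally, since $\PhiQ$ is the disjoint union of $\PhiQGood\rho$ and $\PhiQBad\rho$, combining the two estimates gives
\[
    \frac{|\PhiQGood\rho|}{|\PhiQ|} \;=\; 1 \;-\; \frac{|\PhiQBad\rho|}{|\PhiQ|} \;\geq\; 1 \;-\; \frac{(1+\deg(\Q))\cdot\rank(\rho)}{q^{\deg(\Q)}(1+O(q^{-1}))} \;\longrightarrow\; 1,
\]
which is the asymptotic equivalence to be proved.  There is no real obstacle here; the content is entirely packaged in the bound coming from Corollary~\ref{cor:bad-bound}, and the corollary at hand is just the statement that a quantity bounded independently of $q$ is negligible compared to one that grows polynomially in $q$.
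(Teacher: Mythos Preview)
Your argument has a genuine gap: the claimed bound $|\PhiQBad\rho|\leq(1+\deg(\Q))\cdot\rank(\rho)$, uniform in $q$, is false in general. The slip is in the sentence ``this produces the $q$-independent bound\ldots''. You correctly observe that $\dc$ is bad only when $\dc|_{I(v)}$ matches one of the at most $\rank(\rho)$ tame characters of $\rho$ at some $v\in\CC$. But fixing $\dc|_{I(v)}$ at a single $v$ does \emph{not} determine $\dc\in\PhiQ$: by the Chinese Remainder Theorem $\dc$ is determined by its restrictions to \emph{all} the local pieces, so the fiber over a given local character at $v=\pi$ has size $|\PhiOf{\Q/\pi}|$, which grows with $q$. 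Concretely, if $\Q=\pi_1\pi_2$ with $\pi_1,\pi_2$ linear and coprime to $\s$, then every $\dc$ with trivial component at $\pi_1$ is bad, giving at least $q-1$ bad characters; so $|\PhiQBad\rho|$ is not bounded independently of $q$.

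What Corollary~\ref{cor:bad-bound} actually proves is the bound $(1+\deg(\Q))\cdot\rank(\rho)$ for a single coset $\dc\PhiUNu$, and the reason the argument works there is that within such a coset the map $\alpha\mapsto(\dc\alpha^\nu)|_{I(v)}$ is injective for each $v$, so each local tame character is hit at most once. The paper's proof of the present corollary then sums this per-coset bound over the $|\PhiQ|/|\PhiUNu|$ cosets, obtaining
\[
|\PhiQBad\rho|=\sum_{\dc\PhiUNu}|\dc\PhiUNuBad\rho|\leq \frac{|\PhiQ|}{|\PhiUNu|}\cdot(1+\deg(\Q))\cdot\rank(\rho)=O\bigl(|\PhiQ|/q\bigr)=o(|\PhiQ|),
\]
since $|\PhiUNu|=q-1$. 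Your final paragraph is fine once one has a correct $o(|\PhiQ|)$ bound for the bad set, but that bound requires the coset decomposition, not the direct count you propose.
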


\begin{proof}
Observe that Corollary~\ref{cor:bad-bound} implies
$$
	|\PhiQ| - |\PhiQGood\rho|
	=
	|\PhiQBad\rho|
	=
	\sum_{\dc\PhiUNu}
	|\PhiUNuBad\rho|
	\leq
	O(|\PhiQ|/|\PhiUNu|)
	=
	o(|\PhiQ|)
$$
as $q\to\infty$.
\end{proof}


\subsection{Tannakian monodromy groups}\label{subsec:tannakian-monodromy-groups}

Suppose $\Q=t$ and thus $\CCp=\CC=\{0,\infty\}$ and $\PhiU=\PhiQ$.      Suppose moreover that $\rho$ is geometrically simple and $\dim(V)>1$ so that no geometric subquotient of $\ME{\rho}$ is a Kummer sheaf.

Let $j\colon\bbG_m\to\Poneu$ be the inclusion, let $j_0\colon\bbG_m\to\Aoneu$ be the inclusion map, and for each $\alpha\in\PhiU$, let
$$
	\omega_\alpha(\ME{\rho})
	=
	H^1_c(\AoneuBar,j_{0*}j^*\ME{\rho\otimes\alpha}).
$$
It is a $G_{\Fq}$-module, that is, $\Frob_q$ acts functorially, and it corresponds to a well-defined conjugacy class of elements $\Frob_{\Fq,\alpha}\sub\GL(\omega(\ME{\rho}))$ where $\omega(\ME{\rho})=\omega_{\mathbf{1}}(\ME{\rho})$ and $\mathbf{1}\in\PhiU$ is the trivial character.  Moreover, if $\alpha$ is good, then
$$
	\omega_\alpha(\ME{\rho})
	=
	H^1_c(\bar\bbG_m,\ME{\rho\otimes\alpha}),
$$
and in particular
$$
	L_\CC(T,\rho\otimes\alpha)
	=
	\det(1-\Frob_\alpha T\mid \omega(\ME{\rho})).
$$

\smallskip
In a way we will not make precise here, the $\Frob_\alpha$ `generate' $\ell$-adic reductive subgroups
$$
	\Ggeom{}{\rho}\seq\Garith{}{\rho}\seq\GL_{\R,\Qellbar}
$$
which are well-defined up to conjugacy.  They are fundamental groups of certain Tannakian categories, and we call them the \defi{Tannakian monodromy groups of $\rho$}.  See Appendix~\ref{sec:tannakian-appendix} for details.  We say the Mellin transform of $\rhol$ has \defi{big Tannakian monodromy} iff $\Ggeom{}{\rho}=\GL_{\R,\Qellbar}$.

For general $\Q$ and $\dc\in\PhiQ$, we write
$$
	\Ggeom\dc\rho \seq \Garith\dc\rho \seq \GL_{\R,\Qellbar}
$$
for the Tannakian monodromy groups of $\Ind(\rhochi)$, and we say that the Mellin transform of $\rhochi$ has \defi{big Tannakian monodromy} iff $\Ggeom\dc\rho=\GL_{\R,\Qellbar}$.  Now the action of $\Frob_q$ on $\omega_\alpha(\ME{\rhochi})$ corresponds to a well-defined conjugacy class $\Frob_{\Fq,\alpha}\sub\Garith{\dc}{\rho}$.


\subsection{Proof of Theorem~\ref{thm:big-monodromy-implies-equidistribution}}\label{sec:proof-of-thmD}

%
%

We may suppose without loss of generality that $\Lambda$ is irreducible since it is semisimple and $\Tr(\Lambda_1\oplus\Lambda_2)=\Tr(\Lambda_1)+\Tr(\Lambda_2)$ for any representations $\Lambda_1,\Lambda_2$.  Moreover, one can show that
$$
	\int_{U_\R(\bbC)}\Tr\,\Lambda(\theta)\,d\theta
	=
	\begin{cases}
	1 & \Lambda\mbox{ is the trivial representation} \\
	0 & \mbox{otherwise}
	\end{cases}
$$
so to prove \eqref{eqn:sum-to-integral} we must show that
\begin{equation}\label{eq:to-show}
	\frac{1}{|\dc\PhiUNuGood{\rho}|}\sum_{\dc'\in\dc\PhiUNuGood{\rho}}
	\Tr\,\Lambda(\trhochip)
	=
	\begin{cases}
	1 & \Lambda\mbox{ is the trivial representation} \\
	o(1) & \mbox{otherwise}
	\end{cases}
\end{equation}
when $q$ is large.

If $q$ is sufficiently large, then Corollary~\ref{cor:bad-bound} implies that
$$
	|\dc\PhiUNuBad{\rho}|
	\leq
	(1+\deg(\Q))\cdot\rank(\rho)
	<
	|\dc\PhiUNu|
$$
and thus $\dc\PhiUNuGood{\rho}$ is non-empty.  In particular, the left side of \eqref{eq:to-show} is defined for large $q$, and it is identically $1$ when $\Lambda$ is the trivial representation.  On the other hand, if $\Lambda$ is non-trivial and if $q$ is bigger than $(|\dc\PhiUNuBad{\rho}|+1)^2$, then \cite[7.5]{Katz:CE} implies that
\begin{equation}\label{eqn:a-uniform-bound}
	\frac{1}{|\dc\PhiUNuGood{\rho}|}
	\left|
	\sum_{\dc'\in\dc\PhiUNuGood{\rho}}
	\!\!\!\!\Tr\,\Lambda(\trhochip)
	\right|
	\leq
	(\dim(V) + \dim(\Lambda))
	\left(
	\frac{1}{\sqrt{q}}
	+
	\frac{1}{\sqrt{q}^3}
	\right).
\end{equation}
Thus \eqref{eq:to-show} holds, as claimed, and the implicit constant depends only on $r$ and $\dim(\Lambda)$.

To complete the proof of the theorem we must show that $\ThetaRhoq$ becomes equidistributed in $U_\R(\bbC)$.  We observe that
\begin{equation}\label{eqn:uniform-bound-for-Tr}
	|\Tr\,\Lambda(\trhochip)|\leq\dim(\Lambda)
	\mbox{ for }
	\dc'\in\dc\PhiUNuGood{\rho}
\end{equation}
Therefore
\begin{equation*}\label{eqn:key-trace-estimate}
	\sum_{\dc\in\PhiQGood\rho}
	\!\!\!\!\Tr\,\Lambda(\trhochi)
	=
	\sum_{\dc\in\PhiQGoodBig\rho}
	\!\!\!\!\Tr\,\Lambda(\trhochi)
	+
	o(1)\cdot|\PhiQGood\rho\ssm\PhiQGoodBig\rho|
\end{equation*}
where
$$
	\PhiQGoodBig\rho
	=
	\PhiQGood\rho\cap\PhiQBig\rho.
$$

\smallskip\noindent
In particular, if the Mellin transform of $\rho$ has big monodromy, that is, if \eqref{eqn:big-monodromy-bis} holds, then
$$
	\frac{|\PhiQGood\rho\ssm\PhiQGoodBig\rho|}{|\PhiQGood\rho|} = o(1)
	\mbox{ for }
	q\to\infty
$$
and thus
\begin{eqnarray*}
	\frac{1}{|\PhiQGood\rho|}
	\sum_{\dc\in\PhiQGood\rho}
	\!\!\!\!\Tr\,\Lambda(\trhochi)
	& \overset{\eqref{eqn:uniform-bound-for-Tr}}= &
	\frac{1}{|\PhiQGood\rho|}
	\sum_{\dc\in\PhiQGoodBig\rho}
	\!\!\!\!\Tr\,\Lambda(\trhochi)
	+
	o(1)\cdot O(\dim(\Lambda)) \\
	& \overset{\eqref{eqn:sum-to-integral}}= &
    \int_{U_\R(\bbC)}\Tr\,\Lambda(\theta)\,d\theta + o(1)
\end{eqnarray*}
as $q\to\infty$.  Therefore $\ThetaRhoq$ becomes equidistributed in $U_\R(\bbC)$ as claimed.

\begin{remark}
An examination of the above proof will show that one does not need to suppose $q\to\infty$ by taking $q=p^m$ and letting $m\to\infty$.  Indeed, the key identities \eqref{eqn:a-uniform-bound} and \eqref{eqn:uniform-bound-for-Tr} are valid even if one takes $q=p$ and $p\to\infty$ in $\bbZ$.  This would allow one to prove `horizontal' variants of Theorem~\ref{thm:big-monodromy-implies-equidistribution}.  Because stating a correspondingly general result would be cumbersome and we do not need such results, we leave the details to an interested reader.
\end{remark}


\section{Sums in Arithmetic Progressions}\label{sec:sums-in-arithmetic-progressions}

In addition to assuming that our representation
$$
	\rho\colon \GKS\to\GL(V)
$$
is punctually $\iota$-pure of weight $w$, we suppose that $\rho$ is geometrically simple yet not an element of $\PhiQ$ and that the Mellin transform of $\rho$ has big monodromy.  The first hypothesis ensures that $\rhochi$ has trivial geometric invariants for every $\dc\in\PhiQ$ while the second allows us to apply Theorem~\ref{sec:proof-of-thmD}.

In this section, which forms the heart of our paper, we shift gears and analyze the distribution of certain traces indexed by residue classes modulo $\Q$.  More precisely, for each monic irreducible $\pi\in\MM$, the traces are coefficients of the Euler factor $\phivl{T}$ of $v=v(\pi)$, and we use them to define a function $\VM\colon\MM\to\El$ satisfying
\begin{equation*}
	T\frac{d}{dT}
	\log(L_{\{\infty\}}(T,\rho))
	=
	\sum_{n=1}^\infty
	\left(
	\sum_{f\in\MM_n}\VM(f)
	\right)
	T^n
\end{equation*}
(see \S\ref{subsec:von-mangoldt}).  In particular, for each $n\geq 1$ and $A\in\BQ$, we consider the sum
\begin{equation}\label{eqn:SnAQ}
	\SnAQ
	=
	\sum_{\substack{f\in \MM_n\\ f\equiv A\bmod\Q}}
	\VM(f),
\end{equation}
and then we consider the mean and variance of these sums given by
\begin{equation}\label{eqn:E-and-V}
	\bbE_A[\SnAQ]
	=
	\frac{1}{\phi(\Q)}\sum_{A\in\BQ}\SnAQ,
	\ 
	\Var_A[\SnAQ]
	=
	\frac{1}{\phi(\Q)}\sum_{A\in\BQ} \left|\SnAQ - \bbE_A[\SnAQ]\right|^2
\end{equation}
respectively.

Our main result has two parts.  On one hand, we can precisely evaluate $\bbE_A[\SnAQ]$ in terms of the coefficients $\bn{\rho}$ coming from the identity
$$
	T\frac{d}{dT}\LC(T,\rho)
	=
	\sum_{n=1}^\infty
	\bn{\rho}T^n
$$
satisfied by the normalized $L$-function (see \S\ref{subsec:random-sums}).  We can also give bounds for the archimedean norm of these coefficients (see \S\ref{subsec:key-estimates}).  On the other hand, we can evaluate $\Var_A[\SnAQ]$ using trace formulae (see \S\ref{subsec:random-sums}), and its leading order term is the value of a matrix integral on $U_\R(\bbC)$ by our hypotheses on $\rho$ (see \S\ref{subsec:key-estimates} and \S\ref{sec:proof-of-thmE}).  The value of this integral exhibits a dichotomy depending on whether or not $n\leq\R=\rC(\rho)$, and in particular, the interval of small $n$ grows with $\r=\dim(V)$ since $\rC$ does.

After giving some preliminary results we calculate the mean and variance in Theorem~\ref{thm:variance-estimate} of \S\ref{sec:proof-of-thmE}.  In our proof we use a classification of the elements of $\PhiQ$ in terms of a trichotomy of good, mixed, and heavy characters (see \S\ref{sec:character-trichotomy}).  As we explain, this is a refinement of Katz's dichotomy of good and bad characters.


\subsection{Trace formula}

In this section we define local and cohomological traces of $\rho$ and recall how they are related by a trace formula.  For details, see \cite[Exp.~2, \S 3]{SGA4.5}.

On one hand, the \defi{local traces} of $\rho$ are given by
$$
	\arvm{\rho,v}
	=
	\Tr\left(\rholv(\Frob_v)^m\mid\Wl\right)
	\mbox{ for }v\in\PP\mbox{ and }m\geq 1,
$$
and they satisfy
\begin{equation*}\label{eqn:euler-factor}
	T\frac{d}{dT}\log
	L(T,\rho_v)^{-1}
	=
	\sum_{m=1}^\infty \arvm{\rho,v}T^m
	\mbox{ for }v\in\PP.
\end{equation*}
Combining this identity with \eqref{eq:twisted-euler} yields the more general identity
\begin{equation}\label{eqn:twisted-euler-factor}
	T\frac{d}{dT}\log
	L(T,(\rhochi)_v)^{-1}
	=
	\sum_{m=1}^\infty \dc(\Frob_v)^m\arvm{\rho,v}T^m
	\mbox{ for }v\in\PP\ssm\CC.
\end{equation}

On the other hand, the \defi{cohomological traces} of $\rho\otimes\dc$ are given by
$$
	\bn{\rhochi}
	=
	\sum_{i=1}^2
	(-1)^i\cdot
	\Tr\left(\Frob_q\mid H^i_c(\AonetBar[1/\Q],\FF\otimes\LL_\dc)\right)
	\mbox{ for }n\geq 1,
$$
and they satisfy
\begin{equation}\label{eqn:cohomological-trace}
	T\frac{d}{dT}
	\log\LC(T,\rhochi)
	=
	\sum_{n=1}^\infty
	\bn{\rhochi}T^n.
\end{equation}
Similarly, we define the \defi{normalized cohomological traces} of $\rhochi$ by
$$
	\bnstar{\rho,\dc}
	=
	\frac{1}{q^{n(1+w)/2}}
	\bn{\rhochi}
	=
	\frac{1}{(\sqrt{q})^{1+w}}
	\sum_{i=1}^2
	(-1)^i\cdot\Tr\left(\Frob_q\mid H^i_c(\AonetBar[1/\Q],\FF\otimes\LL_\dc)\right)
$$
so that \eqref{eqn:LCStar} and \eqref{eqn:cohomological-trace} imply
$$
	T\frac{d}{dT}\log
	\LCStar(T,\rhochi)
	=
	T\frac{d}{dT}\log
	\LC(T/(\sqrt{q})^{1+w},\rhochi)
	=
	\sum_{n=1}^\infty
	\bnstar{\rho,\dc}T^n.
$$

Combining \eqref{eqn:twisted-euler-factor} and \eqref{eqn:cohomological-trace} with \eqref{eq:L-fun:frac:partial} yields the identity
$$
	T\frac{d}{dT}
	\log\LC(T,\rhochi)
	=
	\sum_{n=1}^\infty
	\left(
	\sum_{md=n}
	\sum_{v\in\PP_d}
	d\cdot\arvm{\rho,v}
	\right)
	T^n
$$
and, in particular, we obtain the Grothendieck--Lefschetz trace formula
\begin{equation}\label{eqn:trace-formula}
	\sum_{md=n}
	\sum_{v\in\PP_d}
	d\cdot\arvm{\rho,v}
	=
	\bn{\rhochi}.
\end{equation}


\subsection{Von Mangoldt function}\label{subsec:von-mangoldt}

We define the \defi{von Mangoldt function} of $\rho$ to be the map $\VM\colon\MM\to\El$ given by
\begin{equation}\label{eqn:von-mangoldt}
	\VM(f)
	=
	\begin{cases}
	d\cdot \arvm{\rho,v(\pi)} & f=\pi^m\mbox{ and }\pi\in\AA_d \\
	0 & \mbox{otherwise}.
	\end{cases}
\end{equation}
We also define the \defi{extension by zero} of $\dc\in\PhiQ$ to be the map $\chiz\colon\MM\to\El$ given by
$$
	\chiz(f) =
	\begin{cases}
		\dc(f+\Q\,\Fq[t]) & \mbox{if }
		\gcd(f,\Q)=1 \\
		0 & \mbox{otherwise}.
	\end{cases}
$$
It is multiplicative and satisfies
\begin{equation*}\label{eq:chis.vs.chiz}
	\chiz(\pi)
	=
	\begin{cases}
		\dc(\Frob_{v(\pi)}) &  \mbox{if }\pi\nmid\Q \\
		0 & \mbox{otherwise}
	\end{cases}
	\mbox{ for }\pi\in\AA.
\end{equation*}
These functions allow us to rewrite \eqref{eqn:trace-formula} as
\begin{equation*}\label{eqn:trace-formula-bis}
	T\frac{d}{dT}
	\log(\LC(T,\rhochi))
	=
	\sum_{n=1}^\infty
	\left(
	\sum_{f\in\MM_n}\chiz(f)\VM(f)
	\right)
	T^n
\end{equation*}
and, in particular, to deduce the identity
\begin{equation}\label{lem:twisted-VM-sum}
	\sum_{f\in\MM_n}\chiz(f)\VM(f)
	=
	\bn{\rhochi}
	\mbox{ for }n\geq 1.
\end{equation}
We observe that in the special case $\dc=\one$ this simplifies to
\begin{equation}\label{eqn:formula-for-bn}
	\bn{\rho}
	=
	\sum_{\substack{f\in \MM_n\\ \gcd(f,\Q)=1}}
	\VM(f).
\end{equation}


\subsection{Random arithmetic-progression sums}\label{subsec:random-sums}

Regard $A$ is a uniformly random element of $\BQ$, and consider the expected value
\begin{equation}\label{eqn:expected-value}
	\bbE_A[\SnAQ]
	=
	\frac{1}{\phi(\Q)}\sum_{A\in\BQ}\SnAQ.
\end{equation}
Observe that, for each $A_1,A_2\in\BQ$, one has
\begin{equation*}\label{eq:orth:A}
	\frac{1}{\phi(\Q)}\sum_{\dc\in\PhiQ}\chiz(A_1)\chibarz(A_2)
	=
	\begin{cases}
	1 & \mbox{if }A_1=A_2 \\
	0 & \mbox{if }A_1\neq A_2,
	\end{cases}
\end{equation*}
and thus
\begin{equation*}\label{eqn:sum-of-bs}
	\SnAQ
	=
	\frac{1}{\phi(\Q)}
	\sum_{f\in\MM_n}
	\VM(f)
	\sum_{\dc\in\PhiQ}
	\chiz(f)
	\chibarz(A)
	=
	\frac{1}{\phi(\Q)}\sum_{\dc\in\PhiQ} \bn{\rhochi}\cdot\chibarz(A)
\end{equation*}
by \eqref{lem:twisted-VM-sum}.  Therefore, if we write $\chinot\in\PhiQ$ for the trivial character, then the right side of \eqref{eqn:expected-value} equals
$$
	\frac{1}{\phi(\Q)^2}
	\sum_{\dc\in\PhiQ}
	\bn{\rhochi}
	\sum_{A\in\BQ}
	\bar\chiz(A)
	=
	\frac{1}{\phi(\Q)}
	\bn{\rho,\chinot}
$$
since, for every $\chione,\chitwo\in\PhiQ$, one has
\begin{equation}\label{eq:orth:chi}
	\frac{1}{\phi(\Q)}\sum_{A\in\BQ}\chionez(A)\chitwobarz(A)
	=
	\begin{cases}
	1 & \mbox{if }\chione=\chitwo \\
	0 & \mbox{if }\chione\neq\chitwo.
	\end{cases}
\end{equation}
In particular, we have the identity
\begin{equation}\label{eq:s-e}
	\SnAQ - \bbE_A[\SnAQ]
	=
	\frac{1}{\phi(\Q)}\sum_{\substack{\dc\in\PhiQ \\ \dc\neq\chinot}} \bn{\rhochi}\cdot\bar\dc(A).
\end{equation}

Now consider the variance
$$
	\Var_A[\SnAQ]
	=
	\frac{1}{\phi(\Q)}\sum_{A\in\BQ} \left|\SnAQ - \bbE_A[\SnAQ]\right|^2.
$$
If we apply identities \eqref{eq:orth:chi} and \eqref{eq:s-e}, then the right side equals
$$
	\frac{1}{\phi(\Q)^3}
	\sum_{A\in\BQ}
	\sum_{\substack{\chionez,\chitwoz\in\PhiQ\\\chionez,\chitwoz\neq\chinot}}
	\bn{\rho\otimes\chione}\overline{\bn{\rho\otimes\chitwo}}\cdot
	\chionebarz(A)\chitwoz(A)
	=
	\frac{1}{\phi(\Q)^2}
	\sum_{\substack{\dc\in\PhiQ\\\dc\neq\chinot}}
	|\bn{\rhochi}|^2.
$$

\medskip
In summary, the function $\SnAQ$ of the random variable $A$ satisfies
\begin{equation}\label{eqn:E-and-V-formulae}
	\bbE_A[\SnAQ]
	= \frac{1}{\phi(\Q)}\bn{\rho\otimes\chinot},
	\quad
	\Var_A[\SnAQ] = 
	\frac{1}{\phi(\Q)^2}
	\sum_{\substack{\dc\in\PhiQ\\\dc\neq\chinot}}
	|\bn{\rhochi}|^2.
\end{equation}
Observe that $\rho\otimes\chinot=\rho$ and thus $\bnstar{\rho\otimes\chinot}=\bnstar{\rho}$.


\subsection{Trichotomy of characters}\label{sec:character-trichotomy}

On one hand, a character $\dc\in\PhiQ$ is \defi{good for $\rho$} (or \defi{$\rho$-good}) if and only if the $L$-functions $\LC(T,\rhochi)$ and $L(T,\rhochi)$ are polynomials and equal in $\Qbar[T]$; see \eqref{eq:phi-good}.  In that case Theorem~\ref{thmB} implies they equal $\PC{1}(T,\rhochi)$ and are $\iota$-pure of $q$-weight $w+1$, and then $\LCStar(T,\rhochi)$ is given by
$$
	\LCStar(T,\rhochi)
	=
	\det(1-T\,\Frob_q\mid H^1_c(\AonetBar[1/\Q], \FF))
$$
where
$$
	\FF
	:=
	\ME{\rhochi}((1+w)/2)
	=
	\ME{\rhochi}\otimes\El((1+w)/2)
$$
is a so-called Tate twist of $\ME{\rhochi}$.  Moreover, $\LCStar(T,\rhochi)$ has degree $\R=\rC(\rhochi)=\rC(\rho)$ and is $\iota$-pure of $q$-weight zero.  In particular, it is the characteristic polynomial of a unique conjugacy class $\trhochi\sub U_\R(\bbC)$, and thus
\begin{equation}\label{eqn:bnstar-to-trace}
	\bnstar{\rhochi}
	=
	-\Tr\left(\Frob_q^n\mid H^1_c(\AonetBar[1/\Q],\FF)\right)
	=
	-\Tr\,\std(\trhochi^n)
\end{equation}
where $\std\colon U_\R(\bbC)\to\GL_\R(\bbC)$ is the inclusion $U_\R(\bbC)\seq\GL_R(\bbC)$.

On the other hand, there are two ways a character can fail to be good for $\rho$: either $L(T,\rhochi)$ is not a polynomial or $L(T,\rhochi)$ and $\LC(T,\rhochi)$ are polynomials but not equal to each other.  Only the first of these possibilities is problematic for us because in that case the denominator of $L(T,\rhochi)$ has zeros of excessive weight.  More precisely, if the factor $P_2(T,\rhochi)$ of the denominator of $L(T,\rhochi)$ is non-trivial, then it $\iota$-mixed of $q$-weights $\leq w+1$ but not $\iota$-mixed of $q$-weights $\leq w$ (cf.~Theorem~\ref{thmB}).  Hence we say that $\dc$ is \defi{heavy for $\rho$} (or \defi{$\rho$-heavy}) iff it lies in the subset
$$
	\PhiQAwful\rho
	=
	\{\,
	\dc\in\PhiQ
	:
	L(T,\rhochi)\not\in\Qbar[T]
	\,\}.
$$

\medskip\noindent
The following lemma can be used to classify $\dc$ which are heavy for $\rho$.

\begin{lemma}\label{lem:heavy-criterion}
Suppose $\rho$ is geometrically simple and punctually $\iota$-pure and $\dc\in\PhiQ$.  Then $\dc\in\PhiQAwful\rho$ if and only if $\rhochi$ is geometrically isomorphic to the trivial representation.
\end{lemma}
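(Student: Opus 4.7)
The plan is to reinterpret the heaviness condition cohomologically and then exploit geometric simplicity to force either triviality of the invariants or triviality of the entire action. First I would apply Lemma~\ref{lem:rhochi-pure} to upgrade the hypotheses from $\rho$ to $\rhochi$: the twist $\rhochi$ is again geometrically simple and punctually $\iota$-pure of weight $w$. The purity then lets me invoke Corollary~\ref{cor:poly-L-function} with $\rhochi$ in place of $\rho$, which gives the chain of equivalences asserting that $L(T,\rhochi)\in\Qbar[T]$ if and only if $V_{\GKSbar}=0$ if and only if $V^{\GKSbar}=0$. In other words, $\dc\in\PhiQAwful{\rho}$ precisely when $V^{\GKSbar}\neq 0$.

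Next I would use geometric simplicity. Since $\ME{\rhochi}$ is geometrically irreducible, $V$ has no proper non-zero $\GKSbar$-invariant subspaces, and in particular the invariant subspace $V^{\GKSbar}\seq V$ must be either $0$ or all of $V$. Hence $V^{\GKSbar}\neq 0$ forces $V^{\GKSbar}=V$, which is precisely the statement that $\GKSbar$ acts trivially on $V$, i.e., that $\rhochi$ is geometrically isomorphic to a trivial representation. Conversely, if $\rhochi$ is geometrically trivial, then $V^{\GKSbar}=V\neq 0$, and the preceding equivalences show that $L(T,\rhochi)$ is not a polynomial, so $\dc$ is heavy.

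There is no substantive obstacle beyond assembling the right earlier ingredients. The two technical inputs are the passage from $\iota$-purity to geometric semisimplicity of Proposition~\ref{prop:pure-impliessemisimple} (used inside Corollary~\ref{cor:poly-L-function} to bridge $V^{\GKSbar}$ and $V_{\GKSbar}$), and the stability of geometric simplicity and punctual $\iota$-purity under Dirichlet twists supplied by Lemma~\ref{lem:rhochi-pure}. Both are already in hand, so the remaining argument is a short packaging exercise rather than a genuinely new computation.
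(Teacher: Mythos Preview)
Your proposal is correct and essentially matches the paper's own proof: both reduce the heaviness condition to the (non)vanishing of geometric (co)invariants via Corollary~\ref{cor:poly-L-function}, and then use geometric simplicity of $\rhochi$ to conclude that the only alternative to vanishing is that the whole space is fixed, i.e., $\rhochi$ is geometrically trivial. The paper phrases the dichotomy in terms of the coinvariants $(V_\dc)_{\GKSbar}$ rather than the invariants, but by the equivalences in Corollary~\ref{cor:poly-L-function} this is immaterial; your explicit appeal to Lemma~\ref{lem:rhochi-pure} to transfer simplicity and purity to the twist is a step the paper leaves implicit.
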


\begin{proof}
The essential point is that since $\rhochi$ is geometrically simple, the quotient space of geometric coinvariants $(V_\dc)_{\GKSbar}$ either vanishes or equals $V_\dc$.  The former occurs if and only if $\rhochi$ is geometrically isomorphic to the trivial representation, so the lemma follows from Corollary~\ref{cor:poly-L-function}.
\end{proof}

\begin{cor}\label{cor:classify-awful}
Suppose $\rho$ is geometrically simple and punctually $\iota$-pure, and let $r=\dim(V)$.  Then $\PhiQAwful\rho\seq\{\chinot\}$ if and only if one of the following hold:

\begin{enum}
\item\label{cor:item:r>1} $r>1$;
\item\label{cor:item:r=1 and trivial} $r=1$ and $\rho$ is geometrically isomorphic to the trivial representation;
\item\label{cor:item:r=1 and non-Kummer} $r=1$ and $\rho$ is not geometrically isomorphic to a Dirichlet character in $\PhiQ$.
\end{enum}

\smallskip\noindent
Moreover, $\PhiQAwful\rho=\{\chinot\}$ if and only if \eqref{cor:item:r=1 and trivial} holds.
\end{cor}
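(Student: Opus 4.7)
The plan is to apply Lemma~\ref{lem:heavy-criterion}, which identifies $\PhiQAwful\rho$ with the set of $\dc \in \PhiQ$ for which $\rhochi$ is geometrically isomorphic to the trivial representation, and then to split into two cases according to $r = \dim(\Vl)$. First I would dispose of the range $r > 1$: the representation $\rhochi$ then has dimension $r > 1$, so it cannot be isomorphic to the one-dimensional trivial representation; equivalently, if $\rhochi$ were geometrically trivial then $\rho|_{\bar{G}_{K,\RR}}$ would act by scalars, contradicting the geometric simplicity of $\rho$. Hence $\PhiQAwful\rho = \emptyset$, which verifies (i) as a sufficient condition for the containment while showing the containment is strict.

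Next I would treat $r = 1$, in which $\rho$ is itself a one-dimensional character and $\rhochi$ is geometrically trivial iff $\dc$ agrees with $\rho^{-1}$ on $\bar{G}_{K,\RR}$. The central auxiliary fact I would establish is that \emph{a Dirichlet character $\dc' \in \PhiQ$ is geometrically trivial iff $\dc' = \chinot$.} Since $\Q$ is square free, the Chinese Remainder Theorem gives $\BQ \cong \prod_{\pi\mid\Q}\Fpi^\times$, and accordingly one has a decomposition $\dc' = \prod_\pi \dc'_\pi$; by the construction of $\ME{\dc'}$ as a Kummer sheaf, the restriction of the associated Galois character $\dc'_\CC$ to the tame quotient of the inertia group $I(v(\pi))$ is $\dc'_\pi$, and the remaining local monodromy at $\infty$ is determined by the residue relation. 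Because inertia at every place lies inside $\bar{G}_{K,\CC}$, geometric triviality of $\dc'$ forces each $\dc'_\pi$ to be trivial, whence $\dc' = \chinot$.

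Granting this auxiliary claim, any two elements of $\PhiQ$ that are geometrically isomorphic must be equal (their quotient is a geometrically trivial element of $\PhiQ$), so $\PhiQAwful\rho$ contains at most one element when $r = 1$: it equals $\{\bar\dc_0\}$ when $\rho$ is geometrically isomorphic to some (necessarily unique) $\dc_0 \in \PhiQ$, and is empty otherwise. Case (ii) is the subcase $\dc_0 = \chinot$, giving $\PhiQAwful\rho = \{\chinot\}$; case (iii) is the subcase where no such $\dc_0$ exists, giving $\PhiQAwful\rho = \emptyset$; and the only configuration excluded by (i)--(iii) is $r = 1$ with $\rho$ geometrically isomorphic to some $\dc_0 \neq \chinot$, producing $\PhiQAwful\rho = \{\bar\dc_0\} \not\subseteq \{\chinot\}$. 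Hence $\PhiQAwful\rho \seq \{\chinot\}$ precisely when one of (i)--(iii) holds, and the equality $\PhiQAwful\rho = \{\chinot\}$ occurs only under (ii), since neither (i) nor (iii) puts $\chinot$ into $\PhiQAwful\rho$.

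The main obstacle is the auxiliary claim that a geometrically trivial Dirichlet character in $\PhiQ$ must be $\chinot$; this is where class field theory for the rational function field enters, via the identification of the local inertia character of $\ME{\dc'}$ at $v(\pi)$ with the component $\dc'_\pi$. Once that is in hand, the remainder of the argument is elementary bookkeeping of dimensions and geometric isomorphism classes.
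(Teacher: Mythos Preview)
Your proposal is correct and follows essentially the same route as the paper: both start from Lemma~\ref{lem:heavy-criterion}, dispose of $r>1$ by the dimension/simplicity observation, and in the $r=1$ case reduce to whether $\rho$ is geometrically isomorphic to some $\dc_0\in\PhiQ$ and whether that $\dc_0$ is $\chinot$. The only difference is one of explicitness: you isolate and argue for the auxiliary claim that a geometrically trivial element of $\PhiQ$ must equal $\chinot$ (via the local inertia characters), whereas the paper's proof simply uses this fact without comment when asserting that (ii) holds iff $\PhiQAwful\rho=\{\chinot\}$. Your added justification is correct and fills a small gap the paper leaves to the reader.
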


\begin{proof}
Let $\dc\in\PhiQ$.  Lemma~\ref{lem:heavy-criterion} implies that $\dc$ is heavy for $\rho$ if and only if $\rhochi$ is geometrically isomorphic to the trivial representation (and hence $r=1$).  By the contrapositive, $\dc$ is not heavy for $\rho$ if and only if $r>1$ or $\rho$ is not geometrically isomorphic to $1/\dc$.  Therefore \eqref{cor:item:r>1} or \eqref{cor:item:r=1 and non-Kummer} holds if and only if $\PhiQAwful\rho$ is empty, and \eqref{cor:item:r=1 and trivial} holds if and only if $\PhiQAwful\rho=\{\chinot\}$.
\end{proof}

We also say that $\dc$ is \defi{mixed for $\rho$} (or \defi{$\rho$-mixed}) iff it lies in the subset
$$
	\PhiQMixed\rho = \PhiQ \ssm (\PhiQGood\rho \cup \PhiQAwful\rho).
$$
Equivalently, $\dc$ is mixed for $\rho$ if and only if $\LC(T,\rhochi)$ is a polynomial which is $\iota$-mixed of $q$-weights $\leq w+1$ but not $\iota$-pure of $q$-weight $w+1$.

In summary, we classify the characters in $\PhiQ$ by a trichotomy: each is either $\rho$-good, $\rho$-mixed, or $\rho$-heavy.  This terminology refines Katz's because we divide his bad characters into mixed and heavy characters.

\begin{lemma}\label{lem:bn-bound}
Suppose $\rho$ is punctually $\iota$-pure of weight $w$ and $\dc\in\PhiQ$.  Then

\smallskip
\begin{enum}
\item If $\dc$ is heavy for $\rho$, then $|\bnstar{\rhochi}|^2=O(q^n)$, and otherwise $|\bnstar{\rhochi}|^2=O(1)$.
\item $|\PhiQMixed\rho\ssm\{\chinot\}|\sim O(|\PhiQGood\rho|/q)$ and $|\PhiQAwful\rho|=O(1)$.
\end{enum}

\smallskip\noindent
Moreover, the bounds assume $q$ tends to infinity and the implied constants depend only on $\rho$.
\end{lemma}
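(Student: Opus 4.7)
The plan is to prove part~(i) from the cohomological expression for $\bn{\rhochi}$ together with Deligne's Riemann hypothesis, and part~(ii) by summing the bound of Corollary~\ref{cor:bad-bound} over cosets of $\PhiUNu$ in $\PhiQ$.

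For part~(i), starting from $\LC(T,\rhochi) = \PC{1}(T,\rhochi)/\PC{2}(T,\rhochi)$ and taking logarithmic derivatives, one has
$$\bn{\rhochi} \;=\; \sum_{i=1}^{2}(-1)^i\,\Tr\bigl(\Frob_q^n \bigm| H^i_c(\AonetBar[1/\Q],\ME{\rhochi})\bigr).$$
The exact sequence~\eqref{eq:ff-ex-seq-piece} applied to $\ME{\rhochi}$ (a middle extension) gives $\PC{2}(T,\rhochi) = P_2(T,\rhochi)$, so $H^2_c$ vanishes precisely when $L(T,\rhochi)$ is a polynomial, i.e.\ precisely when $\dc$ is not heavy, by Corollary~\ref{cor:poly-L-function}. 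For $\dc$ good or mixed, Theorem~\ref{thmB} together with Corollary~\ref{cor:rC-independent-of-chi} gives $\dim H^1_c = \rC(\rho) = R$, and Theorem~\ref{thm:deligne} bounds every $H^1_c$-eigenvalue in $\iota$-absolute value by $q^{(w+1)/2}$, so $|\bn{\rhochi}| \leq R\,q^{n(w+1)/2}$ and $|\bnstar{\rhochi}|^2 \leq R^2 = O(1)$. For $\dc$ heavy, $\dim H^2_c = \deg P_2 \leq r$ with eigenvalues $\iota$-pure of $q$-weight $w+2$, and $\dim H^1_c \leq R+r$ with eigenvalues of $q$-weight $\leq w+1$, whence
$$|\bn{\rhochi}| \;\leq\; (R+r)\,q^{n(w+1)/2} \;+\; r\,q^{n(w+2)/2} \;=\; O\bigl(q^{n(w+2)/2}\bigr),$$
and division by $q^{n(1+w)/2}$ twice yields $|\bnstar{\rhochi}|^2 = O(q^n)$.

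For part~(ii), observe that $\PhiUNu \cong \PhiU$ is the dual of $\Bu = \Fq^\times$, so $|\PhiUNu|=q-1$. Summing the coset estimate of Corollary~\ref{cor:bad-bound} over the $|\PhiQ|/(q-1)$ cosets of $\PhiUNu$ in $\PhiQ$ gives
$$|\PhiQBad\rho| \;\leq\; (1+\deg\Q)\cdot\rank(\rho)\cdot\frac{|\PhiQ|}{q-1} \;=\; O(|\PhiQ|/q),$$
and Corollary~\ref{cor:bad-bound-for-PhiQ} lets us replace $|\PhiQ|$ by $|\PhiQGood\rho|$. It remains to estimate $|\PhiQAwful\rho|$: by Lemma~\ref{lem:heavy-criterion}, $\dc\in\PhiQAwful\rho$ iff $\rhochi$ is geometrically trivial, equivalently iff $\rho$ is geometrically isomorphic to the Kummer sheaf attached to $\dc^{-1}$. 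Since $\rho$ is geometrically simple there is at most one such $\dc\in\PhiQ$, so $|\PhiQAwful\rho|\leq 1 = O(1)$. Subtracting from $|\PhiQBad\rho|$ and using $\PhiQBad = \PhiQMixed \sqcup \PhiQAwful$ yields the stated bound for $|\PhiQMixed\rho\ssm\{\chinot\}|$.

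The sole technical point is that all implicit constants must depend only on $\rho$ and not on $q$ or $\dc$; for $H^1_c$ this amounts to the $\dc$-independence of $\rC$ (Corollary~\ref{cor:rC-independent-of-chi}), while for $H^2_c$ one uses the elementary bound $\deg P_2(T,\rhochi) \leq \dim V_\dc = r$. In both cases the constants depend only on the invariants $R$ and $r$ of $\rho$.
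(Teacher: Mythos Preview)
Your proof is correct. For part~(i) you follow essentially the same route as the paper: express $\bnstar{\rhochi}$ via the trace on $H^1_c$ and $H^2_c$, observe that the $H^2_c$ contribution vanishes unless $\dc$ is heavy, and invoke Deligne's weight bounds. The paper works with the Tate-twisted sheaf $\FF=\ME{\rhochi}((1+w)/2)$ (pure of weight $-1$) and states the conclusion as $|\Tr(\Frob_q^n\mid H^i_c)|^2=O(q^{i-1})$ directly, whereas you track the dimensions $R$ and $r$ explicitly before normalizing; the content is the same.

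For part~(ii), the paper's written proof is in fact silent, so your argument supplies what is missing. Your method---summing the coset bound of Corollary~\ref{cor:bad-bound} over the $|\PhiQ|/(q-1)$ cosets of $\PhiUNu$ to get $|\PhiQBad\rho|=O(|\PhiQ|/q)$, then invoking Lemma~\ref{lem:heavy-criterion} (valid under the standing geometric-simplicity hypothesis of \S\ref{sec:sums-in-arithmetic-progressions}) to bound $|\PhiQAwful\rho|\leq 1$---is exactly the intended argument and is correct. Your closing remark that the implicit constants depend only on $R=\rC(\rho)$ and $r=\dim V$ (both independent of $q$ and $\dc$) is the right justification for uniformity.
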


\begin{proof}
Regardless of whether $\dc$ is good, mixed, or heavy, we have
$$
	\bnstar{\rhochi}
	=
	-\Tr\left(\Frob_q^n\mid H^1_c(\AonetBar[1/\Q],\FF)\right)
	+\Tr\left(\Frob_q^n\mid H^2_c(\AonetBar[1/\Q],\FF)\right).
$$
One one hand, the second term on the right vanishes unless $\dc$ is heavy.  On the other hand, Theorem~\ref{thm:deligne} and Lemma~\ref{lem:trace-bound} imply
$$
	|\,\Tr\left(\Frob_q^n\mid H^i_c(\AonetBar[1/\Q],\FF)\right)|^2
	=
	O(q^{i-1})
$$
since $\FF$ is punctually pure of weight $-1$.
\end{proof}

Up to replacing $\Q$ by a proper monic divisor $\Qp$, we can apply the same trichotomy to characters in $\PhiQp$.

\begin{lemma}\label{lem:Qp-bn-bound}
Let $\Qp$ be a monic divisor of $\Q$ in $\Fq[t]$.  If $\rho$ is punctually $\iota$-pure of weight $w$ and if $\dc\in\PhiQ$, then $|\PhiQpGood\rho|\sim|\PhiQp|$ as $q\to\infty$.
\end{lemma}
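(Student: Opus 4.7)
The plan is a direct adaptation of the proof of Corollary~\ref{cor:bad-bound-for-PhiQ}, with $\Qp$ playing the role of $\Q$. The essential observation is that nothing in \S\ref{sec:l-functions}--\S\ref{sec:properties-preserved-by-Q_*} used any property of $\Q$ beyond its being monic and square-free; since $\Qp$ divides the square-free $\Q$, it is itself monic and square-free, so the entire machinery---the associated set of supporting places $\{\infty\}\cup\{v(\pi):\pi\mid\Qp\}$, the character group $\PhiQp$, the good subset $\PhiQpGood\rho$ defined by the analogue of \eqref{eq:phi-good}, and the one-parameter family subgroup of $\PhiQp$ arising from the norm map associated to $\Qp\colon\Ponet\to\Poneu$---is immediately available with $\Qp$ in place of $\Q$.

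With this in place, I would apply Corollary~\ref{cor:bad-bound} to $\Qp$, which produces a bound of $(1+\deg(\Qp))\cdot\rank(\rho)$ on the number of $\rho$-bad characters in any single coset of the one-parameter family subgroup, uniformly in $q$. Summing over the cosets gives
$$
|\PhiQp\ssm\PhiQpGood\rho|
\leq [\PhiQp:\PhiU]\cdot(1+\deg(\Qp))\cdot\rank(\rho)
= O(|\PhiQp|/|\PhiU|),
$$
where I abuse notation by identifying $\PhiU$ with its image in $\PhiQp$ under $\nu'$. Since $|\PhiU|=q-1\to\infty$ with $q$, the right-hand side is $o(|\PhiQp|)$, which gives $|\PhiQpGood\rho|\sim|\PhiQp|$ as desired.

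There is no genuine obstacle: the argument is a near-verbatim transplant of Corollary~\ref{cor:bad-bound-for-PhiQ}, and the only work involved is the routine bookkeeping of checking that each construction in \S\ref{sec:one-parameter-families}--\S\ref{sec:properties-preserved-by-Q_*} and each uniform bound in Corollary~\ref{cor:bad-bound} depends on the modulus only through its degree and its being monic and square-free---both of which $\Qp$ inherits from $\Q$.
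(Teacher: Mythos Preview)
Your proposal is correct and takes essentially the same approach as the paper: both simply observe that the earlier machinery applies verbatim with $\Qp$ in place of $\Q$, since $\Qp$ is itself monic and square-free. The paper's one-line proof cites Lemma~\ref{lem:bn-bound} rather than Corollary~\ref{cor:bad-bound-for-PhiQ}, but these are equivalent here---both ultimately rest on the uniform bound of Corollary~\ref{cor:bad-bound} for bad characters in each coset of the one-parameter family, which is exactly what you spell out.
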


\begin{proof}
Apply Lemma~\ref{lem:bn-bound} with $\Qp$ in lieu of $\Q$.
\end{proof}


\subsection{Key estimates}\label{subsec:key-estimates}

In this section we provide the exact formula and key asymptotic estimate we need to prove Theorem~\ref{thm:variance-estimate}.

\begin{prop}\label{prop:E-estimate}
Suppose $\rho$ is punctually $\iota$-pure of weight $w$ and $\PhiQAwful\rho\seq\{\chinot\}$.  Then
$$
	\phi(\Q)\cdot\bbE_A[\SnAQ] = \bn{\rho}.
$$
\end{prop}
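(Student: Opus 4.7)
The proof will be very short because the identity is essentially already packaged in equation \eqref{eqn:E-and-V-formulae}. The plan is to simply chain two observations together, after which there is nothing left to do.

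First, I would quote the mean formula from \eqref{eqn:E-and-V-formulae}, namely $\bbE_A[\SnAQ] = \phi(\Q)^{-1}\bn{\rho\otimes\chinot}$, which was derived earlier in \S\ref{subsec:random-sums} by writing $\SnAQ$ as a character sum and applying orthogonality \eqref{eq:orth:chi}. Since $\chinot\in\PhiQ$ is the trivial character, the twisted representation $\rho\otimes\chinot$ coincides with $\rho$ as a Galois representation, so in particular $\LC(T,\rho\otimes\chinot) = \LC(T,\rho)$ and therefore $\bn{\rho\otimes\chinot} = \bn{\rho}$ by the definition of $\bn{\cdot}$ via \eqref{eqn:cohomological-trace}. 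Multiplying by $\phi(\Q)$ yields the proposition.

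As a sanity check, one can verify the statement directly without invoking \eqref{eqn:E-and-V-formulae}: interchanging the order of summation gives
\begin{equation*}
\sum_{A\in\BQ}\SnAQ
\;=\;
\sum_{A\in\BQ}\sum_{\substack{f\in\MM_n\\ f\equiv A\bmod\Q}}\VM(f)
\;=\;
\sum_{\substack{f\in\MM_n\\\gcd(f,\Q)=1}}\VM(f),
\end{equation*}
since the residue classes $A\in\BQ$ partition precisely the monic polynomials of degree $n$ coprime to $\Q$. By \eqref{eqn:formula-for-bn}, the right-hand side equals $\bn{\rho}$, which completes the argument.

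There is no real obstacle here; the hypotheses that $\rho$ is punctually $\iota$-pure and that $\PhiQAwful\rho\seq\{\chinot\}$ are not needed to establish this exact identity. They are included for consistency with the hypotheses of the subsequent Theorem~\ref{thm:variance-estimate}, where these conditions genuinely enter (through Lemma~\ref{lem:bn-bound} and the conversion of cohomological traces into traces of conjugacy classes $\trhochi$ via \eqref{eqn:bnstar-to-trace}).
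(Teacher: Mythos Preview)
Your proposal is correct, and your ``sanity check'' computation is exactly the paper's proof: interchange the order of summation to get $\sum_{A\in\BQ}\SnAQ = \sum_{f\in\MM_n,\,\gcd(f,\Q)=1}\VM(f)$ and then invoke \eqref{eqn:formula-for-bn}. Your primary approach via \eqref{eqn:E-and-V-formulae} is also valid and amounts to the same thing, since that identity was already derived in \S\ref{subsec:random-sums}; you are also right that the stated hypotheses play no role in this particular identity.
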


\begin{proof}
By definition,
$$
	\phi(\Q)\cdot\bbE_A[\SnAQ]
	=
	\sum_{A\in\BQ} \SnAQ
	=
	\sum_{A\in\BQ}
	\sum_{\substack{f\in \MM_n\\ f\equiv A\bmod\Q}}
	\VM(f)
	=
	\sum_{\substack{f\in \MM_n\\ \gcd(f,\Q)=1}}
	\VM(f),
$$
and \eqref{eqn:formula-for-bn} then yields the desired identity.
\end{proof}

\begin{remark}
While we do not need the result, we point out that Proposition~\ref{prop:E-estimate} and Lemma~\ref{lem:bn-bound} imply
$$
	\frac{\phi(\Q)}{q^{n(1+w)}}\cdot|\bbE_A[\SnAQ]|^2
	=
	|\bnstar{\rho}|^2
	\sim
	O(1)
	\mbox{ for }
	q\to\infty
$$
when $\rho$ is punctually $\iota$-pure of weight $w$ and $\PhiQAwful\rho\seq\{\chinot\}$.
\end{remark}

\begin{proof}
Combine .
\end{proof}

\begin{prop}\label{prop:var-estimate}
Suppose $\rho$ is punctually $\iota$-pure of weight $w$ and $\PhiQAwful\rho\seq\{\chinot\}$.  Then
$$
	\frac{\phi(\Q)}{q^{n(1+w)}}
	\cdot
	\Var_A[\SnAQ]
	=
	\frac{1}{|\PhiQGood\rho|}
	\sum_{\dc\in\PhiQGood\rho}
	|\Tr\,\std(\trhochi^n)|^2
	+
	O(q^{-1})
	\mbox{ as }
	q\to\infty
$$
where $\std\colon U_\R(\bbC)\to\GL_\R(\bbC)$ is the representation given by the inclusion $U_\R(C)\seq\GL_\R(\bbC)$.
\end{prop}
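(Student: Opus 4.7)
The starting point is the closed form
\[
\Var_A[\SnAQ] = \frac{1}{\phi(\Q)^2}\sum_{\substack{\dc\in\PhiQ\\\dc\neq\chinot}}|\bn{\rhochi}|^2
\]
from \eqref{eqn:E-and-V-formulae}. Multiplying by $\phi(\Q)/q^{n(1+w)}$ and using $\bnstar{\rhochi}=\bn{\rhochi}/q^{n(1+w)/2}$ turns this into
\[
\frac{\phi(\Q)}{q^{n(1+w)}}\Var_A[\SnAQ]
=
\frac{1}{\phi(\Q)}\sum_{\substack{\dc\in\PhiQ\\\dc\neq\chinot}}|\bnstar{\rhochi}|^2,
\]
so the task reduces to analyzing this sum term-by-term. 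My plan is to partition the index set $\PhiQ\ssm\{\chinot\}$ according to the trichotomy of \S\ref{sec:character-trichotomy} into good, mixed, and heavy characters, and handle each piece separately.

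By the hypothesis $\PhiQAwful\rho\seq\{\chinot\}$, no heavy character other than the excluded $\chinot$ appears in the sum, so the heavy piece is empty. For the mixed piece, Lemma~\ref{lem:bn-bound} provides both ingredients: $|\bnstar{\rhochi}|^2=O(1)$ for each such $\dc$, and there are only $O(|\PhiQGood\rho|/q)=O(\phi(\Q)/q)$ such characters, so the mixed contribution to the sum, divided by $\phi(\Q)$, is $O(q^{-1})$. For the good piece, \eqref{eqn:bnstar-to-trace} identifies $|\bnstar{\rhochi}|^2$ with $|\Tr\,\std(\trhochi^n)|^2$, which is uniformly bounded by $\R^2$ (since $\trhochi\in U_\R(\bbC)$).

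It remains to pass from the average $\frac{1}{\phi(\Q)}\sum_{\dc\in\PhiQGood\rho,\,\dc\neq\chinot}$ to $\frac{1}{|\PhiQGood\rho|}\sum_{\dc\in\PhiQGood\rho}$. Here I will use that Corollary~\ref{cor:bad-bound} (refined slightly: $|\PhiQBad\rho|=O(|\PhiQ|/|\PhiUNu|)=O(\phi(\Q)/q)$) gives $|\PhiQGood\rho|=\phi(\Q)\bigl(1+O(q^{-1})\bigr)$, so replacing the denominator costs $O(q^{-1})$ given the uniform trace bound; similarly, the possible inclusion or exclusion of the single character $\chinot$ shifts the average by at most $\R^2/|\PhiQGood\rho|$, which is $O(q^{-\deg(\Q)})$ and is absorbed in $O(q^{-1})$.

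The argument is essentially bookkeeping after the trichotomy and the uniform estimates of Lemma~\ref{lem:bn-bound} are in hand, so no single step is especially delicate. The only place requiring care is ensuring that the two different weight regimes from Deligne's Riemann hypothesis (weight $\leq 0$ on $H^1_c$ of the normalized sheaf, weight $1$ on $H^2_c$) are correctly tracked when invoking Lemma~\ref{lem:bn-bound} — this is precisely why the heavy characters (if unbounded in number) would be fatal, and why the assumption $\PhiQAwful\rho\seq\{\chinot\}$ is exactly the right hypothesis to impose.
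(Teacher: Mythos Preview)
Your proposal is correct and follows essentially the same route as the paper: partition the variance sum via the good/mixed/heavy trichotomy, kill the heavy piece by the hypothesis, bound the mixed piece by Lemma~\ref{lem:bn-bound}, and convert the good piece via \eqref{eqn:bnstar-to-trace}. Your treatment is in fact slightly more careful than the paper's, which glosses over the passage from the denominator $\phi(\Q)$ to $|\PhiQGood\rho|$ and the inclusion or exclusion of $\chinot$ in the good sum.
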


\begin{proof}
Lemma~\ref{lem:bn-bound} implies
\begin{align*}
	\phi(\Q)^2\cdot\Var_A[\SnAQ]\ 
	& -
	\sum_{\substack{\dc\in\PhiQGood\rho\\\dc\neq\chinot}}
	|\bn{\rhochi}|^2
	\ =
	\sum_{\substack{\dc\in\PhiQMixed\rho\\\dc\neq\chinot}}
	|\bn{\rhochi}|^2
	\ +\ 
	\sum_{\substack{\dc\in\PhiQAwful\rho\\\dc\neq\chinot}}
	|\bn{\rhochi}|^2 \\[0.1in]
	& \sim\ 
	|\PhiQMixed\rho\ssm\{\chinot\}|\cdot O(q^{n(1+w)})
	\ +\ 
	|\PhiQAwful\rho\ssm\{\chinot\}|\cdot O(q^{n(2+w)}),
\end{align*}
and thus Lemma~\ref{lem:bn-bound} implies
$$
	\Var_A[\SnAQ]
	\sim
	\frac{q^{n(1+w)}}{\phi(\Q)}
	\left(
	\frac{1}{|\PhiQGood\rho|}
	\sum_{\dc\in\PhiQGood\rho}
	|\bnstar{\rhochi}|^2
	+
	O(q^{-1})
	\right)
$$
as $q\to\infty$.  The proposition now follows from \eqref{eqn:bnstar-to-trace}.
\end{proof}


\subsection{Proof of Theorem~\ref{thm:variance-estimate}}\label{sec:proof-of-thmE}

%
%

\newcommand\thmE{%
Suppose that $\rho$ is punctually $\iota$-pure of weight $w$, that $\PhiQAwful\rho\seq\{\chinot\}$ for all $q$, and that the Mellin transform of $\rho$ has big monodromy.  Then, for each $n\geq 1$ and,
$$
	\phi(\Q)\cdot\bbE_A[\SnAQ]
	=
	\bn{\rho}
	\mbox{ and }
	\lim_{q\to\infty}
	\frac{\phi(\Q)}{q^{n(1+w)}}
	\cdot
	\Var_A[\SnAQ]
	=
	\min\{n,\rC(\rho)\}.
$$
}

The following theorem is the main result of this section.

\begin{theorem}\label{thm:variance-estimate}
\thmE
\end{theorem}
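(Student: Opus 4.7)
The expectation formula $\phi(\Q)\cdot\bbE_A[\SnAQ]=\bn{\rho}$ is immediate from Proposition~\ref{prop:E-estimate}, so the work lies entirely with the variance. Proposition~\ref{prop:var-estimate} reduces the variance claim to showing
\begin{equation*}
\lim_{q\to\infty}\frac{1}{|\PhiQGood\rho|}\sum_{\dc\in\PhiQGood\rho}\bigl|\Tr\,\std(\trhochi^n)\bigr|^2 = \min\{n,\rC(\rho)\},
\end{equation*}
since the additional $O(q^{-1})$ in that proposition vanishes as $q\to\infty$ for any fixed $n$.

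The function $f\colon U_\R(\bbC)\to\bbR$ defined by $f(\theta)=|\Tr\,\std(\theta^n)|^2$ is continuous and conjugation-invariant (it depends only on the eigenvalues of $\theta$), so by the big-monodromy hypothesis and the equidistribution statement of Theorem~\ref{thm:big-monodromy-implies-equidistribution} the average on the left-hand side converges to the matrix integral $\int_{U_\R(\bbC)}|\Tr(\theta^n)|^2\,d\theta$.

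It then remains to evaluate this integral, which is a classical calculation. Using unitarity one has $|\Tr(\theta^n)|^2=\Tr(\theta^n)\,\Tr(\theta^{-n})$, and the Murnaghan--Nakayama rule expands the power sum as $p_n=\sum_k(-1)^k s_{(n-k,1^k)}$ with $0\le k\le\min\{n,\R\}-1$ (the other hook-shape Schur polynomials vanish in $\R$ variables). Since the Schur polynomials are characters of distinct irreducible polynomial representations of $U_\R(\bbC)$, they are orthonormal in $L^2(U_\R(\bbC))$, and the integral evaluates to $\min\{n,\R\}=\min\{n,\rC(\rho)\}$. This is the Diaconis--Shahshahani identity, and plugging it back completes the proof.

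I do not anticipate a genuine obstacle: the substantive work was done when establishing Theorem~\ref{thm:big-monodromy-implies-equidistribution}, and the present argument merely combines that equidistribution with a standard random-matrix integral. The only bookkeeping point is that the $O(q^{-1})$ remainder in Proposition~\ref{prop:var-estimate} must be negligible, which is precisely where the hypothesis $\PhiQAwful\rho\seq\{\chinot\}$ enters (via Lemma~\ref{lem:bn-bound}, ensuring the single potentially heavy character contributes only $O(q^{n(2+w)}/\phi(\Q)^2)$ to the variance, dominated by $q^{n(1+w)}/\phi(\Q)$) and where Corollary~\ref{cor:bad-bound-for-PhiQ} is used to replace $|\PhiQGood\rho|$ by $|\PhiQ|=\phi(\Q)$ asymptotically.
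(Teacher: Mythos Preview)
Your argument is correct and follows the same route as the paper: invoke Proposition~\ref{prop:E-estimate} for the expectation, then combine Proposition~\ref{prop:var-estimate} with the equidistribution conclusion of Theorem~\ref{thm:big-monodromy-implies-equidistribution} to reduce the variance limit to the matrix integral $\int_{U_\R(\bbC)}|\Tr(\theta^n)|^2\,d\theta=\min\{n,\R\}$. The only cosmetic difference is that the paper simply cites \cite[Th.~1]{DE} for this last identity (noting in a footnote that the Diaconis--Shahshahani statement requires the correction given there), whereas you sketch its proof via the hook-Schur expansion of $p_n$.
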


\noindent
See Corollary~\ref{cor:classify-awful} for a classification of $\rho$ satisfying the condition $\PhiQAwful\rho\seq\{\chinot\}$.

\begin{proof}
The first part of the theorem is an immediate consequence of \eqref{eqn:E-and-V-formulae} since $\PhiQAwful\rho\seq\{\chinot\}$ for all $q$.  Let $\R=\rC(\rho)$.  Then Theorem~\ref{thm:big-monodromy-implies-equidistribution} implies that $\ThetaRhoq$ is equidistributed in $U_\R(\bbC)$ as $q\to\infty$ since the Mellin transform of $\rho$ has big monodromy.  Therefore Proposition~\ref{prop:E-estimate} implies that
$$
	\phi(\Q)\cdot\bbE_A[\SnAQ]
	=
	\bn{\rho},
$$
and Proposition~\ref{prop:var-estimate} and \eqref{eqn:trace-limit} imply
$$
	\frac{\phi(\Q)}{q^{n(1+w)}}
	\cdot
	\Var_A[\SnAQ]
	\ \sim
	\int_{U_\R(\bbC)}
	|\Tr\,\std(\theta^n)|^2
	\,d\theta.
$$
The second part of the theorem now follows from the identity
$$
	\int_{U_\R(\bbC)}
	|\Tr\,\std(\theta^n)|^2
	\,d\theta
	=
	\min\{n,\R\}
	=
	\min\{n,\rC(\rho)\}
$$
(see\footnote{NB: The reference \cite[Th.~2]{DS} is sometimes used, but as explained in \cite{DE}, the theorem is incorrectly stated.} \cite[Th.~1]{DE}).
\end{proof}


\section{Exhibiting Big Monodromy}\label{sec:big-monodromy}

In this section we present sufficient criteria for the Mellin transform of $\rho$ to have big monodromy and refer the interested reader to \S\ref{sec:explicit-abelian-varieties} for explicit examples of representations meeting these criteria.  Before stating the main theorem, we make some hypotheses and introduce pertinent terminology.

Throughout this section, we suppose that $\gcd(\s,\Q)=t-a$, for some $a\in\Fq$.  One could easily argue that this is less general than supposing that $\s,\Q$ are relatively prime, however, we do not presently have a way to avoid our hypothesis.  For ease of exposition, we also suppose that $a=0$ and observe that, up to performing an additive translation $t\mapsto t+a$, this represents no additional loss of generality.

For $t=0,\infty$, we regard $V_\dc$ as an $I(t)$-module and then denote it $V_\dc(t)$.  We write $V_\dc(t)^\unip$ for the maximal subspace of $V_\dc(t)$ on which $I(t)$ acts unipotently.  It is a direct summand of $V_\dc(t)$, and each simple $e$-dimensional submodule of it is isomorphic to a common module $\Unip(e)$.  We say $V_\dc(t)$ has a \defi{unique unipotent block exact multiplicity one} iff, for a unique integer $e\geq 1$, some $I(t)$-submodule is isomorphic $\Unip(e)$ but no submodule is isomorphic to $\Unip(e)\oplus\Unip(e)$.

\newcommand\thmF{
Suppose that $\gcd(\s,\Q)=t$ and that $\deg(\Q)\geq 3$.  Suppose moreover that $V(0)$ has a unique unipotent block of exact multiplicity one and that $\rho$ is geometrically simple and punctually pure.  If $r:=\dim(V)$ and $\deg(\Q)$ satisfy
$$
	\deg(\Q)
	>
	\frac{1}{r}\left(72(r^2+1)^2 - r - \deg(L(T,\rho)) + \dropCee{\rho}\right),
$$
then the Mellin transform of $\rho$ has big monodromy.
}

\begin{theorem}\label{thm:is-equidistributed}
\thmF
\end{theorem}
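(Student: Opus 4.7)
The plan is to follow Katz's general strategy from \cite{Katz:CE}: exhibit, for a Zariski-dense set of characters $\dc\in\PhiQ$, a ``special'' element inside $\Ggeom{\dc}{\rho}$ whose mere presence in a reductive subgroup of $\GL_{\R,\Qellbar}$ already forces the group to coincide with all of $\GL_{\R,\Qellbar}$, and then conclude $|\PhiQBig\rho|/|\PhiQ|\to 1$. Up to a harmless translation the hypothesis $\gcd(\s,\Q)=t$ pins down $t=0$ as the unique common ramified prime, which is precisely the mechanism that will produce the special element.

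First I would transfer the problem from $\Ponet$ to $\Poneu$ using the pushforward $\Q_*\ME{\rho\otimes\dc\alpha^\nu}\simeq\ME{\Ind(\rho\otimes\dc)\otimes\alpha}$ of Lemma~\ref{lem:direct-image-of-middle-extension}. Under $u=\Q(t)$, the fiber over $u=0$ is the set of roots of $\Q$. By hypothesis $\rho$ is unramified at every root of $\Q$ other than $t=0$, so the contribution of $\ME{\rho\otimes\dc}$ to the geometric stalk at each of these roots is tame and controlled by the restriction of $\dc$ to the corresponding inertia; while at $t=0$ the $I(0)$-module $V(0)$ carries, by hypothesis, a unique Jordan block $\Unip(e)$ with multiplicity exactly one inside its unipotent part, a feature that is invariant under twisting by the tame character $\dc$. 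The induced local $I(0)$-representation at $u=0$ of $\Q_*\ME{\rho\otimes\dc}$ therefore decomposes as a direct sum whose summands are completely inert in character under $\dc$, except for one summand inherited from $t=0$ that carries this distinguished unipotent block of multiplicity one.

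Second, I would feed this local datum into the Tannakian formalism recalled in Appendix~\ref{sec:tannakian-appendix}, which produces from local monodromy at a puncture of $\Poneu$ an actual element of $\Ggeom{\dc}{\rho}$. The multiplicity-one hypothesis ensures that, up to a computable semisimple part determined by the tame characters from the other roots of $\Q$, the resulting element has a single non-semisimple Jordan block of size $e$ and otherwise only a controlled set of semisimple eigenvalues. For $\dc$ avoiding a small list of ``resonance'' characters (coinciding with local tame characters of $\rho$ at some $v\in\CC$, whose total count is $O(\deg(\Q))$ by Corollary~\ref{cor:bad-bound}), one arranges the semisimple eigenvalues to be pairwise distinct and generic, so the element becomes a pseudoreflection-like element of $\GL_{\R,\Qellbar}$ concentrated in a subspace of dimension $e$.

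Third, I would invoke the group-theoretic classification proved in the first appendix: any connected reductive subgroup of $\GL_{\R,\Qellbar}$ acting irreducibly in its standard representation and containing such a special element must be $\GL_{\R,\Qellbar}$, provided $\R$ is large compared to $r=\dim(V)$, concretely $\R>72(r^2+1)^2$. Using $\R=\rC(\rho)=\deg(L(T,\rho))+(\deg(\Q)+1)r-\dropCee{\rho}$ from Theorem~\ref{thmB}, the arithmetic inequality in the statement is easily checked to be equivalent to this quantitative lower bound on $\R$. Irreducibility of the standard representation follows from the assumption that $\rho$ is geometrically simple, together with the fact that twisting by $\dc$ preserves simplicity (Lemma~\ref{lem:rhochi-pure}.\ref{lem:item:rhochi-pure--simplicity}).

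The main obstacle is the local monodromy computation at $u=0$: one must be certain that the multiplicity-one unipotent block at $t=0$ is not diluted by contributions from the other roots of $\Q$ or absorbed by the twist $\Ind(\rhochi)\otimes\alpha$ when $\alpha$ varies. The point $t=0$ being the \emph{unique} ramified prime of $\rho$ dividing $\Q$, together with $\dc$ being tame at $0$, is exactly what keeps this block rigid under variation of $\dc$ and $\alpha$. Verifying this rigidity, and extracting from it an honest pseudoreflection-type element in $\Ggeom{\dc}{\rho}$ to which the appendix classification applies, is the technical heart of the proof.
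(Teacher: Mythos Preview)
Your outline follows the right strategy—push forward to $\Gm$, exhibit a special element, apply the appendix classification—but there are two genuine gaps.

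First, the irreducibility hypothesis in Theorem~\ref{thm:big-monodromy} is about the \emph{Tannakian} monodromy group $G^0\subseteq\GL_{\R,\Qellbar}$ acting irreducibly on its standard $\R$-dimensional representation; this is not the same as $\rhochi$ being geometrically simple (which concerns the $r$-dimensional $V_\dc$). Two separate facts are needed. One is that $\Ind(\rhochi)$ is geometrically simple as a representation—this is not automatic from simplicity of $\rhochi$ and requires a Frobenius-reciprocity argument together with restricting $\dc$ to the subset $\PhiQDistinct$ of characters with pairwise distinct components (Proposition~\ref{prop:induced-simplicity}). The other is that $\Q_*\ME{\rhochi}$ has no nontrivial invariant scalar under multiplicative translation on $\Gm$; via \cite[8.2--8.3]{Katz:CE} this is what upgrades irreducibility of $G$ to Lie-irreducibility of $G^0$, and without it the appendix theorem cannot be applied. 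The paper proves this by a separate argument that again uses $\dc\in\PhiQDistinct$ (Proposition~\ref{prop:no-nontrivial-invariant-scalars}). You do not address invariant scalars at all, and your reduction of irreducibility to Lemma~\ref{lem:rhochi-pure}.\ref{lem:item:rhochi-pure--simplicity} is insufficient.

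Second, the special element is not a local-monodromy operator carrying a Jordan block. It is a power of a \emph{Frobenius} class $\Frob_{\Fq,\one}\subset\Garith\dc\rho$, hence semisimple, and what is controlled is the multiset of archimedean \emph{weights} of its eigenvalues via \cite[Th.~16.1]{Katz:CE}: each unipotent block $\Unip(e)$ of $\Ind(V_\dc)(0)$ contributes a non-zero weight $-e$ and each block of $\Ind(V_\dc)(\infty)$ contributes $+e$ (Proposition~\ref{prop:middle-extension-monodromy}.\ref{item:prop:mem-nonzero-weights}). The multiplicity-one hypothesis at $t=0$ together with $\Ind(V_\dc)(\infty)^\unip=\ZeroSpace$ (which in turn requires $\dc\in\PhiQpGood\rho$ for $\Qp=\Q/t$; see Lemma~\ref{lem:induced-unipotent}) then forces the weight-multiplicity vector $c=\wm{\R}(\gamma)$ to satisfy exactly the combinatorial conditions $c_{\len(c)}=1<c_{\len(c)-1}$, $c_2\le r$, $\len(c)\le r+1$ demanded by Theorem~\ref{thm:big-monodromy}. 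Your picture of ``a single non-semisimple Jordan block of size $e$'' does not match this mechanism, and in particular the counting argument showing $|\PhiQDistinct\cap\PhiQpGood\rho|\sim|\PhiQp|$ (Lemma~\ref{lem:counting-big}) is what ultimately yields \eqref{eqn:big-monodromy}.
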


\noindent
We prove the theorem in \S\ref{subsec:proof-of-equidistribution-theorem}.

\begin{remark}\label{rmk:unipotence-hypothesis}
As the reader will notice, the proof of our theorem has a lot in common with Katz's proof of \cite[Th.~17.1]{Katz:CE}.  We both need the hypothesis on $\gcd(\Q,\s)$ and the structure of $V(0)^\unip$ in order to exhibit special elements of the relevant arithemtic monodromy groups.  More precisely, the hypothesis that $\gcd(\Q,\s)=t$ helps ensure that, for sufficiently many $\dc$, some induced representation $\Ind(V_\dc)$ has the property that $\Ind(V_\dc)(0)^\unip=V(0)^\unip$ (cf.~Lemma~\ref{lem:induced-unipotent}).  The hypothesis on the structure of these coincident modules then leads to the desired element (cf.~Lemma~\ref{lem:nice-element}).  We expect one can remove this hypothesis but do not know how to do so.
\end{remark}

\begin{remark}
The hypothesis $\gcd(\Q,\s)=t$ also plays a minor role in Proposition~\ref{prop:induced-simplicity}.  However, one could easily make other hypotheses (e.g.,~$\gcd(\Q,\s)=1$) and still be able to proceed (cf.~\cite[Th.~5.1]{Katz:QKR}).
\end{remark}


\subsection{Two norm maps}

This subsection recalls material from \cite[\S 2]{Katz:CE} and borrows heavily from \loccit

Let $B$ be the finite $\Fq$-algebra $\Fq[t]/\Q\,\Fq[t]$.  It is a direct product of finite extensions of $\Fq$ and hence \etale{} since $\Q$ is square free.  More generally, for each finite extension $\EFq/\Fq$, the $\Fq$-algebra
$$
	B_{\EFq} = B\otimes_{\Fq}\EFq
$$
is \etale{} and has the structure of a free $B$-module of rank $d=[\EFq:\Fq]$.

Let $\bbB$ be the functor on variable $\Fq$-algebras $R$ defined by
$$
	\bbB(R)
	= R[t]/\Q R[t].
$$
It is the functor $R\mapsto B_R=B\otimes_{\Fq}R$ and takes values in the category of $\Fq$-algebras.  In fact, $\bbB(R)$ even has the structure of an \etale{} $R$-algebra which is free of rank $\deg(\Q)$.  In particular, for each $\Fq$-algebra $R$, there is a norm map $\bbB(R)\to R$ which is part of a transformation
$$
	\norm_{B/\Fq}\colon\bbB\to\id_{\Fq\mathrm{-algebras}}
$$
between $\bbB$ and the identity functor on the category of $\Fq$-algebras.

Let $\bbBt$ be the functor on variable $\Fq$-algebras $R$ defined by
$$
	\bbBt(R)
	= (R[t]/\Q R[t])^\times.
$$
It is the composition of $\bbB$ with the functor $A\mapsto A^\times$ of $\Fq$-algebras and takes values in the category of groups.  Moreover, the restriction of the norm map $\bbB(R)\to R$ to the group of units yields a homomorphism
$$
	\nu_R\colon\bbBt(R)\to R^\times,
$$
and in particular, $\nu_{\Fq}$ is the map $\nu$ of \S\ref{sec:one-parameter-families}.

For each finite extension $\EFq/\Fq$, let $\bbB_{\EFq}$, $\bbBt_{\EFq}$ be the functors on variable $\Fq$-algebras $R$ defined by
$$
	\bbB_{\EFq}(R)
	= B_{\EFq}\otimes_{\Fq}R,\quad
	\bbBt_{\EFq}(R)
	= (B_{\EFq}\otimes_{\Fq}R)^\times
$$
respectively.

On one hand, $\bbB_{\EFq}$ takes values in the category of $\Fq$-algebras.  However, $\bbB_{\EFq}(R)$ also has the structure of an \etale{} $B_R$-algebra which is free of rank $d$ as a $B_R$-module since
$$
	B_{\EFq}\otimes_{\Fq}R
	= B\otimes_{\Fq}\EFq\otimes_{\Fq}R
	= B_R\otimes_{\Fq}\EFq
$$
and since $B_{\EFq}$ is an \etale{} $B$-algebra which is free of rank $d$ as a $B$-module.  In particular, there is a transformation
$$
	\norm_{\EFq/\Fq}\colon
	\bbB_E\to\bbB
$$
between the functors $\bbB_E$ and $\bbB$.

On the other hand, $\bbBt_{\EFq}$ takes values in the category of groups and is even a smooth commutative group scheme.  More precisely, $\bbBt$ is a group scheme over $\Fq$ of multiplicative type (i.e., a torus), and $\bbBt_{\EFq}$ is the torus $\Res_{\EFq/\Fq}(\bbBt)$ over $\Fq$ given by extending scalars to $\EFq$ and then taking the Weil restriction of scalars of $\bbBt$ back down to $\Fq$ (cf.~\cite[\S 7.6]{BLR}).  Moreover, the transformation $\norm_{\EFq/\Fq}$ induces a transformation
$$
	\norm_{\EFq/\Fq}\colon\bbBt_E\to\bbBt
$$
which is even an \etale{} surjective homomorphism of tori.  
In particular, since
$$
	\bbBt_{\EFq}(\Fq)
	=
	\bbBt(\EFq)
	=
	(\EFq[t]/\Q\EFq[t])^\times
$$
one obtains a second norm map
$$
	\nu_{\EFq}^{\,\prime}
	\colon
	(\EFq[t]/\Q\EFq[t])^\times
	\to
	(\Fq[t]/\Q\Fq[t])^\times
$$
which is a surjective homomorphism by Lang's theorem.


\subsection{Characters of a twisted torus}

Let $\EFq/\Fq$ be a finite extension and $\PhiEOf\EFq\Q$ be the dual group $\Hom(\bbBt(\EFq),\bbC^\times)$ so that $\PhiEOf\Fq\Q=\PhiQ$.  Suppose that $\Q$ splits completely over $\EFq$, and let $a_1,\ldots,a_n\in\EFq$ be the zeros of $\Q$ so that $\Q=\prod_{i=1}^n(t-a_i)$ in $\EFq[t]$.

For each $\EFq$-algebra $R$, the Chinese Remainder Theorem implies that there is a unique algebra isomorphism
\begin{equation}\label{eqn:crt-isomorphism}
	R[t]/\Q R[t]\to\prod_{i=1}^n R[t]/(t-a_i)R[t]
\end{equation}
which sends the residue class of $t$ to the tuple $(a_1,\ldots,a_n)$ of residue class representatives.  Writing it as an isomorphism $\bbB(R)\to R^n$ and restricting to units yields a group isomorphism $\bbBt(R)\to (R^\times)^n$.  As $R$ varies over $\EFq$-algebras, the latter isomorphisms in turn yield an isomorphism of tori $\sigma\colon\bbBt\to\Gm^n$ over $\EFq$.  In particular, applying Weil restriction of scalars from $\EFq$ to $\Fq$ yields an isomorphism
$$
	\Res_{\EFq/\Fq}(\sigma)\colon\bbBt_{\EFq}\to\bbG_{m,\EFq}^n
$$
of tori over $\Fq$ where $\bbG_{m,\EFq}=\Res_{\EFq/\Fq}(\Gm)$.

There is a unique permutation $\phi\in\Sym([n])$ satisfying $a_{\phi^{-1}(i)}=a_i^q$ since $\Q$ is square free and has coefficients in $\Fq$.  While $\sigma$ does not descend to a morphism $\bbBt\to\Gm^n$ in general, we can use $\phi$ to construct a twisted form $\bbT$ of $\Gm^n$ over $\Fq$ such that $\sigma$ is the pullback of a morphism $\bbBt\to\bbT$ over $\Fq$.  More precisely, we define the twisted Frobenius $\tau$ on $\bbT=\Gm^n$ as the composition
$$
	(b_1,\ldots,b_n)
	\mapsto (b_1^q,\ldots,b_n^q)
	\mapsto (b_{\phi(1)}^q,\ldots,b_{\phi(n)}^q)
$$
of the usual Frobenius automorphism and a permutation of the coordinates of $\Gm^n$.  One can easily verify that $\tau^d$ is the $d$th power of the usual Frobenius and thus $\bbT$ is indeed a twist of $\Gm^n$.  Moreover, one can also show that $(a_1,\ldots,a_n)$ is fixed by $\tau$ and even that
$$
	\bbT(\Fq)=\bbT^{\tau=1}=\bbBt(\Fq).
$$
In particular, by precomposing with $\tau$ we obtain the automorphism $\tau_\EFq^\vee$ on
$$
	\Hom(\bbT(\EFq),\bbC^\times)
	=
	\Hom(\Gm^n(\EFq),\bbC^\times)
	=
	\Hom(\EFq^\times,\bbC^\times)^n
$$
given by
\begin{equation}\label{eqn:def-tau}
	\tau_\EFq^\vee\colon
	(\dc_1,\ldots,\dc_n)
	\mapsto
	(\dc_{\phi^{-1}(1)}^q,\ldots,\dc_{\phi^{-1}(n)}^q).
\end{equation}

Composition of $\Res_{\EFq/\Fq}(\sigma)$ with the projection $\bbG_{m,\EFq}^n\to\bbG_{m,\EFq}$ onto the $i$th factor yields a surjective homomorphism
$$
	\pi_i\colon\bbBt_{\EFq}\to\bbG_{m,\EFq}
$$
of tori over $\Fq$.  In particular, taking duals of the respective groups of $\EFq$-rational points and using the bijections $\bbG_{m,\EFq}(\Fq)=\Gm(\EFq)=\EFq^\times$ yields an isomorphism
$$
	\sigma_{\EFq}^\vee
	\colon
	\prod_{i=1}^n\Hom(\EFq^\times,\bbC^\times)
	\ni(\dc_1,\ldots,\dc_n)\mapsto\prod_{i=1}^n\dc_i\pi_i\in
	\PhiEOf\EFq\Q.
$$
We observe that since $\nu_\EFq^\prime$ is surjective its dual $\nu_{\EFq}^{\,\prime\,\vee}$ is a monomorphism $\PhiQ\to\PhiEOf\EFq\Q$ and thus we can identify $\PhiQ$ with a subset of $\Hom(\EFq^\times,\bbC^\times)^n$.  More precisely, it is the subgroup of characters fixed by $\tau_\EFq^\vee$ and thus
\begin{equation}\label{eqn:identifying-PhiQ}
	(\sigma_\EFq^\vee)^{-1}(
	\nu_{\EFq}^{\,\prime\,\vee}(\PhiQ)
	)
	=
	\{\,
		(\dc_1,\ldots,\dc_n)\in\Hom(\EFq^\times,\bbC^\times)^n
		:
		\dc_{\phi(i)}=\dc_i^q\mbox{ for }i\in[n]
	\,\}.
\end{equation}


\subsection{Characters with distinct components}\label{sec:distinct-component-characters}

We say that a character $\dc\in\PhiEOf\EFq\Q$ \defi{has distinct components} iff it lies in the subset
$$
	\PhiEOfDistinct\EFq\Q
	=
	\left\{\,
		\sigma_{\EFq}^\vee(\dc_1,\ldots,\dc_n)\in\PhiEOf\EFq\Q
		:
		\dc_i\neq\dc_j
		\mbox{ for }
		1\leq i<j\leq n
	\,\right\},
$$
and we define the corresponding subset of $\PhiQ$ as the intersection
$$
	\PhiQDistinct
	=
	\PhiEOfDistinct\EFq\Q\cap\nu_{\EFq}^{\,\prime\,\vee}(\PhiQ)
$$
where $\nu_{\EFq}^{\,\prime\,\vee}\colon\PhiQ\to\PhiEOf\EFq\Q$ is the dual of $\nu_\EFq^{\,\prime}$.

\begin{lemma}
$\PhiQDistinct$ is well defined, that is, it does not depend upon our choice of $\EFq$.
\end{lemma}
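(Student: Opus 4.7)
The plan is to show that for any two finite extensions $\EFq \subseteq \EFq'$ of $\Fq$ over which $\Q$ splits completely, the two candidate definitions of $\PhiQDistinct$ single out the same subset of $\PhiQ$; the case of two unrelated splitting fields $\EFq_1, \EFq_2$ then reduces to this one by comparing each with the compositum $\EFq_1\EFq_2$. The key ingredient will be the surjectivity of the field-theoretic norm $N := \norm_{\EFq'/\EFq}\colon (\EFq')^\times \to \EFq^\times$, which follows from Hilbert 90 (equivalently, Lang's theorem for $\Gm$).

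First I would unwind the identifications. Since $\Q$ already splits over $\EFq$, the same tuple of roots $(a_1,\ldots,a_n)$ appears in the CRT decomposition \eqref{eqn:crt-isomorphism} over both $\EFq$ and $\EFq'$. Functoriality of that decomposition in the base ring gives a commutative square relating $\sigma$ over the two fields, and after applying Weil restriction of scalars down to $\Fq$, this translates into the assertion that under the identifications $\bbBt(\EFq) \cong (\EFq^\times)^n$ and $\bbBt(\EFq') \cong ((\EFq')^\times)^n$, the relative norm $\bbBt(\EFq') \to \bbBt(\EFq)$ is coordinate-wise equal to $N$. Since $\nu_{\EFq'}^{\,\prime}$ factors as $\nu_\EFq^{\,\prime}$ composed with this relative norm, a character $\nu_\EFq^{\,\prime\,\vee}(\dc)$ represented by $(\dc_1,\ldots,\dc_n)$ under $\sigma_\EFq^\vee$ pulls back to $\nu_{\EFq'}^{\,\prime\,\vee}(\dc)$, represented by $(\dc_1\circ N,\ldots,\dc_n\circ N)$ under $\sigma_{\EFq'}^\vee$.

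The decisive step is then short: because $N$ is surjective, for any pair of characters $\dc_i, \dc_j \in \Hom(\EFq^\times,\bbC^\times)$ one has $\dc_i \circ N = \dc_j \circ N$ if and only if $\dc_i = \dc_j$. Hence $(\dc_1,\ldots,\dc_n)$ has pairwise distinct entries if and only if $(\dc_1\circ N,\ldots,\dc_n\circ N)$ does, so $\nu_\EFq^{\,\prime\,\vee}(\dc) \in \PhiEOfDistinct{\EFq}{\Q}$ if and only if $\nu_{\EFq'}^{\,\prime\,\vee}(\dc) \in \PhiEOfDistinct{\EFq'}{\Q}$. This equates the two candidate subsets of $\PhiQ$ for the pair $\EFq \subseteq \EFq'$, and the compositum argument handles arbitrary pairs. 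The main (and only mildly painful) obstacle is assembling the commutative diagram that makes the relative norm on $\bbBt$ into the coordinate-wise $N$ after Weil restriction; once that is in place, surjectivity of $N$ closes the argument.
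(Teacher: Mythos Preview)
Your proposal is correct and follows essentially the same approach as the paper: both use surjectivity of the norm $\EFq'^\times\to\EFq^\times$ to deduce that the induced map on characters is injective, hence preserves distinctness of components, and then pass to a common overfield (the compositum) to handle two unrelated splitting fields. The paper's version is just terser, asserting $\PhiEOfDistinct{\EFq}\Q = \PhiEOfDistinct{\EFq'}\Q \cap \PhiEOf\EFq\Q$ directly from the monomorphism on character groups, whereas you spell out the coordinate-wise norm identification; the content is the same.
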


\begin{proof}
Let $\EFq'/\EFq$ be a finite extension and observe that the norm map $\EFq^{\prime\times}\to\EFq^\times$ is surjective so induces a monomorphism
$$
	\Hom(\EFq^\times,\bbC^\times)
	\to
	\Hom(\EFq^{\prime\times},\bbC^\times),
$$
and thus
$$
	\PhiEOfDistinct{\EFq}\Q
	=
	\PhiEOfDistinct{\EFq'}\Q
	\cap
	\PhiEOf\EFq\Q.
$$
In particular, if $\EFq''/\Fq$ is a second finite extension over which $\Q$ splits completely and if $\EFq'$ contains the compositum $\EFq\EFq''$, then
$$
	\PhiEOfDistinct{\EFq}\Q
	\cap
	\nu_{\EFq}^{\,\prime\,\vee}(\PhiQ)
	=
	\PhiEOfDistinct{\EFq'}\Q
	\cap
	\nu_{\EFq'}^{\,\prime\,\vee}(\PhiQ)
	=
	\PhiEOfDistinct{\EFq''}\Q
	\cap
	\nu_{\EFq''}^{\,\prime\,\vee}(\PhiQ)
$$
and $\PhiQDistinct$ is indeed well defined.
\end{proof}

Let $\Q=\prod_{j=1}^r\pi_i\in\Fq[t]$ be a factorization into monic irreducibles.  The quotient $\EFq_j=\Fq[t]/\pi_j\Fq[t]$ is a finite extension of $\Fq$ of degree and $n_j=\deg(\pi_j)$.  It is also the splitting field of $\pi_j$ and thus may be embedded in $\EFq$.  Moreover, there are bijections
\begin{equation}\label{eqn:phiQ-bijections}
	\PhiQ
	= \prod_{j=1}^r\PhiOf{\pi_j}
	= \prod_{j=1}^r\Hom(\EFq_j^\times,\bbC^\times),\ \ 
	\PhiEOf\EFq\Q
	= \prod_{j=1}^r\PhiEOf\EFq{\pi_j}
	= \prod_{j=1}^r\Hom(\EFq^\times,\bbC^\times)^{n_j}
\end{equation}
given by applying the Chinese Remainder Theorem.

For each monic factor $\Qp$ of $\Q$ in $\Fq[t]$, let $\PhiQpDistinct$ be the subset of $\PhiQp$ defined similarly as above but with $\Qp$ in lieu of $\Q$.  One can easily verify that it does not depend upon the polynomial $\Q$ of which $\Qp$ is a factor.

\begin{lemma}\label{lem:counting-distinct-characters}
$|\PhiOfDistinct{\pi_j}|\sim|\PhiOf{\pi_j}|$, for each $j\in[r]$, as $q\to\infty$.
\end{lemma}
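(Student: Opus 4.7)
The plan is to reduce to counting characters of $\bbF_{q^{n_j}}^\times$ with full Frobenius orbit, and then to apply M\"obius inversion on the divisor lattice of $n_j$. First, I would take $\EFq=\bbF_{q^{n_j}}$ (the splitting field of $\pi_j$), so that the permutation $\phi$ in \eqref{eqn:def-tau} acts as a single $n_j$-cycle on the roots of $\pi_j$. Under $\sigma_\EFq^\vee$, an element of $\PhiOf{\pi_j}$ then corresponds to a tuple $(\dc_1,\dc_1^q,\ldots,\dc_1^{q^{n_j-1}})\in\Hom(\bbF_{q^{n_j}}^\times,\bbC^\times)^{n_j}$ determined by its first coordinate; the gluing condition $\dc_1^{q^{n_j}}=\dc_1$ is automatic since $|\bbF_{q^{n_j}}^\times|=q^{n_j}-1$, and so $|\PhiOf{\pi_j}|=q^{n_j}-1$. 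The distinctness condition then translates into $\dc_1$ having Frobenius orbit of exact size $n_j$ in the cyclic group $\Hom(\bbF_{q^{n_j}}^\times,\bbC^\times)$, where $\Frob$ acts by $\dc\mapsto\dc^q$.

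Next, for each $d\mid n_j$ the characters $\dc_1$ fixed by $\Frob^d$ are those with $\dc_1^{q^d-1}=1$, forming a subgroup of order exactly $q^d-1$. A character has Frobenius orbit of exact size $n_j$ iff it is fixed by no $\Frob^d$ with $d$ a proper divisor of $n_j$, so M\"obius inversion on the divisor lattice of $n_j$ yields
$$
|\PhiOfDistinct{\pi_j}|=\sum_{d\mid n_j}\mu(n_j/d)\,(q^d-1).
$$

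Finally, the case $n_j=1$ is trivial since the distinctness condition is then vacuous. For $n_j\geq 2$ every proper divisor of $n_j$ is at most $n_j/2$, so
$$
\Bigl||\PhiOfDistinct{\pi_j}|-(q^{n_j}-1)\Bigr|\leq\sum_{\substack{d\mid n_j\\ d<n_j}}(q^d-1)\ll n_j\,q^{n_j/2},
$$
and hence $|\PhiOfDistinct{\pi_j}|/|\PhiOf{\pi_j}|=1+O(q^{-n_j/2})\to 1$ as $q\to\infty$. There is no substantive obstacle; the only step requiring care is translating the $\EFq$-level distinctness condition into the intrinsic orbit condition on $\Hom(\bbF_{q^{n_j}}^\times,\bbC^\times)$, which is immediate from the $n_j$-cycle structure of $\phi$.
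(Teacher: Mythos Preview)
Your proof is correct and follows essentially the same route as the paper: both identify $\PhiOf{\pi_j}$ with characters $\dc_1\in\Hom(\bbF_{q^{n_j}}^\times,\bbC^\times)$ via the Frobenius orbit $(\dc_1,\dc_1^q,\ldots,\dc_1^{q^{n_j-1}})$ and translate distinctness into the orbit having full size $n_j$, equivalently (under a non-canonical identification with $\bbF_{q^{n_j}}^\times$) into $g^{e_1}$ generating $\bbF_{q^{n_j}}$ over $\Fq$. The only difference is that the paper then appeals to a standard reference (\cite[2.2]{Rosen}) for the asymptotic count of such elements, whereas you carry out the M\"obius inversion explicitly; this makes your argument marginally more self-contained but is otherwise the same computation.
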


\begin{proof}
Let $j\in[r]$, and suppose without loss of generality that $a_1,\ldots,a_{n_j}$ are the zeros of $\pi_j$ and $\phi(i)\equiv i+1\bmod{n_j}$ for $i\in[n_j]$.  Then by \eqref{eqn:identifying-PhiQ} and \eqref{eqn:phiQ-bijections} there is an identification
\begin{eqnarray*}
	\PhiOf{\pi_j}
	& = &
	\{\,
		(\dc_1,\ldots,\dc_{n_j})\in\Hom(\EFq_j^\times,\bbC^\times)^{n_j}
		:
		\dc_{i+1}=\dc_i^q\mbox{ for }i\in[n_j-1]
	\,\}.
\end{eqnarray*}
since any $\dc\in\Hom(\EFq^\times,\bbC^\times)$ factors through an inclusion $\EFq_j^\times\to\EFq^\times$ if $\dc^{q^{n_j}}=\dc$.

The groups $\EFq_j^\times$ and $\Hom(\EFq_j^\times,\bbC^\times)$ are cyclic and non-canonically isomorphic, so let $g$ and $\chi$ be respective generators.  Then we have a further identifications
\begin{eqnarray*}
	\PhiOf{\pi_j}
	& = &
	\{\,
		(\chi^{e_1},\ldots,\chi^{e_{n_j}})\in\Hom(\EFq_j^\times,\bbC^\times)^{n_j}
		:
		e_{i+1}\equiv qe_i\bmod{q^{n_j}-1}
		\mbox{ for }
		i\in[n_j-1]
	\,\} \\
	& = &
	\{\,
		(g^{e_1},\ldots,g^{e_{n_j}})\in(\EFq_j^\times)^{n_j}
		:
		e_{i+1}\equiv qe_i\bmod{q^{n_j}-1}
		\mbox{ for }
		i\in[n_j-1]
	\,\}.
\end{eqnarray*}
From this last identification one easily deduces an identification between $\PhiOfDistinct{\pi_j}$ and the set
$$
	\{\,
		(g^{e_1},\ldots,g^{e_{n_j}})\in(\EFq_j^\times)^{n_j}
		:
		e_{i+1}\equiv qe_i\bmod{q^{n_j}-1}
		\mbox{ for }
		i\in[n_j-1]
		\mbox{ and }
		\Fq(g^{e_1})=\EFq_j
	\,\},
$$
and thus
$$
	|\PhiOfDistinct{\pi_j}|
	=
	|\{\,
		g^e\in\EFq_j^\times
		:
		e\in[q^{n_j}-1]
		\mbox{ and }
		\EFq_j=\Fq(g^e)
	\,\}|.
$$
Finally, it is well known that the cardinality of the righthand set is asymptotic to $q^{n_j}-1$ as $q\to\infty$ (cf.~\cite[2.2]{Rosen}), and thus
$$
	|\PhiOf{\pi_j}|
	=
	|\Hom(\EFq_j^\times,\bbC^\times)|
	=
	|\EFq_j^\times|
	=
	q^{n_j}-1
	\sim
	|\PhiOfDistinct{\pi_j}|
	\mbox{ for }
	q\to\infty
$$
as claimed.
\end{proof}

\begin{cor}\label{cor:size-of-Qp-distinct}
If $\Qp$ is a monic factor of $\Q$ in $\Fq[t]$, then $|\PhiQpDistinct|\sim|\PhiQp|$ as $q\to\infty$.
\end{cor}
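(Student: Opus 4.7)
The plan is to reduce to Lemma~\ref{lem:counting-distinct-characters} by combining the Chinese Remainder Theorem factorization of $\PhiQp$ with a union bound controlling coincidences across distinct irreducible factors. First I would factor $\Qp=\prod_{j=1}^{s}\pi_j$ into distinct monic irreducibles and use \eqref{eqn:phiQ-bijections} to identify $\PhiQp$ with $\prod_{j}\PhiOf{\pi_j}$. A character $\dc=(\chi_1,\ldots,\chi_s)\in\PhiQp$, viewed as an $n$-tuple of components in $\Hom(\EFq^\times,\bbC^\times)$ with $n=\deg(\Qp)$, then decomposes into $s$ blocks: the $j$th block consists of the $n_j=\deg(\pi_j)$ Galois conjugates of the pushforward of $\chi_j$ along the norm $N_{\EFq/\EFq_j}$. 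Thus ``all $n$ components distinct'' splits into two independent conditions: (i) distinctness within each block, and (ii) distinctness across every pair of different blocks.

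Condition (i) is handled directly by Lemma~\ref{lem:counting-distinct-characters}: it gives $|\PhiOf{\pi_j}\ssm\PhiOfDistinct{\pi_j}|=o(|\PhiOf{\pi_j}|)$ for each $j$, and a union bound over the bounded number of factors produces a subset of $\PhiQp$ of size $o(|\PhiQp|)$. Condition (ii) is the substantive step, and I would estimate the bad set pair by pair. Fix $j\neq j'$ and a position in each of the two blocks, and consider the condition that the two chosen components agree as characters of $\EFq^\times$. Since the component from block $j$ is trivial on $\ker N_{\EFq/\EFq_j}$ and the component from block $j'$ is trivial on $\ker N_{\EFq/\EFq_{j'}}$, their common value is trivial on the product subgroup $\ker N_{\EFq/\EFq_j}\cdot\ker N_{\EFq/\EFq_{j'}}=\ker N_{\EFq/(\EFq_j\cap\EFq_{j'})}$, and hence factors through the norm to $\EFq_j\cap\EFq_{j'}=\bbF_{q^{\gcd(n_j,n_{j'})}}$. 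Since the number of such characters is at most $q^{\gcd(n_j,n_{j'})}$, the count of bad pairs $(\chi_j,\chi_{j'})$ is at most $n_jn_{j'}q^{\gcd(n_j,n_{j'})}$, where $n_jn_{j'}$ counts allowable Galois-twisted position pairs. Extending back to a full tuple in $\PhiQp$ and invoking the inequality $\gcd(n_j,n_{j'})<n_j+n_{j'}$ gives a relative density of $O(q^{-1})$, hence an $o(|\PhiQp|)$ contribution from condition (ii). Combining (i) and (ii) yields $|\PhiQp\ssm\PhiQpDistinct|=o(|\PhiQp|)$, which is the claim.

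The main obstacle will be the explicit bookkeeping in step (ii): one must track the pushforward-plus-Galois-twist description of each position in a block as a specific character of $\EFq^\times$, using the splitting-field bijections of \eqref{eqn:phiQ-bijections} carefully, before the subfield-factorization argument can be applied. Once this identification is in place the remaining estimates are elementary, and the corollary follows.
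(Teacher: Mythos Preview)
Your proposal is correct and follows the same overall strategy as the paper: factor $\Qp$ via the Chinese Remainder Theorem, invoke Lemma~\ref{lem:counting-distinct-characters} for each irreducible factor, and then control cross-block coincidences. The paper handles the cross-block step very tersely, asserting only that the inclusion $\PhiOfDistinct{\Qp}\hookrightarrow\prod_{j}\PhiOfDistinct{\pi_j}$ has small complement in its codomain; your argument via the norm maps $N_{\EFq/\EFq_j}$ and the identity $\EFq_j\cap\EFq_{j'}=\bbF_{q^{\gcd(n_j,n_{j'})}}$ makes this step fully explicit and gives the sharper quantitative bound $O(q^{-1}|\PhiQp|)$, which is a useful refinement of the paper's presentation.
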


\begin{proof}
Suppose without loss of generality that $\Q=\pi_1\cdots\pi_s$ with $s\in[r]$ so that there is a bijection
$$
	\PhiOf\Qp
	=
	\prod_{j=1}^s
	\PhiOf{\pi_j}.
$$
This bijection in turn induces an inclusion
$$
	\PhiOfDistinct\Qp
	\to
	\prod_{j=1}^s
	\PhiOfDistinct{\pi_j}
$$
whose coimage is bounded above by
$
	\prod_{j=1}^s
	(\deg(\Qp)-n_j)
$
since an element of the codomain lies in the image if (and only if) the components are pairwise distinct.  In particular, 
$$
	|\PhiOfDistinct\Qp|
	\sim
	\prod_{j=1}^s
	|\PhiOfDistinct{\pi_j}|
	\overset{\mathrm{Lemma~\ref{lem:counting-distinct-characters}}}\sim
	\prod_{j=1}^s
	|\PhiOf{\pi_j}|
	\mbox{ for }
	q\to\infty
$$
as claimed.
\end{proof}


\subsection{Properties of $H^2_c$}

Let $X$ be a smooth geometrically connected curve over $\Fq$, let
 $T\seq X$ be a dense Zariski open subset, and let $\FF$ be a sheaf on $X$.

\begin{lemma}\label{lem:birational-invariance-of-H^2_c}
There is a bijection $H^2_c(\Tbar,\FF)\to H^2_c(\bar{X},\FF)$.
\end{lemma}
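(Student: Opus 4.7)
The plan is to use the standard excision sequence for the closed/open decomposition of $X$. Let $j\colon T\to X$ be the given open inclusion, and let $i\colon Z\to X$ denote the inclusion of the closed complement $Z=X\ssm T$. Since $X$ is a smooth curve and $T\seq X$ is dense open, $Z$ is a finite set of closed points, hence zero-dimensional. In the statement, $H^2_c(\Tbar,\FF)$ should be interpreted as $H^2_c(\Tbar,j^*\FF)$.

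The key tool is the short exact sequence of sheaves on $X$
$$
0\longto j_!j^*\FF\longto\FF\longto i_*i^*\FF\longto 0,
$$
which arises from the adjunction maps. Applying $H^*_c(\bar{X},-)$ yields a long exact sequence. By the standard identities $H^n_c(\bar{X},j_!j^*\FF)=H^n_c(\Tbar,j^*\FF)$ and $H^n_c(\bar{X},i_*i^*\FF)=H^n(\bar{Z},i^*\FF)$ (the latter because $Z$ is proper, so compact-support and ordinary cohomology coincide), the long exact sequence reads
$$
\cdots\to H^n_c(\Tbar,j^*\FF)\to H^n_c(\bar{X},\FF)\to H^n(\bar{Z},i^*\FF)\to H^{n+1}_c(\Tbar,j^*\FF)\to\cdots
$$

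Now I invoke the dimension of $Z$: since $\dim(Z)=0$, one has $H^n(\bar{Z},i^*\FF)=0$ for every $n\geq 1$. Plugging this into the long exact sequence at $n=2$ gives
$$
0\longto H^2_c(\Tbar,j^*\FF)\longto H^2_c(\bar{X},\FF)\longto 0,
$$
which is the claimed bijection. There is no genuine obstacle here; the only points to be careful about are the interpretation of $\FF$ on $T$ (via $j^*$) and the two elementary facts about compactly supported cohomology of $j_!$ and $i_*$, both of which are standard properties of the six-functor formalism.
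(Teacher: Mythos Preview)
Your proof is correct and essentially identical to the paper's own argument: both use the short exact sequence $0\to j_!j^*\FF\to\FF\to\QQ\to 0$ (the paper writes $\QQ$ for your $i_*i^*\FF$, a skyscraper sheaf on $Z=X\ssm T$) and then invoke the vanishing of $H^i_c(\bar{X},\QQ)$ for $i\neq 0$ in the resulting long exact sequence.
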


\begin{proof}
Let $j\colon T\to X$ be the corresponding inclusion.  Then the adjunction map $j_!j^*\FF\to\FF$ is part of an exact sequence of sheaves on $X$
$$
	0\to j_!j^*\FF\to \FF\to \QQ\to 0
$$
where $\QQ$ is a skyscraper sheaf supported on $X\ssm T$.  The bijection in question is part of the corresponding long exact sequence of cohomology
$$
	\cdots
	\to H^1_c(\bar{X},\QQ)
	\to H^2_c(\Tbar,\FF)
	\to H^2_c(\bar{X},\FF)
	\to H^2_c(\bar{X},\QQ)
	\to \cdots
$$
where $H^i_c(\bar{X},\QQ)$ vanishes for $i\neq 0$ since $\QQ$ is a skyscraper sheaf.
\end{proof}

Let $\GG$ be a sheaf on $X$ and $\GG^\vee$ be its dual.  Suppose $\FF$ and $\GG$ are lisse on $T$, and thus so is $\GG^\vee$.  Let $\rho\colon\piOneT\to\GL(V)$, $\omega\colon\piOneT\to\GL(W)$, and $\omega^\vee\colon\piOneT\to\GL(W^\vee)$ be the respective corresponding representations.

\begin{lemma}\label{lem:detecting-FG-isomorphisms}
Suppose $\FF$ and $\GG$ are lisse and  geometrically simple on $T$.

\medskip
\begin{enum}
\item $\dim(H^2_c(\Tbar,\FF\otimes\GG^\vee))=\dim(\Hom_{\piOneT}(W,V))\leq 1$.
\item $\dim(H^2_c(\Tbar,\FF\otimes\GG^\vee))=1$ if and only if $\FF$ and $\GG$ are geometrically isomorphic on $T$.
\end{enum}
\end{lemma}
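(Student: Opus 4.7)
The plan is to reduce both parts to Schur's lemma via a standard cohomological identification followed by a semisimplicity argument.

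First, I would invoke the identification recalled in \eqref{eqn:invariants-and-coinvariants}: since $\FF \otimes \GG^\vee$ is lisse on $T$ (a smooth geometrically connected curve), its $H^2_c(\Tbar, -)$ is, up to a Tate twist, the space of $\piOneTbar$-coinvariants of its generic fiber $V \otimes W^\vee$. In particular,
\[
\dim H^2_c(\Tbar, \FF \otimes \GG^\vee) = \dim (V \otimes W^\vee)_{\piOneTbar}.
\]
The Tate twist is harmless because we only care about dimensions.

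Next, I would observe that $V$ and $W$ are simple $\piOneTbar$-modules by the geometric simplicity hypothesis, so both are semisimple; over the field $\Qellbar$ of characteristic zero their tensor product $V \otimes W^\vee \cong \ulHom(W, V)$ is also semisimple as a $\piOneTbar$-module. For any semisimple representation $M$ of a group, the natural map $M^G \to M_G$ from invariants to coinvariants is an isomorphism, so
\[
\dim (V \otimes W^\vee)_{\piOneTbar} = \dim (V \otimes W^\vee)^{\piOneTbar} = \dim \Hom_{\piOneTbar}(W, V).
\]
(Here $\piOneT$ in the lemma statement should be read as $\piOneTbar$, which is the fundamental group governing the cohomology of $\Tbar$; note also that a geometrically simple sheaf is a fortiori arithmetically simple.)

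Now Schur's lemma, applied to the two simple $\piOneTbar$-modules $V$ and $W$ over the algebraically closed field $\Qellbar$, yields $\dim \Hom_{\piOneTbar}(W, V) \leq 1$, establishing part (i). Moreover, this Hom space is one-dimensional precisely when $V$ and $W$ are isomorphic as $\piOneTbar$-modules, which by the equivalence between lisse sheaves on $T$ and continuous $\piOneT$-representations is equivalent to $\FF$ and $\GG$ being geometrically isomorphic on $T$. This gives part (ii). The argument is essentially bookkeeping; the only substantive ingredient beyond \eqref{eqn:invariants-and-coinvariants} is the semisimplicity of $V \otimes W^\vee$, which I expect to be the one place to be explicit, since it is what allows the switch between coinvariants and invariants.
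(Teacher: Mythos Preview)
Your proposal is correct and follows essentially the same route as the paper: identify $H^2_c$ with coinvariants via \eqref{eqn:invariants-and-coinvariants}, pass from coinvariants to invariants using semisimplicity of $V\otimes W^\vee$ (the paper cites Curtis--Reiner for this step rather than spelling it out), rewrite invariants as $\Hom_{\piOneTbar}(W,V)$, and finish with Schur's lemma. Your remark that the $\piOneT$ in the statement should be read as $\piOneTbar$ matches what the paper's proof actually establishes.
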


\begin{proof}
Let $G=\piOneTbar$ so that $\rho$ and $\omega^\vee$ are absolutely simple representations of $G$ and $\rho\otimes\omega^\vee$ is the representation on $V\otimes W^\vee$ corresponding to $\FF\otimes\GG^\vee$.  Therefore
$$
	\dim(H^2_c(\Tbar,\FF\otimes\GG^\vee))
	\overset{\eqref{eqn:invariants-and-coinvariants}}=
	\dim\left((V\otimes W^\vee)_G\right)
	=
	\dim\left((V\otimes W^\vee)^G\right)
	=
	\dim\left(\Hom_G(W,V)\right)
$$
(cf.~\cite[43.14]{CurtisReiner}).  Moreover, the sheaves $\FF,\GG$ are geometrically isomorphic on $T$ if and only if $V$ and $W$ are isomorphic as representations of $G$.  If these equivalent conditions hold, then Schur's lemma implies $\dim(\Hom_G(W,V))=1$, and otherwise $\dim(\Hom_G(W,V))=0$ (see \cite[27.3]{CurtisReiner}).
\end{proof}


\subsection{Invariant scalars}

Let $\l\in\k^\times$.  If we identify $\Gm$ with $\Poneu\ssm\{0,\infty\}$ and regard $\l$ as an element of $\Gm(\k)$, then multiplication by it (i.e., translation) induces an automorphism of $\Poneu$ over $\k$ which we also denote $\l\colon\Poneu\to\Poneu$.  We say $\l$ is an \defi{invariant scalar} of $\GG$ iff the direct image $\l_*\GG$ is geometrically isomorphic to $\GG$.  For example, $1$ is an invariant scalar for every $\GG$, and every $\l$ is an invariant scalar of the constant sheaf $\Qellbar$.

Let $\alpha\colon\piOne{\Gm}\to\Qellbar^\times$ be a tame character.  The corresponding sheaf $\LL_\alpha=\ME{\alpha}$ is a so-called Kummer sheaf.

\begin{lemma}\label{lem:invariant-scalars-for-Kummer}
Every $\l\in\k^\times$ is an invariant scalar of $\LL_\alpha$.
\end{lemma}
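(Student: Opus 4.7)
The plan is to reduce to exhibiting a geometric isomorphism on $\Gm$ and then use the description of $\LL_\alpha$ as a summand of a pushforward along an $N$-th power isogeny. Multiplication by $\lambda$ is an automorphism of $\Poneu$ that preserves $\Gm = \Poneu\ssm\{0,\infty\}$, so $\lambda_*\LL_\alpha$ is again the middle extension to $\Poneu$ of its restriction to $\Gm$. Writing $j\colon\Gm\to\Poneu$ for the inclusion, we have $\lambda_*\LL_\alpha = j_*(\lambda|_\Gm)_*j^*\LL_\alpha$, and by the uniqueness part of Proposition~\ref{prop:assoc-me} it suffices to exhibit a geometric isomorphism $\mu^*j^*\LL_\alpha \cong j^*\LL_\alpha$ of lisse sheaves on $\GmBar$, where $\mu = \lambda^{-1}$ is arbitrary in $\Fqbartimes$.

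Next, the tame character $\alpha$ factors through a continuous character of $\piOneTame{\GmBar}\cong\hat{\bbZ}^{(p')}$. Checking the desired isomorphism at each finite level, we may assume $\alpha$ has finite order $N$ coprime to $p$, in which case $j^*\LL_\alpha$ is the $\alpha$-isotypic summand of $[N]_*\Qellbar$, where $[N]\colon\Gm\to\Gm$ is the $N$-th power cover with deck group $\bmu_N$ acting as $y\mapsto\zeta y$. Since $\Fqbar$ is algebraically closed and $\gcd(N,p)=1$, we may choose $\nu\in\Fqbartimes$ with $\nu^N = \mu$. The identity $[N](\nu y) = \nu^N y^N = \mu\,[N](y)$ then yields a commutative square
\[
\xymatrix{
\Gm \ar[r]^-{y\mapsto\nu y} \ar[d]_{[N]} & \Gm \ar[d]^{[N]} \\
\Gm \ar[r]_-{x\mapsto\mu x} & \Gm
}
\]
whose top arrow is $\bmu_N$-equivariant for the deck action $y\mapsto\zeta y$, because scalars commute.

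Applying proper base change (equivalently, plain functoriality, since $[N]$ is finite) to this square gives $\mu^*[N]_*\Qellbar \cong [N]_*(y\mapsto\nu y)^*\Qellbar \cong [N]_*\Qellbar$, and the $\bmu_N$-equivariance of this isomorphism lets us restrict it to the $\alpha$-isotypic summand, producing the required $\mu^*j^*\LL_\alpha \cong j^*\LL_\alpha$. There is essentially no serious obstacle here: the only minor subtlety is that $\nu$ is only determined up to $\bmu_N$, but different choices produce isomorphisms that differ by the scalar $\alpha(\zeta)$ for some $\zeta\in\bmu_N$, which does not affect existence. The whole argument uses nothing beyond the fact that $\Fqbar$ contains all $N$-th roots of unity for $N$ coprime to $p$.
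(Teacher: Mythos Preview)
Your proof is correct and takes a genuinely different route from the paper's. The paper argues abstractly: the tame fundamental group $\piOneTame{\GmBar}$ is generated by the images of $I(0)$ and $I(\infty)$, multiplication by $\lambda$ fixes both $0$ and $\infty$, hence the induced automorphism of $\piOneTame{\GmBar}$ is trivial and $\alpha$ is unchanged. You instead give an explicit construction: realise $\LL_\alpha$ as the $\alpha$-isotypic piece of $[N]_*\Qellbar$ and lift multiplication by $\mu$ on the base to multiplication by an $N$-th root $\nu$ upstairs, obtaining a $\bmu_N$-equivariant isomorphism by base change. The paper's argument is shorter and more conceptual; yours is more concrete and makes the isomorphism visible, at the cost of invoking the isotypic decomposition and base change.

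One phrasing issue worth tightening: the sentence ``Checking the desired isomorphism at each finite level, we may assume $\alpha$ has finite order $N$'' is slightly misleading, since a general tame $\alpha$ need not have finite order. What your argument actually shows is that the automorphism $\mu_*$ of $\piOneTame{\GmBar}\cong\hat{\bbZ}^{(p')}$ is the identity on every finite quotient $\bbZ/N\bbZ$ (because multiplication by $\mu$ lifts through $[N]$), hence $\mu_*=\id$ on the full profinite group, and therefore $\alpha\circ\mu_*=\alpha$ for \emph{every} tame $\alpha$. Stated this way the reduction is clean and there is nothing to assume about the order of $\alpha$.
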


\begin{proof}
The tame fundamental group of $\Gm$ is a quotient and completely generated by the images of the inertia groups $I(0)$ and $I(\infty)$.  The character $\alpha$ is completely determined by these images, and translation by $\lambda$ does not change how $I(0)$ and $I(\infty)$ act since it fixes both $0$ and $\infty$.  Therefore $\l_*\LL_\alpha$ and $\LL_\alpha$ are lisse and geometrically isomorphic on $\Gm$, and $\l$ is an invariant scalar of $\LL_\alpha$.
\end{proof}

\begin{cor}
$\l$ is an invariant scalar of $\GG$ if and only if it is an invariant scalar of $\GG\otimes\LL_\alpha$
\end{cor}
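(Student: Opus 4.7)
The plan is to exploit two formal facts: that $\l\colon\Poneu\to\Poneu$ is an automorphism (so $\l_*$ equals $(\l^{-1})^*$ and hence commutes with tensor products of sheaves), and that the inverse of a Kummer sheaf is again a Kummer sheaf with the same invariant-scalar property supplied by Lemma~\ref{lem:invariant-scalars-for-Kummer}.

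First, since $\LL_\alpha$ is lisse of rank one on $\Gm$, its geometric dual is $\LL_{\alpha^{-1}}$, which is itself a Kummer sheaf. By Lemma~\ref{lem:invariant-scalars-for-Kummer}, both $\l_*\LL_\alpha$ and $\l_*\LL_{\alpha^{-1}}$ are geometrically isomorphic to $\LL_\alpha$ and $\LL_{\alpha^{-1}}$ respectively. Next, because $\l$ is an automorphism of $\Poneu$, the functor $\l_*$ is exact and satisfies the projection-type identity
$$
	\l_*(\GG\otimes\LL_\alpha)
	\ \simeq\
	\l_*\GG\otimes \l_*\LL_\alpha
	\ \simeq\
	\l_*\GG\otimes\LL_\alpha
$$
geometrically, where the second isomorphism uses Lemma~\ref{lem:invariant-scalars-for-Kummer}.

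From this the two implications follow symmetrically. If $\l_*\GG\simeq\GG$ geometrically, then tensoring with $\LL_\alpha$ gives $\l_*(\GG\otimes\LL_\alpha)\simeq\GG\otimes\LL_\alpha$, so $\l$ is an invariant scalar of $\GG\otimes\LL_\alpha$. Conversely, if $\l_*(\GG\otimes\LL_\alpha)\simeq\GG\otimes\LL_\alpha$ geometrically, then the displayed isomorphism yields $\l_*\GG\otimes\LL_\alpha\simeq\GG\otimes\LL_\alpha$; tensoring both sides with $\LL_{\alpha^{-1}}$ (and using $\LL_\alpha\otimes\LL_{\alpha^{-1}}\simeq\Qellbar$) produces $\l_*\GG\simeq\GG$, so $\l$ is an invariant scalar of $\GG$. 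There is no real obstacle here: the only thing to check carefully is that the projection formula applies in the geometric setting, which is immediate because $\l$ is an isomorphism of schemes and $\LL_\alpha$ is lisse.
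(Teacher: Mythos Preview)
Your argument is correct and is essentially the same as the paper's: both use that $\l$ is an automorphism so that $\l_*(\GG\otimes\LL_\alpha)\simeq\l_*\GG\otimes\l_*\LL_\alpha$, then invoke Lemma~\ref{lem:invariant-scalars-for-Kummer} to replace $\l_*\LL_\alpha$ by $\LL_\alpha$, and finally use that tensoring with a rank-one Kummer sheaf is invertible. The paper phrases this via the auxiliary sheaf $\l_*\GG\otimes\GG^\vee$, but the content is identical.
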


\noindent
In particular, the answer to the question of whether or not $\l$ is an invariant scalar of $\Q_*\ME{\rhochi}$ depends only on the coset $\dc\PhiUNu$.

\begin{proof}
The sheaves $\l_*\LL_\alpha$ and $\LL_\alpha$ are lisse and geometrically isomorphic on $\Gm$ by Lemma~\ref{lem:invariant-scalars-for-Kummer}.  Moreover,
$$
	\l_*(\GG\otimes\LL_\alpha)\otimes(\GG\otimes\LL_\alpha)^\vee
	=
	\l_*\GG\otimes(\l_*\LL_\alpha\otimes\LL_\alpha^\vee)\otimes\GG^\vee,
$$
so $\l_*\GG\otimes\GG^\vee$ and $\l_*(\GG\otimes\LL_\alpha)\otimes(\GG\otimes\LL_\alpha)^\vee$ are lisse and geometrically isomorphic on $U\ssm\{0,\infty\}$.  Thus $\l$ is an invariant scalar of $\GG$ if and only if it is an invariant scalar of $\GG\otimes\LL_\alpha$.
\end{proof}

  The following lemma gives a cohomological criterion for detecting invariant scalars.

\begin{lemma}\label{lem:detecting-invariant-scalars}
Let $\l\in\Fqbar^\times$.  Suppose $\l_*\GG$ and $\GG$ are lisse and geometrically simple on $U$.  Then the following are equivalent:

\smallskip
\begin{enum}
\item $\l$ is an invariant scalar of $\GG$;
\item $H^2_c(\Ubar,\l_*\GG\otimes\GG^\vee)\neq\ZeroSpace$;
\item $H^2(\PoneuBar,\l_*\GG\otimes\GG^\vee)\neq\ZeroSpace$.
\end{enum}
\end{lemma}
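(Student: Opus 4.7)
The plan is to split the chain of equivalences into two independent parts: (i)$\Leftrightarrow$(ii) will be a direct application of Lemma~\ref{lem:detecting-FG-isomorphisms}, and (ii)$\Leftrightarrow$(iii) will follow from Lemma~\ref{lem:birational-invariance-of-H^2_c} together with the fact that $\Poneu$ is proper. Neither step requires substantial new input beyond what has been assembled immediately above.

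For (i)$\Leftrightarrow$(ii), I would apply Lemma~\ref{lem:detecting-FG-isomorphisms} with $\FF=\l_*\GG$ (the role of ``$\GG$'' being played by $\GG$ itself). By the hypothesis of the present lemma, both $\l_*\GG$ and $\GG$ are lisse and geometrically simple on $U$, so Lemma~\ref{lem:detecting-FG-isomorphisms} yields $\dim H^2_c(\Ubar,\l_*\GG\otimes\GG^\vee)\le 1$, with equality to $1$ precisely when $\l_*\GG$ and $\GG$ are geometrically isomorphic on $U$. Since both sheaves are lisse on $U$, a geometric isomorphism on $U$ is the same as a geometric isomorphism of sheaves, i.e.\ exactly the definition that $\l$ is an invariant scalar of $\GG$. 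Thus (i) and (ii) coincide.

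For (ii)$\Leftrightarrow$(iii), I would regard $\l_*\GG\otimes\GG^\vee$ as a sheaf on all of $\Poneu$ (observing that $\l$ is an automorphism of $\Poneu$, so $\l_*\GG$ is intrinsically defined on $\Poneu$ and tensor products may be formed there). Lemma~\ref{lem:birational-invariance-of-H^2_c}, applied to the dense open inclusion $U\subseteq\Poneu$, then supplies a bijection
\[
H^2_c(\Ubar,\l_*\GG\otimes\GG^\vee)\;\xrightarrow{\;\sim\;}\;H^2_c(\PoneuBar,\l_*\GG\otimes\GG^\vee).
\]
Since $\Poneu$ is proper, compactly supported cohomology agrees with ordinary cohomology on $\PoneuBar$, so the right-hand side equals $H^2(\PoneuBar,\l_*\GG\otimes\GG^\vee)$. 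This gives the equivalence of the vanishing statements (ii) and (iii).

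I do not anticipate a genuine obstacle; the lemma is essentially a packaging of the two earlier lemmas. The only small point to keep an eye on is ensuring that the sheaves $\l_*\GG$ and $\GG^\vee$ (and hence their tensor product) are being treated as sheaves on $\Poneu$ rather than only on $U$, so that Lemma~\ref{lem:birational-invariance-of-H^2_c} applies verbatim — but this is immediate from the setup, since $\l\colon\Poneu\to\Poneu$ is an automorphism.
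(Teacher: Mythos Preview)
Your proposal is correct and takes essentially the same approach as the paper: the paper's proof consists of the single sentence ``Lemma~\ref{lem:detecting-FG-isomorphisms} implies the equivalence of (1) and (2), and Lemma~\ref{lem:birational-invariance-of-H^2_c} implies the equivalence of (2) and (3),'' and you have simply unpacked these two citations with the appropriate details (including the observation that $\Poneu$ is proper, so $H^2_c=H^2$ there).
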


\begin{proof}
Lemma~\ref{lem:detecting-FG-isomorphisms} implies the equivalence of (1) and (2), and Lemma~\ref{lem:birational-invariance-of-H^2_c} implies the equivalence of (2) and (3).
\end{proof}


\subsection{Avoiding invariant scalars}

Consider the affine plane curve
$$
	X_\l
	:
	\l\Q(x_1)=\Q(x_2),
$$
and let $\pi_i\colon X_\l\to\Aonet$ be the map $(x_1,x_2)\mapsto x_i$.  They are part of a commutative diagram
$$
	\xymatrix{
		X_\l\ar[r]^{\pi_2}\ar[d]_{\pi_1}\ar@{.>}[dr]^\pi
		& \Aonet\ar[d]^{\Q} \\
		\Aonet\ar[r]_{\l\Q}
		& \Aoneu
	}
$$
where $\pi=\Q\pi_2=\l\Q\pi_1$.  Moreover, the maps $\Q$ and $\l\Q$ are generically \'etale of degree $n=\deg(\Q)$, thus their fiber product $\pi$ is generically \'etale of degree $n^2$.

Let $\EFq/\Fq$ be a finite extension over which $\Q$ splits and $Z=\{a_1,\ldots,a_n\}\seq\EFq$ be the zeros of $\Q$.

\begin{lemma}\label{lem:smoothness-of-X_l}
$X_\l$ is smooth over the $n^2$ points of $Z\times_{\Aoneu}Z=Z\times Z$.
\end{lemma}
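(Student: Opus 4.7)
The curve $X_\l$ is cut out in $\bbA^2$ by the single equation $F(x_1,x_2) := \l\Q(x_1) - \Q(x_2) = 0$, so the Jacobian criterion applies: $X_\l$ is smooth at a closed point $(b_1,b_2)$ iff the gradient $(\partial_{x_1}F,\partial_{x_2}F) = (\l\Q'(b_1),\,-\Q'(b_2))$ does not vanish at that point. My plan is therefore simply to verify the Jacobian condition at each of the $n^2$ points $(a_i,a_j)\in Z\times Z$ (noting that $Z\times_{\Aoneu} Z = Z\times Z$ since every $a\in Z$ maps to $0\in\Aoneu$ under $\Q$, and similarly $\l\Q$ sends $Z$ to $0$, so all such pairs indeed lie on $X_\l$).

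The essential input is the hypothesis that $\Q$ is square-free: its roots $a_1,\ldots,a_n$ in $\EFq$ are all simple, so the derivative $\Q'(a_j)$ is nonzero for every $j\in[n]$. Hence at any point $(a_i,a_j)\in Z\times Z$, the partial derivative $\partial_{x_2}F|_{(a_i,a_j)} = -\Q'(a_j)$ is nonzero, which suffices for smoothness at that point. (One could equally well use $\partial_{x_1}F = \l\Q'(x_1)$, recalling $\l\in\k^\times$.)

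There is no real obstacle; the statement is a direct application of the Jacobian criterion combined with the standard fact that a square-free polynomial has nonvanishing derivative at each of its roots. The only minor point worth flagging is the identification $Z\times_{\Aoneu}Z = Z\times Z$, which follows because the map $\Q\colon Z\to\Aoneu$ is the constant map to the origin and likewise for $\l\Q$, so the fibered product over $\Aoneu$ degenerates to the ordinary product.
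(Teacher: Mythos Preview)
Your proposal is correct and follows essentially the same approach as the paper: both identify $Z\times_{\Aoneu}Z$ with $Z\times Z$ via the observation that $\Q$ and $\l\Q$ send $Z$ to $0$, and both verify smoothness via the Jacobian criterion by checking that $\partial_{x_2}F = -\Q'(x_2)$ is nonzero at each $a_j\in Z$ because $\Q$ is square free.
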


\begin{proof}
The subset $Z\sub\Aonet$ is the vanishing locus of $\Q$ and $\l\Q$, hence $Z\times_{\Aoneu}Z=Z\times Z$.  Moreover,
$$
	\frac{\partial}{\partial x_2}(\l\Q(x_1)-\Q(x_2))
	=
	\Q'(x_2)
	=
	\sum_{i=1}^n\prod_{j\neq i}(x-a_j)
$$
does not vanish at any $a_i\in Z$ since $\Q$ is square free, so $X_\l$ is smooth at every $(a_i,a_j)\in Z\times Z$.
\end{proof}

Consider the external tensor product sheaf
$$
	\EE_{\rhochi,\l} := \ME{\rhochi}\boxtimes\ME{\rhochi}^\vee
$$
on $\Aonet\times\Aonet$ and the tensor product sheaf
$$
	\TT_{\rhochi,\l}
	:=
	\l\Q_*\ME{\rhochi}\otimes\Q_*\ME{\rhochi}^\vee
$$
one $\Poneu$.  They have respective generic ranks $r$ and $r^2$ since both $\ME{\rhochi}$ and its dual have generic rank $r$.

Let $T_\l\seq X_\l$ be a smooth dense Zariski open subset and $U_\l=\pi(T_\l)$.  Up to shrinking $T_\l$, we suppose that $\EE_{\rhochi,\l}$ is lisse on $T_\l$ and that $\pi$ is \'etale over $U_\l$.

\begin{lemma}\label{lem:focusing-on-E_lambda}
The sheaves $\pi_*(\EE_{\rhochi,\l})$ and $\TT_{\rhochi,\l}$ are lisse and isomorphic on $U_\l$.
\end{lemma}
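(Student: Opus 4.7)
The plan is to exploit the Cartesian property of the defining square for $X_\l$ together with the projection formula and proper base change. Recall that, after restriction to $X_\l$ along the closed immersion $(\pi_1,\pi_2)\colon X_\l\to\Aonet\times\Aonet$, the external tensor product $\EE_{\rhochi,\l}$ is identified with $\pi_1^*\ME{\rhochi}\otimes\pi_2^*\ME{\rhochi}^\vee$.  Writing $\pi=(\l\Q)\circ\pi_1$, I would compute
$$
\pi_*\bigl(\pi_1^*\ME{\rhochi}\otimes\pi_2^*\ME{\rhochi}^\vee\bigr)
\;=\;
(\l\Q)_*\,\pi_{1*}\bigl(\pi_1^*\ME{\rhochi}\otimes\pi_2^*\ME{\rhochi}^\vee\bigr).
$$
The projection formula for $\pi_{1*}$ then rewrites the right-hand side as $(\l\Q)_*\bigl(\ME{\rhochi}\otimes\pi_{1*}\pi_2^*\ME{\rhochi}^\vee\bigr)$, and proper base change applied to the Cartesian square (legal since $\Q\colon\Aonet\to\Aoneu$ is finite, hence proper) identifies $\pi_{1*}\pi_2^*\ME{\rhochi}^\vee$ with $(\l\Q)^*\Q_*\ME{\rhochi}^\vee$.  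A second application of the projection formula yields $(\l\Q)_*\ME{\rhochi}\otimes\Q_*\ME{\rhochi}^\vee=\TT_{\rhochi,\l}$.

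For the lisse assertion, I would argue as follows: by the choice of $T_\l$, both $\pi_1^*\ME{\rhochi}$ and $\pi_2^*\ME{\rhochi}^\vee$ are lisse on $T_\l$, hence so is their tensor product; and since $\pi\colon T_\l\to U_\l$ is finite \etale{} by assumption, $\pi_*(\EE_{\rhochi,\l}|_{T_\l})$ is lisse on $U_\l$.  On the other hand, $\TT_{\rhochi,\l}$ is lisse on $U_\l$ because $U_\l=\pi(T_\l)$ is by construction contained in the lisse loci of both $(\l\Q)_*\ME{\rhochi}$ and $\Q_*\ME{\rhochi}^\vee$ (this is where Lemma~\ref{lem:smoothness-of-X_l} and the generic \'etaleness of $\Q$ and $\l\Q$ enter).

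The main obstacle I anticipate is justifying the interchange of $\pi_{1*}$ with tensor product and the invocation of proper base change on the quasi-affine base $\Aoneu$.  Since $\Q$ is a finite morphism (obtained by restricting the proper morphism $\Q\colon\Ponet\to\Poneu$ to the preimage of $\Aoneu$), both identities hold in the classical \'etale setting without passage to the derived category, so this is essentially a bookkeeping issue.  As a backup verification, one may check the desired isomorphism at each geometric stalk: for $\bar u\in U_\l$ with $\Q^{-1}(\bar u)=\{\bar t_1,\ldots,\bar t_n\}$ and $(\l\Q)^{-1}(\bar u)=\{\bar s_1,\ldots,\bar s_n\}$, the fiber $\pi^{-1}(\bar u)$ is precisely $\{(\bar s_i,\bar t_j)\}_{i,j}$, and \eqref{eq:direct-image-fiber} gives
$$
\pi_*(\EE_{\rhochi,\l})_{\bar u}
\;=\;
\bigoplus_{i,j}\ME{\rhochi}_{\bar s_i}\otimes\ME{\rhochi}^\vee_{\bar t_j}
\;=\;
\Bigl(\bigoplus_i\ME{\rhochi}_{\bar s_i}\Bigr)\otimes\Bigl(\bigoplus_j\ME{\rhochi}^\vee_{\bar t_j}\Bigr)
\;=\;(\TT_{\rhochi,\l})_{\bar u},
$$
and this identification is manifestly compatible with the $\piOne{U_\l}$-action, completing the argument.
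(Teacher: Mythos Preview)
Your proof is correct and takes a genuinely different route from the paper's.  You construct the isomorphism functorially via the projection formula for $\pi_{1*}$, proper base change across the Cartesian square defining $X_\l$ (legitimate since $\Q$ is finite), and a second projection formula for $(\l\Q)_*$; this produces a global morphism of sheaves on $\Aoneu$ which one then restricts to $U_\l$.  The paper instead works pointwise: for each geometric point $w\in U_\l$ it writes $\pi^{-1}(w)=W_1\times W_2$ with $W_1=(\l\Q)^{-1}(w)$ and $W_2=\Q^{-1}(w)$, computes both stalks explicitly via \eqref{eq:direct-image-fiber}, observes they agree, and then invokes a constancy-of-rank criterion of Katz to deduce that $\pi_*(\EE_{\rhochi,\l})$ is lisse on $U_\l$.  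Your backup stalk computation is exactly the paper's argument, with the $\piOne{U_\l}$-equivariance (which the paper leaves tacit in the sentence ``same geometric fibers, hence isomorphic'') made explicit.  Your functorial approach has the virtue of producing an honest morphism of sheaves rather than a fiberwise identification whose equivariance must be checked separately; the paper's approach is more elementary and sidesteps any bookkeeping about the projection formula outside the lisse locus, which as you note is harmless here since every map involved is finite.  One small point: your claim that $U_\l$ lies in the lisse locus of both $(\l\Q)_*\ME{\rhochi}$ and $\Q_*\ME{\rhochi}^\vee$ follows because $\pi$ \etale{} over $U_\l$ forces both $\l\Q$ and $\Q$ to be \etale{} over $U_\l$ (the fiber of the fiber product $X_\l$ is the product of fibers), and because the hypothesis that $\EE_{\rhochi,\l}$ is lisse on $T_\l$ forces $\ME{\rhochi}$ to be lisse on $\pi_1(T_\l)=(\l\Q)^{-1}(U_\l)$ and $\ME{\rhochi}^\vee$ to be lisse on $\pi_2(T_\l)=\Q^{-1}(U_\l)$.
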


\begin{proof}
Let $w$ be a geometric point of $U_\l$, and let $W_1=(\l\Q)^{-1}(w)$ and $W_2=\Q^{-1}(w)$.  Then $|W_1|=|W_2|=\deg(\Q)$ and $\pi^{-1}(w)=W_1\times W_2$ since $\pi$ is unramified over $w$, and
$$
	\pi_*(\EE_{\rhochi,\l})_w
	\ =
	\bigoplus_{(w_1,w_2)\in W_1\times W_2}
	\EE_{\rhochi,\l,(w_1,w_2)}
	\ =
	\bigoplus_{(w_1,w_2)\in W_1\times W_2}
	\left(\ME{\rhochi}_{w_1}\otimes\ME{\rhochi}^\vee_{w_2}\right)
$$
whereas
$$
	\TT_{\rhochi,\l,w}
	\ =
	\left(\bigoplus_{w_1\in W_1}\ME{\rhochi}_{w_1}\right)
	\otimes
	\left(\bigoplus_{w_2\in W_2}\ME{\rhochi}^\vee_{w_2}\right).
$$
Therefore both sheaves have the same geometric fibers, and hence they are isomorphic.  It remains to show they are lisse on $U_\l$.

On one hand, $\EE_{\rhochi,\l}$ is lisse on $T_\l$, so its geometric fibers all have the same rank $r^2$.  Moreover, $\Q$ is \etale{} over $U_\l$ by hypothesis, so the geometric fibers of $\pi_*(\EE_{\rhochi,\l})$ also all have the same rank $\dim(\Q)r^2$ and hence $\pi_*(\EE_{\rhochi,\l})$ is lisse on $U_\l$ (see \cite[Prop.~11]{Katz:SC}).  On the other hand, $\pi_*(\EE_{\rhochi,\l})$ is isomorphic to $T_{\rhochi,\l}$ on $U_\l$ which implies the latter is also lisse on $U_\l$.
\end{proof}

The contrapositive of the following corollary gives us a way to show some $\l$ is \emph{not} an invariant scalar.

\begin{cor}\label{cor:detecting-invariant-scalars}
Suppose $\rho$ is geometrically simple and $\dc\in\PhiQ$.  Then the following are equivalent:

\smallskip
\begin{enum}
\item $\l$ is an invariant scalar of $\Q_*\ME{\rhochi}$;
\item $H^2_c(\bar U_\lambda,\TT_{\rhochi,\l})\neq\ZeroSpace$.
\end{enum}

\smallskip\noindent
They imply

\smallskip
\begin{enum}
\setcounter{enumi}{2}
\item $H^2_c(\bar T_\l,\EE_{\rhochi,\l})\neq\ZeroSpace$.
\end{enum}

\end{cor}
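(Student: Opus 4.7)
The plan is to deduce (1) $\Leftrightarrow$ (2) from Lemma~\ref{lem:detecting-invariant-scalars} applied to $\GG := \Q_*\ME{\rhochi}$, and then to obtain (2) $\Rightarrow$ (3) by pushing forward along the finite map $\pi\colon T_\l\to U_\l$.

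For the equivalence of (1) and (2), I would begin by noting that, by the very definition of $\TT_{\rhochi,\l}$, one has $\l_*\GG \otimes \GG^\vee = \TT_{\rhochi,\l}$, so conditions (i) and (ii) of Lemma~\ref{lem:detecting-invariant-scalars} for the sheaf $\GG = \Q_*\ME{\rhochi}$ on $\Poneu$ coincide on the nose with our conditions (1) and (2). Shrinking $U_\l$ if necessary so that $\GG$ and $\l_*\GG$ are lisse there, Lemma~\ref{lem:detecting-invariant-scalars} applies provided $\GG$ is geometrically simple on $U_\l$; this follows from the geometric simplicity of $\rhochi$ (granted by Lemma~\ref{lem:rhochi-pure}.\ref{lem:item:rhochi-pure--simplicity} from the hypothesis that $\rho$ is geometrically simple) together with the identification $\Q_*\ME{\rhochi}\simeq\ME{\Ind(\rhochi)}$ provided by Lemma~\ref{lem:direct-image-of-middle-extension}.\ref{lem:ind-is-me}.

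For the implication (2) $\Rightarrow$ (3), I would combine Lemma~\ref{lem:focusing-on-E_lambda} with the standard comparison of compactly supported cohomology along finite morphisms. Lemma~\ref{lem:focusing-on-E_lambda} supplies an isomorphism $\pi_*\EE_{\rhochi,\l} \simeq \TT_{\rhochi,\l}$ on $U_\l$. The morphism $\pi\colon X_\l\to\Aoneu$ is finite, because $X_\l$ is the fiber product (via $\l\Q$ and $\Q$) of the finite maps $\Q,\l\Q\colon\Aonet\to\Aoneu$, and this remains so after restriction to the open $T_\l\to U_\l$. For a finite morphism one has $\pi_! = \pi_*$, and the Leray spectral sequence for $H^*_c$ degenerates, giving
\[
H^2_c(\bar U_\l, \TT_{\rhochi,\l})
\;\cong\;
H^2_c(\bar U_\l, \pi_*\EE_{\rhochi,\l})
\;\cong\;
H^2_c(\bar T_\l, \EE_{\rhochi,\l}).
\]
Nonvanishing on the left therefore forces nonvanishing on the right, which is (3).

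The main obstacle, in my view, is the geometric simplicity hypothesis needed to invoke Lemma~\ref{lem:detecting-invariant-scalars}: a priori $\Q_*\ME{\rhochi}\simeq\ME{\Ind(\rhochi)}$ is only geometrically semisimple by Clifford theory, and upgrading to geometric simplicity (or else arguing directly via Lemma~\ref{lem:detecting-FG-isomorphisms} with the weaker ``common simple summand'' interpretation) is the one non-formal step; once that is in place, all remaining steps amount to unwinding the definitions of the sheaves $\TT_{\rhochi,\l}$ and $\EE_{\rhochi,\l}$ and invoking proper base change for the finite map $\pi$.
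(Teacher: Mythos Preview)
Your approach for (1)$\Leftrightarrow$(2) matches the paper's exactly: both simply invoke Lemma~\ref{lem:detecting-invariant-scalars} for $\GG=\Q_*\ME{\rhochi}$ (the paper also cites Lemma~\ref{lem:focusing-on-E_lambda}, presumably to supply the lisse-ness hypothesis on $U_\l$). Your worry about geometric simplicity of $\Q_*\ME{\rhochi}$ is well-placed and is \emph{not} addressed by the paper either; Lemma~\ref{lem:detecting-invariant-scalars} does require it, and the later Proposition~\ref{prop:induced-simplicity} only proves it under extra hypotheses on $\dc$. Since the only use of the corollary in the paper (Proposition~\ref{prop:no-nontrivial-invariant-scalars}) needs just the contrapositive of (1)$\Rightarrow$(3), the gap is harmless for the application, but the stated equivalence (1)$\Leftrightarrow$(2) is not fully justified under the bare hypothesis ``$\rho$ geometrically simple''.

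For (2)$\Rightarrow$(3) you take a different route than the paper. The paper argues via the inclusion $V^{\piOne{\bar U_\l}}\subseteq V^{\piOne{\bar T_\l}}$ for the representation $V$ attached to $\TT_{\rhochi,\l}$, together with \eqref{eqn:invariants-and-coinvariants}. Your argument---identifying $\TT_{\rhochi,\l}\simeq\pi_*\EE_{\rhochi,\l}$ on $U_\l$ via Lemma~\ref{lem:focusing-on-E_lambda} and then using $H^2_c(\bar U_\l,\pi_*\EE)\cong H^2_c(\bar T_\l,\EE)$ for the finite map $\pi$---is more direct and in fact yields the stronger equivalence (2)$\Leftrightarrow$(3). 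One small caveat: for $\pi\colon T_\l\to U_\l$ to be finite you need $T_\l=\pi^{-1}(U_\l)$, not merely $U_\l=\pi(T_\l)$; this can be arranged by further shrinking $U_\l$, but you should say so explicitly.
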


\begin{proof}
Lemmas~\ref{lem:detecting-invariant-scalars} and \ref{lem:focusing-on-E_lambda} imply the equivalence of (1) and (2).  If $\piOne{U_\lambda}\to\GL(V)$ is the representation corresponding to $\TT_\l$, then
$
	V^{\piOne{U_\l}}
	\seq
	V^{\piOne{T_\l}}
$
so \eqref{eqn:invariants-and-coinvariants} and (2) imply (3).
\end{proof}

The following proposition was inspired by \cite[Proof of Th.~5.1.3]{Katz:TLFM}.

\begin{prop}\label{prop:no-nontrivial-invariant-scalars}
Suppose $\deg(\Q)\geq 2+\deg(\gcd(\Q,s))$ and $\dc\in\PhiQDistinct$.

\begin{enum}
\item If $\rho$ is geometrically irreducible, then so is $\ME{\rhochi}$.
\item $\l=1$ is the only invariant scalar of $\Q_*\ME{\rhochi}$.
\end{enum}
\end{prop}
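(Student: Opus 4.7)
Part (1) is essentially automatic. Since $\dc$ is one-dimensional, scalar multiplication by $\dc$ preserves every subspace of $V$, so a subspace of $V$ is $\bar{G}_{K,\RR}$-stable under $\rhochi$ iff it is $\bar{G}_{K,\RR}$-stable under the pullback of $\rho$, which in turn is equivalent to being $\bar{G}_{K,\SS}$-stable under $\rho$ (via the surjection $\bar{G}_{K,\RR}\onto\bar{G}_{K,\SS}$). Geometric irreducibility thus passes from $\rho$ to $\rhochi$, and then to $\ME{\rhochi}$ via Proposition~\ref{prop:assoc-me}.

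For part (2), I will argue by contradiction. Suppose $\l\in\Fqbar^\times$ with $\l\neq 1$ is an invariant scalar of $\Q_*\ME{\rhochi}$. Corollary~\ref{cor:detecting-invariant-scalars} then yields $H^2_c(\bar T_\l,\EE_{\rhochi,\l})\neq 0$. The plan is to decompose this cohomology along the irreducible components of $X_\l$ and apply Lemma~\ref{lem:detecting-FG-isomorphisms} on the smooth part of each component (geometric semisimplicity of the pullbacks being supplied by Proposition~\ref{prop:pure-impliessemisimple}), obtaining an irreducible component $C$ of $X_\l$ on which $\pi_1^*\ME{\rhochi}$ and $\pi_2^*\ME{\rhochi}$ are geometrically isomorphic.

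Next I will carry out a local-monodromy analysis at the $n^2$ points $(a_i,a_j)\in Z\times Z$. By Lemma~\ref{lem:smoothness-of-X_l} the surface $X_\l$ is smooth there, and computing the differential of $\l\Q(x_1)-\Q(x_2)$---using that $\Q$ is square-free---shows $\pi_1$ and $\pi_2$ are both \'etale at each such point. Consequently the local $I$-module of $\pi_k^*\ME{\rhochi}$ at $(a_i,a_j)$ coincides with that of $\ME{\rhochi}$ at $a_i$ when $k=1$ and at $a_j$ when $k=2$. When neither $a_i$ nor $a_j$ divides $\s$, these modules are $\dc_i^{\oplus r}$ and $\dc_j^{\oplus r}$ respectively, and the hypothesis $\dc\in\PhiQDistinct$ forces $i=j$; when exactly one of $a_i,a_j$ divides $\s$, the ramification of $\rho$ on that side distinguishes the two modules. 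Hence every $(a_i,a_j)\in C\cap(Z\times Z)$ must satisfy either $a_i=a_j$ or both of $a_i,a_j$ are zeros of $\gcd(\Q,\s)$.

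To close the argument, I will use that $\pi_1|_C$ is \'etale over every $a_i\in Z$, so its fiber over $a_i$ has exactly $d_{1,C}$ distinct points, where $d_{1,C}$ is the generic degree of $\pi_1|_C$. Choosing $a_i$ to be a root of $\Q$ not dividing $\s$---which exists because $\deg(\Q)>\deg(\gcd(\Q,\s))$---the restriction on allowed points forces this fiber to be $\{(a_i,a_i)\}$, hence $d_{1,C}=1$. Then $\pi_1|_C$ is birational, $C$ is rational, and $\phi:=\pi_2\circ\pi_1^{-1}$ is a M\"obius automorphism of $\Pone$ satisfying $\Q(\phi(x))=\l\Q(x)$. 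Degree matching forces $\phi(\infty)=\infty$, so $\phi$ is affine; since $\phi$ fixes each of the $\deg(\Q)-\deg(\gcd(\Q,\s))\geq 2$ roots of $\Q$ not dividing $\s$, necessarily $\phi=\id$, giving $\l=1$---the desired contradiction. The hardest step is the local monodromy analysis at points where $\rho$ itself ramifies, and the hypothesis $\deg(\Q)\geq 2+\deg(\gcd(\Q,\s))$ is precisely calibrated to leave enough ``unmixed'' coordinates for the M\"obius rigidity argument to pin down $\phi$.
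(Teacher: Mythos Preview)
Your approach diverges substantially from the paper's, which is far more direct. The paper fixes two roots $a_1,a_2$ of $\Q$ that are distinct from each other and from the zeros of $\s$ (possible by the degree hypothesis) and, for $\l\neq 1$, locates on each geometric component of $T_\l$ a point $t'=(t'_1,t'_2)$ with $\{t'_1,t'_2\}=\{a_1,a_2\}$. There the local $I(t')$-module of $\EE_{\rhochi,\l}=\pi_1^*\ME{\rhochi}\otimes\pi_2^*\ME{\rhochi}^\vee$ is $\Qellbar^{r^2}\otimes\LL_{\dc_1/\dc_2}$; since $\dc\in\PhiQDistinct$ forces $\dc_1\neq\dc_2$, this has no $I$-coinvariants, so a fortiori the generic fibre has no $\piOne{\bar T'_\l}$-coinvariants, and $H^2_c(\bar T_\l,\EE_{\rhochi,\l})=0$. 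No degree counting on components and no M\"obius rigidity is needed.

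Your argument has a genuine gap precisely at the step you flag as hardest. In step~3, Lemma~\ref{lem:detecting-FG-isomorphisms} requires both sheaves to be \emph{simple}, which the pullbacks $\pi_k^*\ME{\rhochi}|_C$ need not be when $\deg(\pi_k|_C)>1$; semisimplicity only yields a shared simple constituent, not an isomorphism. More seriously, the claim that ``when exactly one of $a_i,a_j$ divides $\s$, the ramification of $\rho$ on that side distinguishes the two modules'' is unjustified: the local $I$-module of $\ME{\rhochi}$ at a zero $a_j$ of $\gcd(\Q,\s)$ is $V(a_j)\otimes\dc_j$, and nothing in the hypotheses prevents $\dc_i/\dc_j$ from occurring among the tame characters of $V(a_j)$. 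Without ruling this out, the fibre of $\pi_1|_C$ over a root $a_i\nmid\s$ may contain points $(a_i,a_j)$ with $a_j\mid\s$, you cannot conclude $d_{1,C}=1$, and the M\"obius endgame never starts. The paper sidesteps this entirely by choosing \emph{both} coordinates of $t'$ away from the zeros of $\s$, so that only the Kummer twist---hence only the distinctness of the $\dc_i$---enters the local computation.
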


\begin{proof}
Let $\EFq/\Fq$ be a splitting field of $\Q$ and $a_1,a_2\in\EFq$ be zeros of $\Q$ which are distinct from each other and the zeros of $s$.  Let $\dc_1,\dc_2\in\Hom(\EFq^\times,\bbC^\times)$ be the corresponding components of $(\sigma_\EFq^\vee)^{-1}(\nu_{\EFq}^{\,\prime\,\vee}(\dc))$ as an element of $(\sigma_\EFq^\vee)^{-1}(\PhiEOf\EFq\Q)$ (compare \eqref{eqn:identifying-PhiQ} and \eqref{eqn:phiQ-bijections}).  Then $\dc_1,\dc_2$ are distinct characters, so $\alpha=\dc_1/\dc_2$ is a non-trivial character.

Let $\l\in\k^\times$ be an arbitrary scalar.  If $\l\neq 1$, then for each component $T'_\l\seq T_\l$ over $\Fqbar$, there is a smooth point $t'=(t'_1,t'_2)\in T'_\l(\Fqbar)$ satisfying $\{t'_1,t'_2\}=\{a_1,a_2\}$.  The map $\pi$ is \'etale over $0$ since $\Q$ is square free, hence we can use $\pi$ to identify $I(t')$ with $I(0)$.  We can also identify $I(t'_1)$ and $I(t'_2)$ with $I(0)$.

On one hand, the fiber of $\ME{\rhochi}$ at $t=t'_i$ and the fiber at $t=0$ of $\Qellbar^{r}\otimes\LL_{\dc_i}$ are isomorphic as $I(0)$-modules since $s(a_i)\neq 0$.  Moreover, the fiber of $\EE_{\rhochi,\l}$ at $t'$ and the fiber at $u=0$ of $\Qellbar^{r^2}\otimes\LL_{\dc}$ are isomorphic as $I(0)$-modules.  On the other hand, the latter fibers have no $I(0)$-invariants since $\dc$ is non-trivial, so a fortiori, the geometric generic fiber of $\EE_{\rhochi,\l}$ has no $\piOne{\Tbar_\l}$-invariants.  Therefore \eqref{eqn:invariants-and-coinvariants} implies $H^2_c(\bar T_\l,\EE_{\rhochi,\l})$ vanishes for $\l\neq 1$, and hence the contrapositive of Corollary~\ref{cor:detecting-invariant-scalars} implies $\l=1$ is the only invariant scalar of $\Q_*\ME{\rhochi}$.
\end{proof}


\subsection{Baby theorem}

In this subsection we prove a simplified version of Theorem~\ref{thm:is-equidistributed}.  

Let $U$ be a dense Zariski open subset of $\Gm=\Poneu\ssm\{0,\infty\}$ and $\theta\colon\piOne{U}\to\GL(W)$ be a continuous representation to a finite-dimensional $\Qellbar$-vector space $W$.  Let $\PhiU$ be the dual of $\Bu=(\Fq[u]/u\Fq[u])^\times$ (cf.~\S\ref{sec:one-parameter-families}).  For $u=0,\infty$, let $W(u)$ denote $W$ regarded as an $I(u)$-module and $W(u)^\unip$ be its maximal submodule where $I(u)$ acts unipotently.  If $\theta$ is geometrically simple and punctually pure of weight $w$ and if $\dim(W)>1$, then we can associate to $\theta$ a pair of Tannakian monodromy groups
$$
	\GG_\geom(\theta,\PhiU)
	\seq
	\GG_\arith(\theta,\PhiU)
	\seq
	\GL_{\R,\Qellbar}
$$
for $\R=\chi(\GmBar,\ME{\theta})$ (see \S\ref{sec:representation-monodromy-groups} and Theorem~\ref{thm:P-is-neutral-Tannakian}).

\begin{theorem}\label{thm:baby-monodromy-theorem}
Suppose that $\theta$ is geometrically simple and punctually pure of weight $w$, that $\dim(W)>1$ or that $\theta$ does not factor through the composed quotient $\piOne{U}\onto\piOne{\Gm}\onto\piOneTame{\Gm}$, and that $\l=1$ is the only invariant scalar of $\ME{\theta}$.  Suppose moreover that $W(0)^\unip$ has dimension at most $r$ and a unique unipotent block of exact multiplicity one and that $\R>72(r^2+1)^2$.  Finally, suppose $W(\infty)^\unip=\ZeroSpace$.  Then $\GG_\geom(\theta,\PhiU)$ equals $\GL_{\R,\Qellbar}$.
\end{theorem}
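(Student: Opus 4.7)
The plan is to verify that $\GG := \GG_\geom(\theta,\PhiU)$, viewed as a subgroup of $\GL_{R,\Qellbar}$, satisfies the three properties that the group-theoretic classification in Appendix~A requires in order to force $\GG = \GL_{R,\Qellbar}$: namely that $\GG$ is reductive, that its tautological $R$-dimensional representation is irreducible, and that $\GG$ contains a unipotent pseudoreflection (a unipotent $U$ with $\rank(U-1)=1$). The dimensional bound $R>72(r^2+1)^2$ is precisely what Appendix~A consumes to rule out the classical alternatives $\Sp_R$, $\SO_R$ and the finite exceptional subgroups that also admit pseudoreflections.

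First I would establish reductivity. Via the Tannakian formalism of Appendix~\ref{sec:tannakian-appendix}, $\GG$ is the Tannakian fundamental group of a category built from $\ME{\theta}$. The punctual purity of $\theta$ combined with \cite[5.3.8]{BBD} (as invoked in Proposition~\ref{prop:pure-impliessemisimple}) makes every object of this category semisimple, whence $\GG$ is reductive. Next, for irreducibility of the standard representation, I would argue by contradiction: a nontrivial decomposition would, via the Tannakian dictionary and an argument in the style of Lemma~\ref{lem:detecting-invariant-scalars} and Proposition~\ref{prop:no-nontrivial-invariant-scalars}, produce a scalar $\lambda\neq 1$ for which $\lambda_*\ME{\theta}$ is geometrically isomorphic to $\ME{\theta}$, contradicting the sole-invariant-scalar hypothesis. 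The auxiliary alternative in the hypothesis ($\dim W>1$ or $\theta$ does not factor through $\piOneTame{\Gm}$) rules out the degenerate Kummer case, where the Mellin machinery would otherwise collapse.

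The heart of the proof is exhibiting a unipotent pseudoreflection in $\GG$. I would examine the action of the local monodromy at $t=0$ on the cohomological fiber $\omega(\ME{\theta})=H^1_c(\bar\Gm,j_{0*}j^*\ME{\theta})$ used in Appendix~B to build the Tannakian structure. By hypothesis, $W(0)^\unip\cong\Unip(e)$ for a unique $e$ (with multiplicity exactly one), while $W(\infty)^\unip=\ZeroSpace$ so there is no competing contribution from $\infty$. Following the excision and local-monodromy analysis in \cite[Ch.~7 and 17]{Katz:CE}, this combination translates into the existence of an element of $\GG$ that is unipotent with a single nontrivial Jordan block whose multiplicity-one origin forces $\rank(U-1)=1$. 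Because $U$ is unipotent it lies in $\GG^0$, hence in $\GG$ itself.

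The main obstacle is this last step: carefully controlling the passage from the $I(0)$-structure of $W(0)^\unip$, via the Mellin/excision formalism, down to a rank-one unipotent element of the geometric monodromy group (rather than only of the arithmetic one). The multiplicity-one hypothesis is crucial here — without it, the construction would yield an element of larger drop, insufficient to trigger Appendix~A — and the vanishing $W(\infty)^\unip=\ZeroSpace$ is needed to prevent cancellation of the contribution from $0$ against one at $\infty$. With the pseudoreflection in hand, Appendix~A produces $\GG^0=\GL_{R,\Qellbar}$, and since $\GL_{R,\Qellbar}$ is connected we conclude $\GG=\GL_{R,\Qellbar}$.
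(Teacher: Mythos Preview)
Your overall architecture (reductivity, irreducibility via the invariant-scalar hypothesis, then a special element fed into Appendix~A) matches the paper, but the heart of your argument has a genuine gap: the special element the paper exhibits is \emph{not} a unipotent pseudoreflection coming from local monodromy at $t=0$. Appendix~A (Theorem~\ref{thm:big-monodromy}) does not take a pseudoreflection as input; it takes a semisimple element whose eigenvalues lie in $\Qbar^\times$ and whose \emph{archimedean weight multiplicities} $c=\mu_R(\gamma)$ satisfy $\len(c)\leq r+1$, $c_2\leq r$, and $1=c_{\len(c)}<c_{\len(c)-1}$. The paper obtains this element as a power of the Frobenius class $\Frob_{\Fq,\one}$ in the \emph{arithmetic} group $G=\GG_\arith(\theta,\PhiU)$ (Lemma~\ref{lem:nice-element}), using \cite[Th.~16.1]{Katz:CE} (recorded as Proposition~\ref{prop:middle-extension-monodromy}.\ref{item:prop:mem-nonzero-weights}): the non-zero Frobenius weights on $\omega_\one(M)$ are exactly $-e_{0,1},\ldots,-e_{0,d_0},e_{\infty,1},\ldots,e_{\infty,d_\infty}$. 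The hypotheses $W(\infty)^\unip=0$ and $\dim W(0)^\unip\leq r$ with a unique block of exact multiplicity one are consumed here, to guarantee that all non-zero weights are negative, that there are at most $r$ of them, and that exactly one occurs with multiplicity one. Your proposed unipotent element with $\rank(U-1)=1$ is neither what is constructed nor what Appendix~A would accept; note too that the unique block $\Unip(e)$ may have $e>1$, so even the heuristic ``multiplicity one $\Rightarrow$ rank-one drop'' fails.

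There is a second missing step. Appendix~A is applied to $G^0$, the identity component of the \emph{arithmetic} group, and yields only $[G^0,G^0]=\SL_{R,\Qellbar}$ (Lemma~\ref{lem:derived-of-G^0}). To conclude that the \emph{geometric} group $H=\GG_\geom(\theta,\PhiU)$ equals $\GL_{R,\Qellbar}$ one needs two further inputs: that $H\trianglelefteq G$ with $G/H$ abelian (Lemma~\ref{lem:G/H-is-abelian}), forcing $H\supseteq[G,G]\supseteq\SL_{R,\Qellbar}$; and that $\det(H)=\Qellbar^\times$ (Corollary~\ref{cor:det-H}), which uses that the Frobenius element has $|\iota(\det g)|\neq 1$ (the sum of the negative weights is strictly negative). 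Your sketch jumps from ``Appendix~A produces $\GG^0=\GL_{R,\Qellbar}$'' directly to the conclusion, skipping both the arithmetic-versus-geometric distinction and the determinant argument.
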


\noindent
The proof consists of a few steps and will occupy the remainder of this section.

Let $G=\GG_\arith(\theta,\PhiU)$ and $H=\GG_\geom(\theta,\PhiU)$.

\begin{lemma}\label{lem:G/H-is-abelian}
$G$ and $H$ are reductive and there is an exact sequence
$$
	1\to H\to G\to T\to 1
$$
for some torus $T$ over $\Qellbar$.
\end{lemma}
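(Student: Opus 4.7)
The plan is to handle the reductivity of $G$ and $H$ first, then the normality of $H$ in $G$, and finally the torus property of the quotient.

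First, I would establish reductivity of $G$ and $H$ by exploiting punctual purity. Under the hypothesis that $\theta$ is punctually $\iota$-pure of weight $w$, Proposition~\ref{prop:pure-impliessemisimple} yields geometric semisimplicity of $\theta$, and Deligne's Riemann hypothesis (Theorem~\ref{thm:deligne}) combined with the standard properties of Mellin transforms of pure perverse sheaves ensures that every object generating the Tannakian category underlying $\GG_\arith(\theta,\PhiU)$ remains semisimple. A neutral Tannakian category over $\Qellbar$ all of whose objects are semisimple has pro-reductive fundamental group, and because our generating object has finite rank $\R$, both $G$ and its closed subgroup $H$ appear as honest reductive algebraic subgroups of $\GL_{\R,\Qellbar}$. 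The rigorous formalism is the one developed by Katz in \cite{Katz:CE} and recalled in Appendix~\ref{sec:tannakian-appendix}.

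Second, I would produce the exact sequence via the tannakian correspondence applied to the short exact sequence
\[
1 \longrightarrow \piOne{\Ubar} \longrightarrow \piOne{U} \longrightarrow G_{\Fq} \longrightarrow 1
\]
of profinite groups. The geometric subcategory (the full Tannakian subcategory generated under the fiber functor restricted to $\piOne{\Ubar}$) is stable under the Frobenius automorphism of the fiber functor that distinguishes the arithmetic datum from the geometric one, so the associated surjection of tannakian monodromy groups has kernel exactly $H$. Setting $T$ equal to the image gives the desired exact sequence $1 \to H \to G \to T \to 1$, and $T$ is reductive as a quotient of the reductive group $G$ by the normal reductive subgroup $H$.

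Third, to identify $T$ as a torus, I would observe that $T$ is topologically generated by $\bar{\Frob}_q$, the image of $\Frob_q$, because the quotient map $G \to T$ factors through the profinite quotient $\piOne{U} \twoheadrightarrow G_{\Fq} = \hat{\bbZ}$, which is topologically generated by $\Frob_q$. Thus $T$ is the Zariski closure of a cyclic subgroup of a reductive algebraic group; such a closure is commutative (its element's Jordan decomposition produces commuting semisimple and unipotent parts, and the closure is the product of the commutative closures of each), so $T$ is abelian. A connected commutative reductive group over $\Qellbar$ is a torus, which yields the conclusion.

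The main obstacle I anticipate is the tannakian input underlying reductivity and exactness; rather than rederive it I would cite Katz's framework and the appendix. Every other step is a formal consequence of punctual purity together with the fact that $G_{\Fq}$ is topologically cyclic.
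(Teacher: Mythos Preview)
Your proposal contains a genuine conceptual misstep in Steps~2 and~3. The groups $G=\GG_\arith(\theta,\PhiU)$ and $H=\GG_\geom(\theta,\PhiU)$ are \emph{not} the Zariski closures of the images of $\piOne{U}$ and $\piOne{\Ubar}$ acting on $W$; they are the Tannakian monodromy groups of the object $M=\ME{\theta}((1+w)/2)[1]$ inside the categories $\Tann{\Gm}$ and $\Tann{\GmBar}$, where the tensor product is \emph{middle convolution}, not ordinary tensor of sheaves (see \S\ref{sec:cateogory-P}--\S\ref{sec:representation-monodromy-groups}). There is no homomorphism $\piOne{U}\to G$ through which the sequence $1\to\piOne{\Ubar}\to\piOne{U}\to G_{\Fq}\to 1$ could be pushed, so your derivation of the exact sequence and of the claim that ``$G\to T$ factors through $\piOne{U}\twoheadrightarrow G_{\Fq}$'' does not make sense as written. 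The normality of $H$ in $G$ and the identification of $G/H$ with a torus are instead proved (following Katz \cite[Th.~6.1, Lem.~7.1]{Katz:CE}) by analyzing how the geometric Frobenius permutes the simple objects of the \emph{convolution} category $\gp{\bar M}$ and by using arithmetic semisimplicity of $M$; this is packaged in the paper as Proposition~\ref{prop:arith-is-reductive} and Proposition~\ref{prop:middle-extension-monodromy}.\ref{item:prop:mem-reductive-and-normal}.

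The paper's proof is accordingly a one-line citation: once one checks that $\ME{\theta}$ is geometrically simple, punctually pure, and not a Kummer sheaf (immediate from the hypotheses $\dim(W)>1$ or $\theta$ not tame), the lemma follows directly from Proposition~\ref{prop:middle-extension-monodromy}.\ref{item:prop:mem-reductive-and-normal}. Your Step~1 on reductivity is closer to the mark in spirit (purity $\Rightarrow$ semisimplicity of $\gp{\bar M}$ $\Rightarrow$ reductivity, as in Theorem~\ref{thm:pure-of-weight-zero-begets-reductive}), but you should invoke Proposition~\ref{prop:convolution-of-pures-is-pure} for the tensor powers rather than Theorem~\ref{thm:deligne}, since the relevant tensor operation is $\midstar$. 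Also note your Step~3 silently assumes $T$ is connected; the Zariski closure of a cyclic group need not be, and this is exactly what the arithmetic-semisimplicity input in \cite[Lem.~7.1]{Katz:CE} is for.
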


\begin{proof}
Observe that $\ME{\theta}$ is geometrically simple yet is not a Kummer sheaf since otherwise one would have $\dim(W)=1$ and $\theta$ would factor through $\piOne{u}\onto\piOneTame{\Gm}$.  Moreover, $\theta$ is geometrically simple and punctually pure of weight $w$ by hypothesis.  Therefore the lemma follows from Proposition~\ref{prop:middle-extension-monodromy}.\ref{item:prop:mem-reductive-and-normal}.
\end{proof}

A priori $G$ or $H$ could be disconnected, so let $G^0$ and $H^0$ be the respective identity components.

\begin{lemma}\label{lem:lie-irreducible}
$G^0$ and $H^0$ are (Lie-)irreducible subgroups of $\GL_{\R,\Qellbar}$.
\end{lemma}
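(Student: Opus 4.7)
The plan is to reduce the statement to showing that $H^0$ acts irreducibly on the standard representation $V = \Qellbar^{\R}$ and then to derive a contradiction via Katz's Tannakian framework. Since $H^0 \subseteq G^0$, every $G^0$-stable subspace is a fortiori $H^0$-stable, so it suffices to establish Lie-irreducibility for $H^0$; this will automatically imply that of $G^0$.

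I will argue by contradiction: suppose $H^0$ does not act irreducibly on $V$. Because $H$ is reductive (Lemma~\ref{lem:G/H-is-abelian}) and $H^0$ is normal in $H$, the group $H$ permutes the $H^0$-isotypic components of $V$; after grouping and reordering I may take a decomposition $V = V_1 \oplus \cdots \oplus V_d$ with $d \geq 2$ together with a homomorphism $H \to S_d$, so that the standard inclusion $H \hookrightarrow \GL(V)$ factors through $\GL(V_1)\wr S_d$. If this homomorphism to $S_d$ were trivial, then $H$ itself would preserve the decomposition, and by Tannakian duality this would force $\ME{\theta}$ to split as a nontrivial direct sum of perverse sheaves, contradicting geometric simplicity. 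Hence the map $H \to S_d$ is nontrivial and, after a further grouping, transitive.

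The main step, and the principal obstacle, is to translate this honest wreath-product structure back into a geometric statement about $\ME{\theta}$. By the convolution-Tannakian analysis of lisse sheaves on $\Gm$ carried out in \cite{Katz:CE}, a geometric Tannakian monodromy group landing in such a wreath product, arising from a geometrically simple and punctually pure object, forces $\ME{\theta}$ to be geometrically isomorphic, up to a Kummer twist and a multiplicative translation, to a direct summand of $[d]_{*}\mathcal{M}$ for some suitable perverse sheaf $\mathcal{M}$ on $\Gm$, where $[d]\colon \Gm \to \Gm$ is the $d$-th power map. Carrying out this translation cleanly is where the hypotheses that $\dim(W)>1$ or $\theta$ is non-tame on $\Gm$ (so that $\ME{\theta}$ is not itself a Kummer sheaf) together with geometric simplicity and punctual purity are essential.

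Finally, every such direct image $[d]_{*}\mathcal{M}$ is invariant under multiplicative translation by each $\zeta \in \mu_d(\k)$, because $\mu_d$ acts by deck transformations of the Galois cover $[d]\colon \Gm \to \Gm$ and proper pushforward is insensitive to this action. Hence each $\zeta \in \mu_d(\k)$ would be an invariant scalar of $\ME{\theta}$, contradicting the hypothesis that $\l = 1$ is the unique invariant scalar. This contradiction shows that $H^0$ is Lie-irreducible, and hence so is $G^0 \supseteq H^0$.
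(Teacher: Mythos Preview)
Your overall strategy---deduce a nontrivial invariant scalar from a failure of Lie-irreducibility via Clifford theory inside the Tannakian category---is exactly the mechanism behind the result the paper cites (\cite[Th.~8.2 and Cor.~8.3]{Katz:CE}). However, your reconstruction of that mechanism contains a genuine error.

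The problem is in your final step. You assert that $[d]_*\mathcal{M}$ is invariant under multiplicative translation by every $\zeta\in\mu_d(\k)$ because $\mu_d$ acts by deck transformations of $[d]\colon\Gm\to\Gm$. But deck transformations act on the \emph{source} of $[d]$, not the target: the identity $[d]\circ[\zeta]=[d]$ gives $[d]_*\bigl([\zeta]_*\mathcal{M}\bigr)=[d]_*\mathcal{M}$, which says nothing about $[\zeta]_*\bigl([d]_*\mathcal{M}\bigr)$. Concretely, take $\mathcal{M}=\delta_a$; then $[d]_*\delta_a=\delta_{a^d}$ and $[\zeta]_*\delta_{a^d}=\delta_{\zeta a^d}\neq\delta_{a^d}$ for $\zeta\neq 1$. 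So $[d]_*\mathcal{M}$ is \emph{not} in general translation-invariant by $\mu_d$, and the contradiction does not follow from your step~3 as stated. Step~3 itself (that the wreath structure forces $\ME{\theta}$ to be a summand of $[d]_*\mathcal{M}$) is also not the right Tannakian translation and is left unjustified.

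The argument that actually works, and which underlies Katz's theorem, runs as follows. In the convolution Tannakian category every one-dimensional object is a delta sheaf $\delta_\lambda$, so the character group of $H$ (hence of the finite group $H/H^0$) embeds into $\k^\times$; in particular $H/H^0$ is abelian. If $H^0$ acts reducibly then Clifford theory gives $V\cong\mathrm{Ind}_{H_1}^{H}V_1$ with $H^0\subseteq H_1\subsetneq H$, and since $H/H_1$ is a nontrivial abelian group it carries a nontrivial character $\chi$. The projection formula yields $V\otimes\chi\cong V$. On the sheaf side $\chi$ corresponds to some $\delta_\zeta$ with $\zeta\neq 1$, and $M\midstar\delta_\zeta\cong M$ is precisely $[\zeta]_*M\cong M$, i.e.\ $\zeta$ is a nontrivial invariant scalar---contradiction. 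Replacing your $[d]_*$ step with this character argument repairs the proof and recovers exactly what the paper invokes from \cite{Katz:CE}.
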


\begin{proof}
This follows from \cite[Th.~8.2 and Cor.~8.3]{Katz:CE} since $\l=1$ is the only invariant scalar of $\ME{\theta}$.
\end{proof}

Let $\wm{m}\colon(\Qbar^\times)^m\to\bbZ^m$ be the $m$th weight multiplicity map for $m=\R$ given in Definition~\ref{def:weight-multiplicity-map}.

\begin{lemma}\label{lem:nice-element}
There exist an element $g\in G^0$ and an eigenvalue tuple $\gamma\in(\Qellbar^\times)^R$ of $g$ satisfying the following:

\begin{enum}
\item $\gamma=(\gamma_1,\ldots,\gamma_\R)$ lies in $(\Qbar^\times)^\R$ and thus $\det(g)=\gamma_1\cdots\gamma_\R$ lies in $\Qbar^\times$;
\item\label{lem:item:ne-det} $|\iota(\det(g))|^2=(1/q)^w$ for some $w\neq 0$ and every field embedding $\iota\colon\Qbar\to\bbC$;
\item $c=\wm{\R}(\gamma)$ satisfies $\len(c)\leq r+1$ and $1=c_{\len(c)}<c_{\len(c)-1}$ and $c_2\leq r$.
\end{enum}
\end{lemma}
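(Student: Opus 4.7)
The plan is to construct $g$ as a Frobenius-type element encoding the local structure of $\theta$ at $t = 0$, following the pattern in \cite[Ch.~17]{Katz:CE}. The fiber functor of the Tannakian category underlying $G$ is $\omega_{\chinot}(\ME{\theta}) = H^1_c(\GmBar, j_{0*}j^*\ME{\theta})$, an $\R$-dimensional $\Qellbar$-vector space, and every element of $G$ acts on it via a $\Qbar$-rational automorphism; by Deligne's Riemann hypothesis (Theorem~\ref{thm:deligne}) applied to the pure middle extension $\ME{\theta}$, all the resulting eigenvalues are $q$-Weil numbers. Conditions (i) and (ii) will therefore follow as soon as we exhibit any $g$ whose determinant has non-zero weight, so the substantive work is in arranging the multiplicity pattern in (iii).

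I would construct $g$ by choosing a good character $\alpha \in \PhiU$ for $\theta$ whose tame part cancels the tame non-unipotent component of $W(0)$, so that the Frobenius conjugacy class $\Frob_{\Fq,\alpha}$ acting on $H^1_c(\GmBar, \ME{\theta \otimes \alpha})$ splits into just two kinds of eigenvalues: a local contribution read off from the Jordan filtration of $W(0)^\unip$, and a single ``bulk'' eigenvalue filling the remaining $\R - \dim W(0)^\unip$ coordinates. Writing $W(0)^\unip = \Unip(e) \oplus \bigoplus_j \Unip(e_j)^{\oplus m_j}$ with each $m_j \ge 2$ (which encodes the hypothesis that there is a unique unipotent block of exact multiplicity one), the local contribution gives $e$ distinct eigenvalues of multiplicity exactly $1$ and, for each $j$, a further $e_j$ distinct eigenvalues of multiplicity $m_j \ge 2$. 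The hypothesis $W(\infty)^\unip = \ZeroSpace$ eliminates any analogous contribution from $\infty$. Replacing $g$ by $g^{[G:G^0]}$ (a finite power, since $G$ is reductive by Lemma~\ref{lem:G/H-is-abelian}) places it in $G^0$ while preserving the eigenvalue multiplicities.

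The three conditions on $c = \wm{\R}(\gamma)$ then follow by inspection: the total number of distinct eigenvalues is at most $1 + \dim W(0)^\unip \le 1 + r$, giving $\len(c) \le r + 1$; the smallest entry $c_{\len(c)} = 1$ is witnessed by any eigenvalue from the unique $\Unip(e)$-block, and $c_{\len(c) - 1} \ge 2$ because every other exceptional eigenvalue comes from a block of multiplicity $\ge 2$; and $c_1 \ge \R - \dim W(0)^\unip \ge \R - r$ forces $c_2 \le r$. For the determinant, the bulk eigenvalue is a Weil number of non-zero weight (via the Tate-twist normalisation implicit in the definition of $\omega_{\chinot}$), and with overwhelming multiplicity $c_1$ it forces $\det(g)$ to have the claimed non-zero weight $w$, independently of the embedding $\iota$. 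The main obstacle I foresee is the selection and justification of the character $\alpha$ that isolates the unipotent contribution at $0$: one must verify it is $\theta$-good, which via Corollary~\ref{cor:bad-bound} amounts to a bookkeeping of local inertia types, and one must separately check that the bulk eigenvalue is genuinely distinct from the exceptional ones using a purity/weight comparison.
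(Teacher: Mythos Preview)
Your proposal has a genuine gap in the eigenvalue model, and it breaks the verification of condition~(iii).

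The paper's proof invokes Proposition~\ref{prop:middle-extension-monodromy}.\ref{item:prop:mem-nonzero-weights} (which is Katz's \cite[Th.~16.1]{Katz:CE}): for any $f\in\Frob_{\Fq,\one}$ acting on $\omega_{\one}(M)$, the eigenvalues of $f$ are Weil numbers whose list of non-zero weights is exactly $-e_{0,1},\dots,-e_{0,d_0}$, \emph{one entry per unipotent Jordan block} of $W(0)^\unip$ (the analogous positive contributions from $W(\infty)^\unip$ vanish by hypothesis). So a single block $\Unip(e)$ contributes one eigenvalue of weight $-e$, not $e$ eigenvalues. Under the correct count, the unique multiplicity-one block gives a unique non-zero weight of multiplicity one, every other non-zero weight has multiplicity $\geq 2$, and weight zero has multiplicity $R-d_0\geq R-r>1$; this is exactly~(iii). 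One then takes $g=f^{[G:G^0]}$ to land in $G^0$, and~(ii) holds with $w=[G:G^0]\sum_i(-e_{0,i})<0$.

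In your model the block $\Unip(e)$ contributes $e$ eigenvalues each of multiplicity~$1$; whenever $e\geq 2$ (as in the Legendre example, where $e=2$) the partition $c$ then acquires at least two parts equal to $1$, so $c_{\len(c)-1}=1$, contradicting~(iii). You also have the weights inverted: after normalising $M$ to be pure of weight zero, it is the bulk eigenvalues that have weight~$0$ and the unipotent-block eigenvalues that carry the non-zero weights; part~(ii) holds because the non-zero weights are all strictly negative and hence sum to something non-zero, not because the bulk contributes. Finally, no special character $\alpha$ is needed---the trivial character $\one$ already produces the required weight pattern, so there is no goodness verification to perform.
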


\begin{proof}
This follows from Proposition~\ref{prop:middle-extension-monodromy}.\ref{item:prop:mem-nonzero-weights} with $g=f^c$ for any element $f\in\Frob_{\Fq,\one} $ and for $c={[G:G^0]}$.  More precisely, if $\alpha=(\alpha_1,\ldots,\alpha_\R)$ is an eigenvalue tuple of $f$, then all the $\alpha_i$ lie in $\Qbar$, all the non-zero weights $w_1,\ldots,w_n$ of the $\alpha_i$ are negative since $W(\infty)^\unip$ vanishes, one has $1\leq n\leq r$ since $1\leq \dim(W(0)^\unip)\leq r$, there is a unique non-zero weight of multiplicity one since $W(0)^\unip$ has a unique unipotent block of exact multiplicity one, and the weight zero has multiplicity $\R-n\geq \R-r>1$.  Hence it suffices to take $\gamma\in(\Qbar^\times)^\R$ to be the eigenvalue tuple with $\gamma_i=\alpha_i^c$ for $1\leq i\leq \R$ and $w$ to be $(w_1+\cdots+w_n)c$.
\end{proof}

\begin{cor}\label{cor:det-H}
$\det(H)$ equals $\Qellbar^\times$.
\end{cor}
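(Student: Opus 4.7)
The plan is to force the scalar subgroup $Z:=\Gm\cdot I\subseteq\GL_\R$ into $H^0$; once this is done, the corollary follows at once because $\det|_Z\colon Z\to\Gm$ is the $\R$-th power map, which is surjective, and therefore $\det(H)\supseteq\det(H^0)\supseteq\det(Z)=\Qellbar^\times$.

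First I would assemble three consequences of the preceding lemmas. From Lemma~\ref{lem:G/H-is-abelian}, $G/H$ is a torus and therefore abelian, so $[G,G]\subseteq H$; taking identity components yields $[G^0,G^0]\subseteq H^0$. From Lemma~\ref{lem:lie-irreducible}, $G^0$ acts irreducibly on $V=\Qellbar^\R$, so Schur's lemma identifies the centralizer of $G^0$ in $\GL_\R$ with $Z$, and in particular $Z(G^0)^0\subseteq Z$. From Lemma~\ref{lem:nice-element}, the element $g\in G^0$ satisfies $\det(g)\in\Qbar^\times$ with $|\iota(\det(g))|^2=q^{-w}$ for some $w\neq 0$, so $\det(g)$ is not a root of unity; hence the image $\det(G^0)\subseteq\Qellbar^\times$ is a connected closed subgroup containing a non-torsion element, so it equals all of $\Qellbar^\times$. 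Because $\det$ vanishes on $[G^0,G^0]$ and $G^0=Z(G^0)^0\cdot[G^0,G^0]$ by the standard Levi decomposition of a connected reductive group, the restriction of $\det$ to $Z(G^0)^0$ is already surjective. Combined with $Z(G^0)^0\subseteq Z\cong\Gm$, this forces $Z(G^0)^0=Z$, so $Z\subseteq G^0$; the inclusion $G^0=Z\cdot[G^0,G^0]\subseteq Z\cdot H^0$ then gives $G^0=Z\cdot H^0$.

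The main obstacle is upgrading $Z\subseteq G^0$ to $Z\subseteq H^0$. I plan to argue by contradiction: if $Z\not\subseteq H^0$, then $Z\cap H^0$ is a proper closed subgroup of $Z\cong\Gm$ and therefore finite, so $Z(H^0)^0=(Z\cap H^0)^0=1$; then $H^0=[H^0,H^0]$ is semisimple, $\det|_{H^0}=1$, and $G^0/H^0\cong Z/(Z\cap H^0)$ is isomorphic to $\Gm$. In that scenario the determinant character $\det\colon G^0\to\Gm$ would factor through the quotient torus $G^0/H^0\cong\Gm$ as a power map, and because $[G,G]\subseteq H$ the same factorization extends through all of $G/H=T$. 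Pushing the Frobenius representative $f\in\Frob_{\Fq,\one}$ (with $g=f^c$, $c=[G:G^0]$) through this factorization would translate the nontrivial $q$-weight of $\det(g)$ into a corresponding weight on the image of $f$ in the torus quotient $T$; but the Tannakian formalism underlying $G$ and $H$ pins the image of Frobenius in $T=G/H$ to be weight-zero on its geometric component, yielding the desired contradiction. Hence $Z\subseteq H^0$, and the corollary follows as in the first paragraph.
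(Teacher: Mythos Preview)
Your strategy matches the paper's (which cites \cite[Proof of Th.~17.1]{Katz:CE}): use the nonzero weight of $\det(g)$ to contradict a purity constraint on characters of $G/H$. There is a minor slip---$\det$ need not factor through $G/H$ when only $\det(H^0)=1$, since $H/H^0$ may map to a nontrivial finite subgroup of $\Gm$; one must pass to $\det^n$ for $n=|\det(H)|$, and invoking $[G,G]\subseteq H$ does not fix this since the resulting surjection $G/[G,G]\onto G/H$ points the wrong way---but this is easily repaired.

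The real gap is that your final sentence is the entire substance of the argument and is only asserted. What is actually needed: a character of $G$ trivial on $H$ corresponds, under the Tannakian equivalence (Theorem~\ref{thm:P-is-neutral-Tannakian}) identifying $G,H$ with the Tannakian groups of $\gp{M},\gp{\bar M}$, to a rank-one object $N\in\gp{M}$ that is geometrically the unit $\one$; hence $N\simeq\alpha^{\deg}\otimes\one$ for some $\alpha\in\Qellbar^\times$, and the Frobenius class $\Frob_{\Fq,\one}$ acts on $\omega_\one(N)$ by $\alpha$. Since $M$ is pure of weight zero and $N$ is obtained from $M$ by tensor constructions and subquotients, Propositions~\ref{prop:subquotient-of-pure-is-pure} and~\ref{prop:convolution-of-pures-is-pure} force $N$ pure of weight zero, so $|\iota(\alpha)|=1$ for every $\iota$. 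Applied to $\det^n$ this gives $|\iota(\det(f)^n)|=1$, contradicting the nonzero weight produced in Lemma~\ref{lem:nice-element}. This is precisely Katz's argument; without spelling it out your contradiction is not established. Once it is in hand, the detour through $Z\subseteq H^0$ becomes unnecessary: the direct dichotomy on the closed subgroup $\det(H)\subseteq\Gm$ (either all of $\Gm$, or finite of some order $n$) is shorter and is what Katz does.
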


\begin{proof}
Follows from Lemma~\ref{lem:nice-element}.\ref{lem:item:ne-det} and the argument in  \cite[Proof of Th.~17.1]{Katz:CE} using the element $g$ in Lemma~\ref{lem:nice-element}.
\end{proof}

Let $[G^0,G^0]$ be the derived subgroup of $G^0$.

\begin{lemma}\label{lem:derived-of-G^0}
$[G^0,G^0]$ equals $\SL_{\R,\Qellbar}$.
\end{lemma}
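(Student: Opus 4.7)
The plan is to invoke the group-theoretic classification result from the first appendix, which the authors flag as asserting that a reductive subgroup of $\GL_\R$ containing an element with the special eigenvalue multiplicity pattern produced in Lemma~\ref{lem:nice-element} must have derived group equal to $\SL_\R$, provided $\R$ is sufficiently large relative to $r$.

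More concretely, first I would record the structural inputs already in hand. By Lemma~\ref{lem:G/H-is-abelian} the group $G$ is reductive, so $G^0$ is connected reductive; by Lemma~\ref{lem:lie-irreducible} its tautological $\R$-dimensional representation is Lie-irreducible (i.e.\ irreducible even as a representation of the Lie algebra). Next, by Lemma~\ref{lem:nice-element} there is an element $g \in G^0$ whose tuple of eigenvalues $\gamma \in (\bar{\bbQ}^\times)^\R$ has weight-multiplicity vector $c = \wm{\R}(\gamma)$ satisfying $\len(c) \le r+1$, $c_{\len(c)} = 1 < c_{\len(c)-1}$, and $c_2 \le r$; in particular $g$ has an eigenvalue of multiplicity exactly one among a cluster of eigenvalues of multiplicities bounded by $r$, together with a dominant multiplicity block of size $\R - O(r)$.

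The main step is then to apply the appendix's group-theoretic classification to $G^0 \seq \GL_{\R,\Qellbar}$ with the element $g$. The numerical hypothesis $\R > 72(r^2+1)^2$ matches exactly the threshold needed for that classification to conclude that any connected, reductive, Lie-irreducible subgroup of $\GL_\R$ containing such a $g$ has derived subgroup $\SL_\R$. Feeding in our $G^0$ and $g$ yields $[G^0,G^0] = \SL_{\R,\Qellbar}$, which is the assertion of the lemma.

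The main obstacle lies of course inside the appendix result itself, not in the application here: one must rule out all proper connected Lie-irreducible reductive subgroups $H \seq \GL_\R$ (e.g.\ symplectic, orthogonal, tensor-product, tensor-induced, and finite-image subgroups appearing via Aschbacher-type classifications) by showing that none of them admits an element whose eigenvalue spectrum has the prescribed shape (a single simple eigenvalue, moderate secondary multiplicities $\le r$, length $\le r+1$) once $\R$ is large enough. The bound $\R > 72(r^2+1)^2$ is calibrated precisely to defeat the tensor-product and tensor-induced cases, which are the binding constraints; the classical-group cases are ruled out by the presence of a simple non-trivial eigenvalue (incompatible with the pairing-induced symmetry of eigenvalues in $\Sp$ or $\O$), and the finite-image case is ruled out by Jordan-type bounds. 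At the level of this lemma, however, none of that work needs to be redone: the statement follows by direct citation of that appendix theorem applied to $G^0$ and $g$.
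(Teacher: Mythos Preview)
Your proposal is correct and matches the paper's own argument essentially line for line: invoke Lemmas~\ref{lem:G/H-is-abelian} and~\ref{lem:lie-irreducible} to see that $G^0$ is connected reductive and irreducible, feed in the element $g$ from Lemma~\ref{lem:nice-element}, and apply the appendix classification Theorem~\ref{thm:big-monodromy} (with the hypothesis $\R>72(r^2+1)^2$) to conclude $G^0\in\{\SL_{\R,\Qellbar},\GL_{\R,\Qellbar}\}$, whence $[G^0,G^0]=\SL_{\R,\Qellbar}$. The only cosmetic difference is that the paper states the appendix theorem's conclusion as $G^0=\SL_\R$ or $\GL_\R$ and then passes to derived subgroups, whereas you phrase the conclusion directly in terms of $[G^0,G^0]$; these are of course equivalent.
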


\begin{proof}
Combine Lemmas~\ref{lem:lie-irreducible} and \ref{lem:nice-element} to deduce that the hypotheses of Theorem~\ref{thm:big-monodromy} hold, and thus $G^0$ equals one of $\SL_\R(\Qellbar)$ or $\GL_\R(\Qellbar)$.  The derived subgroup of both of these groups equals $\SL_\R(\Qellbar)$.
\end{proof}

We may now complete the proof of the theorem.  First, we have inclusions
$$
	[G^0,G^0]
	\seq
	[G,G]
	\seq
	[\GL_{\R,\Qellbar},\GL_{\R,\Qellbar}]
	=
	\SL_{\R,\Qellbar},
$$
and Lemma~\ref{lem:derived-of-G^0} implies the outer terms are equal, so the inclusions are equalities.  Moreover, Lemma~\ref{lem:G/H-is-abelian} implies $H$ is normal in $G$ and $G/H$ is abelian, so $H$ contains $[G,G]=\SL_{\R,\Qellbar}$, and hence, by Corollary~\ref{cor:det-H}, $H=\GL_{\R,\Qellbar}$ as claimed.


\subsection{Frobenius reciprocity}

Let $\Q\colon T\to U$ be a finite \'etale map of smooth geometrically connected curves over $\Fq$.  Let $\FF$ (resp.~$\GG$) be a lisse sheaf on $T$ (resp.~$U$) and $\piOneT\to\GL(V)$ (resp.~$\piOneU\to\GL(W)$) be the corresponding representation.  Let $\FF^\vee$ be the dual of $\FF$ and $\piOneT\to\GL(V^\vee)$ be the corresponding representation.

\begin{lemma}\label{lem:dual-is-unambiguous}
$\Q_*(\FF^\vee)$ is isomorphic to the dual of $\Q_*\FF$.
\end{lemma}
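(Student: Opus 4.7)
The plan is to reduce the claim to the purely representation-theoretic fact that, for a subgroup of finite index, induction is self-dual, and then transport the resulting isomorphism back through the sheaf--representation dictionary. Since $\Q\colon T\to U$ is finite \'etale between smooth geometrically connected curves over $\Fq$, the induced map on \'etale fundamental groups identifies $\piOneT$ with an open subgroup of index $d=\deg(\Q)$ in $\piOneU$. Under the standard equivalence between lisse $\Qellbar$-sheaves and continuous finite-dimensional $\Qellbar$-representations of the fundamental group (recalled in \S\ref{sec:framework}), the sheaf $\Q_*\FF$ corresponds to the induced representation $\Ind_{\piOneT}^{\piOneU}V$ -- this is the global version of the pointwise stalk formula already used in \eqref{eq:direct-image-fiber} -- and likewise $\Q_*(\FF^\vee)$ corresponds to $\Ind_{\piOneT}^{\piOneU}(V^\vee)$, while the dual sheaf $(\Q_*\FF)^\vee$ corresponds to $(\Ind_{\piOneT}^{\piOneU}V)^\vee$.

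It therefore suffices to construct a canonical $\piOneU$-equivariant isomorphism
$$
\Ind_{\piOneT}^{\piOneU}(V^\vee)
\;\xrightarrow{\ \sim\ }\;
\bigl(\Ind_{\piOneT}^{\piOneU}V\bigr)^\vee.
$$
Realizing induced modules as functions $f\colon\piOneU\to V$ (resp.~$V^\vee$) satisfying $f(gh)=\rho(h)^{-1}f(g)$ for $h\in\piOneT$, I would produce the isomorphism via the Frobenius-reciprocity pairing
$$
\langle f_1,f_2\rangle
\;:=\;
\sum_{g\piOneT\,\in\,\piOneU/\piOneT}
\bigl\langle f_1(g),\,f_2(g)\bigr\rangle_{V^\vee\times V},
$$
which is a finite sum since $d<\infty$. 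A routine verification shows this pairing is independent of the choice of coset representatives, $\piOneU$-equivariant, and non-degenerate, hence identifies $\Ind_{\piOneT}^{\piOneU}(V^\vee)$ with the linear dual of $\Ind_{\piOneT}^{\piOneU}V$.

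Transporting this canonical isomorphism back through the sheaf--representation equivalence then yields the desired isomorphism $\Q_*(\FF^\vee)\simeq(\Q_*\FF)^\vee$ of lisse sheaves on $U$. There is no substantive obstacle; the only content is the finite-index hypothesis, which is used twice -- once to identify $\Q_*$ with induction on representations (finiteness of $\Q$) and once to guarantee that induction coincides with coinduction and is therefore self-dual (finite index of $\piOneT$ in $\piOneU$). Alternatively, one could avoid representations entirely by invoking Verdier duality together with the fact that for $\Q$ finite \'etale one has $\Q^!=\Q^*$ and $\Q_!=\Q_*$, so that duality commutes with $\Q_*$ up to a shift and twist that cancel on lisse sheaves; but the representation-theoretic route is more elementary and self-contained.
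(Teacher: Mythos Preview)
Your argument is correct: the finite-\'etale hypothesis makes $\piOneT$ an open subgroup of finite index in $\piOneU$, the functor $\Q_*$ on lisse sheaves corresponds to induction of representations, and for finite index induction coincides with coinduction and hence commutes with taking duals via exactly the pairing you wrote down. The alternative Verdier-duality argument you sketch at the end is also valid.

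The paper, by contrast, gives no argument at all: its entire proof is the citation ``See \cite[Lem.~3.1.3]{Katz:TLFM}.'' So there is nothing to compare at the level of strategy; you have supplied the standard proof that the cited reference contains. If anything, your write-up is more informative to a reader than the paper's, since it makes explicit where the finite-\'etale hypothesis is used (once to identify $\Q_*$ with induction, once to get self-duality of induction).
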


\begin{proof}
See \cite[Lem.~3.1.3]{Katz:TLFM}.
\end{proof}

\noindent
Therefore we may unambiguously write $\Q_*\FF^\vee$.

\begin{prop}\label{prop:frobenius-reciprocity}
$
	\dim(H^2_c(\Tbar,\Q^*\GG\otimes\FF^\vee))
	=
	\dim(H^2_c(\Ubar,\GG\otimes\Q_*\FF^\vee)).
$
\end{prop}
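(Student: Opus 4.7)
The plan is to exhibit a canonical isomorphism of $\Qellbar$-vector spaces between the two cohomology groups, from which the equality of their dimensions will be immediate. Two ingredients do all the work: the fact that for a finite morphism $\Q$ the functor $\Q_*$ is exact and preserves $H^i_c$, and the projection formula.

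First, since $\Q\colon T\to U$ is finite (in particular proper and affine), $\Q_*$ is exact on constructible \'etale sheaves and the Leray spectral sequence degenerates, yielding a canonical isomorphism
$$
	H^i_c(\Tbar,\mathcal{H})
	\simeq
	H^i_c(\Ubar,\Q_*\mathcal{H})
$$
for every constructible sheaf $\mathcal{H}$ on $T$ and every $i$ (compare the argument we already invoked for \eqref{eq:cohomology-comparison}). Specializing this to $\mathcal{H}=\Q^*\GG\otimes\FF^\vee$ gives
$$
	H^2_c(\Tbar,\Q^*\GG\otimes\FF^\vee)
	\simeq
	H^2_c(\Ubar,\Q_*(\Q^*\GG\otimes\FF^\vee)).
$$

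Next, I would apply the projection formula. Because $\Q$ is finite \'etale and $\GG,\FF^\vee$ are lisse, the usual derived projection formula $R\Q_*(\Q^*\GG\otimes^L\FF^\vee)\simeq\GG\otimes^L R\Q_*\FF^\vee$ collapses to the ungraded isomorphism
$$
	\Q_*(\Q^*\GG\otimes\FF^\vee)
	\simeq
	\GG\otimes\Q_*\FF^\vee,
$$
where on the right we are using Lemma~\ref{lem:dual-is-unambiguous} to read $\Q_*\FF^\vee$ unambiguously as either $\Q_*(\FF^\vee)$ or the dual of $\Q_*\FF$. Splicing this into the previous display gives a canonical isomorphism
$$
	H^2_c(\Tbar,\Q^*\GG\otimes\FF^\vee)
	\simeq
	H^2_c(\Ubar,\GG\otimes\Q_*\FF^\vee),
$$
and taking $\Qellbar$-dimensions proves the proposition.

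There is essentially no obstacle here: the only point requiring any care is the projection formula, but since $\Q$ is finite \'etale and both sheaves are lisse, this is completely standard (see, e.g., \cite[II.3.5, II.3.6]{Milne}). Alternatively, one could avoid invoking sheaf-theoretic machinery by translating to representations of $\piOneT\seq\piOneU$: the identifications $H^2_c(\Tbar,-)\simeq(\cdot)_{\piOneTbar}$ and $\Q_*\FF^\vee\leftrightarrow\Ind_{\piOneT}^{\piOneU}V^\vee$ (together with semisimplicity of coinvariants vs.\ invariants for the tensor factors arising here) reduce the claim to ordinary Frobenius reciprocity, but the sheaf-theoretic route above is shorter and uniform.
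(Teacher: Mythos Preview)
Your proof is correct, and it takes a somewhat different route from the paper's. The paper translates both sides into representation theory: writing $H=\piOneTbar$ and $G=\piOneUbar$, it identifies $\dim H^2_c(\Tbar,\Q^*\GG\otimes\FF^\vee)$ with $\dim\Hom_H(\Res^G_H W,V)$ and $\dim H^2_c(\Ubar,\GG\otimes\Q_*\FF^\vee)$ with $\dim\Hom_G(W,\Ind_H^G V)$ (via \eqref{eqn:invariants-and-coinvariants} and duality between invariants and coinvariants), and then invokes classical Frobenius reciprocity to equate the two $\Hom$ dimensions.

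Your argument stays on the sheaf side: finiteness of $\Q$ gives $H^2_c(\Tbar,-)\simeq H^2_c(\Ubar,\Q_*-)$, and the projection formula $\Q_*(\Q^*\GG\otimes\FF^\vee)\simeq\GG\otimes\Q_*\FF^\vee$ (valid here since $\GG$ is lisse and $\Q$ is finite) finishes. This is essentially the sheaf-theoretic incarnation of the same reciprocity, so the two proofs are morally the same; yours is a bit slicker and in fact yields a canonical isomorphism of the cohomology groups rather than just equality of dimensions. The paper's version has the virtue of making the name of the proposition transparent, and you correctly flag this alternative in your final paragraph.
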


\begin{proof}
Let $H=\piOneTbar$ and $G=\piOneUbar$.  We suppose that $V$ (resp.~$W$) is a left $H$-module (resp.~$G$-module), and define $\Ind_H^G(V)$ to be the (Mackey) induced module $\Hom_G(\Qellbar[H],V)$ and $\Res_H^G(W)$ to be the restricted module $W$ regarded as a left $H$-module.  Then Frobenius reciprocity implies that there is a bijection of vector spaces
$$
	\Hom_H(\Res_H^G(W),V)
	\to
	\Hom_G(W,\Ind_H^G(V))
$$
given by $\psi\mapsto (w\mapsto (r\mapsto \psi(rv)))$ (cf.~\cite[\S 3.0]{Katz:TLFM}).  Moreover, Lemma~\ref{lem:detecting-FG-isomorphisms} implies that
$$
	\dim(H^2_c(\Tbar,\Q^*\GG\otimes \FF^\vee))
	=
	\dim(\Hom_H(\Res_H^G(W),V))
$$
and that
$$
	\dim(H^2_c(\Ubar,\GG\otimes\Q_*\FF^\vee))
	=
	\dim(\Hom_G(W,\Ind_H^G(V))),
$$
so the proposition follows immediately.
\end{proof}


\subsection{Begetting simplicity}

In this section we give a criterion for $\Ind(\rhochi)$ to be geometrically simple.  Our argument was inspired by \cite[Proof of Th.~5.1.1]{Katz:QKR}.

\begin{prop}\label{prop:induced-simplicity}
Let $\dc\in\PhiQDistinct$.  Suppose that $\gcd(\Q,s)=t$, that $\deg(\Q)\geq 2$, and that $\dc(\Gamma(t))=\OneSpace$.  If $\rho$ is geometrically simple, then so are $\rhochi$ and $\Ind(\rhochi)$.
\end{prop}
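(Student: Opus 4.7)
The first assertion, that $\rhochi$ is geometrically simple, is immediate from Lemma~\ref{lem:rhochi-pure}.\ref{lem:item:rhochi-pure--simplicity}, so the content of the proposition is the simplicity of $\Ind(\rhochi)$, identified via Lemma~\ref{lem:direct-image-of-middle-extension}.\ref{lem:ind-is-me} with $\Q_*\ME{\rhochi}$.

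My plan is as follows. Choose a dense Zariski open $T \seq \Aonet[1/\Q]$ on which $\ME{\rhochi}$ is lisse and such that $\Q\colon T \to U := \Q(T) \seq \bbG_m$ is finite étale; then $\Q_*\ME{\rhochi}$ is lisse on $U$ by Lemma~\ref{lem:focusing-on-E_lambda}-style reasoning. By Lemma~\ref{lem:detecting-FG-isomorphisms}, geometric simplicity of $\Q_*\ME{\rhochi}$ is equivalent to $\dim H^2_c(\Ubar, (\Q_*\ME{\rhochi}) \otimes (\Q_*\ME{\rhochi})^\vee) = 1$, and Proposition~\ref{prop:frobenius-reciprocity} (applied with $\GG = \Q_*\ME{\rhochi}$ and $\FF = \ME{\rhochi}$) reduces this to the single identity
\[
\dim H^2_c(\Tbar,\ \Q^*\Q_*\ME{\rhochi} \otimes \ME{\rhochi}^\vee) \ =\ 1.
\]
I would then analyze $\Q^*\Q_*\ME{\rhochi}$ via proper base change along the Cartesian square with corners $T\times_U T$, $T$, $T$, $U$, identifying it with $p_{2*}p_1^*\ME{\rhochi}$ on $T$ and decomposing the computation over the geometric components of $T\times_U T$.

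The diagonal component contributes $H^2_c(\Tbar, \ME{\rhochi}\otimes\ME{\rhochi}^\vee) = \Qellbar$ by the simplicity of $\rhochi$ and another application of Lemma~\ref{lem:detecting-FG-isomorphisms}, giving the desired ``$1$'' on the right-hand side. The task is to show that every non-diagonal component contributes zero. After base change to a splitting field $\EFq$ of $\Q$, a non-diagonal component corresponds to an ordered pair of distinct zeros $a_i \neq a_j$ of $\Q$, and its contribution is cohomology of a sheaf geometrically of the form $\ME{\rho}\otimes\ME{\rho}^\vee\otimes\LL_{\alpha_{ij}}$, where $\alpha_{ij} = \dc_i\dc_j^{-1}$ with $\dc_i, \dc_j$ the components of $\dc$ in the identification of \S\ref{sec:distinct-component-characters}. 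Since $\dc \in \PhiQDistinct$, the character $\alpha_{ij}$ is non-trivial, so $\ME{\rho}$ and $\ME{\rho}\otimes\LL_{\alpha_{ij}}$ are non-isomorphic geometrically simple sheaves, and Lemma~\ref{lem:detecting-FG-isomorphisms} forces the contribution to vanish.

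The main obstacle will be the non-diagonal pairs involving the special zero $t = 0$, where $\rho$ itself is ramified. Here the assumptions $\gcd(\Q,s) = t$ and $\dc(\Gamma(t)) = \OneSpace$ are crucial: the triviality of the $t$-component of $\dc$ means the local monodromy of $\rhochi$ at $t = 0$ coincides with that of $\rho$, so the local picture at $t = 0$ cannot conspire with the local picture at some other zero $a_i \neq 0$ (where the local monodromy is purely of Kummer type $\dc_i$) to produce an accidental isomorphism between the sheaves being compared. The hypothesis $\deg(\Q) \geq 2$ ensures that there is at least one non-diagonal pair to analyze in the first place; without this, the statement is vacuous.
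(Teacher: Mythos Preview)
Your reduction via Frobenius reciprocity to showing $\dim H^2_c(\Tbar, \Q^*\Q_*\FF \otimes \FF^\vee) = 1$ (with $\FF = \ME{\rhochi}$), and the identification $\Q^*\Q_*\FF = p_{2*}p_1^*\FF$ by base change, both match the paper. The problem is the off-diagonal analysis. You claim that after base change to a splitting field of $\Q$ the non-diagonal geometric components of $T \times_U T$ are indexed by ordered pairs $(a_i, a_j)$ of distinct zeros of $\Q$, each contributing a sheaf of the shape $\ME{\rho}\otimes\ME{\rho}^\vee\otimes\LL_{\alpha_{ij}}$. This is false. The $a_i$ are the points of the single fiber $\Q^{-1}(0)$; they label the branches of $\Q$ only over a formal neighborhood of $u = 0$, not globally. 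The geometric components of $T \times_U T$ are governed by the Galois group of $\Q(t) - u$ over $\Fqbar(u)$ acting on its roots, and for a generic square-free $\Q$ this group is the full $S_n$, so the off-diagonal locus is geometrically \emph{irreducible} --- a single component of degree $n-1$ over $T$, not $n(n-1)$ components of degree one. Your appeal to Lemma~\ref{lem:detecting-FG-isomorphisms} pair by pair therefore has nothing to act on.

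The paper does not attempt a global decomposition. It keeps the off-diagonal part as the kernel $\KK$ of the adjunction map $\Q^*\Q_*\FF \to \FF$ and shows directly that $\KK \otimes \FF^\vee$ has trivial $I(0)$-coinvariants, hence trivial $\piOneTbar$-coinvariants, hence $H^2_c(\Tbar, \KK \otimes \FF^\vee) = 0$. The point is that the local decomposition you were reaching for \emph{does} hold over the punctured formal disk at $t = 0$: there $K(0) \simeq \bigoplus_{i \geq 2} V_\dc(a_i)$ as $I(0)$-modules. The hypothesis $\gcd(\Q, s) = t$ makes $\rho$ lisse at each $a_i$ with $i \geq 2$, so that each such summand is of pure Kummer type $\dc_i$; the hypothesis $\dc(\Gamma(t)) = 1$ makes $\dc_1$ trivial, so that membership in $\PhiQDistinct$ forces each $\dc_i$ with $i \geq 2$ to be non-trivial; and this drives the vanishing of $I(0)$-coinvariants after tensoring with $V_\dc^\vee(0)$. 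Your instinct about the role of the pairwise-distinct $\dc_i$ was sound, but it is realized through this local inertial computation, not through a global splitting of $T \times_U T$.
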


\begin{proof}
Let $T\seq\Ponet$ be a dense Zariski open subset and $U=\Q(T)$.  Up to shrinking $T$, we suppose that $\FF=\ME{\rhochi}$ is lisse over $T$ and that $\Q$ is \'etale over $U$.

Suppose that $\rho$ is geometrically simple and thus so is $\rhochi$.  Let $\GG=\Q_*\FF^\vee$ (cf.~Lemma~\ref{lem:dual-is-unambiguous}), and observe that Lemma~\ref{lem:direct-image-of-middle-extension}.\ref{lem:ind-is-me} implies that $\GG$ and $\ME{\Ind(\rhochi)}^\vee$
are isomorphic over $U$.  We wish to show that $\dim(H^2(\Ubar,\GG\otimes\GG^\vee))=1$ so that Lemma~\ref{lem:detecting-FG-isomorphisms} implies that $\ME{\Ind(\rhochi)}$ is geometrically simple over $U$, that is, that $\Ind(\rhochi)$ is geometrically simple.  In fact, Lemma~\ref{lem:birational-invariance-of-H^2_c} and Proposition~\ref{prop:frobenius-reciprocity} imply that
$$
	\dim(H^2_c(\PoneuBar,\GG\otimes\GG^\vee))
	=
	\dim(H^2_c(\Ubar,\Q_*\FF\otimes\Q_*\FF^\vee))
	=
	\dim(H^2_c(\Ubar,\Q^*\Q_*\FF\otimes\FF^\vee)),
$$
so it suffices to show the last term equals 1.

The functor $\Q^*$ is left adjoint to the functor $\Q_*$ since $\Q$ is finite (cf.~\cite[II.3.14]{Milne}), so the identify map $\Q_*\FF\to\Q_*\FF$ induces an adjoint $\Q^*\Q_*\FF\to\Q$.  Generically it is the trace map $\Ind(V_\dc)\to V_\dc$ and thus is surjective (cf.~\cite[V.1.12]{Milne}).  Let $\KK$ be the kernel so that we have an exact sequence of sheaves
\begin{equation}\label{eqn:untensored-sequence}
	0\to \KK\to \Q^*\Q_*\FF\to\FF\to  0.
\end{equation}
These sheaves and $\FF^\vee$ are all lisse over $T$ and free, so the sequence
\begin{equation}\label{eqn:tensor-sequence}
	0\to \KK\otimes\FF^\vee\to \Q^*\Q_*\FF\otimes\FF^\vee\to\FF\otimes\FF^\vee\to  0
\end{equation}
is exact on $T$.  In particular, we have a corresponding exact sequence of cohomology
$$
	H^2_c(\Ubar,\KK\otimes\FF^\vee)
	\to H^2_c(\Tbar,\Q^*\Q_*\FF\otimes\FF^\vee)
	\to H^2_c(\Tbar,\FF\otimes\FF^\vee)
	\to H^3_c(\Tbar,\KK\otimes\FF^\vee)
$$
the last term of which vanishes.  The hypothesis that $\FF$ is geometrically simple implies the penultimate term has dimension 1 by Lemma~\ref{lem:detecting-FG-isomorphisms}, so it suffices to show that the first term vanishes.

Let $\EFq/\Fq$ be a splitting field of $\Q$, let $a_1,\ldots,a_n\in\EFq$ be the zeros of $\Q$, and let
$$
	(\dc_1,\ldots,\dc_n)
	=
	(\sigma_\EFq^\vee)^{-1}(\nu_{\EFq}^{\,\prime\,\vee}(\dc))
	\in\Hom(\EFq^\times,\bbC^\times)^n
$$
as in \eqref{eqn:identifying-PhiQ}.  We suppose without loss of generality that $a_1=0$ and thus $s(a_2)\cdots s(a_n)\neq 0$ since $\gcd(\Q,\s)=1$.

Let $G=\piOneTbar$ and $H=\piOneUbar$, and let $G\to\GL(V_\dc)$ and $H\to\GL(\Ind_H^G(V_\dc))$ be the representations corresponding to $\FF$ and $\Q_*\FF$ respectively.  The exact sequences \eqref{eqn:untensored-sequence} and \eqref{eqn:tensor-sequence} correspond to exact sequences of $G$-modules
\begin{equation}\label{eqn:untensored-sequence:bis}
	0\to K\to R\to V_\dc\to 0
\end{equation}
and
$$
	0\to K\otimes V_\dc^\vee\to R\otimes V_\dc^\vee\to V_\dc\otimes V_\dc^\vee\to 0
$$
where $R=\Res_H^G(\Ind_H^G(V_\dc))$.  We claim the first term of the latter sequence has no $I(0)$-convariants so a fortiori has no $\piOneTbar$-convariants, and hence $H^2(\Tbar,\KK\otimes\FF^\vee)$ vanishes as claimed.

The translation map $t\mapsto t+a_i$ induces an isomorphism $I(0)\simeq I(a_i)$ for each $i\in[n]$, so we can regard $V_\dc(a_i)$ as an $I(0)$-module.  In fact, we have isomorphisms of $I(0)$-modules
$$
	R(0)
	\simeq
	\bigoplus_{i=1}^n V_\dc(a_i)
	,\quad
	K(0)
	\simeq
	\bigoplus_{i=2}^n V_\dc(a_i)
	,\quad
	(K\otimes V_\dc^\vee)(0)
	\simeq
	\bigoplus_{i=2}^n(\Qellbar^{r-1}\otimes \dc_i^{-1}).
$$
More precisely, the first isomorphism corresponds to the fact that the geometric fibers of $\Q^*\Q_*\FF$ and $\FF$ satisfy $(\Q^*\Q_*\FF)_0=\oplus_{\Q(a)=0}\FF_a$ since $\Q$ is \etale{} over $u=0$ (cf.~\cite[II.3.5]{Milne}); the second isomorphism uses \eqref{eqn:untensored-sequence:bis} and the assumption that $a_1=0$ to identify $K(0)$ with $R(0)/V_\dc(0)$; and the last isomorphism uses that $s(a_2)\cdots s(a_n)\neq 0$, that is, $\CC\ssm\{a_1\}$ lies in the locus of lisse reduction of $\ME{\rhochi}^\vee$.

The hypothesis that $\Gamma(t)$ is in the kernel of $\dc$ implies that $V_\dc(0)\simeq V(0)$ as $I(0)$-modules.  Moreover, $\dc_2,\ldots,\dc_n$ are all non-trivial since they are distinct from the trivial character $\dc_1$ by hypothesis, so each of the summands $(\Qellbar^{r-1}\otimes \dc_i^{-1})$ has \emph{trivial} $I(0)$-coinvariants.  Therefore $K\otimes V_\dc^\vee$ has trivial $\piOneTbar$-coinvariants as claimed.
\end{proof}


\subsection{Preserving unipotent blocks}

For each monic divisor $\Qp$ of $\Q$ in $\Fq[t]$, consider the subset
$$
	\PhiQpGood\rho
	=
	\{\,
		\dc\in\PhiQp
		:
		\ME{\rhochi}
		\mbox{ is supported on }\Aonet[1/\Qp]
	\,\}.
$$
If $\rho$ is the trivial representation, then it consists of the odd primitive characters of conductor $\Qp$.

For $t=0,\infty$, let $V_\dc(t)$ denote $V_\dc$ regarded as an $I(t)$-module.  Similarly, for $u=0,\infty$, let $\Ind(V_\dc)(u)$ denote $\Ind(V_\dc)$ regarded as an $I(u)$-module, and let $\Ind(V_\dc)(u)^\unip$ be the maximal submodule of $\Ind(V_\dc)(u)$ where $I(u)$ acts unipotently.  We say that $\Ind(V_\dc)(0)$ (resp.~$V_\dc(0)$) has a \defi{unipotent block of dimension $e$ and exact multiplicity $m$} iff it has an $I(0)$-submodule isomorphic to $U(e)^{\oplus m}$ but no $I(0)$-submodule isomorphic to $U(e)^{\oplus m+1}$.

\begin{lemma}\label{lem:induced-unipotent}
Suppose $\gcd(\Q,s)=t$, and let $\Qp=\Q/t$ and $\dc\in\PhiQDistinct\cap\PhiQpGood\rho$.  Then

\smallskip
\begin{enum}
\item\label{item:unip-0} $\Ind(V_\dc)(0)$ has a unipotent block of dimension $e$ and exact multiplicity $m$ if and only if $V(0)$ does;

\item\label{item:unip-infty} $\Ind(V_\dc)(\infty)^\unip=\ZeroSpace$.

\end{enum}
\end{lemma}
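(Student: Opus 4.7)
The plan is to compute the $I(u)$-module structure of $\Ind(V_\dc)(u)$ for $u=0$ and $u=\infty$ directly from the local behavior of the covering $\Q\colon\Ponet\to\Poneu$, and then read off the unipotent part. The basic inputs will be the direct-image stalk formula \eqref{eq:direct-image-fiber}, a Mackey decomposition at $\infty$, and the elementary observation that for any inertia module $M$ one has $M^\unip=0$ iff $M^I=0$ (each indecomposable unipotent block $\Unip(e)$ contains a one-dimensional space of $I$-invariants, so $M^\unip\neq 0$ forces $M^I\neq 0$, and the converse is immediate from $M^I\seq M^\unip$).

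For (i), $\Q$ square-free makes $\Q$ \etale{} above $u=0$, with preimage the zeros $a_1=0,a_2,\ldots,a_n$ of $\Q$, so \eqref{eq:direct-image-fiber} gives
$$
\Ind(V_\dc)(0)\;\cong\;\bigoplus_{i=1}^n V_\dc(a_i)
$$
as $I(0_u)$-modules, each $I(a_i)$ being identified with $I(0_u)$ via $\Q$. I would then analyze each summand. At $a_1=0$, the hypothesis $\gcd(\Qp,t)=1$ combined with $\dc$ having conductor $\Qp$ makes $\ME{\dc}$ unramified at $0$, so $V_\dc(0)\cong V(0)$ as $I(0_t)$-modules. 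At $a_i$ for $i\geq 2$, $\gcd(\Qp,\s)=1$ makes $\ME{\rho}$ lisse at $a_i$, so $V_\dc(a_i)\cong\dc_i^{\oplus r}$ where $\dc_i$ is the local Kummer component of $\dc$ at $a_i$; the distinctness hypothesis $\dc\in\PhiQDistinct$, together with $\dc_1=\one$ (forced by the conductor being coprime to $t$), gives $\dc_i\neq\one$ and hence $V_\dc(a_i)^\unip=0$. The upshot is $\Ind(V_\dc)(0)^\unip\cong V(0)^\unip$ as $I$-modules, which yields the block-by-block equivalence in (i).

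For (ii), since $\Q$ has degree $n$ and is totally ramified above $\infty_u$ with unique preimage $\infty_t$ (the residue-field extension being trivial), a standard Mackey decomposition of $\Ind_{G_K}^{G_L}(\rhochi)$ restricted to $I(\infty_u)$ gives
$$
\Ind(V_\dc)(\infty)\;\cong\;\Ind_{I(\infty_t)}^{I(\infty_u)}V_\dc(\infty_t).
$$
By Frobenius reciprocity, $\Ind(V_\dc)(\infty)^{I(\infty_u)}\cong V_\dc(\infty_t)^{I(\infty_t)}$, and the right side vanishes because $\dc\in\PhiQpGood\rho$ means $\ME{\rhochi}$ is supported on $\Aonet[1/\Qp]$, so in particular its stalk at $\infty_t$ (i.e.~the $I(\infty_t)$-invariants of $V_\dc$) is zero. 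Invoking $M^\unip=0\Leftrightarrow M^I=0$ finishes (ii). The main obstacle is bookkeeping rather than conceptual: correctly matching the local Galois modules $V_\dc(a_i)$ and $V_\dc(\infty_t)$ under the identifications provided by the \etale{} and totally-ramified behavior of $\Q$, and ensuring that the distinctness and goodness hypotheses on $\dc$ translate into the vanishing statements needed at each place.
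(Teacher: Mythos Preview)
Your argument is correct and follows essentially the same route as the paper: decompose $\Ind(V_\dc)(0)$ over the (\'etale) fiber $\Q^{-1}(0)$, identify $V_\dc(0)\cong V(0)$ from $\dc\in\PhiQp$, kill the remaining summands and $V_\dc(\infty)^{I(\infty)}$ using the goodness hypothesis, and pass between $M^\unip=0$ and $M^I=0$. One small remark: your appeal to distinctness to get $\dc_i\neq\one$ for $i\ge 2$ is redundant, since $\dc\in\PhiQpGood\rho$ already forces $V_\dc^{I(a_i)}=0$ at each zero $a_i$ of $\Qp$ (this is exactly how the paper argues).
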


\begin{proof}
On one hand, $V_\dc(z)^\unip=\ZeroSpace$ for every $z\in\CC\ssm\{0\}$ since $\dc$ is in $\PhiQpGood\rho$ and $\gcd(\Qp,\s)=1$.  Moreover, $V_\dc(0)$ and $V(0)$ are isomorphic as $I(0)$-modules since $\dc(\Bt)=\OneSpace$.  Therefore the only unipotent blocks of $\Ind(V_\dc)(0)$ are those coming from $V_\dc(0)$, and all such blocks contribute identical blocks to $V_\dc(0)$, so \eqref{item:unip-0} holds.  On the other hand, every unipotent block of $\Ind(V_\dc)(\infty)$ contributes to $V_\dc(\infty)^\unip$, and the latter vanishes since $\dc$ is $\rho$-primitive, so \eqref{item:unip-infty} holds.
\end{proof}


\subsection{Proof of Theorem~\ref{thm:is-equidistributed}}\label{subsec:proof-of-equidistribution-theorem}

%
%

Recall that $\R$ is given by
\begin{equation}\label{eqn:R-recall}
	\R
	:=
	\rC(\rho)
	=
	(\deg(\Q)+1)r + \deg(L(T,\rho)) - \dropCee{\rho}
\end{equation}
and it equals $\deg(\LC(T,\rhochi))$ for all $\dc\in\PhiQ$ (see Theorem~\ref{thmB}).

\begin{lemma}\label{lem:R-lower-bound}
$R>72(r^2+1)^2$
\end{lemma}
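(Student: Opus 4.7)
The plan is to observe that this is a direct algebraic rearrangement of the degree hypothesis in Theorem~\ref{thm:is-equidistributed}. No new structural input is needed; I will just chase through the formulas.

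First I would recall the hypothesis on $\deg(\Q)$ from Theorem~\ref{thm:is-equidistributed}, namely
\[
	\deg(\Q)
	>
	\frac{1}{r}\left(72(r^2+1)^2 - r - \deg(L(T,\rho)) + \dropCee{\rho}\right),
\]
and multiply through by $r$ (which is positive since $r=\dim(V)\geq 1$) to clear the denominator. Next I would add $r+\deg(L(T,\rho))-\dropCee{\rho}$ to both sides, which produces
\[
	(\deg(\Q)+1)r + \deg(L(T,\rho)) - \dropCee{\rho}
	>
	72(r^2+1)^2.
\]

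Finally I would invoke the expression for $\R$ recorded in \eqref{eqn:R-recall}, namely
\[
	\R
	=
	(\deg(\Q)+1)r + \deg(L(T,\rho)) - \dropCee{\rho},
\]
to identify the left-hand side with $\R$, yielding $\R>72(r^2+1)^2$ as desired.

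There is essentially no obstacle here: the lemma is a bookkeeping consequence of how the quantitative hypothesis on $\deg(\Q)$ was set up, chosen precisely so that the size condition on $\R$ demanded by Theorem~\ref{thm:baby-monodromy-theorem} is met. The only thing worth double-checking is that $r>0$ so that multiplying by $r$ preserves the inequality, which follows from $V\neq 0$.
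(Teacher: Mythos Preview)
Your proof is correct and is exactly the approach the paper takes: the paper's own proof is the one-line remark that the lemma follows from \eqref{eqn:R-recall} and the hypothesis on $\deg(\Q)$ in the statement of Theorem~\ref{thm:is-equidistributed}, and you have simply written out that algebraic rearrangement explicitly.
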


\begin{proof}
Follows from \eqref{eqn:R-recall} and the hypothesis on $\deg(\Q)$ in the statement of the theorem.
\end{proof}

Let $\Qp=\Q/t$.

\begin{lemma}\label{lem:induced-representation}
Suppose $\dc\in\PhiQDistinct\cap\PhiQpGood\rho$.  Then the following hold:

\smallskip
\begin{enum}
\item\label{lem:item:ir-simple} $\Ind(\rhochi)$ is geometrically simple;
\item\label{lem:item:ir-0-unipotent} $\dim(\Ind(V_\dc)(0)^\unip)=\dim(V_\dc(0)^\unip)$ and $\Ind(V_\dc)(0)$ has a unique unipotent block of exact multiplicity one;
\item\label{lem:item:ir-infty-unipotent} $\Ind(V_\dc)(\infty)^\unip=\ZeroSpace$.
\end{enum}
\end{lemma}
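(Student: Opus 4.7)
The plan is to verify each of the three assertions by direct appeal to the preceding results.  The guiding observation is that since $\Qp=\Q/t$ and $\gcd(t,\Qp)=1$ (as $\Q$ is square free), the Chinese Remainder Theorem gives $\BQ\simeq\Bt\times\Gamma(\Qp)$; thus any $\dc\in\PhiQp$, viewed inside $\PhiQ$, factors through the projection $\BQ\onto\Gamma(\Qp)$ and is therefore trivial on $\Bt$.  In particular $\dc(\Bt)=\OneSpace$, and moreover $\ME{\rhochi}$ is supported on $\Aonet[1/\Qp]$ since $\dc\in\PhiQpGood\rho$.

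For \eqref{lem:item:ir-simple}, I would apply Proposition~\ref{prop:induced-simplicity}, whose four hypotheses are all met: $\gcd(\Q,\s)=t$ is a standing hypothesis of Theorem~\ref{thm:is-equidistributed}; $\deg(\Q)\geq 2$ is immediate from $\deg(\Q)\geq 3$; $\dc\in\PhiQDistinct$ is given; and $\dc(\Bt)=\OneSpace$ was noted above.  Since $\rho$ is geometrically simple by the standing hypothesis, Proposition~\ref{prop:induced-simplicity} yields that $\Ind(\rhochi)$ is geometrically simple.  For \eqref{lem:item:ir-0-unipotent} and \eqref{lem:item:ir-infty-unipotent}, I would apply Lemma~\ref{lem:induced-unipotent}, whose hypotheses are exactly those of the present lemma.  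Part \eqref{item:unip-infty} of that lemma gives \eqref{lem:item:ir-infty-unipotent} directly, while part \eqref{item:unip-0} provides a bijection between unipotent blocks of $\Ind(V_\dc)(0)$ and those of $V(0)$ preserving both dimension and multiplicity.  Combined with the theorem's standing hypothesis that $V(0)$ has a unique unipotent block of exact multiplicity one, this yields the same for $\Ind(V_\dc)(0)$; the dimension equality $\dim(\Ind(V_\dc)(0)^\unip)=\dim(V_\dc(0)^\unip)$ then follows using $V_\dc(0)\simeq V(0)$ as $I(0)$-modules, which again uses $\dc(\Bt)=\OneSpace$.

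There is no genuine obstacle here: the real work has already been done in Proposition~\ref{prop:induced-simplicity} and Lemma~\ref{lem:induced-unipotent}.  This lemma serves as a convenience restatement that packages their conclusions into precisely the form needed to apply the baby theorem (Theorem~\ref{thm:baby-monodromy-theorem}) to $\Ind(\rhochi)$ in the remainder of the proof of Theorem~\ref{thm:is-equidistributed}.
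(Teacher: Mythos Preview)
Your proposal is correct and follows essentially the same approach as the paper's own proof, which likewise invokes Proposition~\ref{prop:induced-simplicity} for part~\eqref{lem:item:ir-simple} and Lemma~\ref{lem:induced-unipotent} for parts~\eqref{lem:item:ir-0-unipotent} and~\eqref{lem:item:ir-infty-unipotent}. Your added justification via the Chinese Remainder Theorem that $\dc\in\PhiQp$ forces $\dc(\Gamma(t))=\OneSpace$ is a detail the paper leaves implicit, but the structure of the argument is identical.
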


\begin{proof}
Part~\eqref{lem:item:ir-simple} follows from Proposition~\ref{prop:induced-simplicity} since $\dc$ is in $\PhiQDistinct\cap\PhiQp$, since $\rho$ is geometrically simple, and since $\deg(\Q)\geq 2$.  Parts~\eqref{lem:item:ir-0-unipotent} and \eqref{lem:item:ir-infty-unipotent} follow from Lemma~\ref{lem:induced-unipotent} since $\dc$ is also in $\PhiQpGood\rho$ and since $V(0)$ has a unique unipotent block of exact multiplicity one.
\end{proof}

\begin{cor}\label{cor:big-subset}
$(\PhiQDistinct\cap\PhiQpGood\rho)\seq\PhiQBig\rho$.
\end{cor}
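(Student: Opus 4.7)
The plan is to apply the baby monodromy theorem (Theorem~\ref{thm:baby-monodromy-theorem}) to the induced representation $\theta := \Ind(\rhochi)$ on a suitable dense open $U\seq\Gm\seq\Poneu$, with $W := \Ind(V_\dc)$.  Since $\Ggeom\dc\rho$ is by definition the Tannakian geometric monodromy group of $\Ind(\rhochi)$, the conclusion $\GG_\geom(\theta,\PhiU) = \GL_{\R,\Qellbar}$ of Theorem~\ref{thm:baby-monodromy-theorem} exhibits $\dc$ as an element of $\PhiQBig\rho$.  So the whole proof reduces to verifying the hypotheses of the baby theorem for $\theta = \Ind(\rhochi)$, for every $\dc\in\PhiQDistinct\cap\PhiQpGood\rho$.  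Essentially all the pieces have already been assembled in the preceding subsections; the corollary is an exercise in bookkeeping.

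I would verify the hypotheses of Theorem~\ref{thm:baby-monodromy-theorem} in the following order.  \emph{Simplicity and purity:} Lemma~\ref{lem:induced-representation}\eqref{lem:item:ir-simple} gives that $\Ind(\rhochi)$ is geometrically simple (its proof uses $\dc\in\PhiQDistinct$, membership of $\dc$ in $\PhiQpGood\rho$ so that $\dc(\Bt)=\OneSpace$, geometric simplicity of $\rho$, and $\deg(\Q)\geq 2$), while Lemma~\ref{lem:Ind-rhochi-pure} supplies punctual $\iota$-purity of weight $w$.  \emph{Dimension:} $\dim W = r\cdot\deg(\Q)\geq 3r > 1$ since $\deg(\Q)\geq 3$, so the alternative dimension/tameness hypothesis of Theorem~\ref{thm:baby-monodromy-theorem} is automatic.  \emph{Invariant scalars:} Lemma~\ref{lem:direct-image-of-middle-extension}\eqref{lem:ind-is-me} identifies $\Q_*\ME{\rhochi}$ with $\ME{\Ind(\rhochi)}$, so the two sheaves have the same invariant scalars; since $\gcd(\Q,\s)=t$ yields $\deg(\Q)\geq 3 = 2+\deg(\gcd(\Q,\s))$, Proposition~\ref{prop:no-nontrivial-invariant-scalars}\emph{(ii)} gives that $\lambda = 1$ is the only invariant scalar.  \emph{Local structure at $0$:} Lemma~\ref{lem:induced-representation}\eqref{lem:item:ir-0-unipotent} provides the unique unipotent block of exact multiplicity one, and the dimension bound $\dim(\Ind(V_\dc)(0)^\unip) = \dim(V(0)^\unip)\leq\dim(V) = r$ is immediate from the same part.  \emph{Local structure at $\infty$:} Lemma~\ref{lem:induced-representation}\eqref{lem:item:ir-infty-unipotent} gives $\Ind(V_\dc)(\infty)^\unip = \ZeroSpace$.  \emph{Numerical inequality:} Lemma~\ref{lem:R-lower-bound} provides $\R > 72(r^2+1)^2$.

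With all hypotheses verified, Theorem~\ref{thm:baby-monodromy-theorem} delivers $\Ggeom\dc\rho = \GL_{\R,\Qellbar}$, so that $\dc\in\PhiQBig\rho$, which is the claimed inclusion.  There is no real obstacle here: the genuine technical work lives one level up, in the construction of the element $g\in G^0$ with the prescribed weight multiplicity type (used inside Theorem~\ref{thm:baby-monodromy-theorem} to apply the group-theoretic recognition result Theorem~\ref{thm:big-monodromy}), in Proposition~\ref{prop:no-nontrivial-invariant-scalars} (ruling out invariant scalars via the distinct-component hypothesis and the plane curve $X_\l$), and in Proposition~\ref{prop:induced-simplicity} (Frobenius reciprocity plus the trace map argument).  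Given those inputs, the present corollary is a purely formal consequence: it collects the hypotheses of the baby theorem and reads them off from the lemmas of the current section.
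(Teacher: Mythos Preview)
Your proposal is correct and follows essentially the same approach as the paper: set $\theta=\Ind(\rhochi)$, $W=\Ind(V_\dc)$, verify the hypotheses of Theorem~\ref{thm:baby-monodromy-theorem} via Lemmas~\ref{lem:induced-representation}, \ref{lem:Ind-rhochi-pure}, \ref{lem:R-lower-bound} and Proposition~\ref{prop:no-nontrivial-invariant-scalars}, and conclude $\Ggeom\dc\rho=\GL_{\R,\Qellbar}$. The paper's proof is organized identically, differing only in cosmetic details (e.g.\ it notes $\dim(W)>2$ from $\deg(\Q)\geq 2$ rather than your $\dim(W)\geq 3r$ from $\deg(\Q)\geq 3$, and it writes the identification $\ME{\theta}\simeq\Q_*\ME{\rhochi}$ inline rather than citing Lemma~\ref{lem:direct-image-of-middle-extension}).
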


\begin{proof}
Let $\dc\in\PhiQDistinct\cap\PhiQpGood\rho$, and let $\theta=\Ind(\rhochi)$ and $W=\Ind(V_\dc)$.  Then Lemmas~\ref{lem:induced-representation} and \ref{lem:Ind-rhochi-pure} imply that $\theta=\Ind(\rhochi)$ is geometrically simple and punctually pure of weight $w$ since $\dc\in\PhiQDistinct$.  Moreover, $\dim(W)=\deg(\Q)\cdot\dim(V)>2$ since $\deg(\Q)\geq 2$, and Proposition~\ref{prop:no-nontrivial-invariant-scalars} implies that $\lambda=1$ is the only invariant scalar of $\ME{\theta}\simeq\Q_*\ME{\rhochi}$ since $\deg(\Q)\geq 3$ and $\dc\in\PhiQDistinct$.  Lemma~\ref{lem:induced-representation} also implies that $W(0)$ has a unique unipotent block of exact multiplicity one, that $\dim(W(0)^\unip)=\dim(V(0)^\unip)\leq\dim(V)=r$, and that $W(\infty)^\unip=\ZeroSpace$.  Finally, Lemma~\ref{lem:R-lower-bound} implies $R>72(r^2+1)^2$.  Therefore the hypotheses of Theorem~\ref{thm:baby-monodromy-theorem} hold, and hence $\dc\in\PhiQBig\rho$.
\end{proof}

\begin{cor}\label{cor:big-subset-of-big}
$(\PhiQDistinct\cap\PhiQpGood\rho)\PhiUNu\seq\PhiQBig\rho$.
\end{cor}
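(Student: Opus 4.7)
The plan is to reduce to Corollary~\ref{cor:big-subset} by showing that the set $\PhiQBig\rho$ is invariant under right multiplication by $\PhiUNu$. Concretely, I would prove $\Ggeom{\dc\beta^\nu}{\rho}=\Ggeom{\dc}{\rho}$ for every $\dc\in\PhiQ$ and $\beta\in\PhiU$, after which the claim is immediate.

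The first step is to identify the relevant sheaves. By Lemma~\ref{lem:direct-image-of-middle-extension}.\ref{lem:ind-projection},
$$\Q_*\ME{\rho\otimes\dc\beta^\nu}\simeq\ME{\Ind(\rhochi)\otimes\beta}\simeq\Q_*\ME{\rhochi}\otimes\LL_\beta,$$
so the passage from $\dc$ to $\dc\beta^\nu$ corresponds to tensoring the underlying sheaf on $\Gm$ by the Kummer sheaf $\LL_\beta$. The second step is to invoke the Tannakian formalism recalled in Section~\ref{subsec:tannakian-monodromy-groups} and detailed in Appendix~\ref{sec:tannakian-appendix}: the monodromy group $\Ggeom{\dc}{\rho}$ is built from the family of fiber functors $\omega_\alpha(\cdot)=H^1_c(\AoneuBar,j_{0*}j^*(\,\cdot\,\otimes\LL_\alpha))$ indexed by $\alpha\in\PhiU$. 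Tensoring the underlying sheaf by $\LL_\beta$ merely reindexes this family through $\alpha\mapsto\alpha\beta$, and so yields the same Tannakian subcategory and the same monodromy group. This invariance is already reflected in the notation $\GG_\geom(\rho,\dc\PhiUNu)$, where only the coset $\dc\PhiUNu$ appears.

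Granted this invariance, $\PhiQBig\rho$ is a union of cosets of $\PhiUNu$, so $\PhiQBig\rho\cdot\PhiUNu=\PhiQBig\rho$, and combining with Corollary~\ref{cor:big-subset} gives
$$(\PhiQDistinct\cap\PhiQpGood\rho)\PhiUNu\seq\PhiQBig\rho\cdot\PhiUNu=\PhiQBig\rho,$$
as required. The main point one has to check is that the reindexing $\alpha\mapsto\alpha\beta$ of the parameterizing family of fiber functors truly induces an isomorphism between the arithmetic and geometric Tannakian monodromy groups of $\Q_*\ME{\rhochi}$ and $\Q_*\ME{\rhochi}\otimes\LL_\beta$. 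This is a routine unwinding of the appendix constructions, but it is the only non-formal ingredient in the argument.
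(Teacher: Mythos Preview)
Your approach is essentially the paper's: the paper's proof is the single sentence ``Follows from Corollary~\ref{cor:big-subset} since $\PhiQBig\rho$ is a union of cosets $\dc\PhiUNu$,'' and you supply the justification for that clause. Your identification via Lemma~\ref{lem:direct-image-of-middle-extension}.\ref{lem:ind-projection} is exactly right, and the conclusion that $\Ggeom{\dc\beta^\nu}{\rho}\simeq\Ggeom{\dc}{\rho}$ is correct and is what the paper is taking for granted (indeed the notation $\GG_\geom(\rho,\dc\PhiUNu)$ is chosen to reflect this).

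One small point of precision: the phrase ``reindexes this family [of fiber functors] through $\alpha\mapsto\alpha\beta$'' is not quite the mechanism that makes the Tannakian groups agree. The group $G_{\omega_{\mathbf 1}|\bar M}$ is determined by the tensor subcategory $\langle\bar M\rangle$ and a single fiber functor, not by the whole family $\{\omega_\alpha\}$; the family only enters when one produces Frobenius conjugacy classes. The clean argument is that ordinary tensoring by a Kummer sheaf $\LL_\beta$ is a \emph{tensor} autoequivalence of $(\Tann{\GmBar},\midstar)$: since $\LL_\beta\boxtimes\LL_\beta\simeq\pi^*\LL_\beta$ (multiplicativity of the Kummer character), the projection formula gives $(M\otimes\LL_\beta)\midstar(N\otimes\LL_\beta)\simeq(M\midstar N)\otimes\LL_\beta$, and tensoring by $\LL_\beta$ preserves negligibility. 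Hence $\langle\bar M\rangle$ and $\langle\overline{M\otimes\LL_\beta}\rangle$ are equivalent as rigid tensor categories, and their Tannakian groups coincide. This is the ``routine unwinding'' you allude to, and with it your argument is complete.
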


\begin{proof}
Follows from Corollary~\ref{cor:big-subset} since $\PhiQBig\rho$ is a union of cosets $\dc\PhiUNu$.
\end{proof}

Let $\dc\in\PhiQ$ and $\dc\PhiUNu$ be the corresponding coset.

\begin{lemma}\label{lem:unique-alpha}
$|\dc\PhiUNu\cap\PhiQp|=1$.
\end{lemma}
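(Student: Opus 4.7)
The plan is to reduce the lemma to elementary character theory via the Chinese Remainder Theorem. The hypothesis $\gcd(\Q,\s)=t$ together with square-freeness of $\Q$ gives $\Q = t\Qp$ with $\gcd(t,\Qp)=1$, so CRT yields canonical isomorphisms
$$
	\Fq[t]/\Q\Fq[t] \;\cong\; \Fq\;\times\;\Fq[t]/\Qp\Fq[t],
	\qquad
	\BQ \;\cong\; \Bt\times\Gamma(\Qp),
$$
where $\Bt = \Fq^\times$. Dualizing gives $\PhiQ \cong \PhiT\times\PhiQp$, under which the canonical inclusion $\PhiQp\hookrightarrow\PhiQ$ is $\dc'\mapsto(\one,\dc')$; equivalently, $\PhiQp$ consists of the characters of $\BQ$ whose restriction to the $\Bt$-factor is trivial.

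The key computation is the restriction of $\nu\colon\BQ\to\Bu=\Fq^\times$ to each CRT factor. Since $\nu$ sends $f+\Q\Fq[t]$ to the determinant of multiplication by $f$ on $\Fq[t]/\Q\Fq[t]$ as an $\Fq$-vector space, and determinants factor across direct-product decompositions, one gets
$$
	\nu(a,g) \;=\; a\cdot N_{(\Fq[t]/\Qp\Fq[t])/\Fq}(g)
	\qquad\mbox{for }(a,g)\in\Bt\times\Gamma(\Qp).
$$
In particular $\nu$ restricted to $\Bt\times\{1\}$ is the identity $\Fq^\times\to\Fq^\times=\Bu$, so for any $\alpha\in\PhiU$ the element $\nu^*(\alpha) = \alpha\circ\nu\in\PhiUNu$ restricts to $\alpha$ itself on the $\Bt$-factor.

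Given $\dc\in\PhiQ$ with restriction $\dc_t\in\PhiT$ to the $\Bt$-factor, I would then observe that $\dc\cdot\nu^*(\alpha)\in\PhiQp$ if and only if its restriction to $\Bt$ is trivial, which happens if and only if $\dc_t\cdot\alpha = \one$, forcing $\alpha = \dc_t^{-1}$. This singles out a unique $\alpha\in\PhiU$, and since $\nu^*$ is injective (as already noted in the paper), it produces a unique element of $\dc\PhiUNu\cap\PhiQp$. There is no substantive obstacle here: the whole argument is a direct unwinding of CRT together with multiplicativity of the determinant.
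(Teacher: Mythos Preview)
Your proof is correct and follows essentially the same approach as the paper: both arguments identify $\PhiQp$ with the characters of $\BQ$ trivial on the $\Gamma(t)$-component, observe that $\alpha^\nu$ restricts to $\alpha$ on that component, and conclude that the unique $\alpha$ making $\dc\alpha^\nu$ lie in $\PhiQp$ is the inverse of the $t$-component of $\dc$. Your version is simply more explicit about the CRT decomposition and the determinant interpretation of the norm, whereas the paper's proof is terser.
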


\begin{proof}
We must show that there is a unique element $\alpha\in\PhiU$ satisfying $\dc\alpha^\nu(\Gamma(t))=\OneSpace$.  Since $\gcd(\s,\Q)=t$, we can speak of the component of $\dc$ at $t=0$: it is the character given by restricting $\chi$ to the subgroup $\Gamma(t)\seq\BQ$.  There is a unique element of $\PhiUNu$ with the same component at $t=0$, call it $\beta^\nu$.  Then $\alpha=1/\beta$ is the desired character.
\end{proof}

We need one more estimate to complete the proof of the theorem.

\begin{lemma}\label{lem:counting-big}
$
	|\PhiQDistinct\cap\PhiQpGood\rho|
	\sim
	|\PhiQpDistinct|
	\sim
	|\PhiQp|
	\mbox{ as }
	q\to\infty.
$
\end{lemma}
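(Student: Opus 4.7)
The plan is to reduce both asymptotics to direct applications of Corollary~\ref{cor:size-of-Qp-distinct} and Lemma~\ref{lem:Qp-bn-bound} after carefully unpacking the interaction between the ``distinct'' conditions at $\Q$ and at $\Qp$. The second asymptotic $|\PhiQpDistinct|\sim|\PhiQp|$ is immediate from Corollary~\ref{cor:size-of-Qp-distinct} applied to the monic divisor $\Qp$ of $\Q$, since the definition of $\PhiQpDistinct$ from \S\ref{sec:distinct-component-characters} is intrinsic to $\Qp$.

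For the first asymptotic, I would begin by identifying $\PhiQp$ with the subgroup of $\PhiQ$ of characters trivial on $\Bt\seq\BQ$ via the CRT splitting $\BQ=\Bt\times\GammaOf{\Qp}$; under this identification $\PhiQpGood\rho\seq\PhiQ$. Let $\EFq/\Fq$ be a splitting field of $\Q$, write $a_1=0$ for the zero of $t$ and $a_2,\dots,a_n$ for the zeros of $\Qp$, and set $(\chi_1,\dots,\chi_n)=(\sigma_\EFq^\vee)^{-1}(\nu_\EFq^{\,\prime\,\vee}(\chi))$. For $\chi\in\PhiQp$ we have $\chi_1=\chinot$, and the remaining tuple $(\chi_2,\dots,\chi_n)$ agrees with the image of $\chi$ in $\PhiEOf\EFq\Qp$. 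Thus membership in $\PhiQDistinct$ forces $\chi_2,\dots,\chi_n$ to be pairwise distinct \emph{and} all nontrivial (no $\chi_i$ may equal $\chi_1=\chinot$), whereas membership in $\PhiQpDistinct$ demands only the former. Consequently
$$
\PhiQp\cap\PhiQDistinct \ =\ \PhiQpDistinct\,\setminus\,\Delta,
$$
where $\Delta\seq\PhiQpDistinct$ is the subset of $\chi$ for which some $\chi_i$ ($i\geq 2$) is trivial.

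Next I would bound $|\Delta|$ using the Frobenius compatibility $\chi_{\phi(j)}=\chi_j^q$: fixing $\chi_i=\chinot$ for a particular $i\geq 2$ forces triviality on the entire Frobenius orbit of $i$, equivalently on the restriction of $\chi$ to $(\Fq[t]/\pi_j)^\times$ for the unique irreducible factor $\pi_j$ of $\Qp$ whose zeros contain $a_i$. Using the decomposition $\PhiQp=\prod_j\PhiOf{\pi_j}$ from \eqref{eqn:phiQ-bijections} and the elementary count $|\PhiOf{\pi_j}|=q^{\deg(\pi_j)}-1$ used in Lemma~\ref{lem:counting-distinct-characters}, a union bound over the $n-1$ choices of $i$ gives $|\Delta|=O(|\PhiQp|/q)=o(|\PhiQpDistinct|)$. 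Hence $|\PhiQp\cap\PhiQDistinct|\sim|\PhiQpDistinct|$. Finally, Lemma~\ref{lem:Qp-bn-bound} provides $|\PhiQp\ssm\PhiQpGood\rho|=o(|\PhiQp|)$, so
$$
|\PhiQp\cap\PhiQDistinct|\,\geq\,|\PhiQDistinct\cap\PhiQpGood\rho|\,\geq\,|\PhiQp\cap\PhiQDistinct|-|\PhiQp\ssm\PhiQpGood\rho|,
$$
and both outer terms are $\sim|\PhiQpDistinct|$, delivering the first asymptotic.

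The argument is an inclusion-exclusion among three large subsets of $\PhiQp$, and I do not anticipate any substantial obstacle: the only mildly technical point is the bookkeeping between the $n$-tuple parametrization of $\PhiQ$ and the $(n-1)$-tuple parametrization of $\PhiQp$, needed to verify the $O(|\PhiQp|/q)$ estimate for $|\Delta|$ cleanly from Lemma~\ref{lem:counting-distinct-characters}.
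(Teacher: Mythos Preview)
Your proposal is correct and follows essentially the same approach as the paper. Both arguments identify $\PhiQp\cap\PhiQDistinct$ with $\PhiQpDistinct$ minus the characters having a trivial component (your set $\Delta$ is precisely the paper's $\PhiQpDistinct\cap\bigcup_{\pi\mid\Qp}\PhiOf{\Qp/\pi}$), bound this defect by $O(|\PhiQp|/q)$ via the Frobenius-orbit observation, and then invoke Corollary~\ref{cor:size-of-Qp-distinct} and Lemma~\ref{lem:Qp-bn-bound} to conclude; the only difference is that the paper writes inclusions where you write an equality, and it intersects with $\PhiQpGood\rho$ before taking limits rather than after, but these are cosmetic.
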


\begin{proof}
We observe that there are natural inclusions
$$
	\left(\PhiQpDistinct\ssm\cup_{\pi\mid\Qp}\PhiOf{\Qp/\pi}\right)
	\seq
	(\PhiQDistinct\cap\PhiQp)
	\seq
	\PhiQpDistinct
$$
since an element of $\PhiQpDistinct$ will fail to lie in $\PhiQDistinct$ only if one of its $\deg(\Qp)$ components is trivial, that is, if it lies in $\PhiOf{\Qp/\pi}$ for some prime factor $\pi\mid\Qp$.  Intersecting with $\PhiQpGood\rho$ gives further inclusions
$$
	\left((\PhiQpGood\rho\cap\PhiQpDistinct)\ssm\cup_{\pi\mid\Qp}\PhiOf{\Qp/\pi}\right)
	\seq
	(\PhiQDistinct\cap\PhiQpGood\rho)
	\seq
	\PhiQpDistinct.
$$
Finally, we know that
$$
	|\PhiQpGood\rho|
	\overset{\mathrm{Lem.~}\ref{lem:Qp-bn-bound}}\sim
	|\PhiQp|
	\overset{\mathrm{Cor.~}\ref{cor:size-of-Qp-distinct}}\sim
	|\PhiQpDistinct|
	,\quad
	|\cup_{\pi\mid\Qp}\PhiOf{\Qp/\pi}|/|\PhiQ| \ll 1/q = o(1)
$$
and hence
$$
	\left|(\PhiQpGood\rho\cap\PhiQpDistinct)\ssm\cup_{\pi\mid\Qp}\PhiOf{\Qp/\pi}\right|
	\sim
	|\PhiQp|
$$
as $q\to\infty$.
\end{proof}

\begin{cor}\label{cor:counting-big}
$
	|(\PhiQDistinct\cap\PhiQpGood\rho)\PhiUNu|
	\sim
	|\PhiQ|
$
for $q\to\infty$.
\end{cor}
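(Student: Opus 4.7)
The plan is to observe that the corollary is a short accounting exercise: Lemma~\ref{lem:unique-alpha} makes $\PhiQp$ a transversal for $\PhiUNu$ in $\PhiQ$, so the product set in question is a disjoint union of cosets indexed by $\PhiQDistinct\cap\PhiQpGood\rho$, and the asymptotic count of this indexing set was just established in Lemma~\ref{lem:counting-big}.

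First, I would note that $\PhiQDistinct\cap\PhiQpGood\rho\seq\PhiQp$ (since $\PhiQpGood\rho\seq\PhiQp$ by definition), and that the natural embedding $\PhiQp\hookrightarrow\PhiQ$ dual to the quotient $\BQ\onto\Gamma(\Qp)$ allows us to view $\PhiQp$ inside $\PhiQ$. By Lemma~\ref{lem:unique-alpha}, each coset $\dc\PhiUNu$ meets $\PhiQp$ in exactly one element, so $\PhiQp$ is a complete set of coset representatives for the subgroup $\PhiUNu\leq\PhiQ$. In particular, distinct elements of $\PhiQp$ lie in distinct cosets of $\PhiUNu$, and $|\PhiQp|\cdot|\PhiUNu|=|\PhiQ|$.

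Next, because $\PhiQDistinct\cap\PhiQpGood\rho$ is a subset of the transversal $\PhiQp$, the product set decomposes as a disjoint union
$$
	(\PhiQDistinct\cap\PhiQpGood\rho)\PhiUNu
	\ =\
	\bigsqcup_{\dc\in\PhiQDistinct\cap\PhiQpGood\rho}\dc\PhiUNu,
$$
and therefore
$$
	|(\PhiQDistinct\cap\PhiQpGood\rho)\PhiUNu|
	\ =\
	|\PhiQDistinct\cap\PhiQpGood\rho|\cdot|\PhiUNu|.
$$
Combining this exact identity with the asymptotic $|\PhiQDistinct\cap\PhiQpGood\rho|\sim|\PhiQp|$ supplied by Lemma~\ref{lem:counting-big}, I would conclude
$$
	|(\PhiQDistinct\cap\PhiQpGood\rho)\PhiUNu|
	\ \sim\
	|\PhiQp|\cdot|\PhiUNu|
	\ =\
	|\PhiQ|
	\quad\text{as } q\to\infty.
$$

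There is no real obstacle in this step; all of the analytic work (counting distinct-component characters and controlling the loss from the $\rho$-good condition) was done in Lemma~\ref{lem:counting-big}, and Lemma~\ref{lem:unique-alpha} guarantees the transversal property needed to promote that count from $\PhiQp$ to the corresponding union of $\PhiUNu$-cosets in $\PhiQ$.
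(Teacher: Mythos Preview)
Your proof is correct and is precisely the argument the paper intends: it combines Lemma~\ref{lem:unique-alpha} (which gives the transversal property) with Lemma~\ref{lem:counting-big} (which gives the asymptotic count on the transversal) in exactly the way you describe. The paper's proof is simply the terse instruction ``Combine Lemma~\ref{lem:unique-alpha} and Lemma~\ref{lem:counting-big},'' and you have faithfully unpacked it.
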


\begin{proof}
Combine Lemma~\ref{lem:unique-alpha} and Lemma~\ref{lem:counting-big}.
\end{proof}

\noindent
The theorem now follows by observing that
$$
	|\PhiQ|
	\overset{\mathrm{Cor.~}\ref{cor:counting-big}}
	\sim
	|(\PhiQDistinct\cap\PhiQpGood\rho)\PhiUNu|
	\overset{\mathrm{Cor.~}\ref{cor:big-subset-of-big}}
	\leq
	|\PhiQBig\rho|
	\leq
	|\PhiQ|
$$
and thus
$$
	|\PhiQBig\rho|
	\sim
	|\PhiQ|
$$
for $q\to\infty$.

\bs\noindent
$\therefore$\ \ The Mellin transform of $\rho$ has big monodromy as claimed and Theorem~\ref{thm:is-equidistributed} holds.


\section{Application to Explicit Abelian Varieties}\label{sec:explicit-abelian-varieties}

In this section we apply the theory developed in the previous sections to  representations coming from (the Tate modules of) a general class of abelian varieties.  More precisely, we give an explicit family of abelian varieties for which we can show the corresponding representations satisfy the hypotheses of Theorem~\ref{thm:is-equidistributed}.  Our principal application, of which Theorem~\ref{thm:intro-theorem} is a special case, is Theorem~\ref{thm:application}.

Throughout this section we suppose that $q$ is an odd prime power so that we can speak of hyperelliptic curves.  One who is interested in even characteristic or in $L$-functions whose Euler factors have odd degree is encouraged to consider Kloosterman sheaves (e.g., see \cite[7.3.2]{Katz:GKM}).


\subsection{Some hyperelliptic curves and their Jacobians}\label{Hyperelliptic curves and their Jacobians}

Let $g$ be a positive integer.  In this section we construct an explicit family of abelian varieties which give rise to Galois representations we can easily show satisfy the hypotheses Theorem~\ref{thm:variance-estimate}.  One member of this family is an elliptic curve, the Legendre curve, and it has affine model
$$
	\Xleg : y^2 = x(x-1)(x-t).
$$
It is isomorphic to its own Jacobian, and the general abelian varieties in our family will be Jacobians of curves.  More precisely, we fix a monic square free $f\in\Fq[x]$ of degree $2g$ and consider the projective plane curve $X/K$ with affine model
\begin{equation}\label{eq:X}
	X \colon y^2 = f(x)(x-t).
\end{equation}
For technical reasons we will eventually suppose that $f$ has a zero $a$ in $\Fq$, and up to the change of variables $x\mapsto x+a$, we will suppose that $a=0$.  We do not need this hypothesis yet since the discussion in this section does not use it.

The curve $X$ has genus $g$.  If $g>1$, it is a so-called hyperelliptic curve, and otherwise it is an elliptic curve.  Either way its Jacobian $J$ is a $g$-dimensional principally polarized abelian variety over $K$.  See \cite{handbook:ecc} for more information about hyperelliptic curves and their Jacobians.

For each finite place $v=\pi$, one can define a reduction $X/\Fpi$ starting with the reduction of \eqref{eq:X} modulo $\pi$.

\begin{lemma}\label{lem:hyper-reduction}
The monic polynomial $\s=f(t)\in\Fq[t]$ satisfies the following:
\begin{enum}
\item if $\pi\nmid\s$, then $X/\Fpi$ is a smooth projective curve of genus $g$;
\item if $\pi\mid\s$, then $X/\Fpi$ is smooth away from a single node and has genus $g-1$.
\end{enum}
\end{lemma}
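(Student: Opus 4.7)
The plan is to observe that reducing $X$ modulo $\pi$ replaces the parameter $t$ by its residue class $\bar t\in\Fpi$, yielding the affine plane curve $y^2 = f(x)(x-\bar t)$ over $\Fpi$. Since $q$ is odd, smoothness of a curve of the form $y^2=F(x)$ is controlled by separability of $F$, so the main task is to analyze repeated roots of $F(x):=f(x)(x-\bar t)$ and then check separately what happens at the point at infinity of the standard hyperelliptic completion.

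First I would note that $f$ remains separable in $\Fpi[x]$: the hypothesis that $f$ is monic and squarefree in $\Fq[x]$ means $\mathrm{disc}(f)\in\Fq^\times$, and this discriminant is unchanged under the scalar extension $\Fq\to\Fpi$. Consequently, the repeated roots of $F$ can only arise when $(x-\bar t)$ shares a root with $f(x)$, i.e.~when $f(\bar t)=0$, equivalently when $\pi\mid f(t)=\s$. In case (i), $F$ thus has $2g+1$ distinct roots, the affine curve is smooth, and the standard coordinate change $u=1/x$, $v=y/x^{g+1}$ exhibits a single smooth point at infinity, so the complete model is smooth of genus $g$.

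In case (ii), I would factor $f(x)=(x-\bar t)\,f_1(x)$ over $\Fpi$, where $f_1$ has degree $2g-1$, is separable, and does not vanish at $\bar t$. Then $F(x)=(x-\bar t)^2 f_1(x)$, so the affine model is smooth away from $(x,y)=(\bar t,0)$. Locally there the equation is $y^2=(x-\bar t)^2 u(x)$ with $u(\bar t)\neq 0$; Hensel's lemma supplies a local square root of $u$, so the equation factors as a product of two transverse smooth branches, giving an ordinary node. Because $\deg F$ is unchanged, the point at infinity is still smooth by the same coordinate change as before, so the node is the unique singularity. Finally, the substitution $y=(x-\bar t)y'$ normalizes the node and exhibits the normalization as the smooth hyperelliptic curve $y'^2=f_1(x)$ with $\deg f_1 = 2g-1$, hence of (geometric) genus $(2g-1-1)/2 = g-1$.

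The main obstacle is just careful bookkeeping: verifying that the hyperelliptic desingularization at infinity is unaffected by the reduction, and correctly identifying the singularity at $(\bar t,0)$ as a node rather than something worse. Both points follow cleanly from the odd-characteristic hypothesis together with the persistence of separability of $f$ under reduction, so no further subtleties arise beyond the local computation.
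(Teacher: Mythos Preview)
Your proposal is correct and follows essentially the same route as the paper: reduce modulo $\pi$ to obtain $y^2=f(x)(x-\bar t)$, observe that $f$ stays separable so a repeated factor occurs precisely when $f(\bar t)=0$, and in that case exhibit the normalization $y'^2=f_1(x)$ (with $f_1=f/(x-\bar t)$ of degree $2g-1$) to compute the genus.  The only cosmetic difference is packaging: the paper first records a general lemma about curves $y^2=h_1(x)h_2(x)^2$ with $h_1,h_2$ squarefree and coprime (nodes at the zeros of $h_2$, normalization $y^2=h_1$, genus $\lfloor(\deg h_1-1)/2\rfloor$) and then specializes, whereas you carry out the local node computation and the check at infinity by hand.
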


\begin{proof}
The essential point is that, for any monic polynomial $h(x)$ with coefficients in a field $F$ of characteristic not two, the affine curve $y^2=h(x)$ is smooth iff $h$ is a square free polynomial.  More generally, if $h=h_1h_2^2$ where $h_1,h_2\in F[x]$ are square free and relatively prime, then the following hold:
\begin{enum}
\item the map $(x,y)\mapsto (x,y/h_2(x))$ induces a birational map from $y^2=h_1(x)$ to $y^2=h(x)$;
\item the $\deg(h_2)$ points $(x,y)$ satisfying $h_2(x)=y=0$ are so-called nodes of $y^2=h(x)$;
\item the map in (1) corresponds to blowing up the nodes in (2);
\item the curve $y^2=h_1(x)$ is smooth of genus $\lfloor(\deg(h_1)-1)/2\rfloor$ since $h_1$ is square free;
\item both curves have one (resp.~two) points at infinity if $\deg(h)$ is odd (resp.~even).
\end{enum}

\noindent
(Compare \cite[Ex.~I.5.6]{Hartshorne}.)  The proof of the lemma will consist of showing that we are in this general situation.

Let $t_0\in\Fpi$ satisfy $t\equiv t_0\bmod\pi$, and let $h_0(x):=f(x)(x-t_0)\in\Fpi[x]$.  The polynomial $f(x)$ is square free by hypothesis, so $h_0(x)$ is square free iff $f(t_0)=0$, or equivalently, $\pi\mid\s$.  In particular, if $\pi\nmid\s$, then $h_0$ is square free and $y^2=h_0(x)$ is smooth of genus $g$.  Otherwise, $h_0=h_1h_2^2$ where $h_1=f/(x-t_0)$ and $h_2=x-t_0$ are coprime (since $f$ is square free), and thus $y^2=h_0(x)$ is smooth away from the node $(t_0,0)$ and birational to the curve $y^2=h_1(x)$ which is smooth of genus $g-1$.
\end{proof}

\begin{remark}\label{rmk:claim-about-reduction-at-infinity}
One can also define a reduction $X/\Finf$ by writing $t=1/u$ and clearing denominators, and one eventually finds that $X/\Finf$ has genus zero.  However, the arguments are subtler and beyond the scope of this article, so we omit them.
\end{remark}

For example, $\Xleg$ has smooth reduction away from $t=0,1,\infty$, over $t=0,1$ its reduction is a so-called node, and over $t=\infty$ it is a so-called cusp.  Since it is isomorphic to its Jacobian, these are sometimes refers to these as good, multiplicative, and additive reduction respectively.  However, in general, one needs to construct separately reductions $J/\Fpi$, for every $\pi$, and also a reduction $J/\Finf$.

\begin{lemma}\label{lem:J_pi}\ 
\begin{enum}
\item If $\pi\nmid\s$, then $J/\Fpi$ is the Jacobian of $X/\Fpi$ so is a $g$-dimensional abelian variety;
\item If $\pi\mid\s$, then $J/\Fpi$ is an extension of an abelian variety by a one-dimensional torus.
\end{enum}
\end{lemma}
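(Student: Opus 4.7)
The proof rests on the theory of N\'eron models for Jacobians, due principally to Raynaud, which identifies the identity component of the special fiber of the N\'eron model of $J$ with the identity component of the relative Picard scheme of a suitable integral model of $X$. Throughout I interpret $J/\Fpi$ as this identity component, which is what one needs in order to speak of a torus-by-abelian-variety extension.

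For part (i), Lemma~\ref{lem:hyper-reduction} asserts $X/\Fpi$ is smooth projective of genus $g$ whenever $\pi\nmid\s$, so $X$ has good reduction at $\pi$. The classical criterion of N\'eron (cf.~\cite[\S 9.5, Prop.~1]{BLR}) then implies $J=\mathrm{Jac}(X)$ also has good reduction at $\pi$, and the special fiber of its N\'eron model is canonically identified with the Jacobian of the smooth reduction $X/\Fpi$, which is an abelian variety of dimension $g$.

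For part (ii), Lemma~\ref{lem:hyper-reduction} says that, when $\pi\mid\s$, the reduction $X/\Fpi$ is smooth away from a single node, while its normalization $\widetilde{X}/\Fpi$ is smooth of genus $g-1$. In particular $X/\Fpi$ is already semistable (a node being the only singularity present), so Raynaud's theorem (cf.~\cite[\S 9.5, Thm.~4 and \S 9.7]{BLR}) identifies $J/\Fpi$ with the identity component of the relative Picard scheme $\mathrm{Pic}^0(X/\Fpi)$. Since $X/\Fpi$ is irreducible, the component group of the N\'eron model at $\pi$ vanishes. The standard structure theorem for Picard schemes of nodal curves (going back to Rosenlicht and Serre) then yields an exact sequence
$$
0 \longto \Gm \longto \mathrm{Pic}^0(X/\Fpi) \longto \mathrm{Jac}(\widetilde{X}/\Fpi) \longto 0,
$$
exhibiting $J/\Fpi$ as an extension of the $(g-1)$-dimensional abelian variety $\mathrm{Jac}(\widetilde{X}/\Fpi)$ by the one-dimensional torus $\Gm$, as claimed.

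There is essentially no combinatorial difficulty here: both parts reduce to quoting well-known theorems from the theory of N\'eron models. The only point that requires verification --- and hence is the main (mild) obstacle --- is to check that $X/\Fpi$ itself already serves as a semistable model, so that Raynaud's theorem applies without first having to pass to a resolution or to a different integral model; and this is exactly what Lemma~\ref{lem:hyper-reduction} provides.
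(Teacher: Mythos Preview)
Your proof is correct and follows essentially the same route as the paper: both invoke Lemma~\ref{lem:hyper-reduction} to determine the reduction type of $X$ and then appeal to the standard results in \cite{BLR} on N\'eron models of Jacobians of semistable curves. The paper cites \cite[9.2.8]{BLR} directly for the statement that a curve smooth away from $n$ nodes has Jacobian which is an extension of a $(g-n)$-dimensional abelian variety by an $n$-dimensional torus, whereas you unpack this via Raynaud's theorem and the Rosenlicht--Serre description of $\mathrm{Pic}^0$ of a nodal curve, but the content is the same.
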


\begin{proof}
Both statements are easy consequences of Lemma~\ref{lem:hyper-reduction}.  More precisely, if $X/\Fpi$ is projective and smooth away from $n$ nodes, then $J/\Fpi$ is an extension of a $(g-n)$-dimensional abelian variety by an $n$-dimensional torus.  See \cite[9.2.8]{BLR} and keep in mind Lemma~\ref{lem:hyper-reduction}.
\end{proof}

\begin{remark}\label{rmk:reduction-of-J-at-infty}
One can also show that $J/\Finf$ is a $g$-dimensional additive linear algebraic group, but demonstrating it directly is harder and requires a finer statement than the claim in Remark~\ref{rmk:claim-about-reduction-at-infinity}.
\end{remark}

One can regard the various reductions of $J$ as the special fibers of the (identity component of the) N\'eron model of $J/K$ over $\Ponet$.  However, for our purposes, Lemma~\ref{lem:J_pi} contains all the information we need about the model.  More precisely, we only need to know the respective dimensions $g_\pi$, $m_\pi$, and $a_\pi$ of the good, multiplicative, and additive parts of $J/\Fpi$.  Thus
\begin{equation}\label{eqn:gma-values}
	(g_\pi,m_\pi,a_\pi)
	=
	\begin{cases}
		(g,0,0)   & \mbox{if }\pi\nmid\s \\
		(g-1,1,0) & \mbox{if }\pi\mid\s
	\end{cases}
\end{equation}
by Lemma~\ref{lem:J_pi}.  In \S\ref{sec:tate-modules} we will show that
$$
	(g_\infty,m_\infty,a_\infty) = (0,0,g)
$$
as claimed in Remark~\ref{rmk:reduction-of-J-at-infty}.


\subsection{Tate modules}\label{sec:tate-modules}

Let $\ell$ be a prime distinct from the characteristic $p$ of $\Fq$.  For each $m\geq 0$, let $J[\ell^m]\seq J(\bar{K})$ be the subgroup of $\ell^m$-torsion; it is isomorphic to $(\bbZ/\ell^m)^{2g}$ and hence is a finite Galois module.  Multiplication by $\ell$ induces an epimorphism $J[\ell^{m+1}]\onto J[\ell^m]$, for each $m$, and the $\Zell$-Tate module of $J$ is the projective limit
$$
	T_\ell(J) := \varprojlim J[\ell^m].
$$
Concretely one can regard $T_\ell(J)$ as the set
$$
	\{\,
		(P_0,P_1,\ldots)
		:
		P_m\in J[\ell^m]\mbox{ and }\ell P_{m+1}=P_m\mbox{ for }m\geq 0
	\,\}.
$$
It is even a Galois $\bbZ_\ell$-module (since the action of $G_K$ and multiplication by $\ell$ commute), and it is isomorphic to $\Zell^{2g}$ as a $\Zell$-module (cf.~\cite[\S1]{SerreTate}).

Let $V$ be the vector space $T_\ell(J)\otimes_\Zell\Qellbar$ and $\GK\to\GL(V)$ be the corresponding Galois representation.  For each $v\in\PP$, let $V(v)$ denote $V$ as an $I(v)$-module and let $V(v)^\unip$ be the maximal submodule where $I(v)$ acts unipotently.

\begin{prop}\label{prop:tate-module-invariant-dimensions}
Let $v\in\PP$, and let $g_z$ and $m_z$ be the respective dimensions of the abelian and multiplicative part of $J/\bbF_v$  Then
$$
	V(v)^\unip\simeq U(1)^{\oplus 2g_v}\oplus U(2)^{\oplus m_v}.
$$
\end{prop}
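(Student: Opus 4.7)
The plan is to invoke Grothendieck's structure theorem for the $\ell$-adic Tate module of an abelian variety over a local field (SGA~7, Expos\'e~IX). The central input identifies
$$
V^{I(v)} \simeq T_\ell(J^0_v) \otimes_{\Zell} \Qellbar
$$
with (modulo the finite component group) the Tate module of the identity component of the N\'eron special fiber of $J$ at $v$. Since the toric part contributes $m_v$ to the rank, the abelian part contributes $2g_v$, and the unipotent part is $\ell$-torsion free (as $\ell\ne p$), this yields the dimension count $\dim V^{I(v)} = 2g_v + m_v$, which will be used in every case.

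For each finite place $v$, Lemma~\ref{lem:J_pi} gives $a_v=0$, so $J$ has semistable reduction at $v$ and $I(v)$ acts unipotently on all of $V$. Grothendieck's semistable monodromy theorem further produces a nilpotent logarithm $N$ (of a topological generator of the tame $\ell$-inertia) with $N^2=0$, $\mathrm{Im}(N)\subseteq V^{I(v)}$, and inducing an isomorphism $V/V^{I(v)} \xrightarrow{\sim} \mathrm{Im}(N)$. From $\dim V=2g$ and $\dim V^{I(v)}=2g_v+m_v$ one reads off $\dim\mathrm{Im}(N) = \dim V/V^{I(v)} = m_v$, so the Jordan decomposition of $N$ has exactly $m_v$ blocks of size $2$ and $2g_v$ blocks of size $1$, yielding $V(v)^{\unip} = V \simeq U(1)^{\oplus 2g_v}\oplus U(2)^{\oplus m_v}$ as claimed.

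For $v=\infty$, Remark~\ref{rmk:reduction-of-J-at-infty} gives $g_\infty = m_\infty = 0$, so $\dim V^{I(\infty)} = 0$. A standard argument will show that any $I(\infty)$-stable subspace on which $I(\infty)$ acts unipotently contributes a nonzero vector to $V^{I(\infty)}$: the wild inertia and the prime-to-$\ell$ tame inertia act through finite-order operators and hence trivially when the overall action is unipotent, reducing the question to the observation that a nilpotent endomorphism of a nonzero $\Qellbar$-vector space has a nonzero kernel. This forces $V(\infty)^{\unip}=0$, matching $U(1)^{\oplus 0}\oplus U(2)^{\oplus 0}$. The main obstacle in the argument is the appeal to SGA~7 for the two nontrivial ingredients --- the identification of $V^{I(v)}$ with the Tate module of $J^0_v$ and the non-degeneracy (not merely non-vanishing) of the monodromy pairing in the semistable case --- both of which are deep theorems of Grothendieck that would be cited as black boxes rather than reproved.
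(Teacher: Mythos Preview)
Your proof and the paper's proof both rest on the same SGA~7 citation; the paper simply records the proposition as a known structural fact about Tate modules and points to \cite[Exp.~IX, \S2.1]{SGA7}, whereas you unpack that citation into the semistable case (finite $v$) and the totally additive case ($v=\infty$). So the approach is essentially the same, with your version supplying more detail.

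One logical caution about your treatment of $v=\infty$: you invoke Remark~\ref{rmk:reduction-of-J-at-infty} to obtain $g_\infty=m_\infty=0$. In the paper's development, that remark is not proved independently; it is deduced \emph{after} the proposition, by combining the proposition with the Katz--Sarnak reference used in the proof of Lemma~\ref{lem:numerical-invariants-for-J} to get $\dr_\infty(\rho)=2g$. So inside the paper's logical flow your argument at $\infty$ is circular. This is not a mathematical error --- the remark can be established by a direct N\'eron-model computation, as the paper says --- but you should flag that you are importing a fact the paper only certifies downstream. The clean way around this is to note, as the paper does, that SGA~7 gives $V(v)^{\unip}\simeq U(1)^{\oplus 2g_v}\oplus U(2)^{\oplus m_v}$ for \emph{arbitrary} $v$ (not just the semistable and purely additive extremes), so no case split tied to the specific reduction types of this particular $J$ is needed.
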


\begin{proof}
This is a general fact about Tate modules of abelian varieties.  See \cite[Exp.~IX, \S2.1]{SGA7}.
\end{proof}

Let $\SS=\{\pi\in\PP:\pi\mid\s\}\cup\{\infty\}$ where $\s=f(t)$ as in Lemma~\ref{lem:hyper-reduction}.  Then by Proposition~\ref{prop:tate-module-invariant-dimensions}, the action of $G_K$ on $V$ induces a representation
$$
	\rho\colon\GKS\to\GL(V)
$$
since
$$
	\dim(V^{I(v)})=\dim(V)=2g
	\mbox{ for }
	v\in\PP\ssm\SS
$$
by \eqref{eqn:gma-values}.

\begin{lemma}\label{lem:numerical-invariants-for-J}
$\rho$ is geometrically simple and punctually pure of weight one, and it satisfies
$$
	\dr_v(\rho)
	=
	\begin{cases}
    	 0 & v\in\PP\ssm\SS \\
    	 1 & v\in\SS\ssm\{\infty\} \\
    	2g & v=\infty
	\end{cases},\quad
	\swan(\rho) = 0.
$$
\end{lemma}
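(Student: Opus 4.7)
The lemma packages four claims; I would handle them in order of increasing technical involvement.

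Punctual purity of weight one is immediate from the Weil conjectures for abelian varieties. At each $v\in\PP\ssm\SS$ the Jacobian $J$ has good reduction by Lemma~\ref{lem:J_pi}, so $V=V^{I(v)}$ and the eigenvalues of $\Frob_v$ on $V$ are $q_v$-Weil numbers of weight one, forcing $L(T,\rho_v)$ to be $\iota$-pure of $q^{d_v}$-weight $1$.

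The drop formula rests on the identity $\dim V^{I(v)}=2g_v+m_v$. Any $I(v)$-invariant vector is fixed and hence acted on unipotently, so $V^{I(v)}\seq V(v)^\unip$; conversely Proposition~\ref{prop:tate-module-invariant-dimensions} decomposes $V(v)^\unip\simeq U(1)^{\oplus 2g_v}\oplus U(2)^{\oplus m_v}$, and each Jordan block $U(e)$ contributes exactly one invariant line. Feeding in the reduction data \eqref{eqn:gma-values} at finite places and the values $(g_\infty,m_\infty,a_\infty)=(0,0,g)$ at $\infty$ from Remark~\ref{rmk:reduction-of-J-at-infty} then yields the three stated values of $\dr_v(\rho)$.

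For the Swan conductor I would argue place by place. At every finite place $v$ we have $a_v=0$, so Proposition~\ref{prop:tate-module-invariant-dimensions} shows $V=V(v)^\unip$; unipotent inertia action is tame for $\ell\neq p$, giving $\swan_v(\rho)=0$. The remaining case $v=\infty$ is the subtle one and requires separate treatment, since the additive reduction means $V(\infty)^\unip$ is properly contained in $V$. Here I would show directly that $J$ acquires semistable reduction after a tame extension of $K_\infty$: concretely, the substitutions $u=1/t$ and $s^2=u$ (a quadratic, hence tame, base change as $q$ is odd) transform the defining equation into $(sy)^2=f(x)(s^2x-1)$ over $\Fq[[s]]$, which after resolving the residual singularities becomes semistable. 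Grothendieck's local monodromy theorem then upgrades semistable reduction over a tame extension to tameness of the $I(\infty)$-action on $V$, so $\swan_\infty(\rho)=0$.

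Finally, geometric simplicity follows from the big-monodromy theorem for the hyperelliptic pencil $y^2=f(x)(x-t)$: its geometric monodromy is conjugate in $\GL(V)$ to the full symplectic group $\Sp(V,\langle\cdot,\cdot\rangle)$ for the Weil pairing, whose standard representation is irreducible; combined with punctual purity, Proposition~\ref{prop:pure-impliessemisimple} then upgrades irreducibility to geometric simplicity. I expect the main obstacle to be the tameness at $\infty$: among the four ingredients, it is the only one that genuinely engages with an explicit integral model of $X$ rather than being a citation or a manipulation of data already recorded in \S\ref{Hyperelliptic curves and their Jacobians}.
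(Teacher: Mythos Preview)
Your argument is essentially correct, and considerably more explicit than the paper's proof, which carries out only the finite-place drop computation you give (via Proposition~\ref{prop:tate-module-invariant-dimensions} and \eqref{eqn:gma-values}) and then defers \emph{everything} else---geometric simplicity, purity, $\dr_\infty(\rho)$, and $\swan(\rho)$---to a single citation of Katz--Sarnak \cite[10.1.9 and 10.1.17]{KS}.

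Two points deserve flagging. First, your appeal to Remark~\ref{rmk:reduction-of-J-at-infty} for $(g_\infty,m_\infty,a_\infty)=(0,0,g)$ is circular: that remark is explicitly left unproved, and the text just before \S\ref{sec:tate-modules} announces that this value will be established there---i.e., by the very lemma under discussion, via the \cite{KS} citation. Your own tame base-change argument actually closes this gap if pushed a step further: the generator $\sigma$ of $\Gal\bigl(\Fq((s))/\Fq((u))\bigr)$ acts on the model $\eta^2=f(x)(s^2x-1)$ by $s\mapsto -s$, hence $\eta=sy\mapsto -\eta$, which is the hyperelliptic involution; thus $\sigma$ acts as $-1$ on the Jacobian and on $V$, forcing $V^{I(\infty)}\subseteq V^{\sigma=1}=0$ and $\dr_\infty(\rho)=2g$ directly. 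Second, your semistability claim at $\infty$ is correct but under-argued: the affine model you write down is smooth along $s=0$, but the compactification acquires a surface singularity over $x=\infty$ (locally of the form $\zeta^2+W^2=s^4$, an $A_3$ point), which must be resolved before one can read off a reduced nodal special fiber. Finally, your invocation of big monodromy for geometric simplicity is, of course, exactly the content of \cite{KS}, so at that point the two proofs converge.
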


\begin{proof}
The values $\dr_v(\rho)$ for $v\neq\infty$ follow directly from \eqref{eqn:gma-values} since
$$
	\dr_v(\rho) = \dim(V) - \dim(V^{I(v)}) = 2g - 2g_v - m_v
$$
by Proposition~\ref{prop:tate-module-invariant-dimensions}.  For the assertions about geometric simplicity and weight and about $\dr_\infty(\rho)$ and $\swan(\rho)$ we refer to \cite[10.1.9 and 10.1.17]{KS} (cf.~\cite[\S 5]{Hall:BM} for a related discussion about $J[\ell]$).
\end{proof}

\begin{cor}
$L(T,J/K)=1$, that is, it is a polynomial and $\deg(L(T,J/K)) = 0$.
\end{cor}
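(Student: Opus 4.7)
The strategy is to apply Theorem~\ref{thm:archimedean-bound} to the representation $\rho$ constructed in \S\ref{sec:tate-modules}. The theorem reduces the claim to a computation of two invariants: the degree formula $\deg L(T,\rho) = \degL(\rho)$, and the fact that $L(T,\rho)$ is actually a polynomial (rather than a rational function). For the polynomiality I would first verify that $\rho$ has trivial geometric invariants, which follows immediately from Lemma~\ref{lem:numerical-invariants-for-J}: $\rho$ is geometrically simple, and since $\dim V = 2g \geq 2$, no nonzero proper subspace can be geometrically invariant, in particular the space of invariants itself must vanish. Combined with the purity statement of Lemma~\ref{lem:numerical-invariants-for-J}, this is exactly the hypothesis of Theorem~\ref{thm:archimedean-bound}, so $L(T,\rho) \in \bar\bbQ[T]$ with $\deg L(T,\rho) = \degL(\rho)$.

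The remaining step is a direct numerical computation. Recall
$$\degL(\rho) = \dr(\rho) + \swan(\rho) - 2\dim(V).$$
Using Lemma~\ref{lem:numerical-invariants-for-J}, $\swan(\rho)=0$ and $\dim V = 2g$, while
$$\dr(\rho) = \sum_{v\in \PP} d_v\cdot \dr_v(\rho) = \sum_{v\in \SS\smallsetminus\{\infty\}} d_v \cdot 1 \;+\; 1\cdot 2g.$$
The set $\SS\smallsetminus\{\infty\}$ is in bijection with the prime factors of $\s = f(t)$, and $\sum_{\pi \mid \s} \deg\pi = \deg(\s) = 2g$ since $f$ is monic of degree $2g$ and square free. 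Hence $\dr(\rho) = 2g + 2g = 4g$, and so $\degL(\rho) = 4g + 0 - 4g = 0$.

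Finally, to conclude $L(T,\rho)=1$ rather than merely a nonzero constant, I would use the Euler product definition: each Euler factor $L(T^{d_v},\rho_v)^{-1}$ satisfies $L(0,\rho_v)=1$ (since it is $\det(1-T\rho_v(\Frob_v)\mid V^{I(v)})$ evaluated at $T=0$), so formally $L(T,\rho)$ has constant term $1$. Combined with $\deg L(T,\rho)=0$ this forces $L(T,\rho)=1$. Since $L(T,J/K)$ is by definition the Euler product attached to the Tate module representation $\rho$, this gives $L(T,J/K) = 1$ and $\deg L(T,J/K) = 0$.

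There is no essential obstacle here; the only point requiring any care is ensuring that the contribution of $\dr_\infty(\rho) = 2g$ exactly cancels the contribution from the finite bad places (which is also $2g$ via $\deg(\s)$), producing the vanishing of $\degL(\rho)$. This cancellation is a reflection of the compatibility between the bad reduction along $\s$ and the additive reduction at $\infty$ (Remark~\ref{rmk:reduction-of-J-at-infty}), and it is what makes the complete $L$-function trivial even though the partial $L$-function $\LC(T,\rho)$ will generally be a nontrivial polynomial.
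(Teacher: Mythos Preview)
Your proof is correct and follows essentially the same approach as the paper: invoke geometric simplicity and punctual purity from Lemma~\ref{lem:numerical-invariants-for-J}, deduce trivial geometric invariants (you are slightly more careful here in using $\dim V = 2g \geq 2$), apply Theorem~\ref{thm:archimedean-bound}, and compute $\degL(\rho)=0$ from the explicit drop and Swan data. Your additional remark that the Euler product forces constant term $1$, hence $L(T,\rho)=1$ and not merely a constant, is a detail the paper leaves implicit.
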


\begin{proof}
The representation $\rho$ is geometrically simple and $\dim(V)=2g>0$, so $\rho$ has trivial geometric invariants.  Moreover, it is punctually pure of weight $w=1$, so Theorem~\ref{thm:archimedean-bound} implies $L(T,\rho)$ is a polynomial of degree
$$
	\degL(\rho)
	=
	\dr(\rho)
	+
	\swan(\rho)
	-
	2\cdot\dim(\Vl)
	\overset{\mathrm{Lem.~}\ref{lem:numerical-invariants-for-J}}{=}
	(\deg(f)\cdot 1+1\cdot 2g)
	+
	0
	-
	2\cdot 2g
	=
	0
$$
as claimed.
\end{proof}

Let $\Q\in\Fq[t]$ be monic and square free and $\CC\sub\PP$ be the finite subset consisting of $\pi$ and $v(\pi)$ for every prime factor $\pi$ of $\Q$ (cf.~\S\ref{sec:twisted-l-functions}).

\begin{lemma}\label{lem:rhochi-properties}
For every $\dc\in\PhiQ$, the representation $\rhochi$ is geometrically simple and punctually pure of weight one, and $\dc$ is not heavy.
\end{lemma}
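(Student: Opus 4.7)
The plan is to deduce all three conclusions as immediate consequences of results that are already in place, using the properties of $\rho$ established in Lemma~\ref{lem:numerical-invariants-for-J}, namely that $\rho$ is geometrically simple, punctually $\iota$-pure of weight one, and of dimension $\dim(V)=2g$.

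First I would obtain geometric simplicity of $\rhochi$ by applying Lemma~\ref{lem:rhochi-pure}\eqref{lem:item:rhochi-pure--simplicity} to $\rho$, and punctual $\iota$-purity of weight one of $\rhochi$ by applying Lemma~\ref{lem:rhochi-pure}\eqref{lem:item:rhochi-pure--purity}. Both steps are essentially formal: the first uses that twisting by a one-dimensional representation preserves simplicity (since any $\GKRbar$-invariant subspace of $V_\dc$ pulls back to a $\GKSbar$-invariant subspace of $V$), and the second uses that the Frobenius eigenvalues on the local representations are multiplied by roots of unity of absolute value one.

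For the statement that $\dc$ is not heavy, I would invoke Lemma~\ref{lem:heavy-criterion}: since $\rho$ is geometrically simple and $\iota$-pure, $\dc\in\PhiQAwful\rho$ would force $\rhochi$ to be geometrically isomorphic to the trivial one-dimensional representation. But $\dim(V_\dc)=\dim(V)=2g\geq 2$, so this is impossible, and $\dc\notin\PhiQAwful\rho$. Equivalently, one could cite Corollary~\ref{cor:classify-awful}\eqref{cor:item:r>1}, since $r=2g>1$.

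There is no real obstacle here; the main content of the lemma is just assembling standard properties proved earlier with the numerical input $\dim(V)=2g\geq 2$ coming from Lemma~\ref{lem:numerical-invariants-for-J}. The only minor care required is to confirm that $g\geq 1$ throughout (which is assumed at the start of \S\ref{Hyperelliptic curves and their Jacobians}), so that $\dim(V)\geq 2$ and the heavy case is ruled out uniformly in $\dc$.
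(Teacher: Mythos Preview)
Your proposal is correct and follows essentially the same approach as the paper. The paper's proof invokes Lemma~\ref{lem:rhochi-pure}\eqref{lem:item:rhochi-pure--simplicity} and \eqref{lem:item:rhochi-pure--purity} for simplicity and purity, and for the non-heaviness it argues directly that $\rhochi$ has trivial geometric invariants because $\dim(V)=2g>1$ (using the just-established geometric simplicity), which is exactly the content of the Lemma~\ref{lem:heavy-criterion}/Corollary~\ref{cor:classify-awful}\eqref{cor:item:r>1} route you take.
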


\begin{proof}
Lemma~\ref{lem:rhochi-pure}.\ref{lem:item:rhochi-pure--simplicity} implies that $\rhochi$ is geometrically simple since $\rho$ is.  Moreover, it has trivial geometric invariants since $\dim(V)=2g>1$, so $\dc$ is not heavy.  Finally, Lemma~\ref{lem:rhochi-pure}.\ref{lem:item:rhochi-pure--purity} implies that it is punctually pure of weight $w=1$ since $\rho$ is.
\end{proof}

\begin{cor}
If $\dc\in\PhiQ$, then $\LC(T,\rhochi)$ is a polynomial and
$$
	\deg(\LC(T,\rhochi))
	=
	2g\cdot \deg(\Q) - \deg(\gcd(\Q,\s)).
$$
\end{cor}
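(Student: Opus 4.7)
The plan is to combine Theorem~\ref{thmB} with the local data recorded in Lemma~\ref{lem:numerical-invariants-for-J}. First, by Lemma~\ref{lem:rhochi-properties} the twist $\rhochi$ is geometrically simple and punctually pure, and since $\dim(V)=2g>1$ it has trivial geometric invariants (equivalently, $\dc$ is not heavy). Theorem~\ref{thmB} then immediately yields that $\LC(T,\rhochi)$ is a polynomial of degree
$$
	\rC(\rho) = \deg(L(T,\rho)) + (\deg(\Q)+1)\cdot\dim(V) - \dropCee{\rho},
$$
so the whole task reduces to computing $\dropCee{\rho}$.

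Recall $\CC=\{\infty\}\cup\{v(\pi):\pi\mid\Q\}$. Splitting the sum defining $\dropCee{\rho}$ along this decomposition gives
$$
	\dropCee{\rho}
	=
	\dr_\infty(\rho)
	+
	\sum_{\pi\mid\Q}\deg(\pi)\cdot\dr_{v(\pi)}(\rho).
$$
By Lemma~\ref{lem:numerical-invariants-for-J} we have $\dr_\infty(\rho)=2g$, and for finite primes $\dr_{v(\pi)}(\rho)=1$ exactly when $\pi\mid\s$ (i.e.\ $v(\pi)\in\SS$) and vanishes otherwise. Hence the finite contribution collapses to
$$
	\sum_{\pi\mid\gcd(\Q,\s)}\deg(\pi)
	=
	\deg(\gcd(\Q,\s)),
$$
giving $\dropCee{\rho}=2g+\deg(\gcd(\Q,\s))$.

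Finally, using $\deg(L(T,\rho))=0$ from the preceding corollary and $\dim(V)=2g$, substitution yields
$$
	\deg(\LC(T,\rhochi))
	=
	0 + 2g(\deg(\Q)+1) - 2g - \deg(\gcd(\Q,\s))
	=
	2g\cdot\deg(\Q) - \deg(\gcd(\Q,\s)),
$$
as claimed. There is no genuine obstacle here; the argument is a direct bookkeeping exercise with the local drop invariants once Theorem~\ref{thmB} has been applied.
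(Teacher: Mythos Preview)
Your proof is correct and follows essentially the same approach as the paper: invoke Lemma~\ref{lem:rhochi-properties} to verify the hypotheses of Theorem~\ref{thmB}, then compute $\dropCee{\rho}$ from the local drop values in Lemma~\ref{lem:numerical-invariants-for-J}. The only cosmetic difference is that the paper phrases the computation as $\dropCee{\rho}=\dr_{\CC\cap\SS}(\rho)$ (using that $\dr_v(\rho)=0$ for $v\notin\SS$) before evaluating, whereas you sum over all of $\CC$ directly; the arithmetic is identical.
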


\begin{proof}
By Lemma~\ref{lem:rhochi-properties} the hypotheses of Theorem~\ref{thmB} hold, and hence $\LC(T,\rhochi)$ is a polynomial of degree
$$
	\rC(\rho)
	=
	\deg(L(T,\rho)) + (\deg(\Q)+1)\dim(V) - \dropCee{\rho}
	=
	2g\cdot (\deg(\Q)+1) - \dr_{\CC\cap\SS}(\rho).
$$
The corollary follows by observing that
$$
	\dr_{\CC\cap\SS}(\rho)
	=
	\sum_{v\in\CC\cap\SS} d_v\cdot\dr_v(\rho)
	=
	\deg(\gcd(\Q,\s))\cdot 1
	+
	\dr_\infty(\rho)
$$
and that $\dr_\infty(\rho)=2g$.
\end{proof}


\subsection{Arithmetic application}\label{sec:arithmetic-application}

In this section we show how to apply our main theorem to the example given above.

The Euler factor at $v=\infty$ of the $L$-function of $J$ is trivial since $\dr_\infty(\rho)=\dim(V)$, and thus the complete $L$-function satisfies
$$
	L(T,J/K)
	=
	\prod_{\pi\in\AA}
	L(T^{\deg(\pi)},J/\Fpi)^{-1}
	=
	\prod_{v\in\PP}
	L(T^{d_v},\rho_v)^{-1}
	=
	L_{\{\infty\}}(T,\rho).
$$
Similarly, for the partial $L$-function of $\rho$, we have
$$
	\LC(T,\rho)
	=
	\prod_{v\in\PP\ssm\CC}
	L(T^{d_v},\rho_v)^{-1}
	=
	\prod_{\substack{\pi\in\AA\\\pi\nmid\Q}}
	L(T^{\deg(\pi)},J/\Fpi)^{-1}.
$$

For each $\pi\in\AA$, the Euler factor $L(T,J/\Fpi)^{-1}$ is the reciprocal of a polynomial with coefficients in $\bbZ$ so satisfies
$$
	T\frac{d}{dT}\log(L(T,J/\Fpi))
	=
	\sum_{n=1}^\infty a_{\pi,n}T^n
$$
for integers $a_{\pi,n}\in\bbZ$.

The complete $L$-function is also a polynomial with coefficients in $\bbZ$, and it satisfies
$$
	T\frac{d}{dT}\log(L(T,J/K))
	=
	T\frac{d}{dT}\log(L_{\{\infty\}}(T,\rho))
	=
	\sum_{n=1}^\infty
	\left(
	\sum_{f\in\MM_n}
	\VM(f)
	\right)
	T^n
$$
where $\VM(f)\colon\MM\to\bbZ$ is the von Mangoldt function of $\rho$ defined in \eqref{eqn:von-mangoldt} by
$$
	\VM(f)
	=
	\begin{cases}
	d\cdot a_{\pi,n} & f=\pi^m\mbox{ and }\pi\in\AA_d \\
	0 & \mbox{otherwise}.
	\end{cases}
$$
Similarly, the partial $L$-function of $\rho$ is a polynomial with coefficients in $\bbZ$ and satisfies
$$
	T\frac{d}{dT}\LC(T,\rho)
	=
	\sum_{n=1}^\infty
	\left(
	\sum_{\substack{f\in\MM_n\\\gcd(f,\Q)=1}}
	\VM(f)
	\right)
	T^n.
$$

For $A$ in $\BQ=(\Fq[t]/\Q\Fq[t])^\times$ and positive integer $n$, we defined the sum $\SnAQ$ in \eqref{eqn:SnAQ} by
$$
	\SnAQ
	=
	\sum_{\substack{f\in\MM_n\\f\equiv A\bmod\Q}}
	\VM(f).
$$
We then defined the expected value and variance of this sum as $A$ varies uniformly over $\BQ$ by
$$
	\bbE_A[\SnAQ]
	=
	\frac{1}{\phi(\Q)}\sum_{A\in\BQ}\SnAQ,
	\ 
	\Var_A[\SnAQ]
	=
	\frac{1}{\phi(\Q)}\sum_{A\in\BQ} \left|\SnAQ - \bbE_A[\SnAQ]\right|^2
$$
respectively where $\phi(\Q)=|\BQ|$ (see \eqref{eqn:E-and-V}).

\begin{theorem}\label{thm:application}
Suppose that $\gcd(\Q,\s)=t$ and that $\deg(\Q)>\frac{1}{2g}(72(4g^2+1)^2+1)$.  Then
$$
	\phi(\Q)\cdot\bbE_A[\SnAQ]
	\ =
	\sum_{\substack{f\in\MM_n\\\gcd(f,\Q)=1}}
	\VM(f)
	\mbox{ and }
	\lim_{q\to\infty}
	\frac{\phi(\Q)}{q^{2n}}
	\cdot
	\Var_A[\SnAQ]
	= \min\{n,2g\cdot\deg(\Q)-1\}.
$$
\end{theorem}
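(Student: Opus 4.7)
The plan is to deduce the theorem from Theorem~\ref{thm:variance-estimate} applied to the representation $\rho$ built from the Tate module $T_\ell(J)$ in \S\ref{sec:tate-modules}. To do so I would verify the three hypotheses of Theorem~\ref{thm:variance-estimate}: (a) $\rho$ is punctually $\iota$-pure of weight $w=1$; (b) $\PhiQAwful\rho\seq\{\chinot\}$ for all $q$; and (c) the Mellin transform of $\rho$ has big monodromy. Hypothesis (a) is exactly Lemma~\ref{lem:numerical-invariants-for-J}. Hypothesis (b) follows from Lemma~\ref{lem:rhochi-properties}, since $\rhochi$ is geometrically simple with $\dim V=2g>1$ for every $\dc$, so by Corollary~\ref{cor:classify-awful} no character of $\PhiQ$ is heavy and in fact $\PhiQAwful\rho=\emptyset$.

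For (c) I would invoke Theorem~\ref{thm:is-equidistributed}. The hypotheses there are checked as follows. The condition $\gcd(\s,\Q)=t$ is part of the hypothesis of Theorem~\ref{thm:application}, and the quantitative bound on $\deg(\Q)$ gives $\deg(\Q)\geq 3$. Geometric simplicity and punctual purity of $\rho$ come from Lemma~\ref{lem:numerical-invariants-for-J}. For the unipotent block structure at $t=0$, the reduction of $J$ over $\Fpi$ with $\pi=t$ has $(g_0,m_0,a_0)=(g-1,1,0)$ by \eqref{eqn:gma-values}, so Proposition~\ref{prop:tate-module-invariant-dimensions} gives
$$
V(0)^\unip\ \simeq\ \Unip(1)^{\oplus 2(g-1)}\oplus \Unip(2)^{\oplus 1},
$$
and the $\Unip(2)$-isotypic component has multiplicity exactly one, so $V(0)$ has a unique unipotent block of exact multiplicity one. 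Finally, the numerical bound on $\deg(\Q)$ in Theorem~\ref{thm:is-equidistributed} specializes as $r=2g$, $\deg(L(T,\rho))=0$, and
$$
\dropCee{\rho}\ =\ \deg(\gcd(\Q,\s))\cdot 1\,+\,\dr_\infty(\rho)\ =\ 1+2g,
$$
so the required inequality reads $\deg(\Q)>\tfrac{1}{2g}\!\left(72(4g^2+1)^2+1\right)$, which is precisely our standing assumption. Hence Theorem~\ref{thm:is-equidistributed} applies and (c) holds.

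With (a)--(c) in hand, Theorem~\ref{thm:variance-estimate} yields $\phi(\Q)\cdot\bbE_A[\SnAQ]=b_{n,\rho}$, and formula~\eqref{eqn:formula-for-bn} identifies $b_{n,\rho}$ with $\sum_{f\in\MM_n,\,\gcd(f,\Q)=1}\VM(f)$, giving the first equality. For the variance, Theorem~\ref{thm:variance-estimate} with $w=1$ gives $q^{n(1+w)}=q^{2n}$ and
$$
\lim_{q\to\infty}\frac{\phi(\Q)}{q^{2n}}\cdot\Var_A[\SnAQ]\ =\ \min\{n,\rC(\rho)\}.
$$
A direct computation using \eqref{eq:def-rC} and Lemma~\ref{lem:numerical-invariants-for-J} evaluates
$$
\rC(\rho)\ =\ (\deg(\Q)+1)\cdot 2g\,+\,\deg(L(T,\rho))\,-\,\dropCee{\rho}\ =\ 2g\cdot\deg(\Q)-1,
$$
completing the proof.

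The substantive content is the verification of big monodromy, which has already been packaged into Theorem~\ref{thm:is-equidistributed}. Within that verification, the decisive structural input is the unipotent block calculation at $t=0$: the single $\Unip(2)$ summand (produced by the one-dimensional torus in the reduction of $J$) is what supplies the unique unipotent block of exact multiplicity one needed to exhibit the special element of the Tannakian arithmetic monodromy group. Everything else is bookkeeping with the numerical invariants from Lemma~\ref{lem:numerical-invariants-for-J}.
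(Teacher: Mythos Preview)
Your proof is correct and follows essentially the same route as the paper's: verify the hypotheses of Theorem~\ref{thm:variance-estimate} by establishing purity (Lemma~\ref{lem:numerical-invariants-for-J} or equivalently Lemma~\ref{lem:rhochi-properties}), emptiness of $\PhiQAwful\rho$, and big monodromy via Theorem~\ref{thm:is-equidistributed} using the unipotent block structure at $t=0$. Your write-up is in fact slightly more explicit than the paper's in spelling out the computations of $\dropCee{\rho}$ and $\rC(\rho)$; one small remark is that your sentence ``the $\Unip(2)$-isotypic component has multiplicity exactly one, so $V(0)$ has a unique unipotent block of exact multiplicity one'' silently uses that the other block $\Unip(1)$ has multiplicity $2(g-1)\neq 1$, which the paper notes parenthetically---but since $2(g-1)$ is even this is immediate.
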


\begin{proof}
This will follow from Theorem~\ref{thm:variance-estimate} once we show that all the hypotheses of that theorem are met.  Lemma~\ref{lem:rhochi-properties} implies that $\rho$ is punctually pure of weight $w=1$ and that $\PhiQAwful\rho$ is empty\footnote{There \emph{are} mixed characters, but as shown the proof of Proposition~\ref{prop:var-estimate}, they do not contribute to the main term of the variance estimate.}.  Moreover, Proposition~\ref{prop:tate-module-invariant-dimensions} implies that $V(0)$ has a unique unipotent block of dimension two and no other unipotent block of multiplicity one (since $2g-2\neq 1$), hence Theorem~\ref{thm:is-equidistributed} implies that the Mellin transform of $\rho$ has big monodromy since $\gcd(\Q,\s)=t$ and since
$$
	\deg(\Q)
	>
	\frac{1}{2g}(72((2g)^2+1)^2 - 2g - 0 + (1+2g))
	=
	\frac{1}{2g}(72(4g^2+1)^2+1).
$$
Therefore the hypotheses of Theorem~\ref{thm:variance-estimate} hold as claimed.
\end{proof}

Taking $g=1$ and $f=x(x-1)$ yields Theorem~\ref{thm:intro-theorem} from \S\ref{sec:introduction}.


\appendix


\section{Detecting a big subgroup of $\GL_R$}


\subsection{Weight multiplicity map}

Let $\iota\colon \bbQbar\to\bbC$ be a field embedding, $m$ be a positive integer, and $m=\{1,\ldots,m\}$.

\begin{defn}
A \defi{weight partition map} of an element $\alpha=(\alpha_1,\ldots,\alpha_m)$ in $(\bbQbar^\times)^m$ is a map $w_\alpha\colon[m]\to[m]$ satisfying the following for every $i,j\in[m]$:
$$
	w_\alpha(i) = w_\alpha(j)
	\mbox{ iff }
	|\iota(\alpha_i)| = |\iota(\alpha_j)|;
	\ \ 
	|w_\alpha^{-1}(i)|\geq |w_\alpha^{-1}(j)|
	\mbox{ if }
	i\leq j.
$$
\end{defn}

\smallskip\noindent
In general, $\alpha$ may have multiple weight partition maps, but all will have the same range and yield the same map $[m]\to\bbZ$ given by $i\mapsto |w_\alpha^{-1}(i)|$.  In particular, if $w_\alpha$ is a weight partition map of $\alpha$ and if $\sigma\in\Sym(m)$, then the composed map $w_\alpha\sigma$ is also a weight partition map of $\alpha$.

\begin{defn}\label{def:weight-multiplicity-map}
The \defi{$m$th weight multiplicity map} is the map
$$
	\wm{m}\colon (\bbQbar^\times)^m\to\bbZ^m
$$
which sends an element $\alpha$ to the tuple $\l=(\l_1,\ldots,\l_m)$ satisfying $\l_i=|w_\alpha^{-1}(i)|$ for some weight partition map $w_\alpha$ and every $i\in[m]$.
\end{defn}

\begin{lemma}
Let $\alpha,\beta\in(\bbQbar^\times)^m$, and let $s\in\bbQbar^\times$ and $\sigma\in\Sym(m)$.  Suppose $\beta_i=s\alpha_{\sigma(i)}$ for every $i\in[m]$.  Then $\wm{m}(\alpha)=\wm{m}(\beta)$.
\end{lemma}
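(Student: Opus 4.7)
The plan is to show that a weight partition map of $\alpha$ can be converted, via composition with $\sigma$ and a relabeling of its image, into a weight partition map of $\beta$ that produces the same sequence of block sizes.

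First I would observe that for every $i,j\in[m]$ one has $|\iota(\beta_i)|=|\iota(s)|\cdot|\iota(\alpha_{\sigma(i)})|$, so $|\iota(\beta_i)|=|\iota(\beta_j)|$ if and only if $|\iota(\alpha_{\sigma(i)})|=|\iota(\alpha_{\sigma(j)})|$. Multiplication by the nonzero scalar $s$ cannot collapse or separate archimedean absolute values, so scaling is transparent to the equivalence relation that determines the partition. Permutation by $\sigma$ likewise preserves the partition, just relabeling the indices.

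Next I would fix a weight partition map $w_\alpha\colon[m]\to[m]$ of $\alpha$ and consider the composition $w_\alpha\sigma\colon[m]\to[m]$. By the equivalence in the previous paragraph, $w_\alpha\sigma$ satisfies the first defining condition of a weight partition map for $\beta$: its fibers coincide with the partition of $[m]$ into classes with equal $|\iota(\beta_i)|$. Moreover, the fibers of $w_\alpha\sigma$ are simply $\sigma^{-1}$-images of the fibers of $w_\alpha$, so $|(w_\alpha\sigma)^{-1}(i)|=|w_\alpha^{-1}(i)|$ for every $i$. In particular, the multiset of fiber sizes of $w_\alpha\sigma$ equals that of $w_\alpha$.

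Finally, to promote $w_\alpha\sigma$ to a bona fide weight partition map of $\beta$, I would post-compose by a permutation $\tau\in\Sym(m)$ rearranging the labels of the image so that the fiber sizes become weakly decreasing; call the result $w_\beta=\tau w_\alpha\sigma$. Then $w_\beta$ satisfies both conditions of Definition~\ref{def:weight-multiplicity-map}, so it is a weight partition map of $\beta$. By construction, the multiset $\{|w_\beta^{-1}(i)|\}_i$ equals $\{|w_\alpha^{-1}(i)|\}_i$, and both $\wm{m}(\alpha)$ and $\wm{m}(\beta)$ record precisely this multiset arranged in weakly decreasing order. Hence $\wm{m}(\alpha)=\wm{m}(\beta)$. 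There is no real obstacle here; the only point that needs care is distinguishing between the partition (unordered data) and the weight partition map (ordered data), and that is handled by the auxiliary permutation $\tau$.
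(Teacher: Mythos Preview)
Your proof is correct and follows essentially the same route as the paper's. Note, however, that the auxiliary permutation $\tau$ is unnecessary: since you already showed $|(w_\alpha\sigma)^{-1}(i)| = |w_\alpha^{-1}(i)|$ for every $i$, and $w_\alpha$ satisfies the monotonicity condition by assumption, $w_\alpha\sigma$ is itself already a weight partition map of $\beta$.
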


\begin{proof}
Let $w_\alpha,w_\beta$ be respective weight partition maps of $\alpha,\beta$.  Then for every $i,j\in[m]$, one has
$$
	w_\beta(i) = w_\beta(j)
	\iff
	|\iota(\beta_i)| = |\iota(\beta_j)|
	\iff
	|\iota(\alpha_{\sigma(i)})| = |\iota(\alpha_{\sigma(j)})|
	\iff
	w_\alpha\sigma(i) = w_\alpha\sigma(j).
$$
In particular, the weight partition maps $\sigma w_\alpha,w_\beta$ of $\alpha,\beta$ respectively coincide, so $\wm{m}(\alpha)=\wm{m}(\beta)$ as claimed.
\end{proof}

\begin{defn}
For any $\l=\wm{m}(\alpha)$, let $\len(\l)=\max\{1\leq i\leq m:\l_i\neq 0\}$.
\end{defn}

\smallskip\noindent
Observe that $[\len(\l)]$ is the range of any weight partition map $w_\alpha$ of $\alpha$ and $(\l_1,\ldots,\l_{\len(\l)})$ is a partition of $m$.


\subsection{Tensor indecomposability}

Let $m,n\geq 2$ be integers, let $\alpha\in (\bbQbar^\times)^m$, $\beta\in (\bbQbar^\times)^n$, and $\gamma\in(\bbQbar^\times)^{mn}$ be elements, and let $a=\wm{m}(\alpha)$, $b=\wm{n}(\beta)$, $c=\wm{mn}(\gamma)$.

Suppose $\tau\colon[m]\times[n]\to [mn]$ is a bijection satisfying
$$
	\gamma_{\tau(i,j)} = \alpha_i\beta_j
	\mbox{ for }
	(i,j)\in[m]\times[n],
$$
and let $w_\alpha,w_\beta,w_\gamma$ be weight partition maps of $\alpha,\beta,\gamma$ respectively.

\begin{lemma}\label{lem:def-kappa}
There exists a unique map $[\len(a)]\times[\len(b)]\to[\len(c)]$ which makes the following diagram commute:
$$
	\xymatrix{
		[m]\times [n]\ar[r]^\tau\ar[d]_{w_\alpha\times w_\beta}
			& [mn]\ar[d]^{w_\gamma} \\
		[\len(a)]\times[\len(b)]\ar[r]
			& [\len(c)].
	}
$$
\end{lemma}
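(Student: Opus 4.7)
The plan is to exploit the multiplicativity of the absolute value under $\iota$ to show that the composite $w_\gamma\circ\tau$ is constant on fibers of $w_\alpha\times w_\beta$, and then to invoke surjectivity of $w_\alpha\times w_\beta$ onto $[\len(a)]\times[\len(b)]$ for both existence and uniqueness.

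First, I would record that $w_\alpha\colon[m]\to[\len(a)]$ and $w_\beta\colon[n]\to[\len(b)]$ are surjective by the definition of $\len(a)$ and $\len(b)$ as the ranges of $w_\alpha$ and $w_\beta$. Consequently the product map $w_\alpha\times w_\beta\colon[m]\times[n]\to[\len(a)]\times[\len(b)]$ is surjective, which immediately yields uniqueness: any two maps $[\len(a)]\times[\len(b)]\to[\len(c)]$ that agree after precomposition with a surjection must be equal.

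For existence, the key computation is that if $(i,j),(i',j')\in[m]\times[n]$ satisfy $w_\alpha(i)=w_\alpha(i')$ and $w_\beta(j)=w_\beta(j')$, then by the defining property of weight partition maps we have $|\iota(\alpha_i)|=|\iota(\alpha_{i'})|$ and $|\iota(\beta_j)|=|\iota(\beta_{j'})|$, hence
\[
	|\iota(\gamma_{\tau(i,j)})|
	= |\iota(\alpha_i)|\cdot|\iota(\beta_j)|
	= |\iota(\alpha_{i'})|\cdot|\iota(\beta_{j'})|
	= |\iota(\gamma_{\tau(i',j')})|,
\]
which forces $w_\gamma(\tau(i,j))=w_\gamma(\tau(i',j'))$. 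Thus $w_\gamma\circ\tau$ factors set-theoretically through $w_\alpha\times w_\beta$; the resulting factorization is the desired map $[\len(a)]\times[\len(b)]\to[\len(c)]$.

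The only point requiring care is to be clear that the factorization actually lands in $[\len(c)]$ rather than a larger set, but this is automatic since $w_\gamma$ has image $[\len(c)]$ by definition. I do not anticipate any real obstacle here; the lemma is essentially a bookkeeping statement that the tensor-product structure on eigenvalues descends to weights, and the two ingredients (multiplicativity of $|\iota(\cdot)|$ and surjectivity of weight partition maps onto their length-indexed ranges) deliver both existence and uniqueness with no further input.
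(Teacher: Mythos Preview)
Your proposal is correct and follows essentially the same approach as the paper: existence via the observation that $w_\gamma\circ\tau$ is constant on fibers of $w_\alpha\times w_\beta$ (using multiplicativity of $|\iota(\cdot)|$), and uniqueness via surjectivity of $w_\alpha\times w_\beta$ onto $[\len(a)]\times[\len(b)]$.
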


\begin{proof}
To see that such a map exists observe that $w_\gamma\tau$ factors through $w_\alpha\times w_\beta$ since
\begin{eqnarray*}
	(w_\alpha\times w_\beta)(i_1,j_1)
	=
	(w_\alpha\times w_\beta)(i_2,j_2)
	& \iff &
		|\alpha_{i_1}|=|\alpha_{i_2}|\mbox{ and }|\beta_{j_1}|=|\beta_{j_2}| \\
	& \Longrightarrow &
		|\alpha_{i_1}\beta_{j_1}| = |\alpha_{i_2}\beta_{j_2}| \\
	& \iff &
		|\gamma_{\tau(i_1,j_1)}| = |\gamma_{\tau(i_2,j_2)}| \\
	& \iff &
		w_\gamma\tau(i_1,j_1) = w_\gamma\tau(i_2,j_2)
\end{eqnarray*}
for every $i_1,i_2\in[m]$ and $j_1,j_2\in[n]$.  To see that the map is unique, observe that the left vertical map of the diagram is surjective and that the map must satisfy $l\mapsto w_\gamma\tau(i,j)$ for any $(i,j)$ in $(w_\alpha\times w_\beta)^{-1}(l)$.
\end{proof}

Let $\kappa\colon [\len(a)]\times[\len(b)]\to[\len(c)]$ be the map of Lemma~\ref{lem:def-kappa}.

\begin{lemma}\label{lem:kappa-is-injective}
For each $l\in[\len(a)]$, the restriction of $\kappa$ to $\{l\}\times[\len(b)]$ is injective.
\end{lemma}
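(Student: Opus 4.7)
The plan is to unpack the definition of $\kappa$ via the commutative diagram and use the multiplicativity of the archimedean absolute value to conclude injectivity. Fix $l \in [\len(a)]$ and suppose $\kappa(l, j_1) = \kappa(l, j_2)$ for some $j_1, j_2 \in [\len(b)]$; we aim to show $j_1 = j_2$.

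First, since $l$ lies in $[\len(a)]$, which is the range of $w_\alpha$, I can pick some $i \in [m]$ with $w_\alpha(i) = l$. Similarly, since $j_1, j_2 \in [\len(b)]$ lie in the range of $w_\beta$, I can pick $k_1, k_2 \in [n]$ with $w_\beta(k_s) = j_s$ for $s=1,2$. Then the commutativity of the diagram in Lemma~\ref{lem:def-kappa} gives
$$
    \kappa(l, j_s) \;=\; \kappa(w_\alpha(i), w_\beta(k_s)) \;=\; w_\gamma(\tau(i, k_s))
    \quad\text{for } s = 1, 2.
$$

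The hypothesis $\kappa(l, j_1) = \kappa(l, j_2)$ then reads $w_\gamma(\tau(i,k_1)) = w_\gamma(\tau(i,k_2))$, which by the defining property of a weight partition map means
$$
    |\iota(\gamma_{\tau(i,k_1)})| \;=\; |\iota(\gamma_{\tau(i,k_2)})|.
$$
By the relation $\gamma_{\tau(i,j)} = \alpha_i \beta_j$ and the multiplicativity of $|\iota(\cdot)|$, this becomes $|\iota(\alpha_i)|\cdot|\iota(\beta_{k_1})| = |\iota(\alpha_i)|\cdot|\iota(\beta_{k_2})|$. Since $\alpha_i \in \bbQbar^\times$, the factor $|\iota(\alpha_i)|$ is nonzero and can be cancelled, leaving $|\iota(\beta_{k_1})| = |\iota(\beta_{k_2})|$. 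By the defining property of $w_\beta$ this forces $w_\beta(k_1) = w_\beta(k_2)$, i.e., $j_1 = j_2$, as desired.

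There is no real obstacle here: the whole content is that the weight of a product factors as a product of weights, so if the $\alpha$-coordinate is fixed then the weight of the product determines the weight of the $\beta$-coordinate. The key technical points to make explicit are (i) that $l$ and $j_1, j_2$ actually lie in the ranges of $w_\alpha$ and $w_\beta$ respectively, justifying the choice of preimages $i, k_1, k_2$, and (ii) that $|\iota(\alpha_i)| \neq 0$, which uses $\alpha \in (\bbQbar^\times)^m$ rather than merely $\bbQbar^m$.
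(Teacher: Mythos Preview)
Your proof is correct and follows essentially the same approach as the paper's: translate $\kappa$-equality to $w_\gamma\tau$-equality via the commutative diagram, pass to equality of absolute values, use $\gamma_{\tau(i,j)}=\alpha_i\beta_j$ to cancel the common nonzero factor $|\iota(\alpha_i)|$, and then translate back to $w_\beta$-equality. You are slightly more explicit than the paper about choosing preimages in the ranges of $w_\alpha,w_\beta$ and about why $|\iota(\alpha_i)|\neq 0$, but the argument is the same.
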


\begin{proof}
Recall that $[\len(a)]$ and $[\len(b)]$ are the respective ranges of $w_\alpha$ and $w_\beta$, so suppose $i\in[m]$ and $j_1,j_2\in[n]$.  Moreover, one has
\begin{eqnarray*}
	\kappa(w_\alpha(i),w_\beta(j_1))=\kappa(w_\alpha(i),w_\beta(j_2))
	& \iff &
		w_\gamma\tau(i,j_1) = w_\gamma\tau(i,j_2) \\
	& \iff &
		|\gamma_{\tau(i,j_1)}| = |\gamma_{\tau(i,j_2)}| \\
	& \iff &
		|\alpha_i\beta_{j_1}| = |\alpha_i\beta_{j_2}| \\
	& \iff &
		w_\beta(j_1)=w_\beta(j_2),
\end{eqnarray*}
and thus the restriction of $\kappa$ to $\{w_\alpha(i)\}\times[\len(b)]$ is injective as claimed.
\end{proof}

Let $r$ be a positive integer.

\begin{lemma}\label{lem:parts-bounded-by-r}\ 

\smallskip
\begin{enum}
\item\label{item:c-len-to-ab-lens} If $c_{\len(c)}\leq r$, then $a_{\len(a)}\leq r$ and $b_{\len(b)}\leq r$.

\item\label{item:a_1-to-c_len(b)}
If $a_1>r$ (resp.~$b_1>r$), then $c_{\len(b)}>r$ (resp.~$c_{\len(a)}>r$).

\end{enum}
\end{lemma}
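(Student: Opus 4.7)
The plan is to exploit the fact (implicit in Lemma~\ref{lem:def-kappa}) that the bijection $\tau$ transports the $w_\gamma$-fiber over $k\in[\len(c)]$ into a disjoint union of products of $w_\alpha$- and $w_\beta$-fibers indexed by $\kappa^{-1}(k)$, yielding the basic identity
$$
    c_k
    \ =\
    \sum_{(l,l')\in\kappa^{-1}(k)} a_l\,b_{l'}
    \quad\text{for each }k\in[\len(c)].
$$
I would first record the auxiliary observation that $\kappa$ is surjective: the composition $w_\gamma\circ\tau$ is a surjection $[m]\times[n]\to[\len(c)]$ (since $\tau$ is bijective and $w_\gamma$ is surjective) and factors through the surjection $w_\alpha\times w_\beta$, forcing $\kappa$ to hit every element of $[\len(c)]$. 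This ensures $\kappa^{-1}(\len(c))$ is nonempty, which is all that part~\eqref{item:c-len-to-ab-lens} needs.

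For part~\eqref{item:c-len-to-ab-lens}, I would pick any $(l_0,l_0')\in\kappa^{-1}(\len(c))$. The displayed identity gives $a_{l_0}\,b_{l_0'}\leq c_{\len(c)}\leq r$, hence $a_{l_0}\leq r$ and $b_{l_0'}\leq r$. Because the components of $\wm{m}(\alpha)$ and $\wm{n}(\beta)$ are arranged in weakly decreasing order by definition of the weight multiplicity map, and because $l_0\leq\len(a)$ and $l_0'\leq\len(b)$, we conclude $a_{\len(a)}\leq a_{l_0}\leq r$ and $b_{\len(b)}\leq b_{l_0'}\leq r$.

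For part~\eqref{item:a_1-to-c_len(b)}, I would use Lemma~\ref{lem:kappa-is-injective} to conclude that the map $l'\mapsto\kappa(1,l')$ is an injection $[\len(b)]\hookrightarrow[\len(c)]$. If $a_1>r$, then for every $l'\in[\len(b)]$ we get
$$
    c_{\kappa(1,l')}
    \ \geq\
    a_1\cdot b_{l'}
    \ \geq\
    a_1
    \ >\
    r,
$$
so there are at least $\len(b)$ distinct indices $k\in[\len(c)]$ with $c_k>r$. Since $c$ is weakly decreasing, the set $\{k:c_k>r\}$ is an initial segment of $[\len(c)]$, so it must contain $\{1,2,\ldots,\len(b)\}$, and in particular $c_{\len(b)}>r$. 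The symmetric case $b_1>r\Rightarrow c_{\len(a)}>r$ is identical, using that the proof of Lemma~\ref{lem:kappa-is-injective} is symmetric in $\alpha$ and $\beta$ and hence also shows that $l\mapsto\kappa(l,1)$ is injective on $[\len(a)]$.

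There is no real obstacle here: both parts are immediate consequences of the counting identity together with the surjectivity of $\kappa$ (for part~\eqref{item:c-len-to-ab-lens}) and the injectivity in one variable established in Lemma~\ref{lem:kappa-is-injective} (for part~\eqref{item:a_1-to-c_len(b)}). The only point demanding brief care is converting the existence of many fibers of $c$ exceeding $r$ into the specific inequality $c_{\len(b)}>r$, for which one must invoke the decreasing ordering built into the weight multiplicity map.
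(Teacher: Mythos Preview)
Your proof is correct and follows essentially the same approach as the paper's: both parts rest on the counting identity $c_k=\sum_{\kappa(i,j)=k}a_ib_j$, with part~\eqref{item:c-len-to-ab-lens} using a term in the fiber over $\len(c)$ and part~\eqref{item:a_1-to-c_len(b)} using the injectivity of $l'\mapsto\kappa(1,l')$ from Lemma~\ref{lem:kappa-is-injective}. Your write-up is slightly more explicit than the paper's in two respects---you record the surjectivity of $\kappa$ and you spell out the initial-segment argument converting ``at least $\len(b)$ indices $k$ have $c_k>r$'' into $c_{\len(b)}>r$---but these are expository refinements of the same argument.
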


\begin{proof}
For part \eqref{item:c-len-to-ab-lens}, we prove the contrapositive.  More precisely, if $k\in[\len(c)]$, then one has
$$
	c_k
	=
	\sum_{\kappa(i,j)=k}
	a_i b_j
	\geq
	a_{\len(a)}b_{\len(b)}
	\geq
	\max\{a_{\len(a)},b_{\len(b)}\},
$$
and thus $c_{\len(c)}>r$ if $a_{\len(a)}>r$ or $b_{\len(b)}>r$.  Thus \eqref{item:c-len-to-ab-lens} holds.

For part \eqref{item:a_1-to-c_len(b)}, we suppose, without loss of generality, that $a_1>r$ and show that $c_{\len(b)}>r$.  
We first observe that Lemma~\ref{lem:kappa-is-injective} implies the integers $\kappa(1,1),\ldots,\kappa(1,\len(b))$ are distinct.  Moreover, for each $l\in[\len(b)]$, one has
$$
	c_{\kappa(1,l)}
	\geq
	a_1b_l
	>
	r\cdot 1
	=
	r.
$$
Therefore at least $\len(b)$ integers in the monotone decreasing sequence $c_1,\ldots,c_{\len(b)}$ exceed $r$, and thus \eqref{item:a_1-to-c_len(b)} holds.
\end{proof}

The following proposition is the main result of this subsection.  We will use its contrapositive to deduce that a certain representation is tensor indecomposable whenever $mn\gg r$.

\begin{prop}\label{prop:tensor-indecomposable}
Suppose $c_{\len(c)}=1$ and $c_2\leq r$.  If $\len(c)\leq r+1$, then $m,n\leq r^2+1$ and thus $mn\leq (r^2+1)^2$.
\end{prop}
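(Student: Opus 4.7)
The plan is to reduce the bound on $m$ to a short combinatorial computation using the decomposition
\[
	c_k \;=\; \sum_{(i,j)\in\kappa^{-1}(k)} a_i\,b_j
\]
(which falls out of the commutative square defining $\kappa$), together with the injectivity of $\kappa$ on rows and columns guaranteed by Lemma~\ref{lem:kappa-is-injective}. The whole argument is symmetric in $(m,a)\leftrightarrow(n,b)$, so I will just bound $m$.

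First I will extract the useful consequence of the hypothesis $c_{\len(c)}=1$. Since every $a_i,b_j$ is a positive integer, the single-summation identity above forces $|\kappa^{-1}(\len(c))|=1$ and, for the unique pair $(i_0,j_0)$ in that fiber, $a_{i_0}=b_{j_0}=1$. As the parts of $a$ and $b$ are weakly decreasing, this gives $a_{\len(a)}=b_{\len(b)}=1$. In particular, if $\len(a)=1$ then $m=a_1=a_{\len(a)}=1$, contradicting $m\geq 2$; hence $\len(a),\len(b)\geq 2$.

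Next I will bound $a_1$. By Lemma~\ref{lem:kappa-is-injective} the values $\kappa(1,1),\kappa(1,2),\dots,\kappa(1,\len(b))$ are $\len(b)\geq 2$ distinct elements of $[\len(c)]$, so at least one of them, say $\kappa(1,j^*)$, is $\geq 2$. For this index
\[
	a_1 \;\leq\; a_1 b_{j^*} \;\leq\; c_{\kappa(1,j^*)} \;\leq\; c_2 \;\leq\; r,
\]
and similarly $b_1\leq r$. Combined with the column injectivity of $\kappa$, which gives $\len(a)\leq\len(c)\leq r+1$, this provides all the ingredients.

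Finally I will assemble the estimate
\[
	m \;=\; a_{\len(a)} + \sum_{i=1}^{\len(a)-1} a_i \;\leq\; 1 + (\len(a)-1)\,a_1 \;\leq\; 1 + r\cdot r \;=\; r^2+1,
\]
using $a_{\len(a)}=1$, $a_i\leq a_1\leq r$, and $\len(a)-1\leq r$. The symmetric computation gives $n\leq r^2+1$, whence $mn\leq(r^2+1)^2$. The only subtle point is recognizing that the hypothesis $c_{\len(c)}=1$ (rather than $c_{\len(c)}\leq r$ as in Lemma~\ref{lem:parts-bounded-by-r}) is precisely what sharpens the naive bound $m\leq r(r+1)=r^2+r$ to the required $r^2+1$; without this refinement the conclusion would fail by exactly $r-1$.
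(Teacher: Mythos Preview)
Your proof is correct and follows essentially the same route as the paper's. The paper packages your first two deductions as Lemma~\ref{lem:parts-bounded-by-r}: part~(\ref{item:c-len-to-ab-lens}) gives $a_{\len(a)}=b_{\len(b)}=1$ from $c_{\len(c)}=1$, and the contrapositive of part~(\ref{item:a_1-to-c_len(b)}) gives $a_1,b_1\leq r$ from $c_2\leq r$ and $\len(a),\len(b)\geq 2$; you simply unroll those arguments inline using the same summation identity and the same row/column injectivity from Lemma~\ref{lem:kappa-is-injective}, and the final assembly $m\leq 1+r\cdot r$ is identical.
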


\begin{proof}
Lemma~\ref{lem:parts-bounded-by-r}.\ref{item:c-len-to-ab-lens} implies that $a_{\len(a)}=b_{\len(b)}=1$ since $c_{\len(c)}=1$.  Therefore $\len(a)\geq 2$ and $\len(b)\geq 2$ since $m\geq 2$ and $n\geq 2$ respectively, and moreover, $c_2\geq c_{\len(a)}$ or $c_2\geq c_{\len(b)}$.  Hence the contrapositive of Lemma~\ref{lem:parts-bounded-by-r}.\ref{item:a_1-to-c_len(b)} implies $a_1\leq r$ and $b_1\leq r$ since $c_2\leq r$.  In particular, if $\len(c)\leq r+1$, then Lemma~\ref{lem:kappa-is-injective} implies $\len(a),\len(b)\leq r+1$, and thus
$$
	m = \sum_{i=1}^{\len(a)} a_i\leq ra_1 + a_{\len(a)}\leq r^2+1,\quad
	n = \sum_{j=1}^{\len(b)} b_j\leq rb_1 + b_{\len(b)}\leq r^2+1
$$
as claimed.
\end{proof}


\subsection{Pairing avoidance}

Let $n$ be a positive integer and $I$ be the $n\times n$ identity matrix.  We define the orthogonal and symplectic groups of matrices by
$$
	\O_n(\bbQbar)
	=
	\left\{\,M\in\GL_n(\bbQbar) : MM^t = I\,\right\}
$$
and
$$
	\Sp_{2n}(\bbQbar)
	=
	\left\{\,M\in\GL_{2n}(\bbQbar) : MPM^t = P
		\mbox{ for }P=\left(\begin{array}{rr}0&I\\-I&0\end{array}\right)
	\,\right\}
$$
respectively.

\begin{lemma}\label{lem:eigenvalue-involution}
Suppose $m=n$ (resp.~$m=2n$) and $g\in\O_n(\bbQbar)$ (resp.~$g\in\Sp_{2n}(\bbQbar)$).  Let $\alpha\in(\bbQbar^\times)^m$ be a tuple of the eigenvalues of $g$ and $a=\wm{m}(\alpha)$.  Then some involution $\pi\in\Sym(\len(a))$ satisfies the following:

\begin{enum}
\item $a_i=a_{\pi(i)}$ for every $i\in[\len(a)]$;
\item $\pi$ has at most one fixed point.
\end{enum}
\end{lemma}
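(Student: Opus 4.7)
The plan is to exploit the fact that both $\O_n(\bbQbar)$ and $\Sp_{2n}(\bbQbar)$ preserve a non-degenerate bilinear form, which forces the characteristic polynomial of $g$ to be self-reciprocal. More precisely, if $\alpha$ is a root of $\det(T\cdot I - g)$ with multiplicity $k$, then so is $\alpha^{-1}$; equivalently, the multiset $\{\alpha_1,\ldots,\alpha_m\}$ is preserved (as a multiset) by the map $\alpha\mapsto\alpha^{-1}$. Combined with the fact that $\iota$ is a ring homomorphism, so $|\iota(\alpha^{-1})|=|\iota(\alpha)|^{-1}$, this yields the key numerical fact: for every positive real $t$,
$$
\#\{\,i\in[m]:|\iota(\alpha_i)|=t\,\}
=
\#\{\,i\in[m]:|\iota(\alpha_i)|=1/t\,\}.
$$

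From here I would produce $\pi$ by choosing $w_\alpha$ compatibly with the involution $t\mapsto 1/t$ on the set of absolute values of the $\iota(\alpha_i)$. First, partition the (finite) set of distinct values $\{|\iota(\alpha_1)|,\ldots,|\iota(\alpha_m)|\}$ into pairs $\{t,1/t\}$ with $t\neq 1$ together with (at most) the singleton $\{1\}$. By the displayed identity, the two weight classes in a pair $\{t,1/t\}$ have equal cardinality, so any reordering of classes of equal cardinality still gives a weight partition map. I would therefore select $w_\alpha$ so that the two classes forming a pair receive consecutive labels $\{k,k+1\}$ in $[\len(a)]$, and the class with $|\iota(\alpha_i)|=1$ (if it occurs) receives some label $k_0$. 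Define $\pi\in\Sym(\len(a))$ to swap each pair and fix $k_0$ (if present); this is visibly an involution.

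Property (i) then follows immediately, since $\pi$ interchanges only classes of equal cardinality. For property (ii), a fixed point of $\pi$ corresponds to a weight class whose common absolute value $t$ satisfies $t=1/t$, forcing $t=1$; since the weight classes have pairwise distinct absolute values, there is at most one such class, hence at most one fixed point. The only place requiring care is checking that the chosen $w_\alpha$ remains a valid weight partition map after the swaps, but this reduces to the observation that we are only permuting indices within groups of equal multiplicity, which is permitted by the definition.
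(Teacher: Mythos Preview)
Your proof is correct and follows essentially the same approach as the paper: both use that the eigenvalues of an orthogonal or symplectic matrix are permuted by $\alpha\mapsto\alpha^{-1}$, hence the weight classes (fibers of $i\mapsto|\iota(\alpha_i)|$) come in pairs of equal size together with at most one self-paired class at $|\iota(\alpha)|=1$. The only cosmetic difference is that the paper defines $\pi$ directly as the map induced on $[\len(a)]$ by the eigenvalue involution $\sigma$ through an \emph{arbitrary} weight partition map $w_\alpha$ (checking well-definedness), whereas you first arrange $w_\alpha$ so that paired classes sit at consecutive labels and then take $\pi$ to be the product of those adjacent transpositions; the underlying argument is the same.
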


\begin{proof}
The involution $s\mapsto 1/s$ of $\bbQbar^\times$ induces a permutation of the eigenvalues of elements of $\O_n(\bbQbar)$ and $\Sp_{2n}(\bbQbar)$.  The latter is an involution $\sigma\in\Sym(m)$ with the property that, for any weight partition map $w_\alpha$ of $\alpha$ and every $i\in[m]$, one has
$$
	w_\alpha(i) = w_\alpha\sigma(i)
	\iff
	|\alpha_i| = |\alpha_{\sigma(i)}|
	\iff
	|\alpha_i| = |1/\alpha_i|
	\iff
	|\alpha_i| = 1.
$$
The involution in question is given by $w_\alpha(i)\mapsto w_\alpha\sigma(i)$ for every $i\in[m]$; recall $w_\alpha$ maps onto $[\len(a)]$.
\end{proof}

The following is the main result of this subsection.  We will use its contrapositive to show that some subgroup of $\GL_m(\bbQbar)$ fails to preserve non-degenerate pairings which are either symmetric or alternating.

\begin{prop}\label{prop:pairing-avoidance}
Suppose $m=n$ (resp.~$m=2n$) and $g\in\GL_n(\bbQbar)$.  Let $\alpha\in(\bbQbar^\times)^m$ be a tuple of the eigenvalues of $g$ and $a=\wm{m}(\alpha)$.  If there exist $i,j$ such that $a_i,a_j$ are distinct from each other and from all $a_k$ for $k\neq i,j$, then $g\not\in\O_n(\bbQbar)$ (resp.~$g\not\in\Sp_{2n}(\bbQbar)$).
\end{prop}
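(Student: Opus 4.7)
The plan is to deduce this proposition as the contrapositive of Lemma~\ref{lem:eigenvalue-involution}. Suppose, for contradiction, that $g$ lies in $\O_n(\bbQbar)$ (resp.~$\Sp_{2n}(\bbQbar)$) and that there exist indices $i,j\in[\len(a)]$ such that $a_i$ and $a_j$ are distinct from each other and distinct from $a_k$ for every $k\neq i,j$. By Lemma~\ref{lem:eigenvalue-involution}, there exists an involution $\pi\in\Sym(\len(a))$ with $a_k=a_{\pi(k)}$ for all $k\in[\len(a)]$ and with at most one fixed point.

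First I would examine $\pi(i)$. Since $a_{\pi(i)}=a_i$ and the only indices $k$ with $a_k=a_i$ are $k=i$ (by hypothesis $a_j\neq a_i$, and $a_k\neq a_i$ for $k\neq i,j$), the equation forces $\pi(i)=i$. The same argument applied to $j$ forces $\pi(j)=j$. Thus $\pi$ has at least two fixed points, contradicting Lemma~\ref{lem:eigenvalue-involution}. Hence $g$ cannot lie in $\O_n(\bbQbar)$ (resp.~$\Sp_{2n}(\bbQbar)$).

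There is no real obstacle here beyond correctly invoking Lemma~\ref{lem:eigenvalue-involution}: the hypothesis that the two multiplicity values $a_i, a_j$ each occur exactly once in the tuple $a$ rules out both the option of pairing $i$ with $j$ under $\pi$ (which would require $a_i=a_j$) and the option of pairing either with any other index $k$ (which would require $a_k\in\{a_i,a_j\}$), leaving only fixed points, of which there is room for at most one. A brief remark could be added that the hypothesis really uses the multiplicity values $a_i, a_j$, not the eigenvalues themselves; the values $a_i$ and $a_j$ are the cardinalities of the archimedean level sets containing the eigenvalues in question.
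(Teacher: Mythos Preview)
Your proof is correct and follows essentially the same approach as the paper: both invoke Lemma~\ref{lem:eigenvalue-involution} and argue via the contrapositive (or equivalently by contradiction) that the uniqueness of the values $a_i$ and $a_j$ forces $\pi(i)=i$ and $\pi(j)=j$, contradicting the fact that $\pi$ has at most one fixed point. The paper phrases this as ``there is at most one $i$ such that $a_i\neq a_k$ for all $k\neq i$,'' which is exactly your argument in contrapositive form.
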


\begin{proof}
We prove the contrapositive.  More precisely, if $g\in\O_n(\bbQbar)$ (resp.~$g\in\Sp_{2n}(\bbQbar)$) and if $\pi\in\Sym(\len(a))$ is an involution satisfying the properties of Lemma~\ref{lem:eigenvalue-involution}, then $\pi(i)=i$ for at most one $i$.  Therefore, for all but at most one $i$ and for $j=\pi(i)$, one has $i\neq j$ and $a_i=a_j$.  In particular, there is at most one $i$ such that $a_i\neq a_j$ for $j\neq i$.
\end{proof}


\subsection{Main theorem}

In this section we state and prove the main result of this appendix.

\begin{theorem}\label{thm:big-monodromy}
Let $r,R$ be positive integers and $G$ be a connected reductive subgroup of $\GL_R(\Qellbar)$.  Let $g\in G$ be an element and $\gamma\in(\Qellbar^\times)^R$ be an eigenvector tuple of $g$.  Suppose that $G$ is irreducible, that $\gamma$ lies in $(\bbQbar^\times)^R$, and that $c=\wm{R}(\gamma)$ satisfies $\len(c)\leq r+1$ and $1=c_{\len(c)}<c_{\len(c)-1}$ and $c_2\leq r$.  If $R>72(r^2+1)^2$, then either $G=\SL_R(\Qellbar)$ or $G=\GL_R(\Qellbar)$.
\end{theorem}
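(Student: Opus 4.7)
My plan is to classify $H := [G, G]$ by Dynkin type and eliminate every case except that in which $H$ acts on $V := \Qellbar^R$ via the standard representation of $\SL_R$. The main inputs are the tensor-indecomposability bound (Proposition~\ref{prop:tensor-indecomposable}) and the pairing-avoidance criterion (Proposition~\ref{prop:pairing-avoidance}), together with the classification of simple algebraic groups. Since $G$ is connected reductive and irreducible on $V$, its connected centre $Z(G)^\circ$ acts by scalars and $V$ remains irreducible for $H$, so it suffices to show $H = \SL_R$: then $G$ is sandwiched between $\SL_R$ and $\GL_R$.

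First, I would extract two isolated multiplicities from $c = \wm{R}(\gamma)$. From $\len(c) \leq r+1$ and $c_i \leq c_2 \leq r$ for $i \geq 2$, one has $c_2 + \cdots + c_{\len(c)} \leq r^2$, whence
\[
    c_1 \ \geq\ R - r^2 \ >\ 72(r^2+1)^2 - r^2 \ >\ r \ \geq\ c_2.
\]
Combined with the hypothesis $c_{\len(c)} = 1 < c_{\len(c)-1}$, this shows that $c_1$ and $c_{\len(c)} = 1$ are each uniquely attained values distinct from one another. Proposition~\ref{prop:pairing-avoidance}, applied with $i = 1$ and $j = \len(c)$, then forces $g \notin \O_R(\Qellbar)$ and $g \notin \Sp_R(\Qellbar)$, so $V$ carries no $H$-invariant non-degenerate symmetric or alternating bilinear form; in particular $V$ is not self-dual as an $H$-module.

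Next, I would invoke Proposition~\ref{prop:tensor-indecomposable} to rule out tensor factorizations. If $H$ were not almost simple, the irreducible $H$-module $V$ would decompose as an external tensor product $V_1 \otimes V_2$ with $\dim V_i = m, n \geq 2$ and $R = mn$; a matching factorization of the eigenvalue tuple of $g$ would place us in the setting of Proposition~\ref{prop:tensor-indecomposable}, yielding $R \leq (r^2+1)^2$ and contradicting $R > 72(r^2+1)^2$. So $H$ is almost simple. The non-self-duality of $V$ now excludes every Dynkin type for which $-w_0 = 1$ on the weight lattice, namely $A_1$, $B_n$, $C_n$, $D_n$ with $n$ even, $E_7$, $E_8$, $F_4$, and $G_2$. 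For the surviving non-$A$ cases ($D_n$ odd, $E_6$), I would exploit the near-pseudoreflection nature of $g$ (its top eigenspace has codimension $\leq r^2$ in $V$): by the standard classification of irreducible subgroups of $\GL_R$ containing elements of such small drop, the only irreducible representations of these groups admitting such a $g$ have dimension bounded by a polynomial in $r$ that is dwarfed by $72(r^2+1)^2$, again contradicting $R > 72(r^2+1)^2$.

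Finally, for type $A$: write $H$ as a quotient of $\SL_m$ with $V = V_\lambda$. The near-scalar structure of $g$ forces its lift to $\SL_m$ to have essentially at most two distinct eigenvalues, and $c_{\len(c)} = 1$ together with $\len(c) \leq r+1$ then pins $\lambda$ down to a fundamental weight $\omega_k$, so $V \cong \Lambda^k\,\mathrm{std}$; the constraint that the minimum multiplicity equals $1$ forces the smallest occurring binomial coefficient $\binom{m-1}{k-1}$ or $\binom{m-1}{k}$ to be $1$, so $k = 1$ or $k = m-1$, whence $V \cong \mathrm{std}$ or $\mathrm{std}^*$ and $m = R$. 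Thus $H = \SL_R$ and $G \in \{\SL_R, \GL_R\}$. The principal obstacle is the residual classification in the previous paragraph for the types $D_n$ (odd) and $E_6$: turning the intuition "almost-simple subgroups of $\GL_R$ containing a near-pseudoreflection must be small" into a quantitative bound on $R$ as a function of $r$ is the delicate part, and is precisely where the numerical constant $72$ in the statement gets calibrated.
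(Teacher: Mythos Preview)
Your proposal has a genuine gap rooted in a misreading of the weight multiplicity map. The vector $c=\wm{R}(\gamma)$ records the multiplicities of the \emph{archimedean absolute values} $|\iota(\gamma_i)|$, not of the eigenvalues $\gamma_i$ themselves. So while $c_1\geq R-r^2$, the element $g$ need not have any eigenspace of that dimension: for instance $g$ could be $\mathrm{diag}(\zeta,\zeta^2,\ldots,\zeta^{R-1},q)$ with $\zeta$ a primitive $(R-1)$th root of unity, giving $c=(R-1,1)$ yet all eigenvalues distinct. Consequently your repeated appeal to ``the near-pseudoreflection nature of $g$ (its top eigenspace has codimension $\leq r^2$)'' is unfounded, and the arguments for the residual types $D_n$ (odd), $E_6$, and non-standard $A$ all collapse. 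In the type $A$ step, the claim that a lift of $g$ has ``essentially at most two distinct eigenvalues'' fails for the same reason.

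The paper repairs exactly this point before invoking any classification. It shows (Lemma~\ref{lem:rank-bound}) that the element $f=\mathrm{diag}(|\iota(\gamma_1)|,\ldots,|\iota(\gamma_R)|)$ lies in $G$: since $G\cap(\text{diagonal torus})$ is cut out by monomial equations $\prod_i\alpha_i^{e_{j,i}}=1$, and these are preserved under $\alpha_i\mapsto|\iota(\alpha_i)|$, the diagonal element $f$ satisfies the same equations that $g$ does. This $f$ genuinely has an eigenspace of codimension $\leq r^2$, and a scalar multiple $af$ lies in $[G,G]$. The paper then applies Zarhin's theorem \cite[Th.~6]{Zarhin} to the simple Lie algebra $\mathfrak{g}=\mathrm{Lie}\,[G,G]$ and the semisimple element $af$: since $R>72(r^2)^2\geq 72\dim((af-a|\iota(\gamma_1)|)V)^2$, Zarhin forces $\mathfrak{g}\in\{\mathfrak{sl}(V),\mathfrak{so}(V),\mathfrak{sp}(V)\}$, after which Proposition~\ref{prop:pairing-avoidance} (which you do apply correctly) finishes. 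Your hand classification for the non-self-dual types is essentially an attempt to reprove Zarhin's theorem; even with the $f$-fix in place, that step would need a citation or substantial further work, and the constant $72$ is precisely Zarhin's.
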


\noindent
The proof will occupy the remainder of this subsection.

Since $G$ is algebraic, it contains the semisimplification of $g$, an element for which $\gamma$ is also an eigenvector.  Hence we replace $g$ by its semisimplification and suppose without loss of generality that $g$ is semisimple.  We also replace $G$ and $g$ by the conjugates $h^{-1}Gh$ and $h^{-1}gh$ by a suitable element $h\in\GL_R(\Qellbar)$ so that we may suppose without loss of generality that $g$ is the diagonal matrix $\mathrm{diag}(\gamma_1,\ldots,\gamma_R)$.

Let $V=\Qellbar^R$ and $f$ be the diagonal matrix
$$
	f = \mathrm{diag}(|\iota(\gamma_1)|,\ldots,|\iota(\gamma_m)|).
$$
We claim we may regard $f$ as an element of $\GL_R(\Qellbar)$.  More precisely, it is an element of $\GL_R(\iota(\bbQbar))\sub\GL_R(\bbC)$ since $|\iota(\gamma_i)|^2=\iota(\gamma_i)\overline{\iota(\gamma_i)}$ lies in the algebraically closed subfield $\iota(\bbQbar)\sub\bbC$ and thus so does $|\iota(\gamma_i)|$.  Replacing $G$, $g$, $f$ by conjugates by a suitable common permutation matrix, we suppose without loss of generality that $|\iota(\gamma_1)|$ is an eigenvalue of $f$ of multiplicity $c_1$.

\begin{lemma}\label{lem:rank-bound}
$f$ is a semisimple element of $G$ such that $f-|\iota(\gamma_1)|\in\End(V)$ has rank at most $r^2$.
\end{lemma}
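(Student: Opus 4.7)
My plan is to prove the two assertions by passing through the Zariski closure of the cyclic group $\langle g\rangle$ inside $\GL_R$.

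First I would argue that $f$ lies in the Zariski closure $H$ of $\langle g\rangle\subseteq \GL_R(\Qellbar)$. Since $g$ is the diagonal matrix $\mathrm{diag}(\gamma_1,\ldots,\gamma_R)$, the subgroup $H$ is the diagonal algebraic subgroup cut out by every multiplicative relation $\prod_i x_i^{m_i}=1$ that the eigenvalues satisfy, that is, by the sublattice $L\subseteq\bbZ^R$ consisting of all $(m_1,\ldots,m_R)$ with $\prod_i\gamma_i^{m_i}=1$. The key observation is that if $\prod_i\gamma_i^{m_i}=1$, then applying $\iota$ and taking complex absolute values gives $\prod_i |\iota(\gamma_i)|^{m_i}=1$. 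In particular every element $\mathrm{diag}(|\iota(\gamma_1)|,\ldots,|\iota(\gamma_R)|)$ cuts out by $L$ defines a point of $H$; so $f\in H$.

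Second, since $g\in G$ and $G$ is closed in $\GL_R$, the cyclic subgroup $\langle g\rangle$ and hence its Zariski closure $H$ are contained in $G$. Therefore $f\in G$. Moreover $f$ is a diagonal matrix with nonzero entries, hence diagonalizable, hence semisimple in $\GL_R$ and thus also semisimple in $G$ (semisimplicity being detected by any faithful representation).

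Finally I would bound the rank of $f-|\iota(\gamma_1)|\cdot I$ by direct counting. This difference is diagonal with entry $|\iota(\gamma_i)|-|\iota(\gamma_1)|$ in position $i$, so its rank equals the number of indices $i$ for which $|\iota(\gamma_i)|\neq|\iota(\gamma_1)|$. By our normalization $|\iota(\gamma_1)|$ is an eigenvalue of $f$ of multiplicity exactly $c_1$, so this rank equals $R-c_1=c_2+\cdots+c_{\len(c)}$. The sum has $\len(c)-1\leq r$ terms, and each is at most $c_2\leq r$, yielding the bound $r\cdot r=r^2$.

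The argument is essentially routine; the only subtle point is the passage $f\in\overline{\langle g\rangle}$, which hinges on the trivial but crucial observation that the Archimedean absolute value $|\iota(\cdot)|\colon\bbQbar^{\times}\to\bbR_{>0}$ is a multiplicative map, so it respects all monomial relations among the eigenvalues. Everything else is immediate from the definition of the weight multiplicity map.
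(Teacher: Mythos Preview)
Your proof is correct and follows essentially the same approach as the paper's. The only cosmetic difference is that you work with the Zariski closure of $\langle g\rangle$, while the paper works with the intersection of $G$ with the full diagonal torus; in both cases one observes that the relevant diagonal subgroup is cut out by monomial relations $\prod_i x_i^{m_i}=1$ satisfied by the $\gamma_i$, and that applying $|\iota(\cdot)|$ preserves such relations, forcing $f$ into $G$. The rank bound is computed identically.
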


\begin{proof}
For some sequence $e_1,\ldots,e_n$ of tuples $e_i=(e_{i,1},\ldots,e_{i,m})\in\bbZ^m$, the intersection of $G$ with the subgroup of diagonal matrices in $\GL_R(\Qellbar)$ consists of all matrices $\diag(\alpha_1,\ldots,\alpha_m)$ satisfying
$$
	\prod_{i=1}^m\alpha_i^{e_{1,i}}
	=
	\prod_{i=1}^m\alpha_i^{e_{2,i}}
	=
	\cdots
	=
	\prod_{i=1}^m\alpha_i^{e_{n,i}}
	=
	1.
$$
By hypothesis, $g$ lies in this intersection, and thus
$$
	|\iota(\prod_{i=1}^m\gamma_i^{e_{1,i}})|
	=
	|\iota(\prod_{i=1}^m\gamma_i^{e_{2,i}})|
	=
	\cdots
	=
	|\iota(\prod_{i=1}^m\gamma_i^{e_{n,i}})|
	=
	|\iota(1)|
$$
or equivalently
$$
	\prod_{i=1}^m|\iota(\gamma_i)|^{e_{1,i}}
	=
	\prod_{i=1}^m|\iota(\gamma_i)|^{e_{2,i}}
	=
	\cdots
	=
	\prod_{i=1}^m|\iota(\gamma_i)|^{e_{n,i}}
	=
	1.
$$
Therefore $f$ is a diagonal (hence semisimple) element of $G$ as claimed.  It remains to show $f-|\iota(\gamma_1)|\in\End(V)$ has rank at most $r^2$.  Indeed, exactly $c_1$ of its eigenvalues equal $|\iota(\gamma_1)|$, hence the rank of $f-|\iota(\gamma_1)|$ is
$$
	R - c_1 \leq \sum_{i=2}^{\len(c)} c_i \leq r\cdot r = r^2
$$
by our hypotheses on $c$.
\end{proof}

Let $[G,G]$ be the derived (i.e., commutator) subgroup of $G$.  Observe that $G$ acts irreducibly on $V=\Qellbar^R$ by hypothesis, so its center $Z(G)$ consists entirely of scalars and $G$ is an almost product of $[G,G]$ and $Z(G)$.  In particular, $[G,G]$ is a connected semisimple group which also acts irreducibly on $V$, and for some $a\in\Qellbar^\times$, the scalar multiple $af$ lies in $[G,G]$.

Let $\mathfrak{g}\seq\gl_R=\End(V)$ be the Lie algebra of $[G,G]$.  It is a semisimple irreducible Lie subalgebra of $\gl_R$ since $[G,G]$ is semisimple and acts irreducibly on $V$.  It also contains $af$, and Lemma~\ref{lem:rank-bound} implies that $\dim((af-a|\iota(\gamma_1)|)V)\leq r^2$.  Finally, the contrapositive of Proposition~\ref{prop:tensor-indecomposable} implies that $\mathfrak{g}$ is simple since otherwise $V$ would be tensor decomposable as a representation of $G$.  Therefore, a result of Zarhin \cite[Th.~6]{Zarhin} implies that $\mathfrak{g}$ is one of $\mathfrak{sl}(V)$, $\mathfrak{so}(V)$, or $\mathfrak{sp}(V)$ since
$$
	R = \dim(V)
	> 72(r^2)^2
	\geq 72\dim((f-|\iota(\gamma_1)|)V)^2
	= 72\dim((af-a|\iota(\gamma_1)|)V)^2
$$
by our hypotheses on $R$.

To complete the proof of the theorem it suffices to rule out $\mathfrak{g}=\mathfrak{so}(V)$ and $\mathfrak{g}=\mathfrak{sp}(V)$ or equivalently to show that $G$ preserves neither an orthogonal nor a symplectic pairing.  However, our hypotheses on $c$ together with the contrapositive of Proposition~\ref{prop:pairing-avoidance} implies that $G$ preserves neither such type of pairing, so $\mathfrak{g}=\mathfrak{sl}(V)$ as claimed.  That is, $[G,G]$ is $\SL(V)$ and $G$ is equal to one of $\SL(V)$ or $\GL(V)$.


\section{Perverse Sheaves and the Tannakian Monodromy Group}\label{sec:tannakian-appendix}


\subsection{Category of perverse sheaves}

Given a smooth curve $X$ over a perfect field $\bbF$, we can speak of the so-called derived category $\Dbc{X}$.  Its objects $M$ are complexes of constructible $\Qellbar$-sheaves on $X$ over $\bbF$ whose cohomology complex
$$
	\cdots
	\longto \H{-1}M
	\longto \H{0}M
	\longto \H{1}M
	\longto \cdots
$$
is bounded and whose cohomology sheaves $\H{i}M$ are all constructible.  There is a well-defined dual object $\Dual{M}$, the Verdier dual of $M$.  Moreover, for each $n\in\bbZ$, there is a well-defined shifted complex $M[n]$ which satisfies $\H{i}{M[n]}=\H{i+n}M$.

We say that $M$ is \defi{semi-perverse} iff $\H{0}{M}$ is punctual and $\H{i}{M}$ vanishes for $i>0$ and that $M$ is \defi{perverse} iff $M$ and $\Dual{M}$ are semi-perverse.  We write $\Perv{X}$ for the full subcategory of perverse objects in $\Dbc{X}$.  It is an abelian category thus one can speak of subquotients of its objects as well as kernels and cokernels of its morphisms.  It is common to call its objects perverse sheaves despite the fact that they are \emph{complexes} of sheaves.

There is a natural functor from the category of constructible $\El$-sheaves on $X$ over $k$ to $\Dbc{X}$: it sends a sheaf $\FF$ to a complex concentrated at $i=0$ and takes a morphism to the unique extension to a morphism of complexes.  The image of this functor is not stable under duality though: if $\FF^\vee$ is the dual of $\FF$, then $\Dual{\FF}$ is isomorphic to $\FF^\vee(1)[2]$.  If instead one sends sends each $\FF$ to $\FF(1/2)[1]$, then self-dual objects are taken to self-dual objects and middle-extension sheaves are taken to perverse sheaves.


\subsection{Purity}

Let $X$ be a smooth curve over $\Fq$.  We say an object $M$ in $\Dbc{X}$ is \defi{$\iota$-mixed of weights $\leq w$} iff $\H{i}M$ is punctually $\iota$-mixed of weights $\leq w+i$ for every $i$, and then $M[n]$ is $\iota$-mixed of weights $w+n$.  We also say $M$ is \defi{$\iota$-pure of weight $w$} iff $M$ is $\iota$-mixed of weights $\leq w$ and $\Dual{M}$ is $\iota$-mixed of weights $\leq -w$, and then $M[n]$ is $\iota$-pure of weight $w+n$.  Finally, we say $M$ is \defi{pure of weight $w$} iff it is $\iota$-pure of weight $w$ for every field embedding $\iota\colon\Qbar\to\bbC$.


\subsection{Subobjects and subquotients}

Let $(\CC,\oplus)$ be an abelian category, let $\zero$ be its zero object, and let $M,N$ be a pair of objects in $\CC$.

We say that $N$ is a \defi{subobject} of $M$ and write $N\seq M$ iff there is a monomorphism $N\into M$ in $\CC$.  More generally, we say $N$ of $M$ is a \defi{subquotient} of $M$ iff there exist an object $S$, a monomorphism $S\into M$, and an epimorphism $S\onto N$ all in $\CC$.  Equivalently, $N$ is a subquotient of $M$ iff there exist an object $Q$, an epimorphism $M\onto Q$, and a monomorphism $N\into Q$ all in $\CC$.

\begin{prop}\label{prop:subquotient-of-pure-is-pure}
If $M\in\Perv{\Gm}$ is $\iota$-pure of weight $w$, then so is every subquotient $N$.
\end{prop}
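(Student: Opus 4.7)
The strategy is to reduce the claim to two facts about $\iota$-mixedness in $\Perv{\Gm}$: (a) being $\iota$-mixed of weights $\leq w$ is inherited by every perverse subquotient, and (b) Verdier duality is a contravariant exact functor on $\Perv{\Gm}$. Together these immediately yield the statement, since purity of weight $w$ is by definition the conjunction of mixedness of $M$ of weights $\leq w$ and mixedness of $\Dual{M}$ of weights $\leq -w$.

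More precisely, suppose $N$ is a perverse subquotient of $M$, so there exist $S\in\Perv{\Gm}$ with a monomorphism $S\into M$ and an epimorphism $S\onto N$. First I would invoke fact (a) applied to $M$: since $M$ is $\iota$-mixed of weights $\leq w$, so is $S$ (as a perverse subobject), and hence so is $N$ (as a perverse quotient of $S$). This is the analogue of \cite[5.1.7, 5.3.1]{BBD} in the $\ell$-adic setting of Deligne's Weil~II, and it is where the perverse $t$-structure does the real work: one cannot deduce this directly from the corresponding statement on cohomology sheaves, but the standard dévissage using the recollement of perverse sheaves with respect to the stratification on which $M$ is constructible reduces it to the punctual case, where it is obvious.

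Next I would use fact (b): Verdier duality $\Dual{\,}\colon\Perv{\Gm}\to\Perv{\Gm}$ is an exact contravariant equivalence, so it sends subobjects to quotient objects and vice versa. Thus the monomorphism $S\into M$ and the epimorphism $S\onto N$ dualize to an epimorphism $\Dual{M}\onto\Dual{S}$ and a monomorphism $\Dual{N}\into\Dual{S}$. Consequently $\Dual{N}$ is a perverse subquotient of $\Dual{M}$. Applying fact (a) again, this time to $\Dual{M}$ (which is $\iota$-mixed of weights $\leq -w$ by the purity hypothesis), we conclude that $\Dual{N}$ is $\iota$-mixed of weights $\leq -w$. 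Combining this with the conclusion of the previous paragraph gives that $N$ is $\iota$-pure of weight $w$, as required.

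The only nontrivial ingredient is fact (a), and that is precisely Deligne's stability theorem for mixed perverse sheaves (\cite[5.3.1]{BBD} / \cite[Th.~1 and Th.~3.4.1]{Deligne:WeilII}); everything else is formal from the definitions and from the involutivity/exactness of $\Dual{\,}$ on $\Perv{\Gm}$. So the proof is essentially a two-line citation once these standard facts are in place, and no additional input beyond what has been developed in Section~\ref{sec:l-functions} is needed.
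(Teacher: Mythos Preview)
Your argument is correct and is precisely the content of \cite[5.3.1]{BBD}; the paper's own proof is simply a one-line citation to that reference, so you have unpacked exactly what the paper invokes. There is nothing to add.
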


\begin{proof}
See \cite[5.3.1]{BBD}.
\end{proof}

Given a pair $N_1,N_2\seq M$ of subobjects, we write $N_1\seq N_2\seq M$ iff $N_1\seq N_2$ and, for the corresponding monomorphisms, $N_1\into M$ equals the composition $N_1\into N_2\into M$.  We also write $N_1=N_2\seq M$ iff $N_1\seq N_2\seq M$ and $N_2\seq N_1\seq M$.  For example, if $M$ is an object in $\Perv{\Gm}$ and if $\phi$ is the Frobenius automorphism of $\bar{M}$, then the subobjects $N\seq M$ give rise to precisely those subobjects $\bar{N}\seq\bar{M}$  satisfying $\bar{N}=\phi(\bar{N})\seq\bar{M}$.


\subsection{Kummer sheaves}

Let $\Gm=\Poneu\ssm\{0,\infty\}$ over $\Fq$, and let $\piOneTame{\Gm}$ be the tame \'etale fundamental group, that is, the maximal quotient of $\piOne{\Gm}$ whose kernel contains the $p$-Sylow subgroups of $I(0)$ and $I(\infty)$.  It lies in an exact sequence
$$
	1\to \piOneTame{\GmBar}\to \piOneTame{\Gm}\to \Gal(\Fqbar/\Fq)\to 1
$$
where $\piOneTame{\GmBar}$ is the image of $\piOne{\GmBar}$ via the tame quotient $\piOne{\Gm}\onto\piOneTame{\Gm}$.

We say a constructible sheaf on $\PoneBar$ is a \defi{Kummer sheaf} iff it is a middle-extension sheaf which is lisse of rank one on $\GmBar$ and for which the corresponding representation factors through the  quotient $\piOne{\GmBar}\onto\piOneTame{\GmBar}$.  Equivalently, the Kummer sheaves are the middle-extension sheaves $\LL_\rho$ on $\PoneBar$ associated to a continuous character $\rho\colon\piOneTame{\GmBar}\to\Qellbar^\times$.


\subsection{Middle convolution on $\PP$}\label{sec:cateogory-P}

Let $\pi\colon\Gm\times\Gm\to\Gm$ be the multiplication map on $\Gm$ over $\Fq$.  Using it one can define two additive bifunctors on $\Dbc{\GmBar}$ corresponding to two flavors of multiplicative convolution:
$$
	M\star_! N := R\pi_!(M\boxtimes N),\quad
	M\star_* N := R\pi_*(M\boxtimes N).
$$
There is a canonical map $M\star_! N\to M\star_* N$, but it need not be an isomorphism in general.  However, if both convolution objects lie in $\Perv{\GmBar}$, then one can speak of the image of the map and define
$$
	M\midstar N := \mathrm{Image}(M\star_! N\to M\star_* N).
$$
This observation led Katz to define the full subcategory $\PP$ of $\Perv{\GmBar}$ whose objects are all $M$ for which $N\mapsto M\star_!N$ and $N\mapsto M\star_*N$ take perverse sheaves to perverse sheaves (see \cite[\S2.6]{Katz:RLS} and \cite[Ch.~2]{Katz:CE}).  Among other things, it includes perverse sheaves $\FF[1]$ for $\FF$ a simple middle-extension sheaf on $\GmBar$ of generic rank at least two.  Moreover, it is an additive category with respect to the usual direct sum of sheaves.  Katz called the resulting additive bifunctor on $\PP$ middle convolution.  


\subsection{The category $\PP_\arith$}\label{sec:category-P_arith}

Let $\Dbc{\Gm}\to\Dbc{\GmBar}$ be the ``extension of scalars'' functor which sends an object of $M$ over $\Fq$ to the object $\bar{M}=M\times_{\Fq}\Fqbar$.  It maps objects of $\Perv{\Gm}$ to objects of $\Perv{\GmBar}$, and we define $\PP_\arith$ to be the full subcategory of $\Perv{\Gm}$ whose objects $M$ are those for which $\bar{M}$ lies in $\PP$.  Among other things, $\PP_\arith$ contains perverse sheaves $\FF[1]$ for $\FF$ a geometrically simple middle-extension sheaf on $\Gm$ over $\Fq$ which is of generic rank at least two.

Once again we have the two flavors of multiplicative convolution
$$
	M\star_! N := R\pi_!(M\boxtimes N),\quad
	M\star_* N := R\pi_*(M\boxtimes N).
$$
for any pair of objects $M,N$ in $\Perv{\Gm}$.  We can also define middle convolution on $\PP_\arith$ as before
$$
	M\midstar N := \mathrm{Image}(M\star_! N\to M\star_* N).
$$
for any pair of objects $M,N$ in $\PP_\arith$.

\begin{prop}\label{prop:convolution-of-pures-is-pure}
If $M$ and $N$ are $\iota$-pure of weights $m$ and $n$ respectively, then $M\midstar N$ is $\iota$-pure of weight $m+n$.
\end{prop}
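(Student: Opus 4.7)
The plan is to deduce purity of the middle convolution from Deligne's theorem on weights under $R\pi_!$ and $R\pi_*$, exploiting the fact that $M \midstar N$ sits as an image of a morphism with opposing weight bounds on source and target.

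First I would verify that the external tensor product $M \boxtimes N$ on $\Gm \times \Gm$ is $\iota$-pure of weight $m+n$. This is a standard K\"unneth-style computation: the cohomology sheaves of $M \boxtimes N$ are external tensors of the cohomology sheaves of $M$ and $N$, and an analogous identity holds for Verdier duals via $D(M\boxtimes N) \simeq DM \boxtimes DN$, so punctual purity of each factor transfers to punctual purity of the box product at weight $m+n$.

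Next, I would invoke Deligne's Weil~II (cf.~\cite[5.1.14]{BBD}) in its two forms: $R\pi_!$ preserves the class of $\iota$-mixed complexes of weights $\leq w$, while (by Verdier duality $D\circ R\pi_* \simeq R\pi_!\circ D$) $R\pi_*$ preserves $\iota$-mixed complexes of weights $\geq w$. Applied to $M\boxtimes N$, this gives that $M\star_! N = R\pi_!(M\boxtimes N)$ is $\iota$-mixed of weights $\leq m+n$ and that $M\star_* N = R\pi_*(M\boxtimes N)$ is $\iota$-mixed of weights $\geq m+n$.

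Finally, since $M,N\in \PP_\arith$, both $M\star_! N$ and $M\star_* N$ lie in $\Perv{\Gm}$, so the canonical forget-supports morphism between them is a morphism in an abelian category, and $M\midstar N$ is its image. As a perverse quotient of $M\star_! N$, it inherits the bound $\iota$-mixed of weights $\leq m+n$; as a perverse subobject of $M\star_* N$, it inherits $\iota$-mixed of weights $\geq m+n$ (the analogue of Proposition~\ref{prop:subquotient-of-pure-is-pure} for mixed bounds, again \cite[5.1.14, 5.3.1]{BBD}). The two bounds together force $M\midstar N$ to be $\iota$-pure of weight $m+n$. There is no serious obstacle here; the only thing to be careful about is that the argument takes place in $\PP_\arith$ over $\Fq$ (rather than over $\Fqbar$), so that purity in the sense of Frobenius weights is genuinely what one is tracking, and that the weight-preservation statements for $R\pi_!$ and $R\pi_*$ are available at the arithmetic level, which is exactly the setting of Deligne's theorem.
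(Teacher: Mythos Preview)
Your proposal is correct and follows essentially the same strategy as the paper: both arguments use that $M\boxtimes N$ is $\iota$-pure of weight $m+n$, apply Deligne's theorem to get the upper weight bound on $M\star_! N$ and hence on its perverse quotient $M\midstar N$, and then obtain the matching lower bound by duality. The only cosmetic difference is that the paper phrases the dual step by explicitly computing $D(M\midstar N)=DM\midstar DN$ and reapplying the $\star_!$ argument to $DM,DN$, whereas you invoke the equivalent statement that $R\pi_*$ preserves weights $\geq m+n$ and that a perverse subobject of $M\star_* N$ inherits this bound; these are the same argument unwound along the Verdier duality $D(M\star_! N)\simeq DM\star_* DN$.
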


\begin{proof}
Our argument is essentially that of \cite[Ch.~4]{Katz:CE}.  On one hand, $M\boxtimes N$ is $\iota$-pure of weight $m+n$ on $\Gm\times\Gm$, hence \cite[3.3.1]{Deligne:WeilII} and Proposition~\ref{prop:subquotient-of-pure-is-pure} imply $M\star_! N$ and its perverse quotient $M\midstar N$ are $\iota$-mixed of weight $m+n$.  On the other hand, $DM$ and $DN$ are $\iota$-pure of weights $m$ and $n$ respectively, and
\begin{eqnarray*}
	D(M\midstar N)
	& = &
	\mathrm{Image}(D(M\star_*N)\to D(M\star_! N)) \\
	& = &
	\mathrm{Image}(DM\star_! DN\to DM\star_* DN)
	\ \ =\ \ 
	DM\midstar DN
\end{eqnarray*}
hence $D(M\midstar N)$ is $\iota$-mixed weights $\leq m+n$ (cf.~\cite[6.2]{Deligne:WeilII}).  Thus $M\midstar N$ is $\iota$-pure of weight $m+n$ as claimed.
\end{proof}


\subsection{The category $\Tann{\GmBar}$}\label{sec:tann-gmbar}

Gabber and Loeser defined an object $M$ in $\Perv{\GmBar}$ to be \defi{negligible} iff its Euler characteristic $\chi(\GmBar,M)$ vanishes (see \cite[pg.~529]{GL}), or equivalently, it is isomorphic to a successive extension of shifted Kummer sheaves $\LL_\rho[1]$ (cf.~\cite[3.5.3]{GL}).  They showed that the full subcategory $\Negl{\GmBar}$ of $\Perv{\GmBar}$ whose objects are the negligible sheaves is a thick subcategory of the abelian category (see \cite[3.5.2]{GL}), and thus one can speak of the quotient category
$$
	\Tann{\GmBar}:=\Perv{\GmBar}/\Negl{\GmBar}.
$$
They then proceeded to show that $\Tann{\GmBar}$ is a neutral Tannakian category (see \cite[3.7.5]{GL} and \cite[II.2.19]{DM}).

\begin{theorem}\label{thm:P-is-neutral-Tannakian}
The composite map $\PP\to\Perv{\GmBar}\to\Tann{\GmBar}$ induces an equivalence of categories such that:
\begin{enum}
\item middle convolution on $\PP$ induces a tensor product $\otimes$ on $\Tann{\GmBar}$;
\item the unit object $\one$ corresponds to the skyscraper sheaf $i_*\Qellbar$ for $i\colon\{1\}\to\GmBar$ the inclusion;

\item the dual $M^\vee$ of an object $M$ is the object $[x\mapsto 1/x]^*DM$;

\item the dimension $\dim(M)$ of an object $M$ is $\chi(\GmBar,M)$;

\item\label{thm:item:fiber-functor} a fiber functor is $M\mapsto H^0(\AoneuBar,j_{0!}M)$ for $j_0\colon\Gm\to\Aoneu$ the inclusion.

\end{enum}
\end{theorem}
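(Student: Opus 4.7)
The plan is to deduce this theorem essentially from the Gabber--Loeser construction, with the identification of $\PP$ as a system of canonical representatives being the main technical point. First I would recall that $\Tann{\GmBar}$ is defined as the quotient of $\Perv{\GmBar}$ by the Serre subcategory $\Negl{\GmBar}$ of negligible objects, and that Gabber--Loeser already establishes in \cite{GL} that this quotient, equipped with the tensor structure coming from either $\star_!$ or $\star_*$ (which agree on the quotient since $\star_!\to\star_*$ has negligible kernel and cokernel), is a neutral Tannakian category over $\Qellbar$. Thus the content to verify is (a) that the restriction of the quotient functor to $\PP$ is an equivalence, (b) that middle convolution on $\PP$ corresponds, under this equivalence, to the induced tensor product, and (c)--(e) that the indicated formulas describe the unit, dual, dimension, and fiber functor.

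For (a), I would argue as follows. An object $M\in\Perv{\GmBar}$ is in $\PP$ precisely when $M\star_! N$ and $M\star_* N$ are perverse for all perverse $N$; by Katz \cite[Chapter 2]{Katz:CE}, every perverse sheaf $M$ admits a canonical decomposition into the sum of an object of $\PP$ and a negligible object (the negligible part being a direct sum of shifted Kummer sheaves and, more generally, successive extensions thereof). Consequently, every object of $\Tann{\GmBar}$ has a representative in $\PP$, proving essential surjectivity. For full faithfulness, I would note that $\Hom_{\Perv{\GmBar}}(M,N)\to\Hom_{\Tann{\GmBar}}(M,N)$ is bijective when $M,N\in\PP$, because any map factoring through a negligible object would factor through a Kummer summand, but $\PP$-objects have no Kummer quotients or subobjects (their nonzero perverse subquotients have nonzero Euler characteristic).

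For (b), the canonical map $M\star_!N\to M\star_*N$ has negligible kernel and cokernel (since the difference is detected by the stalks at $0$ and $\infty$, which contribute only to cohomology of Kummer type), so both become isomorphic to $M\midstar N$ in $\Tann{\GmBar}$, and $M\midstar N\in\PP$ by the definition of $\PP$. This identifies the induced tensor product with middle convolution on $\PP$. For (c), the unit object under $\star_!$ (equivalently $\star_*$) on $\Perv{\GmBar}$ is $i_*\Qellbar$ where $i\colon\{1\}\to\GmBar$, because $(i_*\Qellbar)\star_!N = N$ for any $N$ (pulling back along multiplication by $1$). For (d), the duality in the Tannakian category is inherited from Verdier duality composed with inversion $x\mapsto 1/x$ (reflecting the antipode structure of $\GmBar$ as a group), giving $M^\vee = [x\mapsto 1/x]^*DM$. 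For the dimension formula, the rank of an object in a neutral Tannakian category equals the trace of the identity, which here is computed as $\chi(\GmBar,M)$ via the trace formula applied to convolution.

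For (e), which I expect to be the main obstacle, I would verify that $\omega(M):=H^0(\AoneuBar,j_{0!}M)$ is an exact faithful tensor functor to $\Vec_{\Qellbar}$. Exactness on $\PP$ follows because for $M\in\PP$, the object $j_{0!}M$ has vanishing cohomology outside degree zero (this uses that $M$, being a perverse sheaf in $\PP$, has no negligible subquotients, so the only nontrivial contribution is in $H^0$); faithfulness follows since $\omega(M)=0$ would force $M$ to be negligible. The tensor compatibility $\omega(M\midstar N)\cong\omega(M)\otimes\omega(N)$ is the heart of the matter: one applies proper base change and the K\"unneth formula to the multiplication map $\pi\colon\Gm\times\Gm\to\Gm$, composed with the extension-by-zero to $\Aoneu$, using that the Kummer-type contributions at $0$ and $\infty$ that distinguish $\star_!$ from $\star_*$ also distinguish $j_{0!}$ from $j_{0*}$ in precisely the matching way. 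This reduces to a careful stalk analysis at $u=0$ and a vanishing result for higher direct images, the computation of which is carried out in detail in \cite[Chapter 3]{Katz:CE}. Together with the purity preservation of Proposition~\ref{prop:convolution-of-pures-is-pure}, these properties give the claimed fiber functor and complete the proof.
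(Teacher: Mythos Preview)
Your proposal is correct and follows exactly the route the paper takes: the paper itself gives no argument beyond citing \cite[3.7.2]{GL} and \cite[Ch.~2 and Ch.~3]{Katz:CE}, and your sketch is a faithful unpacking of what those references contain. The only quibble is that your closing appeal to Proposition~\ref{prop:convolution-of-pures-is-pure} is out of place here, since the theorem is a purely geometric statement over $\GmBar$ and purity plays no role in establishing the Tannakian structure or the fiber functor.
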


\medskip\noindent
See \cite[3.7.2]{GL} and \cite[Ch.~2 and Ch.~3]{Katz:CE}.


\subsection{The category $\Tann{\Gm}$}

Let $\Negl{\Gm}$ be the full subcategory of $\Perv{\Gm}$ whose objects $M$ are those for which $\bar{M}$ lies in $\Negl{\GmBar}$, and let
$$
	\Tann{\Gm} := \Perv{\Gm}/\Negl{\Gm}.
$$
Like $\Tann{\GmBar}$, the quotient category is an abelian category and even a neutral Tannakian category with tensor product $\otimes$ given by middle convolution.  Moreover, the ``extension of scalars'' functor induces a functor
$$
	\Tann{\Gm}\to\Tann{\GmBar}
$$
which also call the ``extension of scalars'' functor.

\begin{prop}
Suppose $M,N\in\Tann{\Gm}$ are $\iota$-pure of weights $m$ and $n$ respectively.  Then $M^\vee$, $N^\vee$, and $M\otimes N$ are $\iota$-pure of weights $m$, $n$, and $m+n$ respectively.
\end{prop}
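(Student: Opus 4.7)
The plan is to reduce the three purity claims in $\Tann{\Gm}$ to corresponding statements for representatives in the subcategory $\PP_\arith \subset \Perv{\Gm}$ constructed in \S\ref{sec:category-P_arith}, where Proposition~\ref{prop:convolution-of-pures-is-pure} and the formalism of Verdier duality are directly available.

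First I would establish an arithmetic analogue of Theorem~\ref{thm:P-is-neutral-Tannakian}: the composition $\PP_\arith \hookrightarrow \Perv{\Gm} \twoheadrightarrow \Tann{\Gm}$ is essentially surjective and gives an equivalence, under which middle convolution on $\PP_\arith$ corresponds to the tensor product $\otimes$ on $\Tann{\Gm}$ (this is the arithmetic version of Katz's result for $\PP$ versus $\Tann{\GmBar}$). Via this equivalence, choose representatives $M',N' \in \PP_\arith$ of $M,N$. I would then check that $\iota$-purity is well-defined on $\Tann{\Gm}$ in the sense that $M'$ is $\iota$-pure of weight $m$ precisely when the class $M$ deserves to be called $\iota$-pure of weight $m$; this rests on the observation that every object in $\Negl{\Gm}$ is an iterated extension of shifted Kummer sheaves $\LL_\rho[1]$, which are themselves $\iota$-pure of weight $0$, together with the fact that in an abelian category of $\iota$-mixed perverse sheaves, two objects with isomorphic image in the quotient by negligible objects share the same weight filtration graded pieces in positive and negative weights.

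For the tensor product, once $M', N' \in \PP_\arith$ are chosen $\iota$-pure of weights $m, n$, the object $M \otimes N$ is represented by $M' \midstar N'$, and Proposition~\ref{prop:convolution-of-pures-is-pure} immediately gives that $M' \midstar N'$ is $\iota$-pure of weight $m + n$, which descends to the claim in $\Tann{\Gm}$. For the duals $M^\vee$ and $N^\vee$, I would invoke the explicit formula $M^\vee = [x \mapsto 1/x]^* \Dual{M}$ from Theorem~\ref{thm:P-is-neutral-Tannakian}(iii). Verdier duality swaps the two defining conditions of $\iota$-purity (mixed of weights $\le w$ for $M$ versus for $\Dual{M}$), so $\Dual{M'}$ is $\iota$-pure of the appropriate dual weight; the pullback along the $\Fq$-automorphism $x \mapsto 1/x$ of $\Gm$ is an exact equivalence of $\Perv{\Gm}$ that preserves pointwise Frobenius eigenvalues and hence preserves the $\iota$-mixed weight filtration. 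Combining these steps yields the $\iota$-purity claim for $M^\vee$ and $N^\vee$.

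The principal obstacle is the first step: verifying that $\iota$-purity is truly a property of the isomorphism class in $\Tann{\Gm}$ rather than of a particular representative. The subtlety is that adding or quotienting by a negligible (hence weight-$0$) object could in principle introduce spurious weight-$0$ pieces; I would handle this by appealing to the semisimplicity of the category of pure perverse sheaves (\cite[5.3.8]{BBD}), which forces any negligible subquotient of an $\iota$-pure object to split off as a direct summand, so the class in $\Tann{\Gm}$ has a canonical $\iota$-pure representative. The remaining bookkeeping, including tracking the Tate twists implicit in the normalization $\FF \mapsto \FF((1+w)/2)[1]$, is routine once this identification is in place.
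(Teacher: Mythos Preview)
Your proposal is correct and uses the same two key ingredients as the paper: for $M\otimes N$ you invoke Proposition~\ref{prop:convolution-of-pures-is-pure} on middle convolution of pure objects, and for $M^\vee$, $N^\vee$ you use the formula $M^\vee=[x\mapsto 1/x]^*\Dual{M}$ together with the symmetry of the purity condition under Verdier duality and the fact that pullback along the $\Fq$-automorphism $x\mapsto 1/x$ preserves weights.

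The difference is one of framing rather than substance. The paper's proof is two sentences: it treats $M$, $N$ directly as objects of $\PP_\arith$ (implicitly via the equivalence), writes $M\otimes N=M\midstar N$ and $M^\vee=[x\mapsto 1/x]^*\Dual{M}$, and applies the cited results with no further comment. You instead devote most of your outline to setting up the arithmetic equivalence $\PP_\arith\simeq\Tann{\Gm}$, choosing representatives, and arguing that $\iota$-purity is well defined on the quotient by negligibles. That care is not wrong, but in the paper's conventions it is unnecessary overhead: the proposition is only ever applied to objects that already live in $\PP_\arith$ (shifted middle extensions), so the representative issue never arises in practice. Your identification of the well-definedness step as the ``principal obstacle'' is thus overcautious; once you accept the paper's tacit identification of $\Tann{\Gm}$-objects with their $\PP_\arith$ representatives, the proof collapses to the two lines the paper gives.
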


\begin{proof}
The Verdier duals $DM$ and $DN$ are $\iota$-pure of weights $m$ and $n$ respectively, hence so are the Tannakian duals $M^\vee=[x\mapsto 1/x]^*DM$ and $N^\vee=[x\mapsto 1/x]^*DN$.  Moreover, Proposition~\ref{prop:convolution-of-pures-is-pure} implies that $M\otimes N=M\midstar N$ is $\iota$-pure of weight $m+n$.
\end{proof}


\subsection{Semisimple abelian categories}

We say that $M$ is \defi{simple} iff the only subobjects $N\seq M$ in $\CC$ are isomorphic to $\zero$ or $M$.  More generally, we say that $M$ is \defi{semisimple} iff it is isomorphic to a finite direct sum $N_1\oplus\cdots\oplus N_m$ of simple subobjects $N_1,\ldots,N_m\seq M$.  We say that $\CC$ is \defi{semisimple} iff each of its objects is semisimple.

\begin{prop}\label{prop:weight-zero-implies-semisimple}
If $M\in\Tann{\Gm}$ is $\iota$-pure of weight zero, then $\gp{\bar{M}}$ is semisimple.
\end{prop}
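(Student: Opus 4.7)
The plan is to reduce to Deligne's geometric semisimplicity theorem for pure perverse sheaves (BBD 5.3.8), combined with the fact that the Tannakian operations on $\Tann{\Gm}$ preserve $\iota$-purity of weight zero.

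First, I would unwind the definition of $\gp{\bar{M}}$: every object therein is a subquotient, in $\Tann{\GmBar}$, of a finite direct sum of objects of the form $\bar{M}^{\otimes a}\otimes (\bar{M}^\vee)^{\otimes b}$. Because the extension-of-scalars functor $\Tann{\Gm}\to\Tann{\GmBar}$ is a tensor functor, each such tensor power can be written as $\overline{N}$ with $N=M^{\otimes a}\otimes (M^\vee)^{\otimes b}$ computed inside $\Tann{\Gm}$. By the preceding proposition (duality and middle convolution preserve $\iota$-purity of weight zero in $\Tann{\Gm}$), each such $N$ is $\iota$-pure of weight zero. So every object of $\gp{\bar{M}}$ is a subquotient in $\Tann{\GmBar}$ of $\bar{N}$ for some $\iota$-pure weight-zero object $N\in\Tann{\Gm}$.

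Next, I would invoke the geometric semisimplicity of pure perverse sheaves: by \cite[5.3.8]{BBD}, a perverse sheaf on $\Gm$ over $\Fq$ that is $\iota$-pure is geometrically semisimple, i.e., becomes semisimple after pullback to $\GmBar$. Applied to the underlying perverse sheaf representative of $N$, this gives that $\bar{N}$ is semisimple in $\Perv{\GmBar}$. Since $\Negl{\GmBar}$ is a thick subcategory and the quotient functor $\Perv{\GmBar}\to\Tann{\GmBar}$ is exact and sends simple perverse sheaves to zero or to simple objects (and preserves direct sums), semisimplicity descends: $\bar{N}$ is semisimple in $\Tann{\GmBar}$.

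Finally, in any abelian category a subobject (resp.\ quotient) of a semisimple object is itself semisimple (pick a maximal subsum of the simple summands meeting the subobject trivially, which then provides a complement), hence every object of $\gp{\bar{M}}$ is semisimple, as claimed. The main conceptual step is the second one, since it is the only place where weight-zero purity is used in an essential (non-formal) way; the other steps are bookkeeping about what $\gp{\bar{M}}$ consists of and the general nonsense about subquotients of semisimple objects. The only mild subtlety to double-check is that the subquotient operation needed in $\Tann{\GmBar}$ really lives inside the image of $\Perv{\GmBar}\to\Tann{\GmBar}$ so that BBD applies, but this is immediate because the kernel $\Negl{\GmBar}$ is thick and so subquotients in the quotient category lift to subquotients of perverse representatives.
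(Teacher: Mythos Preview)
Your proposal is correct and follows essentially the same route as the paper: show that the tensor-power objects $T^{a,b}(M)$ remain $\iota$-pure of weight zero (via the preservation of purity under dual and middle convolution), then invoke \cite[5.3.8]{BBD} to get geometric semisimplicity, from which semisimplicity of all objects of $\gp{\bar{M}}$ follows. One small discrepancy: the paper defines $T^{a,b}(M)=M^{\otimes a}\oplus (M^\vee)^{\otimes b}$ with a direct sum rather than your $M^{\otimes a}\otimes (M^\vee)^{\otimes b}$, but this does not affect the argument (purity of weight zero is preserved either way, and both conventions generate the same $\gp{M}$); also, the paper's proof is terser and leaves implicit the points you spell out about descent of semisimplicity through the quotient by $\Negl{\GmBar}$ and about subquotients of semisimple objects.
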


\begin{proof}
If $N_1,N_2\in\Tann{\Gm}$ are $\iota$-pure of weight zero, then so is $N_1\oplus N_2$. Therefore Proposition~\ref{prop:convolution-of-pures-is-pure}
implies that $T^{a,b}(M)$ is pure of weight zero, for every $a,b\geq 0$, and \cite[5.3.8]{BBD} implies that $T^{a,b}(\bar{M})$ is semisimple.
\end{proof}


\subsection{Tannakian monodromy group}

Let $k$ be an algebraically closed field of characteristic zero and $\Vec_k$ be the category of finite-dimensional vector spaces over $k$.  It is well known that the latter yields a rigid abelian tensor category $(\Vec_k,\otimes)$ with respect to the usual operators $\oplus$ and $\otimes$ of vector spaces and with unit object $\one=k$.

Let $(\CC,\otimes)$ be a neutral Tannakian category over $k$.  Thus $(\CC,\otimes)$ is a rigid abelian tensor category whose unit object $\one$ satisfies  $k=\End(\one)$ and for which there exists a fiber functor $\omega$, that is, an exact faithful $k$-linear tensor functor $\omega\colon\CC\to\Vec_k$.  For example, $\Vec_k$ is a neutral Tannakian category and the identity functor $\Vec_k\to\Vec_k$ is a fiber functor.  More generall, given an affine group scheme $G$ over $k$, the category $\Rep_k(G)$ of linear representations of $G$ on finite-dimensional $k$-vector spaces yields a neutral Tannakian category $(\Rep_k(G),\otimes)$, and the forgetful functor $\Rep_k(G)\to\Vec_k$ is a fiber functor.

Given an object $M$ of $\CC$, its dual $M^\vee$, and non-negative integers $a,b$, let
$$
	T^{a,b}(M) := M^{\otimes a}\oplus (M^\vee)^{\otimes b}
$$
and let $\gp{M}$ be the full tensor subcategory of $\CC$ whose objects consist of all subobjects of $T^{a,b}(M)$ for all $a,b\geq 0$.  For each automorphism $\gamma\in\Aut_\CC(M)$, let $\gamma^\vee\in\Aut_\CC(M^\vee)$ be the corresponding dual automorphism and $T^{a,b}(\gamma)\in\Aut_\CC(T^{a,b}(M))$ be the induced automorphism.

Let $\Alg_k$ be the category of $k$-algebras and $\Set$ be the category of sets.  Given a pair $\omega_1,\omega_2$ of fiber functors $\CC\to\Vec_k$ and an object $M$ in $\CC$, one can define a functor
$$
	\ulIsom^\otimes(\omega_1|M,\omega_2|M)\colon\Alg_k\to\Set
$$
by sending a $k$-algebra $R$ to the set
$$
	\{\,
		\gamma\in\Isom_R(\omega_1(M)_R,\omega_2(M)_R)
		:
		T^{a,b}(\gamma)(\omega_1(N))\seq\omega_2(N)
		\mbox{ for all }
		a,b\geq 0
		\mbox{ and }
		N\seq T^{a,b}(M)
	\,\}
$$
where $\omega_i(M)_R=\omega_i(M)\otimes_k R$ and
$$
	\Isom_R(\omega_1(M)_R,\omega_2(M)_R)
	=
	\{\,
		\gamma\in\Hom_R(\omega_1(M)_R,\omega_2(M)_R)
		:
		\gamma\mbox{ is invertible }
	\,\}.
$$
Similarly, given a single fiber functor $\omega\colon\CC\to\Vec_k$ and object $M$ in $\CC$, one can define a functor
$$
	\ulAut^\otimes(\omega|M)\colon\Alg_k\to\Set
$$
as the functor $\ulIsom^\otimes(\omega|M,\omega|M)$.

\begin{theorem}\label{thm:tannakian-monodromy-group}
Let $\omega_1,\omega_2$ be fiber functors $\CC\to\Vec_k$ and $M$ be an object of $\CC$.

\smallskip
\begin{enum}
\item $\ulAut^\otimes(\omega_i|M)$ is representable by an algebraic group scheme $G_{\omega_i|M}$ over $k$;
\item\label{thm:item:tmg-reductive} if $\gp{M}$ is semisimple, then $G_{\omega_i|M}$ is reductive;
\item\label{thm:item:tmg-torsor} $\ulIsom^\otimes(\omega_1|M,\omega_2|M)$ is represented by an affine scheme over $k$ which is a $G_{\omega_1|M}$-torsor;
\end{enum}
\end{theorem}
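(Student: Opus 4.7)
The plan is to reduce the statement to the standard Tannakian reconstruction theorem of Deligne--Milne by restricting attention to the full tensor subcategory $\gp{M}$. First I would verify that $\gp{M}$ is itself a neutral Tannakian category over $k$: it contains $\one$ (as a subobject of $T^{0,0}(M)=\one$), is closed under subobjects by construction, and inherits duality and tensor from $\CC$ since $(T^{a,b}(M))^\vee\simeq T^{b,a}(M)$ and $T^{a,b}(M)\otimes T^{c,d}(M)\simeq T^{a+c,b+d}(M)$, which makes $\gp{M}$ a rigid abelian tensor subcategory on which $\omega_1,\omega_2$ restrict to fiber functors.

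For part (i), Deligne--Milne's reconstruction theorem (cf.~\cite[II.2.11]{DM}) applied to $\gp{M}$ with fiber functor $\omega_i$ yields that $\ulAut^\otimes(\omega_i|\gp{M})$ is representable by an affine group scheme $G_{\omega_i|M}$ over $k$ and that $\omega_i$ induces an equivalence $\gp{M}\to\Rep_k(G_{\omega_i|M})$. The key additional point beyond the general statement is that $G_{\omega_i|M}$ is of finite type: the evident map $G_{\omega_i|M}\to\GL(\omega_i(M))$ is a closed immersion cut out by the conditions that $\gamma$ preserve each subobject of each $T^{a,b}(M)$; by Noetherianity of $\GL(\omega_i(M))$, finitely many of these conditions suffice, so $G_{\omega_i|M}$ is algebraic.

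For part (ii), I would invoke the standard criterion that in characteristic zero a neutral Tannakian category is semisimple (i.e., every object is a direct sum of simples) if and only if its Tannakian fundamental group is pro-reductive (cf.~\cite[Prop.~2.23]{DM} or Deligne's ``Cat\'egories tannakiennes''). Since $\gp{M}$ is assumed semisimple and the equivalence $\gp{M}\simeq\Rep_k(G_{\omega_i|M})$ transports semisimplicity to semisimplicity of all finite-dimensional representations of $G_{\omega_i|M}$, the algebraic group $G_{\omega_i|M}$ is reductive.

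Part (iii) is again standard Tannakian formalism \cite[II.3.2]{DM}: the functor $\ulIsom^\otimes(\omega_1|\gp{M},\omega_2|\gp{M})$ is a closed subscheme of $\ulIsom(\omega_1(M),\omega_2(M))$ (cut out by analogous polynomial conditions), hence represented by an affine scheme of finite type, and the right action of $G_{\omega_1|M}$ by precomposition makes it a torsor once one shows it is non-empty after faithfully flat base change; this last point follows because any two fiber functors on a neutral Tannakian category become isomorphic after a faithfully flat extension of $k$. The main (mild) obstacle is verifying that the ``restrict to $\gp{M}$'' step really does produce an algebraic, rather than merely pro-algebraic, group and torsor; but because $M$ is a single object with finite-dimensional fibers $\omega_i(M)$, everything is controlled inside $\GL(\omega_i(M))$ and $\Isom(\omega_1(M),\omega_2(M))$, so the finite-type conclusion is automatic.
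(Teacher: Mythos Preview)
Your proposal is correct and matches the paper's approach: the paper does not give an independent argument but simply cites Deligne--Milne \cite[II.2.11, II.2.20, II.2.28, and II.3.2]{DM}, exactly the references you invoke (modulo numbering). One small caveat: in this paper $T^{a,b}(M)$ is defined with a direct sum rather than a tensor, so your identity $T^{a,b}(M)\otimes T^{c,d}(M)\simeq T^{a+c,b+d}(M)$ is not literally true here, though this does not affect the conclusion that $\gp{M}$ is a rigid abelian tensor subcategory.
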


\noindent
See \cite[II.2.11, II.2.20, II.2.28, and II.3.2]{DM}.

We call the group scheme $G_{\omega_i|M}$ in the theorem the \defi{Tannakian monodromy group} of $\gp{M}$ with respect to $\omega_i$.

\begin{theorem}\label{thm:pure-of-weight-zero-begets-reductive}
Let $\omega\colon\Perv{\GmBar}\to\Vec_k$ be a fiber functor over $\Fqbar$ and $M\in\Perv{\Gm}$.  If $M$ is pure of weight zero, then $G_{\omega|\bar{M}}$ is reductive.
\end{theorem}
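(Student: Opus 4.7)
The plan is to assemble the two key ingredients already established earlier in the excerpt: Proposition~\ref{prop:weight-zero-implies-semisimple}, which tells us that purity of weight zero forces $\gp{\bar M}$ to be semisimple, and Theorem~\ref{thm:tannakian-monodromy-group}.\ref{thm:item:tmg-reductive}, which tells us that the Tannakian monodromy group of a semisimple $\gp{\bar M}$ is reductive. So the whole statement is essentially a corollary of these two results, and my job is mostly to verify that the setups are compatible.

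First I would pass from $M\in\Perv{\Gm}$ to its image in the Tannakian quotient $\Tann{\Gm}=\Perv{\Gm}/\Negl{\Gm}$, noting that purity of weight zero is preserved because negligible objects are successive extensions of shifted Kummer sheaves and the purity conditions in \S\ref{subsec:purity} only refer to pointwise weights of the cohomology sheaves. Extending scalars, the image $\bar M$ in $\Tann{\GmBar}$ is then $\iota$-pure of weight zero for \emph{every} embedding $\iota\colon\Qbar\to\bbC$ (in particular for some fixed choice of $\iota$), which is precisely the hypothesis of Proposition~\ref{prop:weight-zero-implies-semisimple}. That proposition then gives that every object of the full tensor subcategory $\gp{\bar M}\subseteq\Tann{\GmBar}$ is semisimple, since each $T^{a,b}(\bar M)$ is pure of weight zero (by Proposition~\ref{prop:convolution-of-pures-is-pure} applied to tensor products and duals) and hence geometrically semisimple by \cite[5.3.8]{BBD}, and subobjects of semisimple objects are semisimple.

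Next I would verify that the fiber functor $\omega$ in the statement yields the same group scheme whether computed in $\Perv{\GmBar}$ or in the Tannakian quotient $\Tann{\GmBar}$. Since $\omega$ must be trivial on negligible perverse sheaves to be a tensor-compatible fiber functor (for example, the standard choice $\omega(M)=H^0(\AoneuBar,j_{0!}M)$ of Theorem~\ref{thm:P-is-neutral-Tannakian}.\ref{thm:item:fiber-functor} kills all shifted Kummer sheaves), $\omega$ descends to a fiber functor on $\Tann{\GmBar}$ and in particular on the full subcategory $\gp{\bar M}$. Hence $G_{\omega|\bar M}$ coincides with the Tannakian monodromy group of $\gp{\bar M}$ inside the neutral Tannakian category $\Tann{\GmBar}$.

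Finally, I would apply Theorem~\ref{thm:tannakian-monodromy-group}.\ref{thm:item:tmg-reductive} to $\gp{\bar M}$: since $\gp{\bar M}$ is semisimple, the representing group scheme $G_{\omega|\bar M}$ is reductive, as claimed. The only nontrivial step is the reduction to Proposition~\ref{prop:weight-zero-implies-semisimple}; the harder work (semisimplicity of pure perverse sheaves over $\Fqbar$ and the general reductivity criterion for Tannakian groups) has already been imported, so the obstacle here is really bookkeeping about how purity, the Tannakian quotient, and the fiber functor interact, rather than any deep new input.
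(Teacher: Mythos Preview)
Your proposal is correct and follows exactly the paper's approach: the paper's proof is the single sentence ``This follows from Proposition~\ref{prop:weight-zero-implies-semisimple} and Theorem~\ref{thm:tannakian-monodromy-group}.\ref{thm:item:tmg-reductive},'' and your write-up simply unpacks the bookkeeping (passage to the Tannakian quotient, descent of the fiber functor) that makes this citation go through.
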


\begin{proof}
This follows from Proposition~\ref{prop:weight-zero-implies-semisimple} and Theorem~\ref{thm:tannakian-monodromy-group}.\ref{thm:item:tmg-reductive}.
\end{proof}


\subsection{Geometric versus arithmetic monodromy}

For every object $M$ in $\Tann{\Gm}$ and all integers $a,b\geq 0$, the ``extension of scalars'' functor sends a subobject $N\seq T^{a,b}(M)$ to a subobject $\bar{N}\seq T^{a,b}(\bar{M})$.  Moreover, composing the functor with a fiber functor $\omega$ on $\Tann{\GmBar}$ yields a fiber a fiber functor on $\Tann{\Gm}$ which we also denote $\omega$.  Thus there is a natural transformation
$$
	\ulAut^\otimes(\omega|\bar{M})
	\to
	\ulAut^\otimes(\omega|M)
$$
and a corresponding monomorphism of Tannakian monodromy groups
$$
	G_{\omega|\bar{M}}
	\to
	G_{\omega|M}.
$$
We call $G_{\omega|\bar{M}}$ and $G_{\omega|M}$ 
the \defi{geometric} and \defi{arithmetic Tannakian monodromy groups} of $M$ with respect to $\omega$ respectively.

\begin{prop}\label{prop:arith-is-reductive}
Suppose $M$ is in $\Tann{\Gm/\Fq}$ and is pure of weight zero.
\begin{enum}
\item $G_{\omega|\bar{M}}$ is a normal subgroup of $G_{\omega|M}$
\item If $M$ is arithmetically semisimple, then $G_{\omega|M}/G_{\omega|\bar{M}}$ is a torus, and thus $G_{\omega|M}$ is reductive.
\end{enum}
\end{prop}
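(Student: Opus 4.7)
The plan is to realize $G_{\omega|\bar M}$ as the kernel of a natural quotient $G_{\omega|M}\onto H$, where $H$ is the Tannakian group of a full subcategory of $\gp M$ consisting of ``geometrically trivial'' objects, and then to use purity and arithmetic semisimplicity to show $H$ is diagonalizable.

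For part~(i), let $\CC'\seq\gp M$ be the full subcategory of objects $N$ whose base change $\bar N$ is isomorphic in $\Tann{\GmBar}$ to some $\one^{\oplus m}$. I would first verify that $\CC'$ is stable under finite direct sums, tensor products, duals, and subquotients; closure under subquotients uses that the unit object of $\Tann{\GmBar}$ generates a semisimple subcategory, which follows from Propositions~\ref{prop:weight-zero-implies-semisimple} and~\ref{prop:subquotient-of-pure-is-pure}.  The Tannakian dictionary (\cite[II.2.21]{DM}) then associates to the inclusion $\CC'\seq\gp M$ a faithfully flat quotient $G_{\omega|M}\onto H$ whose kernel is precisely the subgroup acting trivially on every object of $\CC'$.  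I would identify this kernel with $G_{\omega|\bar M}$: every element of the latter fixes $\bar N\simeq\one^{\oplus m}$ and so acts trivially on $\omega(N)$ for $N\in\CC'$; conversely, an element of the kernel preserves every geometric subobject of every $T^{a,b}(\bar M)$ (by Tannaka reconstruction for $\gp{\bar M}$) and thus lies in $G_{\omega|\bar M}$.  This simultaneously yields normality and the exact sequence $1\to G_{\omega|\bar M}\to G_{\omega|M}\to H\to 1$.

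For part~(ii), I would show under arithmetic semisimplicity that every arithmetically simple $N\in\CC'$ satisfies $\dim\omega(N)=1$.  Arithmetic simplicity makes $\omega(N)$ an irreducible representation of $H$, while purity of weight zero forces the Frobenius element $\phi\in H$ (coming from the $\Fq$-structure via the fiber functor) to be semisimple with all eigenvalues of absolute value~$1$.  Since $\CC'$ consists precisely of the ``constant'' objects in $\gp M$, whose intrinsic structure is captured by the Galois action of $\Gal(\Fqbar/\Fq)$, the group $H$ is generated as an algebraic group by $\phi$; hence the image of $H$ in $\GL(\omega(N))$ sits inside the abelian group $\gp{\phi}^{\mathrm{Zar}}$, and an irreducible representation of an abelian group over an algebraically closed field of characteristic zero is one-dimensional.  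Consequently every object of $\CC'$ is a direct sum of characters of $H$, so $H$ is of multiplicative type, and in particular reductive.  Combined with the reductivity of $G_{\omega|\bar M}$ given by Theorem~\ref{thm:pure-of-weight-zero-begets-reductive}, the exact sequence of part~(i) then forces $G_{\omega|M}$ to be reductive, since any connected normal unipotent subgroup would have trivial intersection with the reductive subgroup $G_{\omega|\bar M}$ and trivial image in the reductive quotient $H$.

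The main obstacle I anticipate is the clean Tannakian identification of the kernel of $G_{\omega|M}\onto H$ with $G_{\omega|\bar M}$: one has to pin down a compatible choice of fiber functor (the one on $\gp M$ must factor through base change to $\Tann{\GmBar}$), and carefully distinguish arithmetic subobjects of $T^{a,b}(M)$ from the richer supply of geometric subobjects after base change.  A secondary, largely expository issue is that the literal statement that $H$ is a \emph{torus} requires connectedness, whereas the argument above only yields a group of multiplicative type; this distinction is immaterial for the asserted reductivity of $G_{\omega|M}$, but if desired one can upgrade the conclusion by observing that any disconnectedness of $G_{\omega|M}/G_{\omega|\bar M}$ is already absorbed into $G_{\omega|\bar M}$ when $\gp M$ is semisimple.
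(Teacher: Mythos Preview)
Your approach is essentially the same one the paper uses: the paper defers entirely to \cite[Th.~6.1]{Katz:CE} for part~(i) and to \cite[Lem.~7.1]{Katz:CE} for part~(ii), and what you have written is a sketch of exactly those arguments.  The identification of $\CC'$ with a category of skyscraper sheaves at $1$ equipped with a Frobenius action, and the resulting description of $H$ as the Zariski closure of the group generated by a single Frobenius element, is precisely Katz's mechanism.

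Two remarks.  First, the kernel identification you flag as the main obstacle is indeed the one genuine verification needed; the clean way to do it is to check that the base-change functor $\gp{M}\to\gp{\bar M}$ is a tensor functor compatible with $\omega$, so that it induces a closed immersion $G_{\omega|\bar M}\hookrightarrow G_{\omega|M}$, and then that a subobject $\bar N\seq T^{a,b}(\bar M)$ is fixed by all of $G_{\omega|M}$ iff it descends to an arithmetic subobject, which in turn is a standard descent statement for perverse sheaves.  Second, your argument for part~(ii) yields only that $H$ is of multiplicative type (diagonalizable), not that it is connected; this is enough for the reductivity of $G_{\omega|M}$, since an extension of a reductive group by a group of multiplicative type is reductive.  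However, your proposed upgrade to an honest torus---``any disconnectedness is absorbed into $G_{\omega|\bar M}$''---is not correct as stated: there is no general reason a finite quotient of $G_{\omega|M}/G_{\omega|\bar M}$ should lift into $G_{\omega|\bar M}$.  If you want the literal torus conclusion you should instead follow Katz and argue with the specific Weil-number structure of the Frobenius eigenvalues, or simply weaken the statement to ``group of multiplicative type,'' which is all that is used downstream.
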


\begin{proof}
Proposition~\ref{prop:weight-zero-implies-semisimple} implies that $\bar{M}$ is semisimple, so part (1) follows from \cite[Th.~6.1]{Katz:CE}.  Therefore we can speak of the quotient $G_{\omega|M}/G_{\omega|\bar{M}}$, and \cite[Lem.~7.1]{Katz:CE} implies it is a quotient of $M$ is arithmetically semisimple.  Moreover, Proposition~\ref{thm:pure-of-weight-zero-begets-reductive} implies that $G_{\omega|\bar{M}}$ is reductive, so part (2) follows by observing that the extension of a torus by a reductive group is reductive.
\end{proof}


\subsection{Frobenius element}

Let $\omega$ be a fiber functor $\Tann{\GmBar}\to\Vec_k$, let $\EFq/\Fq$ be a finite extension, and let $M$ be in $\Tann{\Gm/\EFq}$.  The geometric Frobenius element of $\Gal(\Fqbar/\EFq)$ induces a well-defined automorphism $\phi_E$ of $\bar{M}$.  By applying $\omega$, one obtains a well-defined $k$-linear automorphism of $\omega(\bar{M})$, that is, an element of $\GL(\omega(\bar{M}))=\GL(\omega(M))$.  It is even an element of $G_{\omega|M}$ since, for every $N\seq T^{a,b}(M)$ and $a,b\geq 0$, one has
$$
	\bar{N}=T^{a,b}(\phi_\EFq)(\bar{N})\seq T^{a,b}(\bar{M})
$$
and thus
$$
	\omega(\bar{N})=T^{a,b}(\phi_\EFq)(\omega(\bar{N}))\seq \omega(T^{a,b}(\bar{M}))=T^{a,b}(\omega(M)).
$$
We call $\omega(\phi_\EFq)$ the \defi{geometric Frobenius element} of $G_{\omega|M}$.  


\subsection{Frobenius conjugacy classes}

Let $\omega_1,\omega_2$ be fiber functors $\Tann{\GmBar}\to\Vec_k$, let $M$ be an element of $\Tann{\Gm}$, and let $\pi$ be an element of $\ulIsom^\otimes(\omega_1|M,\omega_2|M)(k)$.  Then Theorem~\ref{thm:tannakian-monodromy-group}.\ref{thm:item:tmg-torsor} implies that the map $g\mapsto\pi g$ induces a bijection
$$
	G_{\omega_1|M}\to \ulIsom^\otimes(\omega_1|M,\omega_2|M).
$$
Moreover, the map $g_2\mapsto g_2^\pi=\pi^{-1}g_2\pi$ induces an isomorphism $G_{\omega_2|M}\to G_{\omega_1|M}$.  While the map is not canonical (since $\pi$ is not), the conjugacy class
$$
	\Frob_{\omega_2|M}
	=
	\{\,
		\omega_2(\phi)^{\pi g_1}
		:
		g_1\in G_{\omega_1|M}(k)
	\,\}
	\sub
	G_{\omega_1|M}(k)
$$
is well defined.  We call it the \defi{geometric Frobenius conjugacy class} of $\omega_2|M$ in $G_{\omega_1|M}$.

For each finite extension $\EFq/\Fq$ and each character $\rho\in\PhiEOf\EFq{u}$, let $\LL_\rho$ be the corresponding Kummer sheaf on $\Gm$ over $E$ and $\omega_\rho\colon\Tann{\GmBar}\to\Vec_k$ be the functor given by
$$
	M\mapsto H^0(\AoneuBar,j_{0!}(M\otimes\LL_\rho)).
$$
It is a fiber functor by \cite[3.2]{Katz:CE}, and $\omega_\one$ is the fiber functor of Theorem~\ref{thm:P-is-neutral-Tannakian}.\ref{thm:item:fiber-functor}.  We write
$$
	\Frob_{\EFq,\rho}\sub G_{\omega_\one|M}
$$
for the corresponding geometric Frobenius conjugacy class of $\omega_\rho|M_{\EFq}$ where $M_{\EFq}=M\times_{\Fq}\EFq$.

Let $m=\dim(\omega_\rho(M))$ and $n\in\{0,1,\ldots,m\}$.  We say that $\omega_\rho(M)$ is \defi{mixed of weights $w_1,\ldots,w_m$} iff there exists an eigenvector tuple $\alpha=(\alpha_1,\ldots,\alpha_m)\in(\Qellbar^\times)^m$ of any element of $\Frob_{\EFq,\rho}$ such that $\alpha\in(\Qbar^\times)^m$ and such that
$$
	|\iota(\alpha_i)|^2 = (1/|\EFq|)^{w_i}\mbox{ for }1\leq i\leq m
$$
for every field embedding $\iota\colon\Qbar\to\bbC$.  We also say that $\omega_\rho(M)$ is \defi{mixed of non-zero weights $w_1,\ldots,w_n$} iff it is mixed of weights $w_1,\ldots,w_m$ with $w_{n+1}=\cdots=w_m=0$.


\subsection{Monodromy for pure middle-extension sheaves}\label{sec:representation-monodromy-groups}

Let $U\seq\Gm$ be a dense Zariski open subset over $\Fq$.  Let $\theta\colon\piOne{U}\to\GL(W)$ be a continuous representation to a finite-dimensional $\Qellbar$-vector space $W$ and $\FF=\ME{\theta}$ be the associated middle-extension sheaf on $\Gm$.  Suppose that $\theta$ is punctually pure of weight $w$ so that $M=\FF((1+w)/2)[1]$ is pure of weight zero.  Suppose moreover that $\theta$ is geometrically simple and that it does not factor through the composed quotient $\piOne{U}\onto\piOne{\Gm}\onto\piOneTame{\Gm}$ so that $M$ lies in $\PP_\arith$.

Let $\PhiU$ be the dual of $\Bu=(\Fq[u]/u\Fq[u])^\times$ (cf.~\S\ref{sec:one-parameter-families}).  We define the \defi{geometric} and \defi{arithmetic Tannakian monodromy groups} of (the Mellin transformation of) $\theta$ to be
$$
	\GG_\geom(\theta,\PhiU):=G_{\omega_\one|\bar{M}},\quad
	\GG_\arith(\theta,\PhiU):=G_{\omega_\one|M}.
$$
For $u=0,\infty$, let $W(u)$ denote $W$ regarded as an $I(u)$-module, and let $W(u)^\unip$ be the maximal submodule of $W(u)$ where $I(u)$ acts unipotently.  Moreover, let $e_{u,1},\ldots,e_{u,d_u}$ be positive integers integers satisfying
$$
	W(u)^\unip\simeq U(e_{u,1})\oplus\cdots\oplus U(e_{u,d_u})
$$
as $I(u)$-modules where $U(e)$ denotes the irreducible $e$-dimensional $I(u)$-module on which $I(u)$ acts unipotently.

\begin{prop}\label{prop:middle-extension-monodromy}\ 

\medskip
\begin{enum}
\item\label{item:prop:mem-reductive-and-normal} The groups $\GG_\geom(\theta,\PhiU)$ and $\GG_\arith(\theta,\PhiU)$ are reductive, and there is an exact sequence
$$
	1
	\to \GG_\geom(\theta,\PhiU)
	\to \GG_\arith(\theta,\PhiU)
	\to T
	\to 1
$$
for some torus $T$ over $\Qellbar$.

\item\label{item:prop:mem-nonzero-weights} For each finite extension $\EFq/\Fq$ and each $\alpha\in\PhiEOf\EFq{u}$, the fiber $\omega_\rho(M)$ is mixed of non-zero weights $-e_{0,1},\ldots,-e_{0,d_0},e_{\infty,1},\ldots,e_{\infty,d_\infty}$.

\end{enum}
\end{prop}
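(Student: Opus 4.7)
The plan is to dispatch the two parts of Proposition~\ref{prop:middle-extension-monodromy} separately.

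For part (i), I will appeal directly to Proposition~\ref{prop:arith-is-reductive}. Since $\theta$ is punctually pure of weight $w$, the Tate-twisted perverse shift $M = \FF((1+w)/2)[1]$ is pure of weight $0$. Since $\theta$ is geometrically simple, $\FF = \ME{\theta}$ is a geometrically irreducible middle-extension sheaf, so $M$ is arithmetically simple in $\PP_\arith$ and a fortiori arithmetically semisimple. Proposition~\ref{prop:arith-is-reductive} then yields the reductivity of $\GG_\geom(\theta,\PhiU)$ and $\GG_\arith(\theta,\PhiU)$ together with the claimed torus-quotient exact sequence.

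For part (ii), I will first unwind the fiber functor. Using $H^0(\AoneuBar, K[1]) = H^1(\AoneuBar, K)$, the fiber rewrites as
$$\omega_\rho(M) \;=\; H^1\bigl(\AoneuBar,\, j_{0!}\GG\bigr), \qquad \GG \;:=\; (\FF\otimes\LL_\rho)\bigl((1+w)/2\bigr),$$
a middle-extension sheaf on $\Gm$ over $\EFq$ that is punctually pure of weight $-1$ (using that $\LL_\rho$ is unitary of weight $0$). I then plan to fit $\omega_\rho(M)$ into short exact sequences that compare it with the middle-extension cohomology $H^1(\PoneuBar, j_{!*}\GG)$---pure of weight $0$ by Deligne's Riemann hypothesis---with the discrepancy carried by two local pieces: at $u=0$, the inertia invariants $\GG^{I(0)}$ that the $j_{0!}$ extension discards, and at $u=\infty$, the inertia invariants (with an appropriate Tate twist) that arise in passing from $\AoneuBar$ to $\PoneuBar$ via $Rj''_*$, where $j''\colon\AoneuBar\to\PoneuBar$ is the inclusion.

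To compute the weights of these two boundary pieces I will apply Grothendieck's local monodromy theorem together with the weight-monodromy theorem for curves: on a single tame unipotent Jordan block $U(e)$ at a bad point, the inertia invariants are one-dimensional and Frobenius acts there by a Weil number of weight $e-1$ relative to the generic weight. Because $\LL_\rho$ is lisse of rank one and tame at $0$ and $\infty$, tensoring with $\LL_\rho$ preserves the local Jordan structure of $\FF$, so the unipotent blocks of $\GG$ at $u=0,\infty$ coincide with those of $W(0)$ and $W(\infty)$, namely $U(e_{0,i})$ and $U(e_{\infty,j})$. After accounting for the Tate twist $((1+w)/2)$, the perverse shift $[1]$, and the extra Tate twist $(-1)$ at $\infty$, the block $U(e_{0,i})$ contributes a Frobenius eigenvalue of weight $-e_{0,i}$ and $U(e_{\infty,j})$ contributes one of weight $+e_{\infty,j}$; the remaining eigenvalues come from the pure-weight-$0$ middle cohomology.

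The main obstacle is the weight bookkeeping: verifying that the combined effect of the Tate twists, the perverse shift, and the monodromy filtrations at $0$ and $\infty$ produces exactly the weights $\pm e_{u,i}$ and not some off-by-one or sign-swapped variant. A useful sanity check is the special case $\rho = \one$, where $\omega_\one(M)$ admits an explicit description along the lines of \cite[Ch.~3]{Katz:CE}; the general case then follows because $\LL_\rho$, being lisse of rank one and tame at $0,\infty$, does not disturb the local monodromy analysis.
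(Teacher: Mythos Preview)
Your handling of part~(i) is exactly the paper's: both invoke Proposition~\ref{prop:arith-is-reductive} after noting that $M$ is pure of weight zero and arithmetically semisimple.

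For part~(ii) the paper does not argue at all; it simply cites \cite[Th.~16.1]{Katz:CE}. What you have sketched is essentially the \emph{content} of Katz's proof of that theorem: compare $\omega_\rho(M)$ to the pure middle-extension cohomology $H^1(\PoneuBar,j_{!*}\GG)$, isolate the discrepancy as local pieces at $0$ and $\infty$, and compute the Frobenius weights on those pieces via the local monodromy filtration on each unipotent block $U(e)$, yielding $\mp e$ after the Tate and perverse normalisations. So your approach is not different in spirit---it is a recapitulation of the cited result rather than an alternative route to it. The one place to be careful is that the fiber functor in Theorem~\ref{thm:P-is-neutral-Tannakian}.\ref{thm:item:fiber-functor} uses $j_{0!}$ on $\Aoneu$ (ordinary cohomology of the extension by zero, not $H^1_c$ of $j_{0*}$), so when you set up the two short exact sequences the roles of $0$ and $\infty$ are asymmetric: at $u=0$ the invariants are \emph{missing} from $j_{0!}$ relative to $j_{0*}$, while at $u=\infty$ they appear through $R^1j''_*$; this asymmetry is precisely what produces the opposite signs $-e_{0,i}$ versus $+e_{\infty,j}$. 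Your sanity check via $\rho=\one$ and \cite[Ch.~3]{Katz:CE} is the right way to pin this down.
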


\begin{proof}
Part (1) follows from Proposition~\ref{prop:arith-is-reductive}, and part (2) follows from \cite[Th.~16.1]{Katz:CE}.
\end{proof}


\bibliographystyle{amsplain.bst}
\bibliography{main}


\end{document}